\newtheorem{theorem}{Theorem}[section]
\newtheorem*{theorem*}{Theorem}
\newtheorem{corollary}[theorem]{Corollary}
\newtheorem{lemma}[theorem]{Lemma}
\newtheorem{question}[theorem]{Question}
\newtheorem{proposition}[theorem]{Proposition}
\theoremstyle{definition}
\newtheorem{definition}[theorem]{Definition}
\newtheorem{remark}[theorem]{Remark}
\newtheorem{claim}{Claim}
\theoremstyle{definition}
\newtheorem{example}[theorem]{Example}
\newcommand{\N}{\mathbb{N}}
\newcommand{\Z}{\mathbb{Z}}
\newcommand{\A}{\mathcal{A}}
\newcommand{\R}{\mathbb{R}}
\newcommand{\B}{\mathcal{B}}
\DeclareMathOperator{\Aut}{Aut}
\DeclareMathOperator{\diam}{diam}
\DeclareMathOperator{\End}{End}
\DeclareMathOperator{\Homeo}{Homeo}
\DeclareMathOperator{\supp}{supp}
\DeclareMathOperator{\diag}{diag}
\DeclareMathOperator{\id}{id}
\newcounter{sigmavariable}
\def\moverlay{\mathpalette\mov@rlay}
\def\mov@rlay#1#2{\leavevmode\vtop{%
		\baselineskip\z@skip \lineskiplimit-\maxdimen
		\ialign{\hfil$\m@th#1##$\hfil\cr#2\crcr}}}
\newcommand{\charfusion}[3][\mathord]{
	#1{\ifx#1\mathop\vphantom{#2}\fi
		\mathpalette\mov@rlay{#2\cr#3}
	}
	\ifx#1\mathop\expandafter\displaylimits\fi}
\newcommand{\cupdot}{\charfusion[\mathbin]{\cup}{\cdot}}
\newtheorem{maintheorem}{Theorem}
\definecolor{zzttqq}{rgb}{0.6,0.2,0.}
\subjclass[2020]{Primary 37B10; Secondary 37A35, 37B51, 68R15}
\keywords{Substitutive subshifts, automomorphism groups, decidability, computability, conjugacies, factor maps, recognizability, f{\o}lner sequences.}
\title[]{Decidability of the isomorphism problem between multidimensional substitutive subshifts}
\author{Christopher Cabezas}
\address{University of Li\`ege, Department of Mathematics, All\'ee de la d\'ecouverte 12 (B37), B-4000 Li\`ege, Belgium.}
\email{ccabezas@uliege.be}
\author{Julien Leroy}
\address{University of Li\`ege, Department of Mathematics, All\'ee de la d\'ecouverte 12 (B37), B-4000 Li\`ege, Belgium.}
\email{j.leroy@uliege.be}
\thanks{The authors acknowledge the financial support of ANR project IZES ANR-22-CE40-0011.}
\begin{document}
	
	\begin{abstract}
		An important question in dynamical systems is the classification problem, i.e., the ability to distinguish between two isomorphic systems. In this work, we study the topological factors between a family of multidimensional substitutive subshifts generated by morphisms with uniform support. We prove that it is decidable to check whether two minimal aperiodic substitutive subshifts are isomorphic. The strategy followed in this work consists of giving a complete description of the factor maps between these subshifts. Then, we deduce some interesting consequences on coalescence, automorphism groups, and the number of aperiodic symbolic factors of substitutive subshifts. We also prove other combinatorial results on these substitutions, such as the decidability of defining a subshift, the computability of the constant of recognizability, and the conjugacy between substitutions with different supports.
	\end{abstract}
	\maketitle
	
	\section{Introduction}
	Isomorphic systems are indistinguishable in terms of their dynamical properties, making classification an important problem in dynamical systems. Nevertheless, finding an isomorphism (or conjugation) between two dynamical systems has proven to be highly challenging. We recall that an \emph{isomorphism} between two symbolic systems $(X,S,\Z^{d})$ and $(Y,S,\Z^{d})$ is a continuous and bijective map $\phi:(X,S,\Z^{d})\to (Y,S,\Z^{d})$ commuting with the action, i.e., for any ${\bm n}\in \Z^{d}$, $\phi\circ S^{{\bm n}}=S^{{\bm n}}\circ \phi$. If the map $\phi$ is only surjective, it is called a \emph{factor map}. 
	
	One classic approach to address the classification problem involves identifying \emph{invariants}, which are properties shared by isomorphic systems and are easily determinable. However, in some cases, the existing invariants may not suffice for this purpose. Additionally, the topological factors of a topological dynamical system are rarely used explicitly to unravel the structure of a particular dynamical system. Nonetheless, they contain valuable information for certain aspects and can be employed for concrete computations or the study of specific structures, such as in spectral theory.
	
	We are also concerned with the \emph{decidability} of certain properties. A property is said to be decidable if an algorithm exists that allows one to verify whether the property is satisfied or not. In this article, our focus lies on the decidability of the classification problem within the family of \emph{multidimensional substitutive subshifts}.
	
	One-dimensional substitutive subshifts have been extensively studied for several decades, ever since they were introduced by W.H. Gottschalk in \cite{gottschalk1963substitution}. They represent the simplest nontrivial zero-entropy symbolic systems and are generated in a highly deterministic manner. This simplicity has led to their presence in various fields of mathematics, computer science, and physics, such as combinatorics of words, number theory, dynamics of aperiodic tilings, quasi-crystals, and more (see, for example, \cite{adamczewski2007complexity,allouche2003automatic,mozes1989tilings,nekrashevych2018palindromic}). 
	However, their deep understanding took several decades, with significant contributions made by A. Cobham \cite{cobham1972uniform} (who identified them as so-called \emph{automatic sequences} from a computational perspective), M. Queff\'elec and others \cite{dekking1978spectrum,siegel2005spectral,queffelec2010substitution} (in terms of their spectral properties), B. Moss\'e \cite{mosse1992puissances} (focused on recognizability, which is a sort of invertibility of substitutions), F. Durand \cite{durand1999substitutional} (who classified their topological factor systems), and B. Host and F. Parreau among several others \cite{coven2016computing,cyr2015automorphism,donoso2016lowcomplexity,host1989homomorphismes,lemanczyk1988metric,salo2015blockmaps}  (classification of their automorphism groups). 
	We refer to \cite{queffelec2010substitution,fogg2002substitutions} for extensive bibliographies on the (earlier developments of) the subject. Many of the aspects mentioned above remain largely unexplored in the context of multidimensional substitutive systems.
	
	In the multidimensional setting, substitutive subshifts find their motivation in physical phenomena, particularly through the discovery of the aperiodic structure of quasi-crystals, modeled by the Penrose tiling \cite{penrose1974role}. In these models, symmetries play a fundamental role and are described using finite data. Numerous articles have been dedicated to the study of these tilings (see \cite{baake2013aperiodic} for an extensive bibliography on aperiodic order). Substitutive systems have then emerged as valuable mathematical models within this research direction. Our focus in this article is on substitutions with uniform support, where the shape of any pattern defined by the substitution remains the same (see \cite{cabezas2023homomorphisms} for basic properties on this topic, where we follow the same notation). Within this class of substitutive subshifts, we prove that assuming they have the same combinatorial structure, it is decidable whether a factor map exists between two aperiodic substitutive subshifts.
	
	\begin{maintheorem}\label{Thm:DecidabilityFactorizationProblemIntro}
		Let $\zeta_{1},\zeta_{2}$ be two aperiodic primitive constant-shape substitutions with the same expansion matrix $L$. It is decidable to know whether there exists a factor map $\phi:(X_{\zeta_{1}},S,\Z^{d})\to (X_{\zeta_{2}},S,\Z^{d})$.
	\end{maintheorem}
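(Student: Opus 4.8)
The plan is to reduce the existence of a factor map to a finite, effective search. By the Curtis--Hedlund--Lyndon theorem, any factor map $\phi:(X_{\zeta_1},S,\Z^{d})\to(X_{\zeta_2},S,\Z^{d})$ is a sliding block code of some radius $r$: there is a local rule $\Phi$ assigning a letter of the alphabet $\A_2$ of $\zeta_2$ to every $\zeta_1$-admissible pattern of shape $\llbracket -r,r\rrbracket^{d}$, with $\phi(x)_{\bm n}=\Phi\bigl((S^{\bm n}x)|_{\llbracket -r,r\rrbracket^{d}}\bigr)$. Since the language of a primitive constant-shape substitution is decidable and there are only finitely many local rules of a given radius, the problem becomes decidable provided one can (i) produce a computable constant $R$ such that, whenever a factor map exists, one of radius at most $R$ exists, and (ii) decide, for a fixed candidate rule, whether it induces a factor map onto $X_{\zeta_2}$.

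For (i) the key tool is \emph{desubstitution} via recognizability. Given a factor map $\phi$ of radius $r$, the composition $\phi\circ\zeta_1$ takes values in $X_{\zeta_2}$, so by recognizability of $\zeta_2$ every such value can be written uniquely as $S^{\bm j}\zeta_2(z)$ with $\bm j$ ranging over a fundamental domain of $L$ and $z\in X_{\zeta_2}$; the assignment $x\mapsto z$ is then again a factor map $X_{\zeta_1}\to X_{\zeta_2}$. This uses crucially that the two substitutions share the expansion matrix $L$, so that $\zeta_2\circ S^{\bm n}=S^{L\bm n}\circ\zeta_2$ matches the scaling produced by $\zeta_1\circ S^{\bm n}=S^{L\bm n}\circ\zeta_1$ and makes the desubstituted map commute with the shift. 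Tracking window sizes shows the new map has radius at most $\|L^{-1}\|(r+R_2)+c$, where $R_2$ is the computable constant of recognizability of $\zeta_2$ and $c$ bounds the support; since $\|L^{-1}\|<1$ in an adapted norm, iterating this contraction drives the radius below the fixed threshold $R=(\|L^{-1}\|R_2+c)/(1-\|L^{-1}\|)$. Thus, whenever a factor map exists, a desubstituted one of radius at most the computable bound $R$ exists, which is exactly the complete description announced in the abstract.

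For (ii), a local rule $\Phi$ of radius at most $R$ automatically defines a continuous shift-commuting map $\phi:X_{\zeta_1}\to\A_2^{\Z^{d}}$, so I only need to decide whether $\phi(X_{\zeta_1})=X_{\zeta_2}$. As $\phi(X_{\zeta_1})$ is a nonempty subshift and $X_{\zeta_2}$ is minimal, this reduces to the single containment $\phi(X_{\zeta_1})\subseteq X_{\zeta_2}$, i.e. to checking that $\Phi$ sends the language of $X_{\zeta_1}$ into that of $X_{\zeta_2}$. Both languages being decidable, a failure is detected in finite time; to certify containment I would again invoke the substitutive structure, so that once $\Phi$ respects the two languages on patterns of size controlled by $L$ and $R_2$, compatibility propagates through substitution to all patterns and only finitely many need to be tested. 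Running these two semi-procedures in parallel across the finitely many candidate rules yields the decision algorithm.

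The main difficulty is the \emph{rigidity} underlying (i), and in particular making the radius contraction genuinely quantitative and uniform in the multidimensional setting. The desubstitution step must control how the common fundamental domains of $L$ and the supports of the patterns interact under $L^{-1}$, and one must verify that the desubstituted map remains surjective onto $X_{\zeta_2}$ rather than onto a proper subsystem; this is where the computability of the constant of recognizability is essential, since it converts the abstract finiteness of the stabilization floor into the explicit computable bound $R$. Everything else is a finite enumeration combined with the decidability of substitutive languages.
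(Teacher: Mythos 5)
Your step (i) is essentially the paper's own route: the renormalization $S^{{\bm f}_{n}(\phi)}\phi\zeta_{1}^{n}=\zeta_{2}^{n}\phi_{n}$ via recognizability of $\zeta_{2}$, with the radius contracting to a computable threshold of the form $2\bar{r}+R_{\zeta_{2}}+1$, is exactly \cref{ConstantModuloMinimality} and \cref{TopologicalRigidityOfFactors} (modulo two points you gloss over: the shift ${\bm j}$ in the desubstitution must be shown \emph{constant} on $\zeta_{1}^{n}(X_{\zeta_{1}})$, which uses minimality, and the reduction to a common support is \cref{ConjugationSubstitutionsDifferentFundamentalDomains}). The genuine gap is in your step (ii). For a fixed candidate block map $\Phi$ of radius at most $R$, deciding $\phi(X_{\zeta_{1}})\subseteq X_{\zeta_{2}}$ is, as you set it up, only co-semi-decidable: enumerating patterns detects a failure in finite time, but your certificate of success --- ``once $\Phi$ respects the two languages on patterns of size controlled by $L$ and $R_{2}$, compatibility propagates through substitution'' --- is asserted, not proved, and it is false for an arbitrary block map. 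The image under $\Phi$ of a $\zeta_{1}$-substituted pattern has no reason to be a $\zeta_{2}$-substituted image of anything, so language-compatibility at one scale does not propagate to larger scales; consequently, on a candidate for which containment does hold, your success semi-procedure has no guaranteed finite certificate and the parallel algorithm need not halt.

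The paper closes precisely this hole by proving more in the rigidity theorem than a radius bound: Property \ref{CommutationPropertySlidingBlickCodeSubstitution} of \cref{TopologicalRigidityOfFactors} says the renormalized map can be chosen to satisfy an exact intertwining $S^{{\bm f}}\psi\zeta_{1}^{n}=\zeta_{2}^{n}\psi$, and a pigeonhole argument among the finitely many radius-$R$ block maps (the sequence $(\phi_{n})$ is eventually periodic) yields the computable bound $n\leq |\mathcal{B}|^{|\A|^{R}}$. The search then runs over finitely many triples $(\Phi,n,{\bm f})$ rather than over $\Phi$ alone, and for each triple the intertwining equation is decidable by testing it on a single pattern whose size is controlled by the computable repetitivity function (\cref{GrowthRepetititvtyFunction}, \cref{corollary:decidegivenblockmap}). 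Crucially, \cref{prop:caract factor} shows the equation is also \emph{sufficient}: if $S^{{\bm f}}\phi\zeta_{1}=\zeta_{2}\phi$ on $X_{\zeta_{1}}$, then for a fixed point $x$ of $\zeta_{1}$ one has $\phi(x)|_{{\bm f}_{n}+F_{n}}=\zeta_{2}^{n}(\phi(x)_{{\bm 0}})\in\mathcal{L}(X_{\zeta_{2}})$, and the F\o lner property together with minimality force $\phi(X_{\zeta_{1}})=X_{\zeta_{2}}$. This intertwining equation is the rigorous form of your ``propagation through substitution''; without establishing it (with its bound on $n$) your enumeration has no terminating test for the positive instances, so the proposal as written does not yield a decision procedure.
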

	
	The strategy followed in this article involves providing a complete description of the factor maps between substitutive subshifts. This approach draws inspiration from a series of works on automorphism groups of symbolic systems, which we proceed to describe. The study of factors and conjugacies between dynamical systems is a classical problem, primarily concerning their algebraic and dynamical properties in relation to the one of the system $(X, T, \Z^{d})$. \emph{Automorphisms}, which are self-conjugacies of a particular system, can be algebraically defined as elements of the centralizer of the action group $\left\langle T\right\rangle$, considered as a subgroup of all homeomorphisms $\Homeo(X)$ from X to itself. Symbolic systems already exhibit significant rigidity properties regarding factor maps and automorphisms. For instance, the famous Curtis-Hedlund-Lyndon theorem \cite{hedlund1969endomorphism} establishes that any factor map between subshifts is a sliding block code, implying that the automorphism group is countable and discrete. Initially studied for \emph{subshifts of finite type} \cite{hedlund1969endomorphism}, these automorphism groups were shown to be infinitely generated and containing various groups, including all finite groups, the free group on two generators, the direct sum of a countable number of copies of $\mathbb{Z}$, and any countable collection of finite groups. The existence of a conjugacy between two subshifts of finite type is known to be equivalent to the notion of \emph{Strong Shift Equivalence} for matrices over $\Z^{+}$ \cite{williams1973classification}, which is not known to be decidable \cite{kim1999williams}.
	
	However, within the rich family of substitutive subshifts, factor maps exhibit strong rigidity properties, as proven by B. Host and F. Parreau in \cite{host1989homomorphismes}. They provided a complete description of factor maps between subshifts arising from certain constant-length substitutions, proving that any measurable factor map induces a continuous one. As a consequence, the automorphism group is virtually generated by the shift action, meaning that there exists a finite set of automorphisms such that any automorphism can be expressed as the composition of an element from this finite set and a power of the shift. Moreover, any finite group can be realized as a quotient group $\Aut(X,S,\Z)/\left\langle S\right\rangle$ for these subshifts, as proven by M. Lema\'nczyk and M. K. Mentzen in \cite{lemanczyk1988metric}. The proof by B. Host and F. Parreau is based on the following fact: there exists a bound (in this case, $r=2$) such that any factor map between these substitutive subshifts is the composition of a sliding block code with a radius less than $r$ and a power of the shift map. Using the self-induced properties of substitutive subshifts, V. Salo and I. Törmä provided in \cite{salo2015blockmaps} a renormalization process for factor maps between two minimal substitutive subshifts of constant-length and for Pisot substitutions, extending the description obtained in \cite{host1989homomorphismes} within a topological framework. More recently, F. Durand and J. Leroy \cite{durand2022decidability} showed the decidability of the factorization problem between two minimal substitutive subshifts, extending the results of V. Salo and I. Törmä, giving a \emph{computable} upper bound $R$ such that every factor map between minimal substitutive subshifts is the composition of a power of the shift map with a factor map having a radius less than $R$. The decidability of the factorization problem in the constant-length case had previously been proved by I. Fagnot in \cite{fagnot1997facteurs} using the first-order logic framework of Presburger arithmetic, without assuming minimality. In \cite{cabezas2023homomorphisms}, an analogous result to that of B. Host and F. Parreau for the multidimensional framework was established. In this article, we further extend the findings in \cite{cabezas2023homomorphisms} to the whole class of aperiodic minimal multidimensional constant-shape substitutive subshifts. 
	
	\begin{maintheorem}\label{Thm:TopologicalRigidityFactorBetweenSubstitutions}
		Let $\zeta_{1},\zeta_{2}$ be two aperiodic primitive constant-shape substitutions with the same expansion matrix. Then, there exists a computable constant $R$ such that every factor map between $(X_{\zeta_{1}},S,\Z^{d})$ and $(X_{\zeta_{2}},S,\Z^{d})$ is the composition of a shift map with a factor map of radius less than $R$.
	\end{maintheorem}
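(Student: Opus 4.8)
The plan is to prove that the set $\mathcal{F}$ of all factor maps $(X_{\zeta_1},S,\Z^{d})\to(X_{\zeta_2},S,\Z^{d})$, taken modulo the shift action, has uniformly bounded radius, and then to observe that the bound is computable. By the Curtis--Hedlund--Lyndon theorem each $\phi\in\mathcal{F}$ is a sliding block code, so it has a radius $r(\phi)$, which I measure in a norm $\|\cdot\|_{*}$ on $\R^{d}$ adapted to $L$, chosen so that $\lambda:=\|L^{-1}\|_{*}<1$ (possible since $L$ is expanding). The engine of the argument is a \emph{desubstitution operator} $D\colon\mathcal{F}\to\mathcal{F}$ coming from recognizability, together with a contraction estimate $r(D\phi)\le\lambda\,r(\phi)+C$ with $C$ computable. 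Granting these, the conclusion follows from a finiteness-plus-injectivity argument rather than from a naive iteration (which only bounds the desubstituted maps, not $\phi$ itself).

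First I would construct $D$. Using recognizability of $\zeta_{2}$, each point $\phi(\zeta_{1}(x))\in X_{\zeta_{2}}$ factors uniquely as $S^{\bm{j}(x)}\zeta_{2}(\phi'(x))$ with $\bm{j}(x)$ in a fundamental domain $F$ of $\Z^{d}/L\Z^{d}$. Since $\zeta_{1}\circ S^{\bm{n}}=S^{L\bm{n}}\circ\zeta_{1}$ and $L\bm{n}\equiv\bm{0}\pmod{L\Z^{d}}$, the column $\bm{j}(x)$ is shift-invariant and continuous, hence constant by minimality; the same covariance makes $\phi'$ continuous and $S$-commuting, so $\phi'=D\phi$ is again a factor map $X_{\zeta_{1}}\to X_{\zeta_{2}}$. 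Unwinding the recognizability windows, the value $\phi(\zeta_{1}(x))_{\bm{0}}$ reads $\zeta_{1}(x)$ on a ball of radius $r(\phi)+O(1)$, which amounts to reading $x$ on a ball of radius $\lambda\,r(\phi)+C$; this gives the contraction, with every constant effective in terms of $L$, the supports, and the recognizability constant.

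Next I would exploit two structural facts. Setting $R:=C/(1-\lambda)$ (computable), the estimate yields $D(\mathcal{F}_{R})\subseteq\mathcal{F}_{R}$, where $\mathcal{F}_{R}=\{\phi:r(\phi)\le R\}$ is finite modulo $S$ (only finitely many local rules on a ball of radius $R$), and moreover $D^{n}\phi\in\mathcal{F}_{R}$ for all large $n$ and every $\phi$. The decisive extra point is that $D$ is injective modulo the shift: if $D\phi_{1}=D\phi_{2}$, then $\phi_{1}$ and $\phi_{2}$ agree up to a fixed shift on $\zeta_{1}(X_{\zeta_{1}})$, and applying $S^{\bm{p}}$ together with the fact that $\{S^{\bm{p}}\zeta_{1}(X_{\zeta_{1}}):\bm{p}\in F\}$ covers $X_{\zeta_{1}}$ propagates this agreement everywhere. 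Injectivity and finiteness force $D|_{\mathcal{F}_{R}}$ to be a bijection of $\mathcal{F}_{R}$; then a downward induction closes the argument: if $D^{N}\phi\in\mathcal{F}_{R}$, the unique $D$-preimage of $D^{N}\phi$ inside the finite invariant set $\mathcal{F}_{R}$ must, by global injectivity, coincide with $D^{N-1}\phi$, so $D^{N-1}\phi\in\mathcal{F}_{R}$, and iterating down gives $\phi\in\mathcal{F}_{R}$. Hence every factor map has radius at most $R$ after composition with a shift, with $R$ computable.

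The main obstacle I expect is the desubstitution step in full generality, not the endgame. Producing $D$ and the contraction requires the multidimensional recognizability theorem with an \emph{effective} constant for arbitrary aperiodic primitive constant-shape substitutions, together with a careful comparison of the recognizability windows of $\zeta_{1}$ and $\zeta_{2}$ transported through $L$ in the adapted norm; this is where the work of extending beyond the restricted classes previously treated is concentrated. Once the contraction holds with computable constants, the finiteness-and-injectivity argument is soft. A secondary point demanding care is the genuine computability of every implicit quantity---the recognizability radius, the additive constant $C$, the bound $R$, and the cardinality of $\mathcal{F}_{R}$---from the finite data defining $\zeta_{1}$ and $\zeta_{2}$, since it is precisely this effectivity that upgrades the topological rigidity into the decidability statement of Theorem~\ref{Thm:DecidabilityFactorizationProblemIntro}.
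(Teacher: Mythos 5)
Your proposal is correct, and its engine is exactly the paper's: your desubstitution operator $D$ is the paper's map $\phi\mapsto\phi_{1}$ (Proposition~\ref{ConstantModuloMinimality} is precisely your observation that the column ${\bm j}(x)$ is shift-invariant and continuous, hence constant by minimality), the contraction $r(D\phi)\le\lambda r(\phi)+C$ is the paper's radius estimate~\eqref{eq:reductionofradius} obtained from the computable recognizability constant of $\zeta_{2}$ (Theorem~\ref{UpperBoundRecognizability}), and both proofs close with finiteness of block maps of radius $\le R$. The one genuine divergence is the endgame. The paper uses eventual periodicity of the orbit $(\phi_{n})_{n}$, via the semigroup relation $(\phi_{n})_{k}=\phi_{n+k}$, to extract a representative $\psi=\phi_{n}$ with $\psi=\phi_{2n}$, from which it gets both the radius bound and the commutation relation $S^{{\bm f}}\psi\zeta_{1}^{n}=\zeta_{2}^{n}\psi$, and then identifies $S^{{\bm j}}\phi=\psi$ on $\zeta_{1}^{n}(X_{\zeta_{1}})$ and hence everywhere by minimality. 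You instead prove $D$ is injective modulo the shift (your propagation argument across the clopen partition $\{S^{{\bm p}}\zeta_{1}(X_{\zeta_{1}})\colon {\bm p}\in F_{1}\}$ is valid, and is the same mechanism as the paper's final minimality step), deduce that $D$ restricts to a bijection of the finite invariant set $\mathcal{F}_{R}$ modulo shift, and run the recursion backwards to pull $[\phi]$ itself into $\mathcal{F}_{R}$. Your version is arguably softer and avoids hunting for a $D$-periodic representative; the paper's version yields the commutation property~\ref{CommutationPropertySlidingBlickCodeSubstitution} of Theorem~\ref{TopologicalRigidityOfFactors} directly, which is what the decidability proof (Theorem~\ref{Thm:DecidabilityFactorizationProblem}) consumes --- though you could recover it, since on the finite set $\mathcal{F}_{R}$ every class is $D$-periodic. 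Two harmless housekeeping points: radii are integers, so take $R=\lceil C/(1-\lambda)\rceil$ (or add $1$, as the paper does with $R=2\bar{r}+R_{\zeta_{2}}+1$) so that both forward invariance of $\mathcal{F}_{R}$ and eventual absorption $D^{n}\phi\in\mathcal{F}_{R}$ hold; and under the paper's definition of expansion matrix one already has $\Vert L^{-1}\Vert<1$ in the Euclidean norm, so no adapted norm is needed.
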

	
	The constant $R$ of the previous theorem depends on the constant of recognizability of the image substitution $\zeta_{2}$. In \cite{cabezas2023homomorphisms} it was already established that aperiodic primitive constant-shape substitutions are recognizable. In this article, we prove that this constant is computable.
	
	\begin{maintheorem}\label{Thm:ComputabilityOfTheRecognizability}
		Let $\zeta$ be an aperiodic primitive constant-shape substitution with expansion matrix $L$ and support $F$. There is a computable upper bound for the constant of recognizability of $\zeta$. This bound can be expressed only by the cardinality of the alphabet $|\A|$, the expansion matrix $L$, the support $F$ and the dimension $d$.
	\end{maintheorem}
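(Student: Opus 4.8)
The plan is to turn the qualitative recognizability theorem of \cite{cabezas2023homomorphisms} into a quantitative one by running a \emph{descent on coincidence pairs} and extracting a computable stopping time from a pigeonhole over finitely many local types. The first step is to reformulate recognizability as a purely local, finitely checkable condition. Say that $\zeta$ is recognizable at radius $R$ when, for every $x\in X_{\zeta}$, the pattern $x|_{\llbracket -R,R\rrbracket^{d}}$ determines whether $\bm 0$ is a cutting point of the unique decomposition $x=S^{\bm k}\zeta(y)$ (with $y\in X_{\zeta}$ and $\bm k$ in the chosen fundamental domain of $F$) and, when it is, the letter of $y$ sitting there. Equivalently, $\zeta$ fails at radius $R$ precisely when there is a \emph{bad pair}: two points $x,x'\in X_{\zeta}$ with $x|_{\llbracket -R,R\rrbracket^{d}}=x'|_{\llbracket -R,R\rrbracket^{d}}$ whose decompositions disagree at the origin. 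The recognizability constant is the least $R$ admitting no bad pair; by \cite{cabezas2023homomorphisms} it is finite, and the task is to bound it in terms of $|\A|$, $L$, $F$ and $d$ only.

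The second step is the descent. Given a bad pair agreeing on $\llbracket -R,R\rrbracket^{d}$, I would desubstitute both points: since $\zeta$ is recognizable, the preimages $y,y'\in X_{\zeta}$ with $x=S^{\bm k}\zeta(y)$ and $x'=S^{\bm k'}\zeta(y')$ are well defined, and because $L$ stretches $F$ by a controlled factor, $y$ and $y'$ agree on a window of radius at least $\|L^{-1}\|R$ minus a boundary correction $b$ depending only on $F$ and $d$. The disagreement at the origin is recorded by a finite \emph{discrepancy type} — the relative offset $\bm k-\bm k'$ inside the tile containing $\bm 0$, together with the letters read in a bounded neighbourhood — which ranges over a finite set $\mathcal T$ whose cardinality is bounded explicitly by $|\A|^{c(F,d)}\cdot|\det L|$. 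Iterating yields a sequence of bad pairs $(y^{(0)},y'^{(0)}),(y^{(1)},y'^{(1)}),\dots$, each carrying a type in $\mathcal T$, whose coincidence radii satisfy $R_{k}\ge \|L^{-1}\|(R_{k-1}-b)$ and hence stay above a usable threshold for at least $k$ steps whenever $R_{0}\ge \|L\|^{k}\cdot c'(F,d)$.

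The third step invokes the pigeonhole principle and aperiodicity. If the initial radius is at least $\|L\|^{|\mathcal T|}\cdot c'(F,d)$, the descent runs for more than $|\mathcal T|$ steps while each window still supports a discrepancy, so a type must repeat. A repeated type lets me splice the two occurrences: composing the intervening desubstitutions with the shift matching the two cuttings produces a nontrivial self-coincidence, which, pushed back up through $\zeta$ and a compactness argument (using that primitivity furnishes a computable recurrence constant to realise the spliced local data inside an actual configuration), yields a point of $X_{\zeta}$ fixed by a nonzero translation — contradicting aperiodicity. Therefore no bad pair can exist at radius $\|L\|^{|\mathcal T|}\cdot c'(F,d)$, so $\zeta$ is recognizable at that radius, giving the desired computable upper bound expressed solely through $|\A|$, $L$, $F$ and $d$.

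The hard part will be the multidimensional bookkeeping in the desubstitution step. Unlike dimension one there is no canonical left-to-right parsing, so I must track how the cutting boundaries of the two decompositions interact with the tiling of $\Z^{d}$ by $L$-translates of $F$, control the boundary layer of width $b$ lost at each desubstitution, and guarantee that the discrepancy genuinely survives into the preimage rather than being absorbed into that boundary. The second delicate point is closing the loop: converting a repeated discrepancy type into an \emph{honest} nonzero period of a point of $X_{\zeta}$, rather than a mere local symmetry of a finite pattern, is where aperiodicity is consumed and where the compactness and recurrence estimates must be combined with care.
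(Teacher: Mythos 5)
Your outline reproduces the architecture of the paper's argument — a desubstitution descent carrying bounded-support local data, a pigeonhole over the finitely many such ``types'' (the paper's triples $\texttt{u}_n,\texttt{v}_n,\texttt{w}_n$ in \cref{ComputabilityRecognizability}), and aperiodicity consumed by extracting a nontrivial period — but two of its load-bearing steps are unsupported, and they are exactly the two you flag as ``hard parts''. First, non-injectivity. Your step-1 reformulation asserts that the window $x|_{\llbracket -R,R\rrbracket^d}$ determines ``the letter of $y$ sitting there'', and your descent assumes that agreement of $\zeta(y)$ and $\zeta(y')$ on a large window forces $y$ and $y'$ to agree on a smaller one. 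Both claims are false when $\zeta$ is not injective on letters: if $\zeta(a)=\zeta(b)$ with $a\neq b$, no radius whatsoever recovers the pre-image letter from the image, so a bad pair's discrepancy need not survive desubstitution and the recursion $R_k\geq \Vert L^{-1}\Vert(R_{k-1}-b)$ has no justification. The paper's proof is structured precisely around this obstruction: it splits recognizability into a first step locating the cutting positions (\cref{ComputabilityRecognizability}, which concludes only ${\bm j}\in L_{\zeta}(\Z^{d})$) and a second step for equality of letters (\cref{RecognizabilitySecondStep}), and both invoke the Ehrenfeucht--Rozenberg simplification theorem (\cref{PropositionForNonInyectiveSubstitutions}): if $\zeta^{|\A|-1}(\texttt{u})\neq\zeta^{|\A|-1}(\texttt{v})$ then $\zeta^{n}(\texttt{u})\neq\zeta^{n}(\texttt{v})$ for all $n$. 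This bounded injectivity delay $|\A|-1$ is what replaces your local-determination claim, and nothing in the proposal substitutes for it. (A minor slip in the same step: the guaranteed shrink factor under desubstitution is $1/\Vert L\Vert$, since $L^{-1}(B({\bm 0},\rho))$ is only guaranteed to contain $B({\bm 0},\rho/\Vert L\Vert)$; $\Vert L^{-1}\Vert\rho$ is the containing radius, not the contained one.)

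Second, the closing contradiction. You propose splicing a repeated type and using compactness to manufacture a point fixed by a nonzero translation, but as described this only re-derives the qualitative finiteness you already cited from \cite{cabezas2023homomorphisms}; to get a \emph{computable} bound one must know at which radius a local near-symmetry already forces a period. The paper does this through the quantitative repulsion property (\cref{RepulsionProperty}): two occurrences ${\bm j}_1,{\bm j}_2$ of a pattern containing a ball of radius at least $R_{X_{\zeta}}(\Vert {\bm j}_1-{\bm j}_2\Vert)$ must coincide. Making this usable requires a computable upper bound on the repetitivity function (\cref{GrowthRepetititvtyFunction}), which is itself a substantive argument — a chain-of-patterns pigeonhole bounding the first iterate $\zeta^{n}(a)$ containing a given bounded pattern, combined with Wielandt's bound $|\A|^{2}-2|\A|+2$ on the primitivity exponent — and it is where the doubly exponential shape of the final constant in \cref{UpperBoundRecognizability} originates. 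Your parenthetical ``primitivity furnishes a computable recurrence constant'' names this ingredient but does not supply it, so the threshold $c'(F,d)$ and the period-extraction step of your third paragraph remain non-effective. In short: same strategy as the paper, but the proposal has genuine gaps at the non-injectivity step and at the quantitative repulsion step, which are the two places where the paper's proof does its real work.
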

	
	This result is an analog of the one proved by F. Durand and J. Leroy in \cite{durand2017constant} for the one-dimensional case.
	
	This article is organized as follows. The basic definitions and background are introduced in \cref{Section:BasicDefinitions}. \cref{Section:DecidabilityFOlner} is devoted to the study of the supports of a constant-shape substitution. We prove the decidability of whether this sequence is F{\o}lner (\cref{thm:decidabilityfolnerporperty}), useful to define the substitutive subshift. Then, we use this proof to get a bound on their complexity function (\cref{Lemma:UpperBoundComplexityFunction}). In \cref{Section:ConjugacySameMatrices} we deal with the conjugacy problem between two aperiodic primitive constant-shape substitutions with the same expansion matrix but different support. We prove that for any pair of different supports $F_{1}, G_{1}$ of an expansion matrix and any constant-shape substitution with support $F_{1}$, there exists a constant-shape substitution with support $G_{1}$ such that the two substitutive subshifts are topologically conjugate (\cref{ConjugationSubstitutionsDifferentFundamentalDomains}). This answers a question raised in \cite{fogg2007substitutions}, where a similar result was shown for the one-dimensional case. \cref{Section:ComputabilityRecognizability} is devoted to the computability of the constant of recognizability of constant-shape substitutions. To do this, we study the computability of the repetitivity function for substitutive subshifts (\cref{GrowthRepetititvtyFunction}).
	Finally, in \cref{Section:RigidityFactorsSubstitutiveSubshifts} we characterize the factor maps between aperiodic primitive substitutive subshifts sharing the expansion matrix (\cref{TopologicalRigidityOfFactors}). Then, we deduce
	the coalescence of substitutive subshifts (\cref{Coalescence}), meaning any endomorphism between the substitutive subshift and itself is invertible. We also prove that the automorphism group of substitutive subshifts is virtually generated by the shift action (\cref{AutomoprhismVirtuallyZd}). Additionally, we use \cref{TopologicalRigidityOfFactors} to conclude the decidability of the factorization problem between substitutive subshifts having the same expansion matrix (\cref{Thm:DecidabilityFactorizationProblem}). Thanks to the coalescence of substitutive subshifts, we also deduce the decidability of the isomorphism problem (\cref{cor:DecidabilityIsomorphismProblem}). We finish this section proving that substitutive subshifts have finitely many aperiodic symbolic factors, up to conjugacy (\cref{Lemma:FinitelyManyAperiodicSymbolicFactors}). Thanks to \cref{TopologicalRigidityOfFactors}, we can provide a list containing these factors. 
	
	\section{Definitions and basic properties}\label{Section:BasicDefinitions}
	\subsection{Basic definitions and notation}
	\subsubsection{Notation} Throughout this article, we will denote by ${\bm n}=(n_{1},\ldots,n_{d})$ the elements of $\Z^{d}$ and by ${\bm x}=(x_{1},\ldots,x_{d})$ the elements of $\R^{d}$. If $F\subseteq \Z^{d}$ is a finite set, it will be denoted by $F\Subset \Z^{d}$, and we use the notation $\Vert F\Vert =\max\limits_{{\bm n}\in F}\Vert {\bm n}\Vert$, where $\Vert\cdot\Vert$ is the standard Euclidean norm of $\R^{d}$. If $L\in \mathcal{M}(d,\R)$ is a $d\times d$-matrix, we denote $\Vert L\Vert=\max\limits_{{\bm x}\in \R\setminus\{{\bm 0}\}} \Vert L({\bm x})\Vert/\Vert {\bm x}\Vert$ as the \emph{matrix norm of} $L$.
	
	A sequence of finite sets $(F_{n})_{n>0}\subseteq \Z^{d}$ is said to be \emph{F}\o\emph{lner}\footnote{In the literature, especially group theory, it is common to also ask that the union of the sequence of sets $(F_{n})_{n>0}$ is equal to $\Z^{d}$ for a sequence to be F\o lner, but we will not use it in this article.} if for any ${\bm n}\in \Z^{d}$
	\[
	\lim\limits_{n\to \infty}\dfrac{|F_{n}\Delta ({\bm n}+F_{n})|}{|F_{n}|}=0,
	\]
	where $|X|$ stands for the cardinality of the set $X$. For any pair of subsets $E, F\subseteq \Z^{d}$, we denote $F^{\circ E}$, as the set of all elements ${\bm f}\in F$ such that ${\bm f}+E\subseteq F$, i.e.,
	\[
	F^{\circ E}=\{{\bm f}\in F\colon {\bm f}+E\subseteq F\}.
	\]
	In the case $E$ is a discrete ball centered at the origin, meaning $E=[B({\bm 0},r)\cap \Z^{d}]$ for some $r>0$, we will denote $F^{\circ [B({\bm 0}, r) \cap \Z^{d}]}$ simply by $F^{\circ r}$. 
	Note that the F\o lner assumption implies that for any $E\Subset \Z^{d}$,
	\begin{equation}\label{LargerballsFolner}
		\lim\limits_{n\to \infty}\dfrac{|F_{n}^{\circ E}\Delta F_{n}|}{|F_{n}|}=1.
	\end{equation}	
	
	\subsection{Symbolic Dynamics}\label{SubsectionSymbolic Dynamics}    
	Let $\A$ be a finite alphabet, and let $d\geq 1$ be an integer. We define a topology on $\A^{\Z^{d}}$ by endowing $\A$ with the discrete topology and considering on $\A^{\Z^{d}}$ the product topology, generated by cylinders. Since $\A$ is finite, $\A^{\Z^{d}}$ is a metrizable compact space. The additive group $\Z^{d}$ acts on this space by translations (or shifts), defined for every ${\bm n}\in \Z^{d}$ by
	\[
	S^{{\bm n}}(x)_{{\bm k}}=x_{{\bm n}+{\bm k}},\ x\in \A^{\Z^{d}},\ {\bm k}\in \Z^{d}.
	\]
	
	The $\Z^{d}$-action $(\A^{\Z^{d}},S,\Z^{d})$ is called the \emph{full-shift}. 
	
	Let $P\subseteq \Z^{d}$ be a finite set. A \emph{pattern} is an element $\texttt{p}\in \A^{P}$. We say that $P$ is the \emph{support} of $\texttt{p}$, denoted $P=\supp(\texttt{p})$. We say that a pattern $\texttt{p}$ \emph{occurs in} $x\in \A^{\Z^{d}}$ if there exists ${\bm n}\in \Z^{d}$ such that $\texttt{p}=x|_{{\bm n}+P}$ (identifying $P$ with ${\bm n}+P$ by translation). In this case, we denote it $\texttt{p}\sqsubseteq x$, and we call such ${\bm n}$ an \emph{occurrence in} $x$ \emph{of} $\texttt{p}$.
	
	A \emph{subshift} $(X,S,\Z^{d})$ is given by a closed subset $X\subseteq \A^{\Z^{d}}$ that is invariant under the $\Z^{d}$-action. In this article, even if the alphabet changes, $S$ will always denote the shift map, and we usually say that $X$ itself is a subshift. A subshift can also be defined by its language. For $P\Subset \Z^{d}$, we denote
	$$\mathcal{L}_{P}(X)=\{\texttt{p}\in \A^{P}: \exists x \in X,\ \texttt{p}\sqsubseteq x\}.$$
	We define the {\em language} of a subshift $X$ by
	\[
	\mathcal{L}(X)=\bigcup\limits_{P\Subset \Z^{d}}\mathcal{L}_{P}(X).
	\]
	
	We say that the subshift $(X,S,\Z^{d})$ is {\em minimal} if it does not contain proper non-empty subshifts. The subshift is \emph{aperiodic} if there are no nontrivial periods; that is, if $S^{{\bm p}}x=x$ for some ${\bm p}\in \Z^{d}$ and $x\in X$, then ${\bm p}=0$.
	
	Let $\B$ be a finite alphabet, and consider a subshift $Y\subseteq \B^{\Z^{d}}$. A map $\phi:(X,S,\Z^{d}) \to (Y,S,\Z^{d})$ is a {\em factor map} if it is continuous, surjective and commutes with the actions, i.e., $\phi \circ S^{\bm n} = S^{\bm n} \circ \phi$ for all ${\bm n} \in \Z^d$. 
	In this case, we say that $(Y,S,\Z^{d})$ is a \emph{factor} of $(X,S,\Z^{d})$. 
	If $\phi$ is also injective, we say it is a \emph{conjugacy} (or an \emph{isomorphism}).
	When $\phi:(X,S,\Z^{d})\to (Y,S,\Z^{d})$ is a factor map, there exists a finite subset $P\Subset \Z^{d}$ and a $P$-\emph{block map} $\Phi: \mathcal{L}_{P}(X)\to \B$ such that for any ${\bm n}\in \Z^{d}$ and $x\in X$, $\phi(x)_{{\bm n}}= \Phi(x|_{{\bm n}+ P})$. This is known as the Curtis-Hedlund-Lyndon theorem \cite{hedlund1969endomorphism}. We call such $P$ {\em the support} of $\Phi$ and {\em a support} of $\phi$. 
	Observe that if $\phi$ is induced by $\Phi$, we can define another block map $\Phi'$ also inducing $\phi$ and whose support is a discrete ball of the form $[B({\bm 0},r)\cap \Z^{d}]$, for $r\in \N$. 
	We define the \emph{radius} of $\phi$ (and denote it $r(\phi)$) as the infimum of $r\in\N$ such that $\phi$ is induced by a block map with support $[B({\bm 0},r)\cap \Z^{d}]$.
	
	\subsection{Multidimensional constant-shape substitutions}\label{SectionDefinitionsMultidimensional}
	
	We recall some basic definitions and results about multidimensional substitutive subshifts of constant-shape that will be used throughout this article. We refer to \cite{cabezas2023homomorphisms} for basic properties on this topic, where we follow the same notation (see also \cite{frank2022spectral} for spectral properties of these subshifts). Let $L\in \mathcal{M}(d,\Z)$ be an expansion integer matrix, i.e., there exists $\lambda>1$ such that for every ${\bm x}\in \R^{d}\setminus\{0\}$, we have that $\Vert L({\bm x}) \Vert >\lambda \Vert {\bm x}\Vert$. 
	Let $F$ be a fundamental domain of $L(\Z^{d})$ in $\Z^{d}$, meaning a set of representative classes of $\Z^{d}/L(\Z^{d})$ (with ${\bm 0}\in F$), and let $\A$ be a finite alphabet. A \emph{multidimensional constant-shape substitution} is a map $\zeta:\A\to \A^{F}$. 
	The set $F$ is called the \emph{support} of the substitution. The following shows an example of a constant-shape substitution:
	
	\begin{example}[Triangular Thue-Morse substitution]\label{ExampleTriangularThueMorse} The \emph{triangular Thue-Morse substitution} is defined with $L=2\id_{\R^{2}}$, $F=\left\{(0,0),(1,0),(0,1),(-1,-1)\right\}$ and $\A=\{a,b\}$ as
		$$\begin{array}{cccccccccccc}
			&  &  &  & b &  &  &  &  &  & a &  \\ 
			\sigma_{\Delta TM}: & a & \mapsto &  & a & b, &  & b & \mapsto &  & b & a. \\ 
			&  &  & a &  &  &  &  &  & b &  &  \\ 
		\end{array}$$
	\end{example}      
	
	In the literature, constant-shape substitutions with a positive diagonal expansion matrix $L=\diag(l_{i})_{i=1,\ldots,d}$ and support equal to the standard $d$-dimensional parallelepiped $F_{1}=\prod\limits_{i=1}^{d}\llbracket 0,l_{i}-1\rrbracket$ are called \emph{block substitutions}. These substitutions have a characteristic block structure defined by the shape of $F_{1}$. Moreover, when $L=p\id_{\R^{2}}$ is equal to some positive multiple of the identity, and the support is equal to $F=\llbracket 0, p-1\rrbracket^{2}$, we use the term \emph{square substitution} to describe such cases. 
	
	Given a substitution $\zeta$, we let $L_{\zeta}$ denote its expansion matrix, and $F_{1}^{\zeta}$ its support. For any $n>0$, we define the $n$-th iteration of the substitution $\zeta^{n}:\A\to \A^{F_{n}^{\zeta}}$ by induction: $\zeta^{n+1}=\zeta\circ \zeta^{n}$, where the supports of these substitutions satisfy the recurrence 
	\begin{equation}\label{Eq:RecurrenceSets}
		F_{n+1}^{\zeta}=L_\zeta(F_{n}^{\zeta})+F_{1}^{\zeta},\quad \forall n\geq 1.
	\end{equation}
	Observe that we trivially have $L_{\zeta^n} = L_\zeta^n$.
	
	The \emph{language} of a substitution is the set of all patterns that appear in $\zeta^{n}(a)$, for some $n>0$, $a\in \A$, i.e.,
	$$\mathcal{L}_{\zeta}=\{\texttt{p}\colon \texttt{p}\sqsubseteq \zeta^{n}(a),\ \text{for some }n>0,\ a\in \A\}.$$
	
	For such a language to define a subshift, we need that the sets $F_n^\zeta$ contain arbitrarily large balls for $n$ large enough, i.e., for any $r>0$, there exists $n>0$ such that $(F_{n}^{\zeta})^{\circ r}\neq \emptyset$.
	This condition is ensured by the F\o lner property. Moreover, the F{\o}lner assumption on the sequence of fundamental domains $(F_{n})_{n>0}$ is necessary to ensure that the subshift is uniquely ergodic, following the proof given in \cite{lee2003consequences} for substitutive Delone sets. 
	We prove in the next section that the F\o lner property is actually equivalent to the existence of arbitrarily large balls in the sequence of supports. 
	We also show that whether a sequence $(F_n)_{n>0}$ satisfies this property is decidable (see~\cref{thm:decidabilityfolnerporperty}).
	
	The following shows the first three iterations of the substitution given in \cref{ExampleTriangularThueMorse}.
	
	\begin{example}[Iterations of a constant-shape substitution]\label{FirstsIterationsExample} The first three iterations of the substitution $\sigma_{\Delta TM}$ illustrated in \cref{ExampleTriangularThueMorse}.
		$$\begin{array}{ccccccccccccccccc}
			&  &  &  &  &  &  &  &  & a &  &  &  &  &  &  &  \\ 
			&  &  &  &  &  &  &  &  & b & a &  &  &  &  &  &  \\ 
			&  &  & b &  &  &  &  & b & b &  & a &  &  &  &  &  \\ 
			a & \mapsto &  & a & b & \mapsto &  &  &  & a & b & b & a &  &  &  &  \\ 
			&  & a &  &  &  &  & b & a &  & b &  &  &  &  &  &  \\ 
			&  &  &  &  &  &  & a & b &  &  &  &  &  &  &  &  \\ 
			&  &  &  &  &  & a &  &  &  &  &  &  &  &  &  &  \\
		\end{array}$$
		\begin{figure}[H]
			$$\begin{array}{ccccccccccccccccc}
				&  &  &  &  &  &  &  &  &  &  &  &  &  &  &  &  \\ 
				&  &  &  &  &  &  &  &  &  &  &  &  &  &  &  &  \\ 
				&  &  &  &  &  &  &  &  & b &  &  &  &  &  &  &  \\ 
				&  &  &  &  &  &  &  &  & a & b &  &  &  &  &  &  \\ 
				&  &  &  &  &  &  &  & a & a &  & b &  &  &  &  &  \\ 
				&  &  &  &  &  &  &  &  & b & a & a & b &  &  &  &  \\ 
				&  &  &  &  &  &  & a & b & a & a &  &  & b &  &  &  \\ 
				&  &  &  &  &  &  & b & a & b & a &  &  & a & b &  &  \\ 
				&  &  &  &  &  & b &  & b & b &  & a & a & a &  & b &  \\ 
				& \mapsto &  &  &  &  &  &  &  & a & b & b & a & b & a & a & b \\ 
				&  &  &  &  & a &  & b & a &  & b & a & b &  & a &  &  \\ 
				&  &  &  &  & b & a & a & b &  &  & b & a &  &  &  &  \\ 
				&  &  &  & b & b & a & a &  &  & b &  &  &  &  &  &  \\ 
				&  &  &  &  & a & b & b & a &  &  &  &  &  &  &  &  \\ 
				&  &  & b & a &  & b &  &  &  &  &  &  &  &  &  &  \\ 
				&  &  & a & b &  &  &  &  &  &  &  &  &  &  &  &  \\ 
				&  & a &  &  &  &  &  &  &  &  &  &  &  &  &  &  \\ 
			\end{array}$$
			\caption{An example of application of the first three iterations of the substitution illustrated in \cref{ExampleTriangularThueMorse}.}
		\end{figure}
		
	\end{example}
	
	We define the subshift $X_{\zeta}$ associated with the substitution $\zeta$ as the set of all sequences $x\in \A^{\Z^{d}}$ such that every pattern occurring in $x$ is in $\mathcal{L}_{\zeta}$. 
	We call this subshift a \emph{substitutive subshift}.
	
	A substitution $\zeta$ is called \emph{primitive} if there exists a positive integer $n>0$, such that, for every $a,b\in \A$, $b$ occurs in $\zeta^{n}(a)$. Each substitution $\zeta$ can be naturally associated with an \emph{incidence matrix} denoted as $M_{\zeta}$. For any $a,b\in \A$ as $(M_{\zeta})_{a,b}$ is defined as $|\{{\bm f}\in F_{1}^{\zeta}\colon \zeta(a)_{{\bm f}} =b\}|$, i.e., it is equal to the number of occurrences of $b$ in the pattern $\zeta(a)$. The substitution $\zeta$ is primitive if and only if its incidence matrix is primitive. A matrix is primitive when it has a power with strictly positive coefficients. 
	
	If $\zeta$ is a primitive constant-shape substitution, the existence of \emph{periodic points} is well-known, i.e., there exists at least one point $x_{0}\in X_{\zeta}$ such that $\zeta^{p}(x_{0})=x_{0}$ for some $p>0$. 
	In the primitive case, the subshift is preserved by replacing the substitution with a power of it, meaning $X_{\zeta^{n}}$ is equal to $X_{\zeta}$ for any $n>0$. 
	Thus, we may assume that the substitution possesses at least one fixed point. To avoid some problems, we only keep in the alphabet the letters that appear in the fixed points. We recall that in this case, the substitutive subshift is minimal if and only if the substitution is primitive (see \cite{queffelec2010substitution}). 
	
	As in the one-dimensional case, the supports do not need to cover all the space. Nevertheless, up to adding a finite set and taking its images under powers of the expansion map $L$, they cover the space. This property is explained in the following proposition. It is similar to the notion of remainder in numeration theory and will be technically useful.
	
	\begin{proposition}\cite[Proposition 2.10]{cabezas2023homomorphisms}\label{FiniteSubsetFillsZd}
		Let $L\in \mathcal{M}(d,\Z)$ be an expansion matrix, and $F_{1}$ be a fundamental domain of $L(\Z^{d})$ in $\Z^{d}$ (containing ${\bm 0}$). Then, the set $K_{L,F_{1}}=\bigcup\limits_{m>0}((\id -L^{m})^{-1}(F_{m})\cap \Z^{d})$ is finite and satisfies	$$\bigcup\limits_{n\geq 0} L^{n}(K_{L,F_{1}})+F_{n}=\Z^{d},$$
		
		\noindent where $F_{0}^{\zeta}=\{{\bm 0}\}$.
	\end{proposition}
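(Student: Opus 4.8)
The plan is to treat the two assertions separately, relying throughout on two structural facts. First, iterating the recurrence \eqref{Eq:RecurrenceSets} with $F_{0}=\{\bm 0\}$ gives the Minkowski-sum description $F_m=\sum_{i=0}^{m-1}L^i(F_1)$, so that every element of $F_m$ has the form $\sum_{i=0}^{m-1}L^i\bm a_i$ with $\bm a_i\in F_1$. Second, expansivity of $L$ gives $\|L^{-1}(\bm y)\|<\lambda^{-1}\|\bm y\|$ for all $\bm y$, hence $\|L^{-1}\|\le\lambda^{-1}<1$; in particular the eigenvalues of $L^m$ avoid $1$, so $\id-L^m$ is invertible over $\R$. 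I would also introduce the greedy digit map $T\colon\Z^d\to\Z^d$, defined by $T(\bm r)=L^{-1}(\bm r-\bm a(\bm r))$, where $\bm a(\bm r)\in F_1$ is the unique representative of $\bm r$ modulo $L(\Z^d)$; this is well defined precisely because $F_1$ is a fundamental domain.

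For finiteness, the naive bound $\|(\id-L^m)^{-1}(F_m)\|\le\|(\id-L^m)^{-1}\|\,\|F_m\|$ is useless, since $\|F_m\|$ grows like $\|L\|^m$ while $\|(\id-L^m)^{-1}\|$ only decays like $\|L^{-1}\|^m$, and the product may diverge. The key is to distribute the inverse over the Minkowski decomposition: writing $\id-L^m=-L^m(\id-L^{-m})$ gives $(\id-L^m)^{-1}L^i=-(\id-L^{-m})^{-1}L^{-(m-i)}$, and since $\|(\id-L^{-m})^{-1}\|\le(1-\|L^{-1}\|)^{-1}$ uniformly in $m$, each layer of $F_m$ gets paired with the genuine contraction $L^{-(m-i)}$. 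Summing the geometric series $\sum_{i=0}^{m-1}\|L^{-1}\|^{m-i}$ then bounds $\|(\id-L^m)^{-1}(F_m)\|$ by a constant $C$ independent of $m$, whence $K_{L,F_1}\subseteq B(\bm 0,C)\cap\Z^d$ is finite.

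For the covering, given $\bm k\in\Z^d$ I would iterate the digit map: set $\bm r_0=\bm k$, $\bm r_{j+1}=T(\bm r_j)$, and record the digits $\bm a_j=\bm a(\bm r_j)\in F_1$, so that unfolding gives $\bm k=\sum_{i=0}^{N-1}L^i\bm a_i+L^N\bm r_N$ for every $N$, with $\sum_{i=0}^{N-1}L^i\bm a_i\in F_N$. The estimate $\|T(\bm r)\|\le\|L^{-1}\|\,\|\bm r\|+\|L^{-1}\|\,\|F_1\|$ shows that $T$ maps the ball $B^{\ast}=B(\bm 0,r^{\ast})\cap\Z^d$, with $r^{\ast}=\|L^{-1}\|\,\|F_1\|/(1-\|L^{-1}\|)$, into itself and attracts every orbit into $B^{\ast}$ after finitely many steps; as $B^{\ast}$ is finite, the sequence $(\bm r_j)$ is eventually periodic. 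If $\bm r_N=\bm r_{N+m}$, unfolding one period gives $(\id-L^m)\bm r_N=\sum_{i=0}^{m-1}L^i\bm a_{N+i}\in F_m$, so $\bm r_N\in(\id-L^m)^{-1}(F_m)\cap\Z^d\subseteq K_{L,F_1}$; substituting back yields $\bm k\in L^N(K_{L,F_1})+F_N$, the desired covering (with the case $\bm k\in K_{L,F_1}$ corresponding to $N=0$ and $F_0=\{\bm 0\}$).

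The main obstacle is the finiteness estimate: one must resist bounding operator norms directly and instead exploit the layered Minkowski structure of $F_m$ so that each digit level is matched with a truly contracting power of $L^{-1}$. A conceptual payoff worth recording is that the two arguments dovetail: every element of $(\id-L^m)^{-1}(F_m)\cap\Z^d$ is exactly an $m$-periodic point of $T$, so $K_{L,F_1}$ is precisely the (necessarily finite) set of periodic points of the greedy digit map.
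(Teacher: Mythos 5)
Your proof is correct and takes essentially the same route as the paper's: the paper defers this statement to \cite[Proposition 2.10]{cabezas2023homomorphisms} while noting that the proof is inspired by the Euclidean Division algorithm, which is exactly your greedy digit map $T$ — its orbits are eventually periodic by your contraction-plus-pigeonhole argument, the periodic tail lands in $K_{L,F_1}$, and unfolding the preperiod yields the covering. Your finiteness estimate, distributing $(\id-L^{m})^{-1}$ over the layers of $F_{m}=\sum_{i=0}^{m-1}L^{i}(F_{1})$ so that each digit is paired with a genuine contraction $L^{-(m-i)}$, is the same geometric-series bound that underlies \cref{rem:powers if needed} and the stabilization argument in the proof of \cref{Prop:FiniteSetSatisfyingParticularProperties}.
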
 
	
	It is straightforward to check that for any block substitution, the set $K_{L,F_{1}}$ is equal to $\llbracket -1,0\rrbracket^{d}$. If $\zeta$ is a constant-shape substitution, we denote $K_{\zeta}=K_{L_{\zeta},F_{1}^{\zeta}}$. The set $K_{\zeta}$ controls, in some way, the number of periodic points that a constant-shape substitution has. More specifically, it can be proved that a primitive constant-shape substitution has at most $|\mathcal{L}_{K_{\zeta}}(K_{\zeta})|$ $\zeta$-periodic points. 
	
	\begin{remark}
		\label{rem:powers if needed}
		Observing that the sets $\bigcup\limits_{m =1}^n\left((\id -L^{m})^{-1}(F_{m})\cap \Z^{d}\right)$, $n \geq 1$, are nested, Proposition~\ref{FiniteSubsetFillsZd} implies that $K_{L,F_{1}}$ is equal to $\bigcup\limits_{m=1}^j((\id -L^{m})^{-1}(F_{m})\cap \Z^{d})$ for some $j >0$.
		Therefore, whenever $\zeta$ is primitive, up to replacing $\zeta$ by a power of itself, we may assume that $K_\zeta$ is equal to $(\id -L_{\zeta})^{-1}(F_{1}^{\zeta}) \cap \mathbb{Z}^d$. In the latter case, it is straightforward to prove that $\Vert K_{L,F_{1}}\Vert \leq \Vert L^{-1}(F_{1})\Vert/(1-\Vert L^{-1}\Vert)$.
	\end{remark}
	
	For the triangular Thue-Morse substitution, the set $K_{\Delta TM}$ is equal to $\{(-1,0),(0,0),(0,-1),(1,1)\}$. 
	
	The proof of \cref{FiniteSubsetFillsZd} is inspired by the \emph{Euclidean Division algorithm}, which was used to obtain finite sets satisfying particular properties as shown in the following result that we will use in the rest of this article.
	
	\begin{proposition}\cite[Proposition 2.12]{cabezas2023homomorphisms}\label{Prop:FiniteSetSatisfyingParticularProperties} Set $A\Subset \Z^{d}$ containing $\bm 0\in \Z^{d}$ and let $F\Subset \Z^{d}$ containing a fundamental domain $F_{1}$ of an integer expansion matrix $L$. Then, there exists a (computable) finite subset $C \subseteq \Z^{d}$ containing $\mathbf{0}$ and such that
		\begin{enumerate}[label=(\arabic*),ref=\text{(}\arabic*\text{)}]
			\item $A+F \subseteq C+A+F\subseteq L(C)+F_{1}$.
			\item \label{item:MoreGeneralresultFortheSet}More generally, for any $n\geq 0$
			
			\begin{itemize}
				\item $L^{n}(C+A+F)+F_{n}\subseteq L^{n+1}(C)+F_{n+1}$
				\item $C+\sum\limits_{i=0}^{n}L^{i}(A+F)\subseteq L^{n+1}(C)+F_{n+1}$.
			\end{itemize} 
			
			\item \label{item:nestedB_n}
			The sequence of sets $(L^{n}(C)+F_{n})_{n\geq 0}$ is nested.
			\item\label{item:BoundForTheNormforthefiniteset} $\Vert C\Vert\leq (\Vert L^{-1}(A+F)\Vert+\Vert L^{-1}(F_{1})\Vert)/(1-\Vert L^{-1}\Vert)$.
		\end{enumerate} 
	\end{proposition}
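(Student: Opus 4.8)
The plan is to build $C$ by an iterative ``Euclidean division'' procedure driven by the expansion matrix $L$, in the spirit of the remark preceding the statement. Since $F_{1}$ is a fundamental domain of $L(\Z^{d})$ in $\Z^{d}$, every ${\bm z}\in\Z^{d}$ has a unique decomposition ${\bm z}=L(q({\bm z}))+r({\bm z})$ with $r({\bm z})\in F_{1}$; I call $q({\bm z})$ the \emph{quotient} and, for a finite set $S\Subset\Z^{d}$, I set $Q(S)=\{q({\bm z}):{\bm z}\in S\}$. Because $L$ is expansive we have $\Vert L^{-1}\Vert<1$, which makes $q$ contracting up to an additive constant: from $q({\bm z})=L^{-1}({\bm z})-L^{-1}(r({\bm z}))$ one gets, for ${\bm z}={\bm c}+{\bm w}$ with ${\bm w}\in A+F$, the estimate $\Vert q({\bm z})\Vert\le \Vert L^{-1}\Vert\,\Vert{\bm c}\Vert+\Vert L^{-1}(A+F)\Vert+\Vert L^{-1}(F_{1})\Vert$.

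First I would set $C_{0}=\{{\bm 0}\}$ and $C_{n+1}=C_{n}\cup Q(C_{n}+A+F)$, an increasing sequence, and put $M=(\Vert L^{-1}(A+F)\Vert+\Vert L^{-1}(F_{1})\Vert)/(1-\Vert L^{-1}\Vert)$. An induction then shows $\Vert C_{n}\Vert\le M$ for all $n$: if ${\bm c}\in C_{n}$ has $\Vert{\bm c}\Vert\le M$ and ${\bm w}\in A+F$, the contraction estimate gives $\Vert q({\bm c}+{\bm w})\Vert\le \Vert L^{-1}\Vert M+\Vert L^{-1}(A+F)\Vert+\Vert L^{-1}(F_{1})\Vert=M$, the last equality holding precisely by the choice of $M$. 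Hence every $C_{n}$ lies in the finite set $B({\bm 0},M)\cap\Z^{d}$, so the increasing sequence stabilizes at some $C:=C_{N}$. This $C$ is finite and computable (the stopping rule $C_{N}=C_{N+1}$ is reached inside a known finite ball), contains ${\bm 0}$, satisfies $\Vert C\Vert\le M$, which is item (4), and enjoys the fixed-point property $Q(C+A+F)\subseteq C$.

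Item (1) is then immediate: ${\bm 0}\in A+F$ (as ${\bm 0}\in A$ and ${\bm 0}\in F_{1}\subseteq F$) yields $A+F\subseteq C+A+F$, while for ${\bm z}\in C+A+F$ the decomposition ${\bm z}=L(q({\bm z}))+r({\bm z})$ with $q({\bm z})\in C$ and $r({\bm z})\in F_{1}$ gives ${\bm z}\in L(C)+F_{1}$. For the remaining items the only algebraic fact I need is that the recurrence $F_{n+1}=L(F_{n})+F_{1}$ (with $F_{0}=\{{\bm 0}\}$) unrolls to $F_{n+1}=\sum_{i=0}^{n}L^{i}(F_{1})$, so that $L^{n}(F_{1})+F_{n}=F_{n+1}$. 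Applying $L^{n}$ to item (1) and adding $F_{n}$ gives $L^{n}(C+A+F)+F_{n}\subseteq L^{n+1}(C)+L^{n}(F_{1})+F_{n}=L^{n+1}(C)+F_{n+1}$, the first half of item (2); item (3) follows in the same way from $C\subseteq L(C)+F_{1}$ (itself a consequence of (1) since ${\bm 0}\in A+F$). The second half of (2) I would prove by induction on $n$: writing $C+\sum_{i=0}^{n+1}L^{i}(A+F)=[C+(A+F)]+L\left(\sum_{i=0}^{n}L^{i}(A+F)\right)$, replacing $C+(A+F)$ by $L(C)+F_{1}$ via (1), pulling $L$ out, and invoking the inductive hypothesis together with $F_{n+2}=L(F_{n+1})+F_{1}$.

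The step I expect to be the main obstacle---and the one place where expansiveness is genuinely used---is the boundedness induction $\Vert C_{n}\Vert\le M$, which simultaneously forces finiteness, termination, and computability of the construction and produces the sharp bound in item (4). The subtlety is that $M$ must be taken to be exactly the value in (4) so that the induction closes with an equality rather than a useless inequality; everything afterwards is a formal manipulation of Minkowski sums resting on the single identity $L^{n}(F_{1})+F_{n}=F_{n+1}$.
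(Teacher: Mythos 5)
Your proof is correct and takes essentially the same route as the paper's: the paper iterates $C_{n+1}=[L^{-1}(C_{n}+A+F-F_{1})\cap\Z^{d}]$, of which your canonical-quotient construction $C_{n+1}=C_{n}\cup Q(C_{n}+A+F)$ is a (slightly smaller) variant, with the identical geometric-series estimate $\Vert C_{n}\Vert\leq(\Vert L^{-1}(A+F)\Vert+\Vert L^{-1}(F_{1})\Vert)/(1-\Vert L^{-1}\Vert)$ forcing stabilization inside a known ball, and the same Minkowski-sum inductions for items (2)--(3). One cosmetic slip: the inclusion $A+F\subseteq C+A+F$ follows from $\mathbf{0}\in C$, not from $\mathbf{0}\in A+F$ (the latter is what you correctly need later to get $C\subseteq C+A+F$ for item (3)).
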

	\begin{proof}
		We define the sequence $(C_n)_{n \geq 0}$ of finite sets by $C_0 = \{\mathbf{0}\}$ and, for every $n \geq 0$,
		\[
		C_{n+1} = [L^{-1}(C_n+A+F-F_1)\cap \Z^{d}]. 
		\]
		One easily checks by induction that $C_n \subseteq C_{n+1}$ for all $n\geq 0$ and a quick computation shows that
		\[
		\| C_n\| \leq \frac{\Vert L^{-1}(A+F)\Vert+\Vert L^{-1}(F_{1}) \Vert}{1-\Vert L^{-1}\Vert }.
		\]
		As a consequence, the sequence $(C_n)_{n \geq 0}$ stabilizes and we set $C = C_N$ where $N$ is such that $C_n = C_N$ for every $n \geq N$. This set $C$ is obviously computable.
		
		Let us now check that the set $C$ satisfies all items.
		The set $C$ contains $\mathbf{0}$ by construction, which directly implies that $A+F \subseteq C+A+F$.
		If ${\bm n} \in C+A+F$, then ${\bm n}$ belongs to $C_n +A + F$ for some $n\in\N$.
		We can thus write ${\bm n} = {\bm n}' = {\bm a} + {\bm f}$ as well as ${\bm n} = L({\bm m}) + {\bm f}'$, for some ${\bm n}' \in B_n, {\bm a} \in A, {\bm f} \in F$ and ${\bm f}' \in F_1$.
		We deduce that $L({\bm m}) \in C_n + A + F - F_1$, hence ${\bm m} \in C$.
		Item~\ref{item:MoreGeneralresultFortheSet} follows by induction and implies Item~\ref{item:nestedB_n}, as $\mathbf{0} \in A \cap F$.
	\end{proof}
	
	\begin{remark}\label{rem:GoodUniformlyBounded}
		Note that if we change the pair $(L,F_{1})$ by $(L^{n},F_{n})$ for any $n\geq 1$, then the set $C$ given by \cref{Prop:FiniteSetSatisfyingParticularProperties} is the same for fixed $A$ and $F$. Using the notion of \emph{digit tile} defined in Section 2.7 of \cite{cabezas2023homomorphisms}, we note that $\Vert L^{-n}(F_{n})\Vert \xrightarrow[n\to \infty]\ \Vert T_{(L,F_{1})}\Vert$. Hence, the sequence $(\Vert L^{-n}(F_{n})\Vert/(1-\Vert L^{-n}\Vert))_{n\geq 1}$ is uniformly bounded. Note that, in the block case, these quantities are bounded by $\sqrt{d}$, where $d$ is the dimension of the substitution.
	\end{remark}
	
	From now on, we denote $C_{L,F_{1}}$ to the set given by \cref{rem:GoodUniformlyBounded} using $A=\{{\bm 0}\}$, and $F=F_{1}+F_{1}$. By \cite[Remark 2.13 (2)]{cabezas2023homomorphisms}, we have that for any $n\in \N$, $C_{L,F_{1}}+F_{n}+F_{n}\subseteq L^{n}(C_{L,F_{1}})+F_{n}$. Note that, in the block case, the set $C_{L,F_{1}}$ is equal to $\llbracket 0,1\rrbracket^{d}$.
	
	Every element of $\Z^{d}$ can be expressed in a unique way as ${\bm p}= L({\bm j})+{\bm f}$, with ${\bm j} \in \Z^{d}$ and ${\bm f}\in F_{1}$, so we can consider the substitution $\zeta$ as a map from $X_{\zeta}$ to itself given by
	$$\zeta(x)_{L({\bm j})+{\bm f}}=\zeta(x(\bm{j}))_{{\bm f}}.$$
	
	This map is continuous. Moreover, when the substitution is aperiodic and primitive, \cref{RecognizabilitySecondStep} below ensures that this map is actually a homeomorphism. This property is satisfied, even in the case where the substitution is not \emph{injective on letters}, i.e., when there exist distinct letters $a,b\in \A$ such that $\zeta(a)=\zeta(b)$. This comes from the notion of \emph{recognizability} of a substitution (see \cref{Section:ComputabilityRecognizability}). 
	
	\begin{definition}
		Let $\zeta$ be a substitution and $x\in X_{\zeta}$ be a fixed point. We say that $\zeta$ is \emph{recognizable on $x$} if there exists some constant $R>0$ such that for all ${\bm i}, {\bm j}\in \Z^{d}$,
		$$x|_{[B(L_{\zeta}({\bm i}),R)\cap \Z^{d}]}=x|_{[B({\bm j},R)\cap \Z^{d}]} \implies (\exists {\bm k}\in \Z^{d}) (({\bm j}=L_{\zeta}({\bm k}))\wedge (x_{{\bm i}}=x_{{\bm k}})).$$
	\end{definition}
	
	The recognizability of a substitution $\zeta$ implies that for every $x\in X_{\zeta}$, there exist a unique $x'\in X_{\zeta}$ and a unique ${\bm j} \in F_{1}^{\zeta}$ such that $x=S^{{\bm j}}\zeta(x')$. 
	This implies that the set $\zeta(X_{\zeta})$ is a clopen subset of $X_{\zeta}$, and $\{S^{{\bm j}}\zeta(X_{\zeta})\colon\ {\bm j} \in F_{1}^{\zeta}\}$ forms a clopen partition of $X_{\zeta}$ (The proof is classical and similar to the one-dimensional case~\cite[Section 5.6]{queffelec2010substitution}). Any power of a recognizable substitution is also recognizable, so these properties extend to $\zeta^{n}$, for all $n>0$. The recognizability property was first proved for any aperiodic primitive substitution by B. Moss\'e in the one-dimensional case \cite{mosse1992puissances}, and in the multidimensional case by B. Solomyak \cite{solomyakrecognizability} for aperiodic self-affine tilings with an $\R^{d}$-action. Later, in \cite{cabezas2023homomorphisms} it was established that the aperiodic symbolic factors of primitive substitutive subshifts also satisfy a recognizability property.
	
	\section{Decidability of the F{\o}lner property for fundamental domains of an expansion matrix and computability of the language of constant-shape substitutions}\label{Section:DecidabilityFOlner}
	
	Let $L\in \mathcal{M}(d,\Z)$ be an expansion matrix and $F_{1}\Subset \Z^{d}$ be a fundamental domain of $L(\Z^{d})$ in $\Z^{d}$ containing ${\bm 0}$. 
	Define the sequence of fundamental domains of $L^{n}(\Z^{d})$ as in \eqref{Eq:RecurrenceSets}:
	$$F_{n+1}=L(F_{n})+F_{1},\ \forall n\geq 1.$$
	
	We recall that the sequence $(F_{n})_{n\in \N}$ is said to be F{\o}lner if for any ${\bm n}\in \Z^{d}$,
	\[\lim\limits_{n\to\infty}\dfrac{|F_{n}\Delta ({\bm n}+F_{n})|}{|F_{n}|}=0.
	\]
	As mentioned in the previous section, the F{\o}lner property ensures that the sets $F_n$ eventually contain balls of arbitrarily large radius, which then allows us to define a subshift. 
	However, the converse is not true in general, i.e., having balls of arbitrarily large radius is not enough to guarantee that the sequence is F{\o}lner, since the sets can be sparse at the same time, as the following example shows.
	
	\begin{example} Consider the sequence of finite sets given by $A_{n}=\llbracket 0,n\rrbracket \cup \{-k(k+3)/2\colon 0\leq k \leq n\}$, for any $n\in \N$. Set $r>2$. Let $n\in \N$ be large enough. We note that
		$$(A_{n}+r)\cap A_{n}=\llbracket r, n\rrbracket \cup \left\{r-\frac{k(k+3)}{2} \colon 0\leq k\leq \frac{\sqrt{8r+9}-3}{2}\right\} \cup \left\{-\frac{(r-2)(r+1)}{2}\right\},$$
		
		\noindent which let us conclude that $|(A_{n}+r)\Delta A_{n}|/|A_{n}|\to 1/2$ as $n\to\infty$. Hence, the sequence $(A_{n})_{n\in \N}$ is not F{\o}lner.
		
	\end{example}
	
	In this section, we prove \cref{thm:decidabilityfolnerporperty}, stating that, when the sets $F_n$ are built as in \eqref{Eq:RecurrenceSets}, the F\o lner property is equivalent to the existence of arbitrarily large balls in the sets $F_n$, for large enough $n$. Moreover, we show that we have some control over the index of the sequence, to check an equivalent property, which implies that being F\o lner is decidable.
	\begin{theorem}\label{thm:decidabilityfolnerporperty}
		Let $L\in \mathcal{M}(d,\Z)$ be an expansion matrix and $F_{1}\Subset \Z^{d}$ be a fundamental domain of $L(\Z^{d})$ in $\Z^{d}$ containing ${\bm 0}$. Set $\bar{r} = \|L^{-1}(F_1)\|/(1-\|L^{-1}\|)$.
		The sequence $(F_{n})_{n\in\N}$ defined as \eqref{Eq:RecurrenceSets} is F{\o}lner if and only if there exists $n \leq \frac{(6\bar{r})^{3d}-(6\bar{r})^d}{6}$ such that $F_n$ contains a translation of $C_{L,F_1}+[B({\bm 0},\bar{r})\cap \Z^{d}]$.
		In particular, it is decidable to check whether $(F_{n})_{n\in\N}$ is a F{\o}lner sequence. 
	\end{theorem}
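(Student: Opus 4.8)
The plan is to reduce the Følner property to the single \emph{computable} condition in the statement and then read off decidability: the sets $F_n$, the finite set $C_{L,F_1}$ and the number $\bar{r}$ are all computable, so once the equivalence is established we simply test, for each $n\le N_0:=\frac{(6\bar{r})^{3d}-(6\bar{r})^{d}}{6}$, whether $F_n$ contains a translate of $C_{L,F_1}+[B({\bm 0},\bar{r})\cap\Z^{d}]$. The mathematical content is thus the equivalence itself, which I would split into three links: (a) $(F_n)_n$ is Følner if and only if it contains balls of arbitrarily large radius; (b) the latter holds if and only if $F_n$ contains a translate of $C_{L,F_1}+[B({\bm 0},\bar{r})\cap\Z^{d}]$ for \emph{some} $n$; and (c) if such an $n$ exists, then one already occurs with $n\le N_0$.

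For (b) and the nontrivial direction of (a) I would first isolate a \emph{ball-growth lemma}. Writing every element of $F_{n+1}=L(F_n)+F_1$ uniquely as $L({\bm g})+{\bm f}$ with ${\bm g}\in F_n$, ${\bm f}\in F_1$, a direct estimate shows that if $B({\bm c},r)\subseteq F_n$ then $B(L({\bm c}),r')\subseteq F_{n+1}$ with $r'=(r-\Vert L^{-1}(F_1)\Vert)/\Vert L^{-1}\Vert$. Since $\bar{r}$ is exactly the fixed point of the affine map $r\mapsto(r-\Vert L^{-1}(F_1)\Vert)/\Vert L^{-1}\Vert$, whose slope $1/\Vert L^{-1}\Vert>1$, a ball of radius strictly above $\bar{r}$ forces a strictly larger ball at the next level, and iterating yields arbitrarily large balls. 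The summand $C_{L,F_1}$ is built into the seed configuration so that, through the relation $C_{L,F_1}+F_{1}+F_{1}\subseteq L(C_{L,F_1})+F_{1}$ together with $\Vert K_{L,F_1}\Vert\le\bar{r}$, the presence of a translate of $C_{L,F_1}+[B({\bm 0},\bar{r})\cap\Z^{d}]$ in $F_n$ triggers this strict growth; this gives (b), while the easy implication in (a), that Følner implies arbitrarily large balls, follows at once from \eqref{LargerballsFolner}.

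The analytic core is the converse in (a), large balls $\Rightarrow$ Følner, where a single large ball does not suffice and the self-affine multiplicity must be used. Here I would set $\beta_n(r)=|F_n\setminus F_n^{\circ r}|/|F_n|$ and exploit the disjoint decomposition $F_{2n}=L^{n}(F_n)+F_n$. Writing ${\bm y}=L^{n}({\bm g})+{\bm h}$ with ${\bm g},{\bm h}\in F_n$, one checks that ${\bm y}\in F_{2n}^{\circ r}$ as soon as ${\bm h}\in F_n^{\circ r}$, and also, by $n$ iterations of the ball-growth lemma whose uniformity in $n$ is controlled through \cref{rem:GoodUniformlyBounded}, whenever ${\bm g}\in F_n^{\circ\rho_0}$ for a \emph{fixed} radius $\rho_0$. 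Consequently the complement of $F_{2n}^{\circ r}$ embeds into $(F_n\setminus F_n^{\circ\rho_0})\times(F_n\setminus F_n^{\circ r})$, giving the product estimate $\beta_{2n}(r)\le\beta_n(\rho_0)\,\beta_n(r)$; the same decomposition shows $\beta_n(r)$ is non-increasing in $n$. Once arbitrarily large balls exist we have $F_{n_1}^{\circ\rho_0}\neq\emptyset$ for some $n_1$, hence $\beta_n(\rho_0)\le\theta<1$ for all $n\ge n_1$, and iterating the product estimate along $n_1,2n_1,4n_1,\dots$ forces $\beta_N(r)\to0$ for every $r$, i.e.\ the Følner property.

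Finally, (c) is the step I expect to be the main obstacle, since it is where the explicit bound $N_0$ must be produced. The recursion $F_{n+1}=L(F_n)+F_1$ makes the restriction of $F_{n+1}$ to any bounded window a function of the restriction of $F_n$ to a slightly larger window, and iterating this contracts to a finite-state dynamics on the local patterns of $F_n$ inside a window of radius comparable to $\bar{r}$, hence on at most $(6\bar{r})^{d}$ lattice points. Using the monotonicity and strict growth established above, the first appearance of the configuration $C_{L,F_1}+[B({\bm 0},\bar{r})\cap\Z^{d}]$ is governed by an eventually periodic sequence of such patterns whose preperiod plus period I would bound by a pigeonhole count over the relevant bounded data; the cubic shape $\frac{(6\bar{r})^{3d}-(6\bar{r})^{d}}{6}=\binom{(6\bar{r})^{d}+1}{3}$ of $N_0$ reflects that this data amounts to an unordered triple of parameters, each ranging over at most $(6\bar{r})^{d}$ values. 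The delicate point is to carry out the bookkeeping so that the crude exponential count $2^{(6\bar{r})^{d}}$ is replaced by this polynomial bound, by tracking only the three critical parameters that decide whether the configuration can appear rather than the full local pattern; this is precisely where the interplay between $\Vert C_{L,F_1}\Vert$, $\Vert K_{L,F_1}\Vert$ and $\bar{r}$ is used.
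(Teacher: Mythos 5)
Your skeleton (reduce to the stated computable condition, then test all $n\le N_0$) matches the paper's, and your doubling estimate $\beta_{2n}(r)\le\beta_n(\rho_0)\,\beta_n(r)$ is a legitimate alternative to the paper's counting argument (the paper instead expands elements of $F_{k\cdot m}$ in base $F_m$ and observes that a single ``good digit'' suffices, so the bad fraction is at most $(1-|J_{m,p}|/|\det L|^{m})^{k}\to 0$). But there is a genuine gap at the seed-growth step. Your ball-growth lemma is correct, yet it produces \emph{strict} growth only from a ball of radius strictly greater than $\bar{r}$, while the hypothesis hands you a translate of $C_{L,F_1}+[B({\bm 0},\bar{r})\cap \Z^{d}]$: since $\bar{r}$ is exactly the fixed point of $r\mapsto (r-\Vert L^{-1}(F_1)\Vert)/\Vert L^{-1}\Vert$, iteration from a radius-$\bar{r}$ seed stalls, and the sentence asserting that $C_{L,F_1}$ together with $\Vert K_{L,F_1}\Vert\le\bar{r}$ ``triggers this strict growth'' is precisely the missing proof. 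What actually makes the seed grow is not metric: it is the propagation in Claim 1 of \cref{lemma:EquivalenceFolner} (if ${\bm f}+C_{L,F_1}+L^{p}(K)+F_{p}\subseteq F_{n}$, then every point of $L({\bm f})+F_1$ carries $C_{L,F_1}+L^{p+1}(K)+F_{p+1}$ inside $F_{n+1}$) combined with the exhaustion $\bigcup_{p} L^{p}(K)+F_{p}=\Z^{d}$ from \cref{FiniteSubsetFillsZd}. You only ever use the norm bound on $K$, never its covering property, and without the latter there is no reason the protected neighbourhoods ever exceed radius $\bar{r}$ — which also undermines your own argument, since seeding $\beta_{n_1}(\rho_0)<1$ requires $F_{n_1}^{\circ\rho_0}\neq\emptyset$ for a fixed $\rho_0>\bar{r}$.

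The second, larger gap is step (c), which you yourself flag as the main obstacle. You correctly decode $N_0=\binom{(6\bar{r})^{d}+1}{3}$, but a ``pigeonhole over unordered triples of parameters'' is a guess, not an argument, and as you concede a pigeonhole on window patterns only yields the exponential bound $2^{(6\bar{r})^{d}}$. The paper's route is different in kind: it builds the finite labeled graph $\mathcal{G}(L,F_{1},B)$ on vertex set $C_{L,F_{1}}+B$, with an edge ${\bm a}\to{\bm b}$ labeled ${\bm f}\in F_1$ iff ${\bm f}+{\bm a}=L({\bm b})+{\bm g}$ for some ${\bm g}\in F_1$, proves in \cref{prop:equivalenceforfolnerproperty} that ${\bm f}+C_{L,F_1}+B\subseteq F_n$ holds exactly when the digit word ${\bm f}_0{\bm f}_1\cdots{\bm f}_{n-1}$ of ${\bm f}$ labels a path from \emph{every} vertex to ${\bm 0}$, and then invokes the known bound that a shortest word mapping all $N$ states of such a graph to a single state has length at most $(N^{3}-N)/6$ \cite{frankl1982extremal,pin1983two}, with $N\le(6\bar{r})^{d}$ via $\Vert C_{L,F_1}\Vert\le 2\bar{r}$. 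That reset-word theorem is a nontrivial extremal-combinatorics result and is exactly where the cubic shape of $N_0$ comes from; it cannot be recovered by elementary bookkeeping on three parameters. So your plan is sound in outline, but both the growth trigger in (b) and all of (c) need the $K$-exhaustion and the graph/synchronizing-word machinery (or an equivalent) that the proposal does not supply.
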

	
	In the block case, i.e., $L=\diag(\ell_{i})_{i=1}^{d}$ and $F_{1}=\prod\limits_{i=1}^{d}\llbracket 0,l_{i}-1\rrbracket$, we note that $C_{L,F_{1}}+[B({\bm 0},\bar{r})\cap \Z^{d}]\subseteq [B({\bm 0},2\sqrt{d})\cap \Z^{d}]$, hence for any $n\in \N$ such that $n\geq \log(4\sqrt{d})/\log(\min \ell_{i})$, contains a translation of $C_{L,F_{1}}+[B({\bm 0},\bar{r})\cap \Z^{d}]$.
	
	Roughly speaking, the idea of the proof of \cref{thm:decidabilityfolnerporperty} is twofold. First, we show that if some $F_N$ contains a large enough ball $B$, then the ``extended image'' $L^m(B)+F_m$ inside $F_{N+m}$ will contain a ball larger than $B$ and this will enforce the F{\o}lner property. 
	The statement of \cref{lemma:EquivalenceFolner} below is actually more precise about the ball $B$, which allows us get a characterization of the F\o lner property. \cref{fig:ballsbecomingbigger} illustrates this idea.
	In the second step (\cref{prop:equivalenceforfolnerproperty}), we define a finite graph and translate the condition of \cref{lemma:EquivalenceFolner} into the existence of a path in this graph. This leads to the decidability of the F\o lner property and to a bound on the index $N$. \cref{automatonforthuemorsetriangular} is an example of the graphs for the triangular Thue-Morse substitution.
	
	Consider the set $K\Subset \Z^{d}$ given by \cref{FiniteSubsetFillsZd}, and the set $C_{L,F_{1}}\Subset \Z^{d}$ given by \cref{Prop:FiniteSetSatisfyingParticularProperties} with $A=\{{\bm 0}\}$ and $F=F_{1}+F_{1}$. 
	We recall that, by \cref{item:MoreGeneralresultFortheSet}, for any $n\geq 1, C_{L,F_{1}}+F_{n}+F_{n}\subseteq L^n(C_{L,F_{1}})+F_n$. 
	We also deduce from \cref{item:BoundForTheNormforthefiniteset} that 
	\begin{equation}\label{eq:boundonC}
		\|C_{L,F_1}\| \leq 2\bar{r}.
	\end{equation}
	Assume, up to replacing $L$ by an appropriate power of it, that $K=(\id-L)^{-1}(F_{1})\cap \Z^{d}$. 
	The following result shows a characterization, for the sequence of fundamental domains $(F_{n})_{n\in \N}$ to be F{\o}lner.

	\begin{proposition}\label{lemma:EquivalenceFolner}
		Let $L\in \mathcal{M}(d,\Z)$ be an expansion matrix, $F_{1}\Subset \Z^{d}$ a fundamental domain of $L(\Z^{d})$ in $\Z^{d}$ containing ${\bm 0}$ and $(F_{n})_{n\in\N}$ be the sequence of fundamental domains defined as \eqref{Eq:RecurrenceSets}. The following conditions are equivalent
		\begin{enumerate}
			\item \label{item:folner} the sequence $(F_{n})_{n\in \N}$ is F{\o}lner;
			\item \label{item:translationofC+K}there exists $n \in \mathbb{N}$ and ${\bm f}\in F_{n}$ such that 
			\[
			{\bm f}+C_{L,F_{1}}+K\subseteq F_{n};
			\]
			\item \label{item:translationofC+B} for every finite set $B \supseteq K$, there exists $m \in \mathbb{N}$, ${\bm f}\in F_{m}$ such that 
			\[
			{\bm f}+C_{L,F_{1}}+B\subseteq F_{m};
			\]
		\end{enumerate}
	\end{proposition}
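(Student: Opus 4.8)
The plan is to prove the three implications $(1)\Rightarrow(2)$, $(2)\Leftrightarrow(3)$ and $(2)\Rightarrow(1)$, the last being the substantial one. Throughout write $q=|\det L|$, so that $|F_n|=q^n$, and for a finite set $S\Subset\Z^d$ put $\beta_n(S)=(|F_n|-|F_n^{\circ S}|)/|F_n|$, the relative size of the $S$-boundary of $F_n$. Set $E=C_{L,F_1}+K$. I will freely use that $(L^n(K)+F_n)_{n\geq0}$ is a nested sequence increasing to $\Z^d$: the union is $\Z^d$ by \cref{FiniteSubsetFillsZd}, and nestedness follows from $K\subseteq L(K)+F_1$ (a one-line consequence of $K=(\id-L)^{-1}(F_1)\cap\Z^d$) by applying $L(\cdot)+F_1$ repeatedly.

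For $(1)\Rightarrow(2)$, I simply invoke \eqref{LargerballsFolner} with the finite set $E$: the F\o lner property forces $|F_n^{\circ E}|/|F_n|\to1$, so $F_n^{\circ E}\neq\emptyset$ for $n$ large, which is exactly the existence of $\mathbf f\in F_n$ with $\mathbf f+C_{L,F_1}+K\subseteq F_n$. For $(3)\Rightarrow(2)$ I take $B=K$. For $(2)\Rightarrow(3)$ I use an image-extension argument: given $\mathbf f\in F_N$ with $\mathbf f+E\subseteq F_N$ and a finite $B\supseteq K$, I first choose $m$ with $B\subseteq L^m(K)+F_m$ and then absorb, using $\mathbf 0\in F_m$ and the recalled inclusion $C_{L,F_1}+F_m+F_m\subseteq L^m(C_{L,F_1})+F_m$, to get $C_{L,F_1}+B\subseteq L^m(K)+(C_{L,F_1}+F_m+F_m)\subseteq L^m(C_{L,F_1}+K)+F_m$. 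Applying $L^m$ to $\mathbf f+E\subseteq F_N$ and adding $F_m$ then yields $L^m(\mathbf f)+C_{L,F_1}+B\subseteq L^m(F_N)+F_m=F_{N+m}$, with $L^m(\mathbf f)\in F_{N+m}$, which is $(3)$.

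The core is $(2)\Rightarrow(1)$. It rests on the self-similar disjoint decomposition $F_{n+m}=\bigsqcup_{\mathbf h\in F_m}(L^m(F_n)+\mathbf h)$ and a two-scale analysis of the interior. Writing a point of $F_{n+m}$ as $\mathbf p=L^m(\mathbf q)+\mathbf h$ with $\mathbf q\in F_n$, $\mathbf h\in F_m$, I record two sufficient conditions for $\mathbf p\in F_{n+m}^{\circ E'}$: a \emph{fine} one, $\mathbf h\in F_m^{\circ E'}$ (then $\mathbf p+E'\subseteq L^m(\mathbf q)+F_m$), and a \emph{coarse} one, $\mathbf q\in F_n^{\circ E}$, valid as soon as $F_m+E'\subseteq L^m(E)+F_m$. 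The key point is that the coarse neighbourhood can be taken to be the fixed set $E$: choosing $j_0$ with $E'\subseteq L^{j_0}(K)+F_{j_0}$, for every $m\geq j_0$ nestedness gives $E'\subseteq L^m(K)+F_m$, whence $F_m+E'\subseteq L^m(K)+(F_m+F_m)\subseteq L^m(K)+L^m(C_{L,F_1})+F_m=L^m(E)+F_m$, again via $C_{L,F_1}+F_m+F_m\subseteq L^m(C_{L,F_1})+F_m$. A point failing to be interior must violate both conditions, so the number of such points is at most $|F_n\setminus F_n^{\circ E}|\cdot|F_m\setminus F_m^{\circ E'}|$, giving the submultiplicative boundary estimate
\[
\beta_{n+m}(E')\leq\beta_n(E)\,\beta_m(E')\qquad(m\geq j_0).
\]

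It remains to run this inequality. The fine condition alone shows $|F_{n+m}^{\circ S}|\geq q^n|F_m^{\circ S}|$, hence each $\beta_n(S)$ is non-increasing in $n$. Applying the estimate with $E'=E$ and using that condition $(2)$ gives $\beta_N(E)<1$, hence $\beta_{m_1}(E)<1$ for a fixed $m_1\geq\max(N,j_0)$ by monotonicity, yields $\beta_{n+m_1}(E)\leq\beta_n(E)\,\beta_{m_1}(E)$; iterating along multiples of $m_1$ and invoking monotonicity forces $\beta_n(E)\to0$. For an arbitrary finite $E'\ni\mathbf 0$, condition $(3)$ (with $B\supseteq E'\cup K$) provides some $m_2\geq j_0$ with $\beta_{m_2}(E')<1$, and then $\beta_{n+m_2}(E')\leq\beta_n(E)\,\beta_{m_2}(E')\to0$, so $\beta_n(E')\to0$. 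Taking $E'=\{\mathbf 0,-\mathbf v\}$ gives $F_n^{\circ E'}=F_n\cap(\mathbf v+F_n)$ and hence $|F_n\Delta(\mathbf v+F_n)|/|F_n|=2\beta_n(E')\to0$ for every $\mathbf v\in\Z^d$, which is the F\o lner property. The main obstacle is isolating the coarse/fine dichotomy and recognizing that the coarse neighbourhood collapses to the single fixed set $E=C_{L,F_1}+K$; once the submultiplicative inequality is in place the convergence is routine.
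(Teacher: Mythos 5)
Your proof is correct, and its outer skeleton matches the paper's: the implications $(1)\Rightarrow(2)$ and $(3)\Rightarrow(2)$ are handled the same way (the paper calls them ``direct''), and your image-extension argument for $(2)\Rightarrow(3)$ --- push the good translate forward by $L^m$ and absorb $F_m+F_m$ via $C_{L,F_1}+F_m+F_m\subseteq L^m(C_{L,F_1})+F_m$ --- is the same mechanism as the paper's \cref{Claim:claimforhteiteratesofK}, which does it one application of $L$ at a time. Where you genuinely diverge is in the organization of the core implication $(2)\Rightarrow(1)$. The paper fixes, for each test set $C_{L,F_1}+L^p(K)+F_p$ of the exhausting family, a scale $m(p)=\inf\{n: J_{n,p}\neq\emptyset\}$, expands points of $F_{k\cdot m(p)}$ in base-$F_{m(p)}$ digits, and proves in \cref{CharacterizationofSetsJ} that a \emph{single} good digit among the $k$ makes the whole point good, letting $k\to\infty$ to get the bound $\left(1-|J_{m(p),p}|/|\det(L)|^{m(p)}\right)^{k}\to 0$; monotonicity of the good proportion comes from the multiplicity count in \cref{Claim:claimforhteiteratesofK}. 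You instead prove a genuine two-scale submultiplicative inequality $\beta_{n+m}(E')\leq\beta_n(E)\,\beta_m(E')$ whose coarse factor always involves the one fixed set $E=C_{L,F_1}+K$ --- resting on exactly the same two absorption facts (the $C_{L,F_1}$ inclusion above and the nestedness of $(L^p(K)+F_p)_p$, which you correctly reduce to $K\subseteq L(K)+F_1$) --- and you derive monotonicity of $\beta_n(S)$ directly from the fine condition, which dispenses with the paper's bookkeeping ($J_{n,p}$, $a_{n,p}$, $b_{n,p}$, $m(p)$). You also verify the F\o lner property directly on the pair sets $\{{\bm 0},-{\bm v}\}$ rather than reducing it to the exhausting family as in Equation~\eqref{eq:limitofb_n}. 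All the small supporting checks are sound: disjointness of $F_{n+m}=L^m(F_n)+F_m$ follows from the cardinality count $|F_{n+m}|=|\det L|^{n+m}$, and your $(1)\Rightarrow(2)$ uses the correct form of \eqref{LargerballsFolner} (the paper's displayed limit there should read $0$, or equivalently $|F_n^{\circ E}|/|F_n|\to 1$, which is what you use). What each approach buys: yours is leaner and makes the decay of the boundary transparent as submultiplicativity plus monotonicity; the paper's $k$-digit version yields the explicit $(C+L^{p}(K)+F_{p},\varepsilon^{k})$-invariance statement noted after \cref{CharacterizationofSetsJ}, a quantitative rate that your two-scale inequality recovers only along multiples of the fixed scale $m_1$.
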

	
	Note that \cref{item:translationofC+B} implies that, it is equivalent for a substitution $\zeta$ to define the substitutive subshift $X_{\zeta}$ to the sequence $(F_{n})_{n>0}$ is F{\o}lner.
	
	\begin{proof}
		The implications $\ref{item:folner} \Rightarrow \ref{item:translationofC+K}$, $\ref{item:folner} \Rightarrow \ref{item:translationofC+B}$ and $\ref{item:translationofC+B} \Rightarrow \ref{item:translationofC+K}$ are direct. 
		Let us thus assume \cref{item:translationofC+K}. We recall that, by \cref{FiniteSubsetFillsZd} $\bigcup\limits_{p\in\N}L^{p}(K)+F_{p}=\Z^{d}$, and the sequence of finite sets $(L^{p}(K)+F_{p})_{p\in \N}$ is nested. To check that $(F_{n})_{n\in \N}$ is F{\o}lner, it is enough to prove that
		\[
		(\forall p\in \N)\ \lim\limits_{n\to\infty}\dfrac{|F_{n}\Delta (F_{n}+C_{L,F_{1}}+L^{p}(K)+F_{p})|}{|F_{n}|}=0,
		\]
		or, equivalently, that
		\begin{equation}\label{eq:limitofb_n}
			(\forall p\in \N)\ \lim\limits_{n\to\infty}
			\frac{|\{{\bm f}\in F_{n}\colon {\bm f}+C_{L,F_1}+L^{p}(K)+F_{p} \subseteq F_n\}|}{|F_n|} = 1.
		\end{equation}
		For any $n\in\N$ and $p\in\N$ we set 
		\[
		J_{n,p}=\{{\bm f}\in F_{n}\colon {\bm f}+C_{L,F_1}+L^{p}(K)+F_{p} \subseteq F_n\},
		\]
		$a_{n,p}=|J_{n,p}|$ and $b_{n,p}=a_{n,p}/|F_{n}|$.
		By assumption, there exists $N \in \mathbb{N}$ and ${\bm f} \in F_N$ such that ${\bm f}+C_{L,F_{1}}+K\subseteq F_{N}$. In other words, we have $a_{N,0} \geq 1$.
		\cref{Claim:claimforhteiteratesofK} below shows a form of stability of elements of $J_{n,p}$ under the application of $L$. In particular, it implies that $a_{N+p,p} \geq 1$ for all $p \in \mathbb{N}$.
		Since $\bigcup\limits_{p\in\N}L^{p}(K)+F_{p}=\Z^{d}$ and the sequence of finite sets $(L^{p}(K)+F_{p})_{p\in \N}$ is nested, it also implies \cref{item:translationofC+B}.
		
		\begin{claim}\label{Claim:claimforhteiteratesofK}
			For all $n,p \in \mathbb{N}$ and all ${\bm f} \in F_n$, if ${\bm f}+C_{L,F_{1}}+L^p(K)+F_p\subseteq F_{n}$, then all elements ${\bm g} \in L({\bm f})+F_1 \subseteq F_{n+1}$ are such that ${\bm g}+C_{L,F_{1}}+L^{p+1}(K)+F_{p+1}\subseteq F_{n+1}$.
			In particular, we have
			\[
			a_{n+1,p} \geq a_{n+1,p+1} \geq a_{n,p} |\det(L)|.
			\]
		\end{claim}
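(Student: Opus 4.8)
The plan is to prove the set inclusion first, since the two inequalities are easy consequences of it. The inclusion is the heart of the statement, and the natural strategy is to take an arbitrary element of ${\bm g}+C_{L,F_{1}}+L^{p+1}(K)+F_{p+1}$, decompose it using the recursive definitions $F_{p+1}=L(F_{p})+F_{1}$ and $F_{n+1}=L(F_{n})+F_{1}$, and then factor out a single copy of $L$ so that the remaining inner term matches the set already known to lie in $F_{n}$.

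Concretely, I would write ${\bm g}=L({\bm f})+{\bm f}_{1}$ with ${\bm f}_{1}\in F_{1}$ and take a generic element ${\bm g}+{\bm c}+L^{p+1}({\bm k})+{\bm e}$ with ${\bm c}\in C_{L,F_{1}}$, ${\bm k}\in K$, ${\bm e}\in F_{p+1}$. Splitting ${\bm e}=L({\bm e}')+{\bm e}_{1}$ according to $F_{p+1}=L(F_{p})+F_{1}$ and collecting all the terms lying in the image of $L$, this element rewrites as $L\big({\bm f}+L^{p}({\bm k})+{\bm e}'\big)+({\bm c}+{\bm f}_{1}+{\bm e}_{1})$, where the leftover ${\bm c}+{\bm f}_{1}+{\bm e}_{1}$ lies in $C_{L,F_{1}}+F_{1}+F_{1}$.

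The key step, and the only place I expect genuine subtlety, is absorbing this leftover into a power of $L$ without leaving the right family of sets. Here I would invoke the property of $C_{L,F_{1}}$ recalled just before the proposition, namely $C_{L,F_{1}}+F_{n}+F_{n}\subseteq L^{n}(C_{L,F_{1}})+F_{n}$ taken at $n=1$, which gives $C_{L,F_{1}}+F_{1}+F_{1}\subseteq L(C_{L,F_{1}})+F_{1}$. This lets me write ${\bm c}+{\bm f}_{1}+{\bm e}_{1}=L({\bm c}')+{\bm f}_{1}'$ with ${\bm c}'\in C_{L,F_{1}}$ and ${\bm f}_{1}'\in F_{1}$, so that the element equals $L\big({\bm f}+{\bm c}'+L^{p}({\bm k})+{\bm e}'\big)+{\bm f}_{1}'$. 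Since ${\bm f}+{\bm c}'+L^{p}({\bm k})+{\bm e}'$ belongs to ${\bm f}+C_{L,F_{1}}+L^{p}(K)+F_{p}\subseteq F_{n}$ by hypothesis, the element lies in $L(F_{n})+F_{1}=F_{n+1}$, which proves the inclusion.

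For the inequalities, the bound $a_{n+1,p}\geq a_{n+1,p+1}$ is immediate from the nestedness $L^{p}(K)+F_{p}\subseteq L^{p+1}(K)+F_{p+1}$ already established, which forces $J_{n+1,p+1}\subseteq J_{n+1,p}$. For the remaining bound $a_{n+1,p+1}\geq a_{n,p}|\det(L)|$, the inclusion just proved shows that the map $({\bm f},{\bm f}_{1})\mapsto L({\bm f})+{\bm f}_{1}$ sends $J_{n,p}\times F_{1}$ into $J_{n+1,p+1}$; because $L$ is injective and $F_{1}$ is a fundamental domain of $L(\Z^{d})$, this map is injective, and since $|F_{1}|=|\det(L)|$, counting gives the claimed inequality.
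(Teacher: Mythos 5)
Your proof is correct and takes essentially the same route as the paper's: both rest on the single key containment $C_{L,F_{1}}+F_{1}+F_{1}\subseteq L(C_{L,F_{1}})+F_{1}$ combined with the recursions $F_{n+1}=L(F_{n})+F_{1}$ and $F_{p+1}=L(F_{p})+F_{1}$, the only difference being that you argue element-wise where the paper applies $L$ to the hypothesis and manipulates set inclusions directly. Your explicit injectivity argument for the map $({\bm f},{\bm f}_{1})\mapsto L({\bm f})+{\bm f}_{1}$ is exactly the counting the paper leaves implicit in deducing $a_{n+1,p+1}\geq a_{n,p}|\det(L)|$, and both proofs obtain $a_{n+1,p}\geq a_{n+1,p+1}$ from the nestedness $L^{p}(K)+F_{p}\subseteq L^{p+1}(K)+F_{p+1}$.
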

		
		\begin{proof}[Proof of \cref{Claim:claimforhteiteratesofK}]     
			Indeed, since $C_{L,F_{1}}+F_1+F_1\subseteq L(C_{L,F_{1}})+F_1$, we have
			\begin{align*}
				{\bm f}+C_{L,F_{1}}+L^p(K)+F_p\subseteq F_{n}& \implies L({\bm f})+(L(C_{L,F_{1}})+F_{1})+L^{p+1}(K)+L(F_p)\subseteq F_{n+1}  \\
				& \implies (L({\bm f})+F_{1})+C_{L,F_{1}}+L^{p+1}(K)+F_{p+1}\subseteq F_{n+1}.
			\end{align*}
			This implies that $a_{n+1,p+1} \geq a_{n,p} |\det(L)|$.                        The other inequality follows from the fact that $L^p(K)+F_p \subseteq L^{p+1}(K)+F_{p+1}$.
		\end{proof}
		
		\cref{fig:ballsbecomingbigger} illustrates \cref{Claim:claimforhteiteratesofK}. Note that \cref{Claim:claimforhteiteratesofK} proves that \cref{item:translationofC+K} implies \cref{item:translationofC+B}. 
		
		\begin{figure}[htp]
			\includegraphics[scale=0.66]{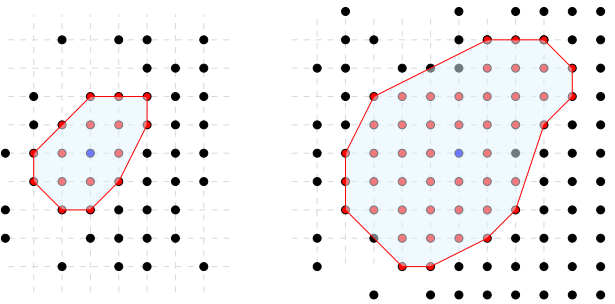}
			\caption{A visual representation of \cref{Claim:claimforhteiteratesofK} with the triangular Thue-Morse substitution (\cref{ExampleTriangularThueMorse}). The black points at the left represent the points in $F_{4}$, and $F_{5}$ at the right. The blue points represent the points $(-2,5)$ in $F_{4}$ (left) and $(-4,10)=2\cdot (-2,5)+(0,0)$ in $F_{5}$ (right). Finally, the red points at the left represent the set $(-2,5)+C_{L,F_{1}}+K$ and $(-4,10)+C_{L,F_{1}}+L(K)+F_{1}$ at the right.}
			\label{fig:ballsbecomingbigger}
		\end{figure}
		
		Since $|F_{n+1}| = |F_n||\det(L)|$, we deduce from \cref{Claim:claimforhteiteratesofK} that the sequence $(b_{n,p})_n$ is non-decreasing for all $p$. Furthermore, it is also bounded from above by 1, and the limit~\eqref{eq:limitofb_n} exists.
		
		For every $p \in \mathbb{N}$, we set $m(p)=\inf\{n \mid a_{n,p}>0\}$. By hypothesis and \cref{Claim:claimforhteiteratesofK} we note that $m(p)\leq N+p$. Recall that for any $k\geq 1$, any element in $F_{k\cdot m(p)}$ can be written as $\sum\limits_{i=0}^{k-1}L^{i\cdot m(p)}({\bm f}_{i})$, with ${\bm f}_{i}\in F_{m(p)}$.
		
		\begin{claim}\label{CharacterizationofSetsJ}
			If there exists $0\leq i\leq k-1$ such that ${\bm f}_{i}\in J_{m(p),p}$, then $\left(\sum\limits_{i=0}^{k-1}L^{i \cdot m(p)}({\bm f}_{i})\right)\in J_{k\cdot m(p),p}$.
		\end{claim}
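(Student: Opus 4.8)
The plan is to establish the set inclusion $S+P\subseteq F_{kq}$, where I abbreviate $q:=m(p)$, $P:=C_{L,F_1}+L^{p}(K)+F_p$ and $S:=\sum_{i=0}^{k-1}L^{iq}({\bm f}_i)$; by definition of $J_{kq,p}$ this is exactly the assertion $S\in J_{kq,p}$. The backbone of the argument is the digit-expansion identity $F_{a+b}=L^{a}(F_b)+F_a$, which follows by an immediate induction from the recurrence \eqref{Eq:RecurrenceSets}. Using it I would split $S$ around the distinguished level $j$: setting $S_{\mathrm{low}}=\sum_{i<j}L^{iq}({\bm f}_i)\in F_{jq}$ and $S_{\geq j}=\sum_{i\geq j}L^{(i-j)q}({\bm f}_i)\in F_{(k-j)q}$, one has $S=L^{jq}(S_{\geq j})+S_{\mathrm{low}}$, and furthermore $S_{\geq j}={\bm f}_j+L^{q}(S')$ for some $S'\in F_{(k-1-j)q}$. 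The point of this bookkeeping is to isolate the good digit ${\bm f}_j$, which by ${\bm f}_j\in J_{q,p}$ satisfies ${\bm f}_j+P\subseteq F_q$.

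The heart of the matter, and the step I expect to be the main obstacle, is the inclusion
\[
F_{jq}+P\subseteq L^{jq}(P)+F_{jq}.
\]
The tempting route---adding $P$ at the bottom level and carrying a small correction up to level $j$---breaks down, because $q=m(p)$ is essentially the first level at which a translate of $P$ fits into $F_q$; hence $P$ is comparable in size to $F_q$ itself, the induced carries are as large as $P$ and may point in any direction, and they cannot be swallowed by the small set $C_{L,F_1}$ sitting at level $j$. I would therefore prove the displayed inclusion directly. First, using $L^{jq}(F_p)+F_{jq}=F_{jq+p}$, rewrite the right-hand side as $L^{jq}(P)+F_{jq}=L^{jq}(C_{L,F_1})+L^{jq+p}(K)+F_{jq+p}$. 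On the left, the nestedness of $(L^{m}(K)+F_m)_{m}$ from \cref{FiniteSubsetFillsZd} gives $L^{p}(K)+F_p\subseteq L^{jq+p}(K)+F_{jq+p}$, so that $F_{jq}+P\subseteq C_{L,F_1}+F_{jq}+F_{jq+p}+L^{jq+p}(K)$. It then remains to check $C_{L,F_1}+F_{jq}+F_{jq+p}\subseteq L^{jq}(C_{L,F_1})+F_{jq+p}$, which I would obtain by expanding $F_{jq+p}=L^{jq}(F_p)+F_{jq}$ and applying the absorption inequality $C_{L,F_1}+F_{jq}+F_{jq}\subseteq L^{jq}(C_{L,F_1})+F_{jq}$ recalled above from \cite[Remark 2.13 (2)]{cabezas2023homomorphisms}.

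Granting the key inclusion, the proof closes with a short computation. Fix ${\bm \delta}\in P$; from $S_{\mathrm{low}}+{\bm \delta}\in F_{jq}+P\subseteq L^{jq}(P)+F_{jq}$ I write $S_{\mathrm{low}}+{\bm \delta}=L^{jq}({\bm e})+S'_{\mathrm{low}}$ with ${\bm e}\in P$ and $S'_{\mathrm{low}}\in F_{jq}$. Then $S+{\bm \delta}=L^{jq}(S_{\geq j}+{\bm e})+S'_{\mathrm{low}}$, and since $S_{\geq j}+{\bm e}=({\bm f}_j+{\bm e})+L^{q}(S')$ with ${\bm f}_j+{\bm e}\in F_q$ (as ${\bm e}\in P$ and ${\bm f}_j+P\subseteq F_q$), I get $S_{\geq j}+{\bm e}\in F_q+L^{q}(F_{(k-1-j)q})=F_{(k-j)q}$ and hence $S+{\bm \delta}\in L^{jq}(F_{(k-j)q})+F_{jq}=F_{kq}$. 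Letting ${\bm \delta}$ range over $P$ yields $S\in J_{kq,p}$. The only genuine content is the key inclusion; everything else is repeated use of $F_{a+b}=L^{a}(F_b)+F_a$ together with the trivial facts that ${\bm 0}$ lies in $C_{L,F_1}$, in $K$ and in every $F_n$.
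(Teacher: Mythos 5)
Your proof is correct and follows essentially the same route as the paper's: both split the digit expansion at an index $j$ with ${\bm f}_{j}\in J_{m(p),p}$, push the perturbation through the low-order digits using the nestedness of $(L^{m}(K)+F_{m})_{m}$ together with the absorption property $C_{L,F_{1}}+F_{n}+F_{n}\subseteq L^{n}(C_{L,F_{1}})+F_{n}$, and then swallow the resulting carry at level $j$ via ${\bm f}_{j}+C_{L,F_{1}}+L^{p}(K)+F_{p}\subseteq F_{m(p)}$, leaving the higher digits untouched. The only difference is organizational: you package the carry step as the set inclusion $F_{jq}+P\subseteq L^{jq}(P)+F_{jq}$ (incidentally avoiding any appeal to $m(p)\geq p$), whereas the paper performs the same computation elementwise, producing ${\bm c}_{1}\in C_{L,F_{1}}$ and ${\bm k}\in K$ so that the level-$j$ digit becomes ${\bm f}_{j}+{\bm c}_{1}+{\bm k}\in F_{m(p)}$.
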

		
		Using the notion of \emph{invariance} introduced by B. Weiss in \cite{weiss2001monotileable}, \cref{CharacterizationofSetsJ} proves that if $F_{n}$ is $(C+L^{p}(K)+F_{p},\varepsilon)$-invariant, then for any $k>0$, the set $F_{kn}$ is $(C+L^{p}(K)+F_{p},\varepsilon^{k})$-invariant, which will let us conclude that the sequence $(F_{n})_{n>0}$ is F{\o}lner.
		
		\begin{proof}[Proof of \cref{CharacterizationofSetsJ}]
			First, we note that $m(p)\geq p$. For any ${\bm c}\in C_{L,F_{1}}$ and ${\bm n}\in L^{p}(K)+F_{p}$, we need to find $({\bm g}_{i})_{i=0}^{k-1}\subseteq F_{m(p)}$ such that
			$$\left(\sum\limits_{i=0}^{k-1}L^{i\cdot m(p)}({\bm f}_{i})\right)+{\bm c}+{\bm n}=\left(\sum\limits_{i=0}^{k-1}L^{i\cdot m(p)}({\bm g}_{i})\right).$$
			
			Let $0\leq j\leq k-1$ be the minimal such that ${\bm f}_{j}\in J_{m(p),p}$. If $j=0$, then the result follows from \cref{CharacterizationofSetsJ}. 
			Assume that $j>0$. 
			Since $L^p(K)+F_p \subseteq L^{j \cdot m(p)}(K)+F_{j \cdot m(p)}$, there exist ${\bm k}\in K$ and $({\bm h}_{i})_{i=0}^{j-1}\subseteq F_{m(p)}$ such that
			\[
			{\bm n}=L^{j\cdot m(p)}({\bm k})+\left(\sum\limits_{i=0}^{j-1}L^{i\cdot m(p)}({\bm h}_{i})\right).
			\]
			
			Then, using items~\ref{item:MoreGeneralresultFortheSet} and~\ref{item:nestedB_n} of \cref{Prop:FiniteSetSatisfyingParticularProperties}, there exists ${\bm c}_{1}\in C_{L,F_{1}}$ and $({\bm g}_{i})_{i=0}^{j-1}\subseteq F_{m(p)}$ such that
			$${\bm c}+\left(\sum\limits_{i=0}^{j-1}L^{i\cdot m(p)}({\bm f}_{i})\right)+\left(\sum\limits_{i=0}^{j-1}L^{i\cdot m(p)}({\bm h}_{i})\right)=L^{j\cdot m(p)}({\bm c}_{1})+\sum\limits_{i=0}^{j-1}L^{i\cdot m(p)}({\bm g}_{i})$$
			
			Hence
			$$\left(\sum\limits_{i=0}^{k-1}L^{i\cdot m(p)}({\bm f}_{i})\right)+{\bm c}+{\bm n}=\left(\sum\limits_{i=0}^{j-1}L^{i\cdot m(p)}({\bm g}_{i})\right)+\left(\sum\limits_{i=j+1}^{k-1}L^{i\cdot m(p)}({\bm f}_{i})\right)+L^{j\cdot m(p)}({\bm f}_{j}+{\bm c}_{1}+{\bm k}).$$
			
			We conclude noting that ${\bm f}_j+{\bm c}_{1}+{\bm k}\in F_{m(p)}$.
		\end{proof}
		
		Now, \cref{CharacterizationofSetsJ} implies that
		$$|F_{k\cdot m(p)}\Delta (F_{k\cdot m(p)}+C+L^{p}(K)+F_{p})|\subseteq \left\{\sum\limits_{i=0}^{k-1}L^{i \cdot m(p)}({\bm f}_{i})\in F_{k\cdot m(p)}\colon (\forall i)\ {\bm f}_{i}\notin J_{m(p),p}\right\}.$$
		
		Hence, for any $p\in\N$
		\begin{align*}
			\lim\limits_{k\to\infty}b_{k,p} &\geq \lim\limits_{k\to\infty}b_{k\cdot m(p),p} \\
			& \geq 1-\lim\limits_{k\to\infty}\dfrac{\left|\left\{\sum\limits_{i=0}^{k-1}L^{i}({\bm f}_{i})\in F_{k\cdot m(p)}\colon (\forall\ 0\leq i\leq k-1){\bm f}_{i}\notin J_{m(p),p}\right\}\right|}{|F_{k\cdot m(p)}|}\\
			& =1-\lim\limits_{k\to\infty}\dfrac{(|\det(L)|^{m(p)}-|J_{m(p),p}|)^{k}}{|\det(L)|^{k\cdot m(p)}}\\
			& = 1-\lim\limits_{k\to\infty}\left(1-\dfrac{|J_{m(p),p}|}{|\det(L)|^{m(p)}}\right)^{k}\\
			& = 1
		\end{align*}
		
		We conclude that the sequence $(F_{n})_{n>0}$ is F{\o}lner.
	\end{proof}
	
	We now aim to show that satisfying \cref{item:translationofC+K} of \cref{lemma:EquivalenceFolner} is decidable. The next result shows that this condition is equivalent to the existence of a \emph{synchronizing word} in some particular finite graph, a condition which is known to be decidable~\cite{Volkov2008}.
	
	Assume that $B \Subset \mathbb{Z}^d$ satisfies $K \subseteq B \subseteq L(B)+F_1$. 
	Let us consider a labeled directed graph $\mathcal{G}(L,F_{1},B)$, where the vertex set is $C_{L,F_{1}}+B$ and there is an edge from ${\bm a}\in C_{L,F_{1}}+B$ to ${\bm b}\in C_{L,F_{1}}+B$ labeled with ${\bm f}\in F_{1}$ if and only if there exists ${\bm g}\in F_{1}$ such that ${\bm f}+{\bm a}=L({\bm b})+{\bm g}$. \cref{fig:automatonfor1d} shows the graph $\mathcal{G}(L,F_{1},B)$ in the classical one-dimensional case, where $L=\ell\geq2$ and $F_{1}=\llbracket 0,\ell-1\rrbracket$.
	
	\begin{figure}[htp]
		\centering
		\begin{tikzpicture}[auto,scale=0.7]
			\node (s0) [state]  at (0,0) {$0$};
			\node (s1) [state] at (5,0) {$1$};
			\node (s7) [state] at (-5,0) {$-1$};
			
			\path [-stealth, thick]
			(s0) edge [in=80,out=110,loop] node[above] {$\llbracket \texttt{0},\ell-1\rrbracket$} (s0)
			(s1) edge [in=350,out=380,loop] node [right] {$\ell-1$} (s1)
			(s1) edge node[midway,sloped] {$\llbracket \texttt{0},\ell-2\rrbracket$} (s0)
			(s7) edge [loop left] node [left] {$\texttt{0}$} (s7)
			(s7) edge node[midway,sloped] {$\llbracket \texttt{1},\ell-1\rrbracket$} (s0);
		\end{tikzpicture}
		\caption{The graph $\mathcal{G}(\ell,\llbracket 0,\ell-1\rrbracket,\{-1,0\})$ for the classical one-dimensional case.}
		\label{fig:automatonfor1d}
	\end{figure}

	\cref{automatonforthuemorsetriangular} represents the graph $\mathcal{G}(L,F_{1},K)$ for the triangular Thue-Morse (\cref{ExampleTriangularThueMorse}).

	\begin{figure}[htp]
		\centering
		\begin{tikzpicture}[auto,scale=0.7]
			\node (s0) [state]  at (0,0) {$(0,0)$};
			\node (s1) [state] at (6,0) {$(1,2)$};
			\node (s2) [state] at (5.1962,3) {$(1,1)$};
			\node (s3) [state] at (3,5.1962) {$(2,1)$};
			\node (s4) [state] at (0,6) {$(1,0)$};
			\node (s5) [state] at (-3,5.1962) {$(1,-1)$};
			\node (s6) [state] at (-5.1962,3) {$(0,-1)$};
			\node (s7) [state] at (-6,0) {$(-1,-2)$};
			\node (s8) [state] at (-5.1962,-3) {$(-1,-1)$};
			\node (s9) [state] at (-3,-5.1962) {$(-2,-1)$};
			\node (s10) [state] at (0,-6) {$(-1,0)$};
			\node (s11) [state] at (3,-5.1962) {$(-1,1)$};
			\node (s12) [state] at (5.1962,-3) {$(0,1)$};
			\node (s13) [state] at (8.6603,5) {$(2,2)$};
			\node (s14) [state] at (-8.6603,5) {$(0,2)$};
			\node (s15) [state] at (0,-10) {$(-2,0)$};
			
			\path [-stealth, thick]
			(s1) edge [bend right] node[below,sloped]  {$(0,0)$} (s12)
			(s1) edge [bend right] node[sloped,below] {$(1,0)$} (s2)
			(s1) edge [in=350,out=380,loop] node [sloped,above] {$(0,1)$} (s1)
			(s1) edge node[midway,sloped] {$(-1,-1)$} (s0)
			
			(s2) edge [in=50,out=80,loop] node[sloped] {$(0,0)$} (s2)
			(s2) edge [bend right=60,distance=4.2cm] node [above,sloped] {$(1,0)$} (s4)
			(s2) edge [bend right] node [sloped] {$(0,1)$} (s12)
			(s2) edge node[midway,sloped] {$(-1,-1)$} (s0)
			
			(s3) edge [bend right] node [above,sloped] {$(0,0)$} (s4)
			(s3) edge [in=50,out=80,loop] node[sloped] {$(1,0)$} (s3)
			(s3) edge [bend right] node[sloped] {$(0,1)$} (s2)
			(s3) edge node[midway,sloped] {$(-1,-1)$} (s0)
			
			(s4) edge node[midway,sloped] {$(0,0)$} (s0)
			(s4) edge [in=60,out=90,loop] node[sloped] {$(1,0)$} (s4)
			(s4) edge [bend right] node [above,sloped] {$(0,1)$} (s2)
			(s4) edge [bend right=60,distance=4.2cm] node [above,sloped] {$(-1,-1)$} (s6)
			
			(s5) edge [bend right] node [above,sloped] {$(0,0)$} (s4)
			(s5) edge [in=75,out=105,loop] node[sloped] {$(1,0)$} (s5)
			(s5) edge node[midway,sloped] {$(0,1)$} (s0)
			(s5) edge [bend right] node [below,sloped] {$(-1,-1)$} (s6)
			
			(s6) edge [in=90,out=120,loop] node[sloped] {$(0,0)$} (s6)
			(s6) edge [bend right] node [sloped] {$(1,0)$} (s4)
			(s6) edge node[midway,sloped] {$(0,1)$} (s0)
			(s6) edge [bend right=60,distance=4.2cm] node [sloped] {$(-1,-1)$} (s8)
			
			(s7) edge [bend right] node [sloped] {$(0,0)$} (s8)
			(s7) edge [bend right] node [sloped] {$(1,0)$} (s6)
			(s7) edge node[midway,sloped] {$(0,1)$} (s0)
			(s7) edge [loop left] node [above=-.1,sloped] {$(-1,-1)$} (s7)
			
			(s8) edge node[midway,sloped] {$(0,0)$} (s0)
			(s8) edge [bend right] node [sloped] {$(1,0)$} (s6)
			(s8) edge [bend right=60,distance=4.2cm] node [sloped,below] {$(0,1)$} (s10)
			(s8) edge [in=199,out=229,loop] node[below,sloped] {$(-1,-1)$} (s8)
			
			(s9) edge [bend right] node [sloped,below] {$(0,0)$} (s8)
			(s9) edge node[midway,sloped] {$(1,0)$} (s0)
			(s9) edge [bend right] node [above,sloped] {$(0,1)$} (s10)
			(s9) edge [in=230,out=260,loop] node [below=-.2,sloped] {$(-1,-1)$} (s9)
			
			(s10) edge [in=290,out=320,loop] node [below,sloped] {$(0,0)$} (s10)
			(s10) edge node[midway,sloped] {$(1,0)$} (s0)
			(s10) edge [bend right=60,distance=4.2cm] node [below,sloped] {$(0,1)$} (s12)
			(s10) edge [bend right] node [sloped] {$(-1,-1)$} (s8)
			
			(s11) edge [bend right] node [sloped] {$(0,0)$} (s12)
			(s11) edge node[midway,sloped] {$(1,0)$} (s0)
			(s11) edge [in=260,out=290,loop] node[below=-.2,sloped] {$(0,1)$} (s11)
			(s11) edge [bend right] node [below,sloped] {$(-1,-1)$} (s10)
			
			(s12) edge node[midway,sloped] {$(0,0)$} (s0)
			(s12) edge [bend right=60,distance=4.2cm] node [sloped,below] {$(1,0)$} (s2)
			(s12) edge [in=290,out=320,loop] node[below,sloped] {$(0,1)$} (s12)
			(s12) edge [bend right] node [below,sloped] {$(-1,-1)$} (s10)
			(s13) edge node {$F_{1}$} (s2)
			(s14) edge node {$F_{1}$} (s6)
			(s15) edge node {$F_{1}$} (s10);
		\end{tikzpicture}
		\caption{The graph $\mathcal{G}(L,F_{1},K)$ for the triangular Thue-Morse. Outgoing edges from vertex $(0,0)$ are not represented as they are self-loops.}
		\label{automatonforthuemorsetriangular}
	\end{figure}
	
	Note that ${\bm 0}\in C_{L,F_{1}}+B$ has out-degree $|F_{1}|=|\det(L)|$ and any edge from ${\bm 0}$ is a self-loop. Furthermore, if ${\bm a}\in C_{L,F_{1}}+B$ is an \emph{in-neighbor} of ${\bm 0}$ with an edge labeled by ${\bm f}\in F_{1}$, then ${\bm f}+{\bm a}\in F_{1}$. Let $P={\bm a}_{0}\xrightarrow[]{{\bm f}_{0}}\ {\bm a}_{1} \xrightarrow[]{{\bm f}_{1}}\ {\bm a}_{2}$ be a path in $\mathcal{G}(L,F_{1},B)$. By definition, we have that ${\bm f}_{0}+{\bm a}_{0}=L({\bm a}_{1})+{\bm g}_{0}$ and ${\bm f}_{1}+{\bm a}_{1}=L({\bm a}_{2})+{\bm g}_{1}$, for some ${\bm g}_{0}, {\bm g}_{1}\in F_{1}$. This implies that $L({\bm f}_{1})+{\bm f}_{0}+{\bm a}_{0}=L^{2}({\bm a}_{2})+L({\bm g}_{1})+{\bm g}_{0}$.
	
	\begin{proposition}\label{prop:equivalenceforfolnerproperty}
		Let $n \geq 1$ and ${\bm f} = \sum\limits_{i=0}^{n-1}L^{i}({\bm f}_{i}) \in F_{n}$, with ${\bm f}_{i} \in F_1$ for every $i$. 
		We have that ${\bm f}+C_{L,F_{1}}+B\subseteq F_{n}$ if and only if ${\bm f}_{0}{\bm f}_{1} \cdots {\bm f}_{n-1}$ labels a path from every vertex in $\mathcal{G}(L,F_{1},B)$ to ${\bm 0}$.
	\end{proposition}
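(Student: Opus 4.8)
The plan is to regard $\mathcal{G}(L,F_{1},B)$ as a \emph{deterministic and complete} automaton over the input alphabet $F_{1}$ that performs Euclidean division by $L$: reading a digit ${\bm f}\in F_{1}$ at a vertex ${\bm a}$ amounts to computing the unique decomposition ${\bm f}+{\bm a}=L({\bm b})+{\bm g}$ with ${\bm g}\in F_{1}$ and moving to the quotient ${\bm b}$. Since $F_{1}$ is a fundamental domain, this decomposition is unique, so each vertex has at most one outgoing edge per label; the first thing I would check is that it has exactly one, i.e.\ that the quotient never leaves the vertex set $C_{L,F_{1}}+B$. This is where the hypothesis $K\subseteq B\subseteq L(B)+F_{1}$ and the defining inclusion $C_{L,F_{1}}+F_{1}+F_{1}\subseteq L(C_{L,F_{1}})+F_{1}$ enter: combining them gives
\[
C_{L,F_{1}}+B+F_{1}\subseteq L(B)+C_{L,F_{1}}+F_{1}+F_{1}\subseteq L\big(C_{L,F_{1}}+B\big)+F_{1},
\]
so that for every ${\bm a}\in C_{L,F_{1}}+B$ and every ${\bm f}\in F_{1}$ the quotient ${\bm b}$ indeed lies in $C_{L,F_{1}}+B$. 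Consequently, reading \emph{any} word ${\bm f}_{0}\cdots{\bm f}_{n-1}\in F_{1}^{n}$ from \emph{any} vertex produces a well-defined path, which is exactly what lets the converse direction avoid paths ``getting stuck''.

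The second ingredient is a telescoping identity generalizing the two-step computation carried out just before the statement. Writing the path read from ${\bm c}_{0}$ as ${\bm c}_{0}\xrightarrow[]{{\bm f}_{0}}{\bm c}_{1}\to\cdots\xrightarrow[]{{\bm f}_{n-1}}{\bm c}_{n}$ with remainders ${\bm g}_{i}\in F_{1}$ satisfying ${\bm f}_{i}+{\bm c}_{i}=L({\bm c}_{i+1})+{\bm g}_{i}$, a straightforward induction on $k$ yields
\[
{\bm c}_{0}+\sum_{i=0}^{k-1}L^{i}({\bm f}_{i})=\sum_{i=0}^{k-1}L^{i}({\bm g}_{i})+L^{k}({\bm c}_{k}),
\]
the inductive step reducing precisely to the edge relation $L({\bm c}_{k+1})+{\bm g}_{k}-{\bm c}_{k}={\bm f}_{k}$. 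At $k=n$ this reads ${\bm c}_{0}+{\bm f}=\sum_{i=0}^{n-1}L^{i}({\bm g}_{i})+L^{n}({\bm c}_{n})$.

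For the implication $(\Leftarrow)$, if the word labels a path from every vertex to ${\bm 0}$, then for each ${\bm c}_{0}\in C_{L,F_{1}}+B$ I substitute ${\bm c}_{n}={\bm 0}$ in the identity and obtain ${\bm c}_{0}+{\bm f}=\sum_{i=0}^{n-1}L^{i}({\bm g}_{i})\in F_{n}$; ranging over all vertices gives ${\bm f}+C_{L,F_{1}}+B\subseteq F_{n}$. For the converse $(\Rightarrow)$, fix any vertex ${\bm c}_{0}$; by completeness the path exists, and the hypothesis gives ${\bm c}_{0}+{\bm f}\in F_{n}$, so it admits a unique digit expansion $\sum_{i=0}^{n-1}L^{i}({\bm h}_{i})$ with ${\bm h}_{i}\in F_{1}$. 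Comparing this with the telescoping identity and using the uniqueness of the base-$L$ expansion with digits in $F_{1}$ (peeling off residues modulo $L(\Z^{d})$ one at a time) forces ${\bm g}_{i}={\bm h}_{i}$ for all $i$ and $L^{n}({\bm c}_{n})={\bm 0}$, whence ${\bm c}_{n}={\bm 0}$. As ${\bm c}_{0}$ was arbitrary, the word labels a path from every vertex to ${\bm 0}$.

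The bookkeeping is routine once this framework is in place; I expect the only genuinely delicate point to be the closure of the vertex set under transitions (the completeness of the automaton), since it is what makes ``reading the word from an arbitrary vertex'' meaningful and is the precise reason the sets $C_{L,F_{1}}$ and $B$ were chosen as they are. Everything else reduces to the uniqueness of Euclidean division by $L$ together with the uniqueness of the $F_{1}$-digit expansion of elements of $F_{n}$.
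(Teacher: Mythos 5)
Your proof is correct and follows essentially the same route as the paper: both rest on unique Euclidean division by $L$ with remainders in $F_{1}$, your telescoping identity ${\bm c}_{0}+\sum_{i=0}^{k-1}L^{i}({\bm f}_{i})=\sum_{i=0}^{k-1}L^{i}({\bm g}_{i})+L^{k}({\bm c}_{k})$ being exactly the induction the paper performs when it peels the digits ${\bm g}_{i}({\bm a})$ of ${\bm f}+{\bm a}\in F_{n}$ to build the paths $P_{\bm a}$, and your $(\Leftarrow)$ direction being what the paper dismisses as ``direct''. Where you go beyond the paper is welcome rather than divergent: you explicitly verify the closure $C_{L,F_{1}}+B+F_{1}\subseteq L(C_{L,F_{1}}+B)+F_{1}$ (completeness and determinism of the automaton), a fact the paper uses implicitly when asserting that the successive quotients ${\bm a}_{i}$ stay in the vertex set $C_{L,F_{1}}+B$, and you then recover the endpoint ${\bm c}_{n}={\bm 0}$ from uniqueness of the $F_{1}$-digit expansion instead of constructing the path directly from that expansion.
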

	
	\begin{proof}
		Assume that ${\bm f}+C_{L,F_{1}}+B\subseteq F_{n}$. Set ${\bm a}\in C_{L,F_{1}}+B$. Then, there exists ${\bm g}({\bm a})=\sum\limits_{i=0}^{n-1}L^{i}({\bm g}_{i}({\bm a}))\in F_{n}$ such that ${\bm f}+{\bm a}={\bm g}({\bm a})$. Note that, there exists ${\bm a}_{1}\in C_{L,F_{1}}+B$ such that ${\bm f}_{0}+{\bm a}=L({\bm a}_{1})+{\bm g}_{0}({\bm a})$, and a straightforward induction shows that there exists a sequence $({\bm a}_{i})_{i=1}^{n-1}$ such that for any $1\leq i\leq n-1$, ${\bm f}_{i}+{\bm a}_{i}=L({\bm a}_{i+1})+{\bm g}_{i}({\bm a})$ and ${\bm f}_{n-1}+{\bm a}_{n-1}={\bm g}_{n-1}({\bm a})$. This implies that, there is a path $P_{{\bm a}}={\bm a}\xrightarrow[]{{\bm f}_{0}}\ {\bm a}_{1} \xrightarrow[]{{\bm f}_{1}}\ \cdots \xrightarrow[]{{\bm f}_{n-2}}\ {\bm a}_{n-1} \xrightarrow[]{{\bm f}_{n-1}}\ {\bm 0}$. All of these paths have the same label. The other direction is direct.
	\end{proof}
	
	The next result is a direct consequence of \cref{lemma:EquivalenceFolner} and \cref{prop:equivalenceforfolnerproperty}. 
	
	\begin{corollary}
		\label{cor:equivalenceFolner}
		Let $L\in \mathcal{M}(d,\Z)$ be an expansion matrix, $F_{1}\Subset \Z^{d}$ a fundamental domain of $L(\Z^{d})$ in $\Z^{d}$ containing ${\bm 0}$ and $(F_{n})_{n\in\N}$ be the sequence of fundamental domains defined as \eqref{Eq:RecurrenceSets}. 
		Let also $B \Subset \mathbb{Z}^d$ satisfying $K \subseteq B \subseteq L(B)+F_1$.
		The sequence $(F_{n})_{n\in \N}$ is F{\o}lner if and only if there exists $n \geq 1$ and a word ${\bm f}_{0}{\bm f}_{1} \cdots {\bm f}_{n-1} \in F_1^*$ that labels a path from any vertex to ${\bm 0}$ in $\mathcal{G}(L,F_{1},B)$.
	\end{corollary}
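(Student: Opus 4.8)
The plan is to simply chain together the two preceding results, using \cref{prop:equivalenceforfolnerproperty} as a dictionary that translates the set-theoretic inclusions appearing in \cref{lemma:EquivalenceFolner} into the existence of labelled paths ending at ${\bm 0}$ in $\mathcal{G}(L,F_{1},B)$. The only point requiring care is to match up the auxiliary set $B$ with $K$ correctly, exploiting the standing hypothesis $K\subseteq B$ (and $B\subseteq L(B)+F_{1}$, which is precisely what makes $\mathcal{G}(L,F_{1},B)$ the relevant graph).

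First I would prove the forward implication. Assume $(F_{n})_{n\in\N}$ is F\o lner. Since $K\subseteq B$, \cref{item:translationofC+B} of \cref{lemma:EquivalenceFolner} yields an index $m\in\N$ and an element ${\bm f}\in F_{m}$ with ${\bm f}+C_{L,F_{1}}+B\subseteq F_{m}$. Writing ${\bm f}=\sum_{i=0}^{m-1}L^{i}({\bm f}_{i})$ in its base-$L$ expansion with digits ${\bm f}_{i}\in F_{1}$ — which is available because $F_{m}=\sum_{i=0}^{m-1}L^{i}(F_{1})$ by the recurrence \eqref{Eq:RecurrenceSets} — \cref{prop:equivalenceforfolnerproperty} applied with $n=m$ immediately gives that the word ${\bm f}_{0}{\bm f}_{1}\cdots{\bm f}_{m-1}$ labels a path from every vertex of $\mathcal{G}(L,F_{1},B)$ to ${\bm 0}$. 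This provides the desired word.

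For the converse, suppose there is $n\geq 1$ and a word ${\bm f}_{0}{\bm f}_{1}\cdots{\bm f}_{n-1}\in F_{1}^{*}$ labelling a path from every vertex to ${\bm 0}$. Setting ${\bm f}=\sum_{i=0}^{n-1}L^{i}({\bm f}_{i})\in F_{n}$, the reverse direction of \cref{prop:equivalenceforfolnerproperty} gives ${\bm f}+C_{L,F_{1}}+B\subseteq F_{n}$. Since $K\subseteq B$, this forces ${\bm f}+C_{L,F_{1}}+K\subseteq F_{n}$, which is exactly \cref{item:translationofC+K} of \cref{lemma:EquivalenceFolner} for this $n$; the equivalence \cref{item:translationofC+K} $\Leftrightarrow$ \cref{item:folner} then shows that $(F_{n})_{n\in\N}$ is F\o lner.

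I do not expect any genuine obstacle here, as the statement is a formal corollary of the two quoted results. The most delicate bookkeeping is keeping the roles of $B$ and $K$ straight — using the general set $B$ through \cref{item:translationofC+B} in the forward direction, and the specific $K$ through \cref{item:translationofC+K} together with the inclusion $K\subseteq B$ in the converse — and ensuring that the base-$L$ digit expansion of elements of $F_{m}$, which underlies the hypothesis ${\bm f}=\sum_{i=0}^{n-1}L^{i}({\bm f}_{i})$ of \cref{prop:equivalenceforfolnerproperty}, is indeed at our disposal.
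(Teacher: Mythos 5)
Your proof is correct and matches the paper, which states \cref{cor:equivalenceFolner} as a direct consequence of \cref{lemma:EquivalenceFolner} and \cref{prop:equivalenceforfolnerproperty}, exactly the chaining you carry out (item \ref{item:translationofC+B} with the given $B$ plus the base-$L$ digit expansion of $F_m$ in the forward direction, and item \ref{item:translationofC+K} via $K\subseteq B$ in the converse). No gaps.
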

	
	Note that in \cref{fig:automatonfor1d}, if $\ell\geq3$, the letter $\texttt{1}$ labels paths from every vertex in $\{-1,0,1\}$ to $0$. If $\ell=2$, the word $\texttt{01}$ labels paths from every vertex in $\{-1,0,1\}$ to $0$. Observe that in \cref{automatonforthuemorsetriangular}, the word $(0,1)(1,0)(-1,-1)(0,1)$ labels paths from every vertex to $(0,0)$, which implies by \cref{cor:equivalenceFolner} that the corresponding sequence $(F_n)_{n>0}$ of fundamental domains is F\o lner. In particular, we have 
	\[
	\binom{-2}{5} = \binom{0}{1} + L \binom{1}{0} + L^2 \binom{-1}{-1} +L^3\binom{0}{1},
	\]
	so \cref{prop:equivalenceforfolnerproperty} implies that $(-2,5) \in F_4$ satisfies $(-2,5)+C_{L,F_1}+K \subseteq F_4$. This inclusion is illustrated on the left part of \cref{fig:ballsbecomingbigger}.
	
	\begin{proof}[Proof of \cref{thm:decidabilityfolnerporperty}.]
		Set $B = [B({\bm 0},\bar{r}) \cap \mathbb{Z}^d]$. Since $\|K\| \leq \bar{r}$ (see \cref{rem:powers if needed}), we have that $K \subseteq B$. Assuming that some $F_n$ contains a translation of $C_{L,F_1}+B$, we directly deduce from \cref{lemma:EquivalenceFolner} that the sequence $(F_n)_{n>0}$ is F\o lner.
		
		Let us now assume that the sequence $(F_n)_{n>0}$ is F\o lner. We first prove that we also have that $B \subseteq L(B)+F_1$.
		Indeed, if ${\bm n}, {\bm n}_{1}\in \Z^{d}$ and ${\bm f}_{1}\in F_{1}$ are such that ${\bm n}=L({\bm n}_{1})+{\bm f}_{1}$, then 
		$$\Vert {\bm n}_{1}\Vert \leq \Vert L^{-1}\Vert\cdot \Vert{\bm n}\Vert+\Vert L^{-1}({\bm f}_{1})\Vert.$$
		In particular, we have that
		\begin{equation}\label{eq:ballsapplyingthematrix}
			B
			\subseteq 
			L\left([B\left({\bm 0}, \Vert L^{-1}\Vert \cdot \bar{r}+\Vert L^{-1}(F_{1})\Vert \right)\cap \Z^{d}]\right)+F_{1} 
			= L(B)+F_{1}.
		\end{equation}
		By \cref{cor:equivalenceFolner}, there exists $n \geq 1$ and a word ${\bm f}_{0}{\bm f}_{1} \cdots {\bm f}_{n-1}$ labelling a path $\mathcal{G}(L,F_{1},B)$ from every vertex to ${\bm 0}$.
		When such a word exists, it is known that the length of the shortest one is at most $(N^{3}-N)/6$, where $N$ is the number of vertices of $\mathcal{G}(L,F_{1},B)$~\cite{frankl1982extremal,pin1983two}.
		It thus suffices to observe, using~\eqref{eq:boundonC}, that $N \leq (6r)^d$.
	\end{proof}
	
	Thanks to \cref{lemma:EquivalenceFolner} from now on, we always assume that the sequence of supports of a substitution is F{\o}lner.
	
	The techniques used in the proof of \cref{thm:decidabilityfolnerporperty} are also useful to understand the pattern complexity of a multidimensional substitutive subshift. The \emph{pattern complexity function} (or just the complexity function), denoted by $p_{\zeta}(r)$, is the number of patterns in $\mathcal{L}_{[B({\bm 0},r)\cap \Z^{d}]}(X_{\zeta})$.
	The next result shows that the pattern complexity of multidimensional substitutive subshift is polynomial. This result was already known in the case of aperiodic tillings~\cite[Theorem 7.17]{robinson2004symbolic}, which follows the proof in the dissertation of C. Hansen~\cite{hansen2000dynamics}.

	\begin{proposition}\label{Lemma:UpperBoundComplexityFunction}
		Let $\zeta$ be an aperiodic and primitive constant-shape substitution with expansion matrix $L_{\zeta}$ and support $F_{1}^{\zeta}$. Then, there exists a constant $c>0$, such that
		$$p_{\zeta}(r)\leq c\cdot r^{-\log(|\det(L_\zeta)|)/\log(\Vert L_\zeta^{-1}\Vert)}.$$
	\end{proposition}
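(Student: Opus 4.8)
The plan is to bound the number of radius-$r$ patterns by counting the ``ancestors'' under the substitution, exploiting recognizability to turn a pattern into a smaller pattern at the previous substitution level. First I would fix a radius $r$ and, given a pattern $\texttt{p} \in \mathcal{L}_{[B({\bm 0},r)\cap \Z^{d}]}(X_{\zeta})$, observe that $\texttt{p}$ occurs in some $x \in X_\zeta$, and since $\zeta$ is aperiodic and primitive it is recognizable (as recalled before \cref{thm:decidabilityfolnerporperty}), so $x = S^{\bm j}\zeta(x')$ for a unique $x' \in X_\zeta$ and a unique ${\bm j} \in F_1^\zeta$. The key geometric point is that the preimage in $x'$ of the support of $\texttt{p}$ is roughly the $L_\zeta^{-1}$-image of $[B({\bm 0},r)\cap \Z^{d}]$, hence is contained in a ball of radius essentially $\|L_\zeta^{-1}\| r$ (up to an additive constant coming from $F_1^\zeta$ and the set $C_{L,F_1}$ appearing in \cref{Prop:FiniteSetSatisfyingParticularProperties}). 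Therefore each radius-$r$ pattern is determined by a choice of ${\bm j} \in F_1^\zeta$ together with a pattern of radius about $\|L_\zeta^{-1}\| r + O(1)$ at the lower level.

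This yields a recursive inequality of the shape
\[
p_\zeta(r) \leq |F_1^\zeta|\cdot p_\zeta\!\left(\|L_\zeta^{-1}\| r + c_0\right)
\]
for some constant $c_0$ depending only on the data of the substitution. Since $\|L_\zeta^{-1}\| < 1$, iterating this bound contracts the radius geometrically, and after about $\log r / \log(1/\|L_\zeta^{-1}\|)$ iterations the radius drops to $O(1)$, where the number of patterns is bounded by a constant. Unwinding the recursion multiplies $|F_1^\zeta| = |\det(L_\zeta)|$ that many times, producing
\[
p_\zeta(r) \leq c \cdot |\det(L_\zeta)|^{\,\log r / \log(1/\|L_\zeta^{-1}\|)} = c\cdot r^{\log|\det(L_\zeta)|/\log(1/\|L_\zeta^{-1}\|)},
\]
which matches the stated exponent once one notes that $-\log\|L_\zeta^{-1}\| = \log(1/\|L_\zeta^{-1}\|)$, so the sign in the exponent is correct (as $\|L_\zeta^{-1}\|<1$ makes both $\log(|\det L_\zeta|)$ and $-\log\|L_\zeta^{-1}\|$ positive).

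The step I expect to be the main obstacle is controlling the additive error term $c_0$ uniformly so that the recursion genuinely contracts rather than merely shifting the radius. Concretely, the support of the ancestor pattern is not exactly $L_\zeta^{-1}$ applied to a ball: the recognizability window, the fact that $\texttt{p}$ may straddle several $\zeta$-images, and the need to enlarge the support to a genuine ball all contribute boundary terms. The clean way to handle this, echoing the techniques already developed for \cref{thm:decidabilityfolnerporperty}, is to absorb these boundary effects into the fixed finite sets $K$ and $C_{L,F_1}$ from \cref{FiniteSubsetFillsZd} and \cref{Prop:FiniteSetSatisfyingParticularProperties}: one shows that the ancestor support sits inside $C_{L,F_1} + [B({\bm 0}, \|L_\zeta^{-1}\|r + c_0)\cap \Z^d]$, and since these sets have bounded norm (by \eqref{eq:boundonC} and \cref{rem:GoodUniformlyBounded}) the constant $c_0$ is indeed uniform in $r$. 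Verifying that the fixed point $r^* = c_0/(1-\|L_\zeta^{-1}\|)$ of the affine map $r \mapsto \|L_\zeta^{-1}\|r + c_0$ is finite then guarantees the recursion terminates at bounded radius, and primitivity gives the base-case bound on the number of small patterns via the finitely many words in $\mathcal{L}_{K_\zeta}$.
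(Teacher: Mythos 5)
Your proof is correct, but it runs the counting in the opposite direction from the paper, and one of your tools is heavier than necessary. The paper's proof is one-shot and top-down: it fixes $n(r)\approx \log r/\log(1/\Vert L_{\zeta}^{-1}\Vert)$ so that $[B({\bm 0},r)\cap \Z^{d}]\subseteq L_{\zeta}^{n(r)}(B)+F_{n(r)}^{\zeta}$ with $B=[B({\bm 0},\bar{r})\cap \Z^{d}]$, and then, working inside a fixed point $x=\zeta(x)$, shows that every radius-$r$ pattern is a window $\zeta^{n(r)}(\texttt{v})|_{{\bm f}+[B({\bm 0},r)\cap \Z^{d}]}$ with $\texttt{v}$ supported on the fixed bounded set $C_{L_{\zeta},F_{1}^{\zeta}}+B$ and ${\bm f}\in F_{n(r)}^{\zeta}$; this yields $p_{\zeta}(r)\leq |\A|^{|C_{L_{\zeta},F_{1}^{\zeta}}+B|}\cdot|\det(L_{\zeta})|^{n(r)}$ directly, and recognizability is used nowhere. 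Your version desubstitutes one level at a time and records the recursion $p_{\zeta}(r)\leq |\det(L_{\zeta})|\cdot p_{\zeta}(\Vert L_{\zeta}^{-1}\Vert r+c_{0})$; unrolling it $n(r)$ times recovers exactly the paper's count, with the per-step offsets ${\bm j}_{0}+L_{\zeta}({\bm j}_{1})+L_{\zeta}^{2}({\bm j}_{2})+\cdots$ reassembling into a single element of $F_{n(r)}^{\zeta}$ via \eqref{Eq:RecurrenceSets}. The substantive difference is your appeal to recognizability: for an upper bound you only need the \emph{existence} of some $x'\in X_{\zeta}$ and ${\bm j}\in F_{1}^{\zeta}$ with $x=S^{{\bm j}}\zeta(x')$, not uniqueness, and existence already follows from primitivity by a compactness argument (or, as the paper does, by restricting attention to a fixed point, since by minimality every pattern of the language occurs in it); dropping recognizability makes your argument exactly as elementary as the paper's, and avoids any appearance of relying on the deep theorem recalled from the earlier work. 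Your handling of the additive term is sound — the affine map $r\mapsto \Vert L_{\zeta}^{-1}\Vert r+c_{0}$ contracts since $\Vert L_{\zeta}^{-1}\Vert<1$, and the uniformity of $c_{0}$ is precisely what the paper packages once and for all into $C_{L_{\zeta},F_{1}^{\zeta}}$ and $B$ via \cref{Prop:FiniteSetSatisfyingParticularProperties} and \eqref{eq:boundonC} — and your recursion has the mild advantage of not needing a fixed point or a power of $\zeta$. One small slip: the base case has nothing to do with $K_{\zeta}$ or $\mathcal{L}_{K_{\zeta}}$ (that set controls the number of $\zeta$-periodic points); the number of patterns at the terminal radius $r^{*}=c_{0}/(1-\Vert L_{\zeta}^{-1}\Vert)$ is simply bounded by $|\A|^{|[B({\bm 0},r^{*})\cap \Z^{d}]|}$, which is the analogue of the paper's explicit constant $|\A|^{|C_{L_{\zeta},F_{1}^{\zeta}}+B|}$ — a constant the paper keeps explicit because it is reused verbatim in the proof of \cref{GrowthRepetititvtyFunction}.
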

	
	\begin{proof}
		Set $B=\left[B\left({\bm 0},\bar{r}\right)\cap \Z^{d}\right]$, where $\bar{r} = \|L_{\zeta}^{-1}(F_1^{\zeta})\|/(1-\|L_{\zeta}^{-1}\|)$.
		The idea of the proof is to show that, for all $r>0$, every pattern in $\mathcal{L}_{[B({\bm 0},r)\cap \Z^{d}]}(X_{\zeta})$ will appear in the image under some power of $\zeta$ of a pattern in $\mathcal{L}_{C_{L_{\zeta},F_1^{\zeta}}+B}(X_{\zeta})$ and that we can control the needed power. The polynomial bound then appears using some counting argument. 
		
		First observe that the inclusion given in~\eqref{eq:ballsapplyingthematrix} holds if we replace $\bar{r}$ by any positive radius $r$, that is, 
		\[
		\left[B\left({\bm 0},r\right)\cap \Z^{d}\right]
		\subseteq 
		L_{\zeta}\left([B\left({\bm 0}, \Vert L_{\zeta}^{-1}\Vert \cdot r+\Vert L_{\zeta}^{-1}(F_{1}^{\zeta})\Vert \right)\cap \Z^{d}]\right)+F_{1}^{\zeta}. 
		\]       
		This implies that for all $r >0$,
		\begin{equation}\label{EquationOfCoveraDiscreteBall}
			[B({\bm 0},r)\cap \Z^{d}]\subseteq L_{\zeta}^{n(r)}(B)+F_{n(r)}^{\zeta},
		\end{equation}
		where $n(r)= \lceil \log(r-\Vert L_{\zeta}^{-1}(F_{1}^{\zeta}))\Vert/\log(\Vert L_{\zeta}^{-1}\Vert)\rceil$.
		Recall that the set $C_{L_{\zeta},F_1^{\zeta}}$ satisfies $C_{L_{\zeta},F_1^{\zeta}} + F_n^{\zeta} + F_n^{\zeta} \subseteq L_{\zeta}^n(C_{L_{\zeta},F_1^{\zeta}})+F_n^{\zeta}$ for all $n \geq 0$.
		Thus, using \eqref{EquationOfCoveraDiscreteBall}, we get that
		$$F_{n(r)}^{\zeta}+[B({\bm 0},r)\cap \Z^{d}]\subseteq L_{\zeta}^{n(r)}(C_{L_{\zeta},F_{1}^{\zeta}}+B)+F_{n(r)}^{\zeta}.$$
		
		Let $x\in X_{\zeta}$ be a fixed point of $\zeta$. Since $x=\zeta(x)$, for every pattern $\texttt{u}\in \mathcal{L}_{[B({\bm 0},r)\cap \Z^{d}]}(X_{\zeta})$, there exists $\texttt{v}\in \mathcal{L}_{C_{L_{\zeta},F_{1}^{\zeta}}+B}(X_{\zeta})$ and ${\bm f}\in F_{n(r)}$ such that $\texttt{u}=\zeta^{n(r)}(\texttt{v})_{{\bm f}+[B({\bm 0},r)\cap \Z^{d}]}$. Indeed, if ${\bm n}\in \Z^{d}$ is such that $\texttt{u}=x_{{\bm n}+[B({\bm 0},r)\cap \Z^{d}]}$, write ${\bm n}=L_{\zeta}^{n(r)}({\bm n}_{1})+{\bm f}$ for some ${\bm n}_{1}\in \Z^{d}$ and ${\bm f}\in F_{n(r)}^{\zeta}$. Consider $\texttt{v}=x|_{{\bm n}_{1}+C_{L_{\zeta},F_{1}^{\zeta}}+B}$. Since $x$ is a fixed point of $\zeta$, we have that $\zeta^{n(r)}(\texttt{v})=x|_{{\bm n}+L_{\zeta}^{n(r)}(C_{L_{\zeta},F_{1}^{\zeta}}+B)+F_{n(r)}^{\zeta}}$. In particular, $\texttt{u}=\zeta^{n(r)}(\texttt{v})_{{\bm f}+[B({\bm 0},r)\cap \Z^{d}]}$. Now, since $|\mathcal{L}_{C_{L_{\zeta},F_{1}^{\zeta}}+B}(X_{\zeta})|$ is at most $|\A|^{|C_{L_{\zeta},F_{1}^{\zeta}}+B|}$, we have that
		\begin{align*}
			p_{\zeta}(r) & \leq |\A|^{\left|C_{L_{\zeta},F_{1}^{\zeta}}+B\right|}|\det(L)|^{n(r)} \\
			& \leq |\A|^{\left|C_{L_{\zeta},F_{1}^{\zeta}}+B\right|}\cdot r^{-\log(|\det(L_{\zeta})|)/\log(\Vert L_{\zeta}^{-1}\Vert)},
		\end{align*}
		which ends the proof.
	\end{proof}
	
	\section{Conjugacy between constant-shape substitutions sharing the expansion matrix}\label{Section:ConjugacySameMatrices}
	
	Constant-shape substitutions in dimension 1 were defined in \cite{fogg2007substitutions} under the name of \emph{pattern substitutions}.
	This notion slightly differs from the one-dimensional constant-shape substitutions by allowing the support associated with each letter to vary.   
	The author proved that every biinfinite sequence which is a fixed point of a pattern substitution is, in fact, substitutive. As a consequence, pattern substitutions do not generate new aperiodic sequences beyond those produced by regular substitutions. This raised the question of whether this fact holds in higher dimensions. In this section, we prove an analog of this result (\cref{ConjugationSubstitutionsDifferentFundamentalDomains}): For a fixed expansion matrix, the conjugacy class of substitutive subshifts is invariant by changing the supports of the substitution.
	
	Let us start with an example. The triangular Thue-Morse substitution has exactly 8 $\sigma_{\Delta TM}$-periodic points of order 2 (or $\sigma_{\Delta TM}^{2}$ has exactly 8 fixed points), which are generated by the 8 patterns in $\mathcal{L}_{K_{\Delta TM}}(X_{\sigma_{\Delta TM}})$:
	
	\begin{figure}[H]
		\centering
		$$\begin{array}{ccccccccccccccc}
			&  & b &  &  &  & a &  &  &  & a &  &  &  & b \\ 
			b & a &  &  & a & b &  &  & b & b &  &  & a & a &  \\ 
			& a &  &  &  & b &  &  &  & a &  &  &  & b &  \\ 
			&  &  &  &  &  &  &  &  &  &  &  &  &  &  \\ 
			&  & a &  &  &  & b &  &  &  & a &  &  &  & b \\ 
			b & a &  &  & a & b &  &  & a & a &  &  & b & b &  \\ 
			& b &  &  &  & a &  &  &  & a &  &  &  & b &  \\ 
		\end{array}$$
		\caption{The 8 patterns in $\mathcal{L}_{K_{\Delta TM}}(X_{\sigma_{\Delta TM}})$.}
		\label{PatternsGeneratedFixedPointsTriangularThueMorse}
	\end{figure}
	
	Now, consider the following square substitution $\sigma_{\thesigmavariable}$, with $L=2\cdot \id_{\R^{2}}$ and $F_{1}=\llbracket 0,1\rrbracket^{2}$, over the alphabet $\A=\{0,1,\ldots,15\}$ defined as
	
	\begin{figure}[H]
		$$\begin{array}{cccccccccccccccccccc}
			&\multicolumn{1}{c}{} & \multicolumn{1}{c}{} & \multicolumn{1}{c}{2} & \multicolumn{1}{c}{3} & \multicolumn{1}{c}{} & \multicolumn{1}{c}{} & \multicolumn{1}{c}{} & \multicolumn{1}{c}{5} & \multicolumn{1}{c}{6} & \multicolumn{1}{c}{} & \multicolumn{1}{c}{} & \multicolumn{1}{c}{} & \multicolumn{1}{c}{6} & \multicolumn{1}{c}{4} & \multicolumn{1}{c}{} & \multicolumn{1}{c}{} & \multicolumn{1}{c}{} & \multicolumn{1}{c}{8} & \multicolumn{1}{c}{0} \\ 
			\sigma_{\thesigmavariable}: & \multicolumn{1}{c}{0} & \multicolumn{1}{c}{\mapsto} & \multicolumn{1}{c}{0} & \multicolumn{1}{c}{1} & \multicolumn{1}{c}{} & \multicolumn{1}{c}{1} & \multicolumn{1}{c}{\mapsto} & \multicolumn{1}{c}{7} & \multicolumn{1}{c}{4} & \multicolumn{1}{c}{} & \multicolumn{1}{c}{2} & \multicolumn{1}{c}{\mapsto} & \multicolumn{1}{c}{7} & \multicolumn{1}{c}{5} & \multicolumn{1}{c}{} & \multicolumn{1}{c}{3} & \multicolumn{1}{c}{\mapsto} & \multicolumn{1}{c}{0} & \multicolumn{1}{c}{8} \\ 
			& \multicolumn{1}{c}{} & \multicolumn{1}{c}{} & \multicolumn{1}{c}{} & \multicolumn{1}{c}{} & \multicolumn{1}{c}{} & \multicolumn{1}{c}{} & \multicolumn{1}{c}{} & \multicolumn{1}{c}{} & \multicolumn{1}{c}{} & \multicolumn{1}{c}{} & \multicolumn{1}{c}{} & \multicolumn{1}{c}{} & \multicolumn{1}{c}{} & \multicolumn{1}{c}{} & \multicolumn{1}{c}{} & \multicolumn{1}{c}{} & \multicolumn{1}{c}{} & \multicolumn{1}{c}{} & \multicolumn{1}{c}{} \\ 
			& \multicolumn{1}{c}{} & \multicolumn{1}{c}{} & \multicolumn{1}{c}{8} & \multicolumn{1}{c}{6} & \multicolumn{1}{c}{} & \multicolumn{1}{c}{} & \multicolumn{1}{c}{} & \multicolumn{1}{c}{10} & \multicolumn{1}{c}{11} & \multicolumn{1}{c}{} & \multicolumn{1}{c}{} & \multicolumn{1}{c}{} & \multicolumn{1}{c}{2} & \multicolumn{1}{c}{4} & \multicolumn{1}{c}{} & \multicolumn{1}{c}{} & \multicolumn{1}{c}{} & \multicolumn{1}{c}{5} & \multicolumn{1}{c}{0} \\ 
			& \multicolumn{1}{c}{4} & \multicolumn{1}{c}{\mapsto} & \multicolumn{1}{c}{0} & \multicolumn{1}{c}{1} & \multicolumn{1}{c}{} & \multicolumn{1}{c}{5} & \multicolumn{1}{c}{\mapsto} & \multicolumn{1}{c}{12} & \multicolumn{1}{c}{9} & \multicolumn{1}{c}{} & \multicolumn{1}{c}{6} & \multicolumn{1}{c}{\mapsto} & \multicolumn{1}{c}{0} & \multicolumn{1}{c}{8} & \multicolumn{1}{c}{} & \multicolumn{1}{c}{7} & \multicolumn{1}{c}{\mapsto} & \multicolumn{1}{c}{7} & \multicolumn{1}{c}{5} \\
			& \multicolumn{1}{c}{} & \multicolumn{1}{c}{} & \multicolumn{1}{c}{} & \multicolumn{1}{c}{} & \multicolumn{1}{c}{} & \multicolumn{1}{c}{} & \multicolumn{1}{c}{} & \multicolumn{1}{c}{} & \multicolumn{1}{c}{} & \multicolumn{1}{c}{} & \multicolumn{1}{c}{} & \multicolumn{1}{c}{} & \multicolumn{1}{c}{} & \multicolumn{1}{c}{} & \multicolumn{1}{c}{} & \multicolumn{1}{c}{} & \multicolumn{1}{c}{} & \multicolumn{1}{c}{} & \multicolumn{1}{c}{} \\ 
			& \multicolumn{1}{c}{} & \multicolumn{1}{c}{} & \multicolumn{1}{c}{14} & \multicolumn{1}{c}{11} & \multicolumn{1}{c}{} & \multicolumn{1}{c}{} & \multicolumn{1}{c}{} & \multicolumn{1}{c}{0} & \multicolumn{1}{c}{10} & \multicolumn{1}{c}{} & \multicolumn{1}{c}{} & \multicolumn{1}{c}{} & \multicolumn{1}{c}{14} & \multicolumn{1}{c}{9} & \multicolumn{1}{c}{} & \multicolumn{1}{c}{} & \multicolumn{1}{c}{} & \multicolumn{1}{c}{0} & \multicolumn{1}{c}{8} \\ 
			& \multicolumn{1}{c}{8} & \multicolumn{1}{c}{\mapsto} & \multicolumn{1}{c}{8} & \multicolumn{1}{c}{13} & \multicolumn{1}{c}{} & \multicolumn{1}{c}{9} & \multicolumn{1}{c}{\mapsto} & \multicolumn{1}{c}{8} & \multicolumn{1}{c}{13} & \multicolumn{1}{c}{} & \multicolumn{1}{c}{10} & \multicolumn{1}{c}{\mapsto} & \multicolumn{1}{c}{8} & \multicolumn{1}{c}{0} & \multicolumn{1}{c}{} & \multicolumn{1}{c}{11} & \multicolumn{1}{c}{\mapsto} & \multicolumn{1}{c}{8} & \multicolumn{1}{c}{0} \\ 
			& \multicolumn{1}{c}{} & \multicolumn{1}{c}{} & \multicolumn{1}{c}{} & \multicolumn{1}{c}{} & \multicolumn{1}{c}{} & \multicolumn{1}{c}{} & \multicolumn{1}{c}{} & \multicolumn{1}{c}{} & \multicolumn{1}{c}{} & \multicolumn{1}{c}{} & \multicolumn{1}{c}{} & \multicolumn{1}{c}{} & \multicolumn{1}{c}{} & \multicolumn{1}{c}{} & \multicolumn{1}{c}{} & \multicolumn{1}{c}{} & \multicolumn{1}{c}{} & \multicolumn{1}{c}{} & \multicolumn{1}{c}{} \\ 
			& \multicolumn{1}{c}{} & \multicolumn{1}{c}{} & \multicolumn{1}{c}{15} & \multicolumn{1}{c}{8} & \multicolumn{1}{c}{} & \multicolumn{1}{c}{} & \multicolumn{1}{c}{} & \multicolumn{1}{c}{15} & \multicolumn{1}{c}{10} & \multicolumn{1}{c}{} & \multicolumn{1}{c}{} & \multicolumn{1}{c}{} & \multicolumn{1}{c}{10} & \multicolumn{1}{c}{9} & \multicolumn{1}{c}{} & \multicolumn{1}{c}{} & \multicolumn{1}{c}{} & \multicolumn{1}{c}{6} & \multicolumn{1}{c}{3} \\ 
			& \multicolumn{1}{c}{12} & \multicolumn{1}{c}{\mapsto} & \multicolumn{1}{c}{12} & \multicolumn{1}{c}{15} & \multicolumn{1}{c}{} & \multicolumn{1}{c}{13} & \multicolumn{1}{c}{\mapsto} & \multicolumn{1}{c}{12} & \multicolumn{1}{c}{9} & \multicolumn{1}{c}{} & \multicolumn{1}{c}{14} & \multicolumn{1}{c}{\mapsto} & \multicolumn{1}{c}{12} & \multicolumn{1}{c}{15} & \multicolumn{1}{c}{} & \multicolumn{1}{c}{15} & \multicolumn{1}{c}{\mapsto} & \multicolumn{1}{c}{7} & \multicolumn{1}{c}{4} \\ 
		\end{array}$$
		\caption{A square substitution conjugate to the triangular Thue-Morse substitution.}
		\label{SquareSubstitutionConjugateTriangularThueMorse}
	\end{figure}
	
	This square substitution is conjugated to the triangular Thue-Morse substitution via the following coding:
	
	$$\begin{array}{cccccccccccccccc}
		\Phi: & 0 & \mapsto & a &  & 1 & \mapsto & b &  & 2 & \mapsto & b &  & 3 & \mapsto & a \\ 
		&  &  &  &  &  &  &  &  &  &  &  &  &  &  &  \\ 
		& 4 & \mapsto & a &  & 5 & \mapsto & a &  & 6 & \mapsto & a &  & 7 & \mapsto & b \\ 
		&  &  &  &  &  &  &  &  &  &  &  &  &  &  &  \\ 
		& 8 & \mapsto & b &  & 9 & \mapsto & b &  & 10 & \mapsto & b &  & 11 & \mapsto & b \\ 
		&  &  &  &  &  &  &  &  &  &  &  &  &  &  &  \\ 
		& 12 & \mapsto & a &  & 13 & \mapsto & a &  & 14 & \mapsto & a &  & 15 & \mapsto & b. \\ 
	\end{array}$$
	
	To see this, we note that $\sigma_{\thesigmavariable}$ also has exactly 8 $\sigma_{1}$-periodic points of order 2 generated by the following patterns in $\mathcal{L}_{\llbracket 0,1\rrbracket^{2}}(X_{\sigma})$:
	
	\begin{figure}[H]
		\centering
		$$\begin{array}{ccccccccccc}
			9 & 0 &  & 4 & 8 &  & 13 & 0 &  & 1 & 8 \\ 
			0 & 10 &  & 8 & 6 &  & 3 & 14 &  & 11 & 2 \\ 
			&  &  &  &  &  &  &  &  &  &  \\ 
			13 & 12 &  & 1 & 7 &  & 9 & 12 &  & 4 & 7 \\ 
			4 & 10 &  & 9 & 6 &  & 6 & 14 &  & 10 & 2. \\ 
		\end{array}$$
		\caption{The patterns that generate the 8 fixed points of $\sigma_{\thesigmavariable}^{2}$.}
		\label{fig:my_label}
	\end{figure}
	
	A standard computation shows that, if we define $\phi:X_{\sigma_{\Delta TM}}\to \phi(X_{\sigma_{\thesigmavariable}})$ by the coding $\phi(x)_{\bm n}=\Phi(x_{\bm n})$ for any ${\bm n}\in \Z^{2}$, then any fixed point of $\sigma_{\thesigmavariable}^{2}$ is mapped, via $\phi$, to a fixed point of $\sigma_{\Delta TM}^{2}$. The minimality of $(X_{\sigma_{\Delta TM}},S,\Z^{2})$ let us conclude that $\phi(X_{\sigma_{\Delta TM}})=X_{\sigma_{\thesigmavariable}}$. It can be shown that, the map $\psi:X_{\sigma_{\Delta TM}}\to X_{\sigma_{\thesigmavariable}}$ induced by the following local map:
	$$\begin{array}{llllllllllllllllllll}
		\Psi: & b & a &  &  &  & a & a &  &  &  & b & a &  &  &  & a & a &  &  \\ 
		& a & b & \mapsto & 0 &  & b & b & \mapsto & 1 &  & b & a & \mapsto & 2 &  & a & a & \mapsto & 3 \\ 
		&  &  &  &  &  &  &  &  &  &  &  &  &  &  &  &  &  &  &  \\ 
		& a & a &  &  &  & a & b &  &  &  & b & a &  &  &  & a & a &  &  \\ 
		& a & b & \mapsto & 4 &  & a & a & \mapsto & 5 &  & a & a & \mapsto & 6 &  & b & a & \mapsto & 7 \\ 
		&  &  &  &  &  &  &  &  &  &  &  &  &  &  &  &  &  &  &  \\ 
		& a & b &  &  &  & b & b &  &  &  & a & b &  &  &  & b & b &  &  \\ 
		& b & a & \mapsto & 8 &  & b & a & \mapsto & 9 &  & b & b & \mapsto & 10 &  & b & b & \mapsto & 11 \\ 
		&  &  &  &  &  &  &  &  &  &  &  &  &  &  &  &  &  &  &  \\ 
		& b & b &  &  &  & b & b &  &  &  & a & b &  &  &  & b & a &  &  \\ 
		& a & b & \mapsto & 12 &  & a & a & \mapsto & 13 &  & a & b & \mapsto & 14 &  & b & b & \mapsto & 15, \\ 
	\end{array}$$
	
	\noindent satisfies $\psi\circ \phi=\id_{X_{\sigma_{\thesigmavariable}}}$, so $(X_{\sigma_{1}},S,\Z^{2}), (X_{\sigma_{TM}},S,\Z^{2})$ are topologically conjugate. The example above generalizes as follows.
	
	\begin{theorem}\label{ConjugationSubstitutionsDifferentFundamentalDomains}
		Let $\zeta$ be an aperiodic primitive constant-shape substitution with an expansion matrix $L$ and support $F_{1}$. Now, consider another fundamental domain $G_1\Subset \Z^{d}$ of $\mathbb{Z}^d/L(\mathbb{Z}^d)$, with $\mathbf{0} \in G_1$, and such that the associated sequences $(F_{n})_{n>0}$, $(G_{n})_{n>0}$ are F\o lner.
		There exists an aperiodic computable primitive constant-shape substitution $\tilde{\zeta}$ with support $G_{1}$ such that $(X_{\zeta},S,\Z^{d})$ and $(X_{\tilde{\zeta}},S,\Z^{d})$ are topologically conjugate.
	\end{theorem}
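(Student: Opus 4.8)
The plan is to recode points of $X_{\zeta}$ through a fixed finite window adapted to the new support $G_{1}$, and to read off $\tilde{\zeta}$ directly from this recoding. First I would produce a finite set ${\bm 0}\in W\Subset\Z^{d}$ that is closed under the ``$F_{1}$-division'' associated with $G_{1}$: writing each ${\bm n}\in\Z^{d}$ uniquely as ${\bm n}=L({\bm j})+{\bm f}$ with ${\bm f}\in F_{1}$ and setting $q({\bm n})={\bm j}$, I want $q({\bm g}+{\bm w})\in W$ for every ${\bm g}\in G_{1}$ and ${\bm w}\in W$. Such a $W$ exists and is computable by the same stabilising recursion as in \cref{Prop:FiniteSetSatisfyingParticularProperties}: starting from $W_{0}=\{{\bm 0}\}$ and adjoining all elements $q({\bm g}+{\bm w})$, the estimate $\Vert q({\bm g}+{\bm w})\Vert\leq \Vert L^{-1}\Vert(\Vert{\bm w}\Vert+\Vert G_{1}\Vert+\Vert F_{1}\Vert)$ together with $\Vert L^{-1}\Vert<1$ forces the process to stabilise inside a fixed ball. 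I then define the sliding block code $\psi:X_{\zeta}\to \tilde{\A}^{\Z^{d}}$, with alphabet $\tilde{\A}=\mathcal{L}_{W}(X_{\zeta})$, by $\psi(y)_{{\bm p}}=y|_{{\bm p}+W}$.

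Next I would define $\tilde{\zeta}:\tilde{\A}\to\tilde{\A}^{G_{1}}$ and show it is the sought substitution. Given a letter $\texttt{a}\in\tilde{\A}$, choose any $y\in X_{\zeta}$ and ${\bm p}\in\Z^{d}$ with $y|_{{\bm p}+W}=\texttt{a}$, and set $\tilde{\zeta}(\texttt{a})_{{\bm g}}=\zeta(y)|_{L({\bm p})+{\bm g}+W}$ for ${\bm g}\in G_{1}$. The substitution rule $\zeta(y)_{L({\bm j})+{\bm f}}=\zeta(y_{{\bm j}})_{{\bm f}}$ shows that each symbol of $\zeta(y)$ on $L({\bm p})+{\bm g}+W$ depends only on the values $y_{{\bm p}+q({\bm g}+{\bm w})}$ for ${\bm w}\in W$; the closure property of $W$ guarantees $q({\bm g}+{\bm w})\in W$, so $\tilde{\zeta}(\texttt{a})_{{\bm g}}$ depends only on $\texttt{a}$ and is well defined, independently of $(y,{\bm p})$. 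By construction $\tilde{\zeta}$ is a constant-shape substitution with support exactly $G_{1}$, and the same computation yields the intertwining identity $\psi\circ\zeta=\tilde{\zeta}\circ\psi$ on all of $X_{\zeta}$; iterating, $\psi\circ\zeta^{m}=\tilde{\zeta}^{m}\circ\psi$, so $\psi$ sends $\zeta$-periodic points to $\tilde{\zeta}$-periodic points.

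It then remains to check that $\psi$ is a conjugacy onto $X_{\tilde{\zeta}}$ and that $\tilde{\zeta}$ has the required properties. Injectivity is immediate: since ${\bm 0}\in W$, the radius-$0$ block code $\rho(\tilde{x})_{{\bm p}}=(\tilde{x}_{{\bm p}})_{{\bm 0}}$ satisfies $\rho\circ\psi=\id_{X_{\zeta}}$. For primitivity of $\tilde{\zeta}$ I would use the minimality (uniform recurrence) of $X_{\zeta}$: by the F\o lner property the supports $F_{n}^{\zeta}$ eventually contain arbitrarily large balls, so for $n$ large every $W$-pattern of $\mathcal{L}_{W}(X_{\zeta})$ occurs well inside each $\zeta^{n}(a)$, whence $\tilde{\zeta}^{n}(\texttt{a})$ contains every letter of $\tilde{\A}$. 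Fixing a $\zeta$-periodic point $x$, its image $\psi(x)$ is a $\tilde{\zeta}$-periodic point, and the continuous, shift-commuting map $\psi$ sends $X_{\zeta}=\overline{\mathrm{Orb}(x)}$ onto $\overline{\mathrm{Orb}(\psi(x))}=X_{\tilde{\zeta}}$, the last equality following from primitivity. Thus $\psi$ is a continuous shift-commuting bijection between compact spaces, hence a topological conjugacy, and aperiodicity of $X_{\tilde{\zeta}}$ follows from that of $X_{\zeta}$. Finally, $W$ is computable, the finite-window language $\tilde{\A}=\mathcal{L}_{W}(X_{\zeta})$ is computable (iterating $\zeta$ far enough, the set of $W$-patterns occurring in the interiors of the $\zeta^{n}(a)$ stabilises to $\mathcal{L}_{W}(X_{\zeta})$ by uniform recurrence), and the defining formula makes $\tilde{\zeta}$ computable.

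I expect the main obstacle to be the construction of the window $W$ and the verification that $\tilde{\zeta}$ is well defined with support \emph{exactly} $G_{1}$: this is precisely where the interaction between the two fundamental domains $F_{1}$ and $G_{1}$ of the same lattice $L(\Z^{d})$ is exploited, and where one must resist the tempting shortcut of replacing $\zeta$ by a power (which would change the target support to $G_{m}$ instead of $G_{1}$). Once $W$ is in place, the conjugacy, primitivity, aperiodicity and computability statements are comparatively formal.
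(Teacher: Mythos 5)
Your construction is essentially the paper's, in different notation. Your window $W$, characterized by the closure property $q(G_{1}+W)\subseteq W$, is precisely a finite set satisfying $W+G_{1}\subseteq L(W)+F_{1}$, which is the $k=1$ case of the inclusion $B+G_{k}\subseteq L^{k}(B)+F_{k}$ that the paper establishes for its window $B=K_{2}+A$, built by the same stabilizing recursion exploiting $\Vert L^{-1}\Vert<1$ (compare \cref{Prop:FiniteSetSatisfyingParticularProperties}); your recoding $\psi$, the substitution $\tilde{\zeta}$ read off through $G_{1}$-translates of the window, and the radius-zero inverse $\rho$ reading the ${\bm 0}$-coordinate coincide with the paper's $\phi$, $\tilde{\zeta}$ and $\Psi$. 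Your global intertwining $\psi\circ\zeta=\tilde{\zeta}\circ\psi$ is in fact slightly cleaner than the paper's verification on a fixed point, and your remark about not replacing $\zeta$ by a power (which would change the expansion data) is apt.

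There is, however, a genuine gap in your primitivity argument. The letters of $\tilde{\zeta}^{n}(\texttt{a})$ are the windows $\zeta^{n}(y)|_{L^{n}({\bm p})+{\bm h}+W}$ anchored \emph{only} at the grid positions ${\bm h}\in G_{n}$. An occurrence of a $W$-pattern $\texttt{b}$ ``well inside $\zeta^{n}(a)$'', i.e., at an arbitrary ${\bm t}$ with ${\bm t}+W\subseteq F_{n}$, is read as a letter of some $\tilde{\zeta}^{n}(\cdot)$ only if ${\bm t}$ lies on this grid; writing ${\bm t}=L^{n}({\bm s})+{\bm h}$ with ${\bm h}\in G_{n}$ and ${\bm s}\neq{\bm 0}$, it appears as a letter of $\tilde{\zeta}^{n}(\texttt{a}')$ for a \emph{different} letter $\texttt{a}'=\psi(y)_{{\bm p}+{\bm s}}$, so your inference is circular, and large balls inside $F_{n}$ are not the relevant ingredient. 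The fix uses the hypothesis you never invoke: since $(G_{n})_{n>0}$ is F\o lner, the sets $G_{n}$ contain translates of arbitrarily large balls (\cref{lemma:EquivalenceFolner}); occurrences of each $\texttt{b}\in\tilde{\A}$ as a $W$-pattern in any point of $X_{\zeta}$ are syndetic with gap controlled by the repetitivity function (\cref{GrowthRepetititvtyFunction}) and $\Vert W\Vert$, so once $G_{n}$ contains a ball of radius exceeding that gap, the sampling set $L^{n}({\bm p})+G_{n}$ meets an occurrence of $\texttt{b}$ in $\zeta^{n}(y)$, giving $\texttt{b}\sqsubseteq\tilde{\zeta}^{n}(\texttt{a})$ for every $\texttt{a}$. (The F\o lner assumption on $(G_{n})_{n>0}$ is also what makes $X_{\tilde{\zeta}}$ a well-defined minimal subshift, which your surjectivity step via $\overline{\mathrm{Orb}(\psi(x))}=X_{\tilde{\zeta}}$ quietly uses; that your draft used neither fact should have been a warning sign. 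To be fair, the paper itself dispatches primitivity with a one-line ``straightforward to check'', but the justification you supplied in its place is the one step that, as written, does not go through.)
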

	
	The proof of \cref{ConjugationSubstitutionsDifferentFundamentalDomains} is an adaptation of the construction of substitution of length $n$ from the one-dimensional case that we now recall. We refer to~\cite{queffelec2010substitution} for more details. Assume that $\zeta : \A^* \to \A^*$ is a primitive one-dimensional substitution. For every $n \geq 1$, we consider the set $\mathcal{L}_n(X_\zeta)$ as an alphabet $\B_n$ and define the substitution $\zeta_n:\B_n^* \to \B_n^*$ as follows. If $w = w_1 \cdots w_n \in \B_n$ and $\zeta(w) = v = v_1 \cdots v_\ell$, where all $w_i$ and $v_j$ are letters, then $\ell \geq |\zeta(w_1)|+n-1$ and we set
	\[
	\zeta_n(w) = (v_1 \cdots v_n)(v_2 \cdots v_{n+1}) \cdots (v_{|\zeta(w_1)|} \cdots v_{|\zeta(w_1)|+n-1}). 
	\]
	It turns out that $\zeta_n$ is a primitive substitution and that the subshifts $X_\zeta$ and $X_{\zeta_n}$ are conjugate, an isomorphism being given by the sliding block code
	\[
	\Phi : \mathcal{L}_{n}(X_\zeta) \to \B_n, w \mapsto (w).  
	\]
	In the proof of \cref{ConjugationSubstitutionsDifferentFundamentalDomains}, we find a set $B$ whose $F_1$-image $L(B)+F_1$ can be cut into blocks with support $B$ along $G_1$. This allows us define the substitution $\tilde{\zeta}$ and we prove that the associated subshifts are conjugate. Note that, the local map $\psi$ in the above example that defines a conjugacy between the triangular Thue-Morse substitution and a square substitution is a coding of the patterns in $\mathcal{L}_{\llbracket 0,1\rrbracket^{2}}(X_{\zeta})$.
	
	\begin{proof} 
		Using Remark~\ref{rem:powers if needed}, we may assume that $\zeta$ has a fixed point $\bar{x}\in X_{\zeta}$. Assume $K_{1}=(\id-L)^{-1}(F_{1})\cap \Z^{d}$ and $K_{2}=(\id-L)^{-1}(G_{1})\cap \Z^{d}$ are the sets given by \cref{FiniteSubsetFillsZd} for $F_{1}$ and $G_{1}$, respectively.   
		
		First, we adapt the proof of \cref{Prop:FiniteSetSatisfyingParticularProperties}, to obtain a finite set $A\Subset \Z^{d}$ such that for any $n\geq 0, K_{2}+A+G_{n}\subseteq L^{n}(K_{2}+A)+F_{n}$. 
		Consider the sequence $(A_n)_{n \geq 0}$ of finite sets in $\mathbb{Z}^d$ as follows: set $A_0 = \{\mathbf{0}\}$, and for $n\geq 0$,
		\begin{align*}
			A_{n+1} 
			&= \{ {\bm p} \in \mathbb{Z}^d \mid \exists {\bm f} \in F_1, {\bm g}_{1}, {\bm g}_{2} \in G_1, {\bm q} \in A_n: L({\bm p})+{\bm f} = {\bm q} + {\bm g}_{1} + {\bm g}_{2}\}\\
			&= L^{-1}(A_n + G_1 + G_1 - F_1) \cap \mathbb{Z}^d.
		\end{align*}
		
		We will prove that this sequence is eventually stationary, i.e., there exists $N>0$ such that for all $m\geq N$, $A_{m}=A_{N}$.
		
		\setcounter{claim}{0}
		\begin{claim}\label{FirstClaimforConjugateSubstitution} For every $n\geq 0$ and every $k>0$, we have the following inclusions: $A_{n}\subseteq A_{n+1}$ and $K_{2}+A_{n}+G_{k}\subseteq L^{k}(K_{2}+A_{n+k})+F_{k}$.
		\end{claim}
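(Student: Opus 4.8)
The plan is to establish the two assertions separately, both by induction, with the second inclusion being the substantial one. The inclusion $A_{n}\subseteq A_{n+1}$ should be read off directly from the recursive definition of the sets $A_n$, while the transfer inclusion $K_{2}+A_{n}+G_{k}\subseteq L^{k}(K_{2}+A_{n+k})+F_{k}$ should be reduced to its case $k=1$, which is then propagated to all $k$ using the Minkowski-sum recurrences $F_{k+1}=L(F_{k})+F_{1}$ (see~\eqref{Eq:RecurrenceSets}) and $G_{k+1}=L(G_{k})+G_{1}$.

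First I would prove $A_{n}\subseteq A_{n+1}$ by induction on $n$. The base case is $\mathbf{0}\in A_{1}$, which holds since $L({\bm 0})+{\bm 0}={\bm 0}+{\bm 0}+{\bm 0}$ and $\mathbf{0}$ lies in $F_{1}\cap G_{1}\cap A_{0}$. For the inductive step, if ${\bm p}\in A_{n+1}$ is witnessed by ${\bm f}\in F_{1}$, ${\bm g}_{1},{\bm g}_{2}\in G_{1}$ and ${\bm q}\in A_{n}$ with $L({\bm p})+{\bm f}={\bm q}+{\bm g}_{1}+{\bm g}_{2}$, then the induction hypothesis gives ${\bm q}\in A_{n+1}$, and the very same identity exhibits ${\bm p}\in A_{n+2}$. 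This part requires no more than unfolding the definition.

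Next I would treat the transfer inclusion, isolating $k=1$ as the core step. Given ${\bm \kappa}\in K_{2}$, ${\bm a}\in A_{n}$ and ${\bm g}\in G_{1}$, I would use the defining property $K_{2}=(\id-L)^{-1}(G_{1})\cap \Z^{d}$, which means ${\bm \kappa}-L({\bm \kappa})\in G_{1}$, to write ${\bm \kappa}+{\bm a}+{\bm g}=L({\bm \kappa})+\bigl({\bm a}+({\bm \kappa}-L({\bm \kappa}))+{\bm g}\bigr)$, where the two ``extra'' summands ${\bm \kappa}-L({\bm \kappa})$ and ${\bm g}$ both lie in $G_{1}$. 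Decomposing uniquely ${\bm a}+({\bm \kappa}-L({\bm \kappa}))+{\bm g}=L({\bm p})+{\bm f}$ with ${\bm f}\in F_{1}$ and ${\bm p}\in\Z^{d}$ forces $L({\bm p})\in A_{n}+G_{1}+G_{1}-F_{1}$, hence ${\bm p}\in A_{n+1}$ by definition of $A_{n+1}$. Then ${\bm \kappa}+{\bm a}+{\bm g}=L({\bm \kappa}+{\bm p})+{\bm f}\in L(K_{2}+A_{n+1})+F_{1}$, which settles $k=1$. For the inductive step, assuming the statement at level $k$ for every index, I would compute
\[
K_{2}+A_{n}+G_{k+1}=K_{2}+A_{n}+G_{1}+L(G_{k})\subseteq L(K_{2}+A_{n+1})+F_{1}+L(G_{k})=L\bigl(K_{2}+A_{n+1}+G_{k}\bigr)+F_{1},
\]
using the case $k=1$, and then feed the induction hypothesis $K_{2}+A_{n+1}+G_{k}\subseteq L^{k}(K_{2}+A_{n+1+k})+F_{k}$ in at index $n+1$ and apply $L$, obtaining $K_{2}+A_{n}+G_{k+1}\subseteq L^{k+1}(K_{2}+A_{n+k+1})+L(F_{k})+F_{1}=L^{k+1}(K_{2}+A_{n+k+1})+F_{k+1}$.

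The hard part will be the base case $k=1$: it is the only place where the arithmetic meaning of $K_{2}$ (namely ${\bm \kappa}-L({\bm \kappa})\in G_{1}$) must be combined with the defining relation of $A_{n+1}$, and one must invoke the uniqueness of the expansion ${\bm m}=L({\bm p})+{\bm f}$ with ${\bm f}\in F_{1}$ to certify that ${\bm p}\in A_{n+1}$. Once this single transfer step is secured, passing to arbitrary $k$ is a routine iteration driven by the recurrences for $F_{k}$ and $G_{k}$, and the first inclusion $A_{n}\subseteq A_{n+1}$ is not even needed to carry it out.
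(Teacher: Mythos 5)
Your proposal is correct and follows essentially the same route as the paper: the nestedness $A_n\subseteq A_{n+1}$ by direct induction, the case $k=1$ as the base of an induction driven by the Minkowski recurrences $F_{k+1}=L(F_k)+F_1$ and $G_{k+1}=L(G_k)+G_1$. The only differences are cosmetic: you spell out the $k=1$ step (using ${\bm \kappa}-L({\bm \kappa})\in G_1$ and the definition of $A_{n+1}$) where the paper simply calls it direct, and in the inductive step you peel off $G_1$ from $G_{k+1}=G_1+L(G_k)$ applying the base case at index $n$ first, whereas the paper peels off $L^k(G_1)$ and applies the base case at index $n+k$ after the induction hypothesis — an equivalent rearrangement.
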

		
		This claim shows that if we choose $N>0$ such that for all $m\geq N$, $A_{m}=A_{N}$, then for all $k>0$, we have that $K_{2}+A_{N}+G_{k}\subseteq L^{k}(K_{2}+A_{N})+F_{k}$. Now, if we consider a pattern $\texttt{w}\in \mathcal{L}_{K_{2}+A_{N}}(X_{\zeta})$, the support of $\zeta^{k}(\texttt{w})$ is large enough to be splited into $(K_{2}+A_{N})$-blocks along $G_k$. This will allow us to consider $\mathcal{L}_{K_{2}+A_{N}}(X_{\zeta})$ as an alphabet and to define the substitution $\tilde{\zeta}$ with support $G_1$. 
		
		\begin{proof}[Proof of \cref{FirstClaimforConjugateSubstitution}]
			We first prove that $A_{n}\subseteq A_{n+1}$ for every $n \geq 0$.
			Since ${\bm 0}\in G_1 \cap F_1$, we trivially have that $A_{0} \subseteq A_{1}$. It is then a direct consequence of the definition of the sets $A_n$, that if $A_n \subseteq A_{n+1}$, then $A_{n+1} \subseteq A_{n+2}$.
			
			Now, let us prove the other sequence of inclusions. The inclusion $K_{2}+A_{n}+G_{1}\subseteq L(K_{2}+A_{n+1})+F_{1}$ is direct for any $n\geq 0$. Suppose that $K_{2}+A_{n}+G_{k}\subseteq L^{k}(K_{2}+A_{n+k})+F_{k}$ for some $n \geq 0$ and some $k>0$. 
			Since $G_{k+1} = G_k + L^k(G_{1})$, we get that 
			\[
			K_{2}+A_{n}+G_{k+1} 
			= K_{2}+A_{n}+G_k + L^k(G_{1}) 
			\subseteq L^{k}(K_{2}+A_{n+k}+G_1)+F_{k}. 
			\]
			By the initial case, we have that $K_{2}+A_{n+k}+G_1 \subseteq L(K_{2}+A_{n+k+1})+F_{1}$. Using the equality $F_{k+1}=F_k + L^k(F_1)$,
			\begin{align*}
				L^{k}(K_{2}+A_{n+k}+G_1)+F_{k} 
				& \subseteq L^{k+1}(K_{2}+A_{n+k+1})+L^k(F_1) +F_{k}\\ 
				& =L^{k+1}(K_{2}+A_{n+k+1})+ F_{k+1}.
			\end{align*}
			
			This completes the proof of the claim.
		\end{proof}
		
		Now, we prove that the sequence of finite sets $(A_{n})_{n>0}$ is eventually stationary. Define the sequence $a_{n}=\Vert A_{n}\Vert$. This sequence satisfies 
		$$a_{n+1}\leq \Vert L^{-1}\Vert a_{n} +\Vert L^{-1}(G_{1}+G_{1}-F_1)\Vert,$$
		which implies that
		$$
		a_{n}
		\leq 
		a_{0}\cdot \Vert L^{-1}\Vert^{n}
		+    
		\Vert L^{-1}(G_{1}+G_{1}-F_1)\Vert\dfrac{1-\Vert L^{-1}\Vert^{n}}{1-\Vert L^{-1}\Vert}.$$
		
		Since $\Vert L^{-1}\Vert<1$, the sequence $(a_{n})_{n\geq 0}$ is bounded. Therefore, the nested sequence $(A_n)_{n \geq 0}$ is eventually constant. Let $n \geq 0$ such that $A_n = A_m$ for all $m \geq n$, and set $A = A_n$.
		By~\cref{FirstClaimforConjugateSubstitution}, we have
		\begin{equation}\label{WeCanIterateSubstitution}
			\forall k> 0, K_{2}+A+G_{k}\subseteq L^{k}(K_{2}+A)+F_{k}.
		\end{equation}
		
		To define a substitution $\tilde{\zeta}$ with support $G_{1}$, we consider the set $\mathcal{B}=\mathcal{L}_{K_{2}+A}(X_{\zeta})$ as a new alphabet, and we define the substitution $\tilde{\zeta}$ with support $G_{1}$ on the alphabet $\mathcal{B}$ as follows:
		$$\forall {\bm g}\in G_{1}, (\tilde{\zeta}(\texttt{w}))_{\bm g}=\zeta(\texttt{w})_{{\bm g}+K_{2}+A}.$$
		
		Note that by \eqref{WeCanIterateSubstitution}, the substitution $\tilde{\zeta}$ is well-defined. Using~\cref{FirstClaimforConjugateSubstitution} and the primitivity of $\zeta$, it is straightforward to check that $\tilde{\zeta}$ is primitive. 
		Let us now prove that $(X_{\zeta},S,\Z^{d})$ and $(X_{\tilde{\zeta}},S,\Z^{d})$ are topologically conjugate. Indeed, consider the factor map $\phi:X_{\zeta}\to \mathcal{B}^{\mathbb{Z}^d}$ induced by 
		$$\begin{array}{cccc}
			\Phi: & \mathcal{L}_{K_{2}+A}(X_{\zeta}) & \to &\mathcal{B}\\
			& \texttt{w} & \mapsto & \texttt{w}.
		\end{array}$$
		Thus, for all $x\in X_{\zeta}$ and ${\bm n}\in \Z^{d}$, we have that $\phi(x)_{{\bm n}}=\Phi(x|_{{\bm n}+K_{2}+A})$. We prove that $\phi(\bar{x})$ is a fixed point of $\tilde{\zeta}$. Set ${\bm n}\in \Z^{d}$. There exists a unique ${\bm n}_{1}\in \Z^{d}$ and ${\bm g}\in G_{1}$ such that ${\bm n}=L({\bm n}_{1})+{\bm g}$. Note that
		\begin{align*}
			(\tilde{\zeta}(\phi(\bar{x})))_{{\bm n}} 
			& = (\tilde{\zeta}(\phi(\bar{x})))_{L({\bm n}_{1})+{\bm g}}\\
			& = (\tilde{\zeta}(\phi(\bar{x}))_{{\bm n}_{1}})_{{\bm g}}\\
			& = (\tilde{\zeta}(\Phi(\bar{x}|_{{\bm n}_{1}+K_{2}+A})))_{{\bm g}}\\
			& = (\zeta(\bar{x}|_{{\bm n}_{1}+K_{2}+A}))_{{\bm g}+K_{2}+A}\\
			& = (\zeta(\bar{x}))_{L({\bm n}_{1})+{\bm g}+K_{2}+A}\\
			& = (\zeta(\bar{x}))_{{\bm n}+K_{2}+A}\\
			& = \bar{x}_{{\bm n}+K_{2}+A}\\
			& = \Phi(\bar{x}|_{{\bm n}+K_{2}+A})\\
			& = (\phi(\bar{x}))_{{\bm n}},
		\end{align*}
		so $\phi(\bar{x})\in X_{\tilde{\zeta}}$ is a fixed point of $\tilde{\zeta}$.
		By the minimality of $\phi(X_\zeta)$ and $X_{\tilde{\zeta}}$, we conclude that $\phi(X_\zeta) = X_{\tilde{\zeta}}$. Therefore, $\phi$ is a factor map from $X_\zeta$ to $X_{\tilde{\zeta}}$.
		To prove that it is a conjugacy, we check that the factor map $\psi:X_{\tilde{\zeta}}\to \mathcal{A}^{\mathbb{Z}^d}$ induced by
		$$\begin{array}{cccc}
			\Psi: & \mathcal{L}_{K_{2}+A}(X_{\zeta}) & \to &\mathcal{A}\\
			& \texttt{w} & \mapsto & \texttt{w}_{{\bm 0}}
		\end{array}$$
		is its inverse map. Indeed, for any ${\bm n}\in \Z^{d}$, we get that $\psi(\phi(\bar{x}))_{{\bm n}}=\Psi(\phi(\bar{x})_{{\bm n}})=\Psi(\bar{x}_{{\bm n}+K_{2}+A})=\bar{x}_{{\bm n}}$, i.e., $\psi(\phi(\bar{x}))=\bar{x}$. 
		The minimality of $(X_{\zeta},S,\Z^{d})$ implies that $\psi\circ\phi=\id_{X_{\zeta}}$. Hence, $\phi$, $\psi$ are invertible and $\phi^{-1}=\psi$. We conclude that $(X_{\zeta},S,\Z^{d})$ and $(X_{\tilde{\zeta}},S,\Z^{d})$ are topologically conjugate.
	\end{proof}
	
	Observe that, by construction, the set $B=K_{2}+A$ only depends on $L,F_1$ and $G_1$, not on the combinatorial properties of $\zeta$. For example, the set $B=\llbracket -1,2\rrbracket^{2}$ is enough to obtain a square substitution conjugate to a substitution defined with $L=2\cdot \id_{\R^{2}}$ and $F_{1}=\{(0,0),(1,0),(0,1),(-1,-1)\}$.
	
	\section{Computability of the constant of recognizability}\label{Section:ComputabilityRecognizability}
	
	The recognizability property of substitutions is a combinatorial one that offers a form of invertibility, allowing the unique decomposition of points within the substitutive subshift. Recall that if $\zeta$ is a substitution with a fixed point $x\in X_{\zeta}$, then $\zeta$ is recognizable on $x$ if there exists some constant $R>0$ such that for all ${\bm i}, {\bm j}\in \Z^{d}$,
	\begin{equation} \label{eq:def recognizability}
		x|_{[B(L_{\zeta}({\bm i}),R)\cap \Z^{d}]}=x|_{[B({\bm j},R)\cap \Z^{d}]} \implies (\exists {\bm k}\in \Z^{d}) (({\bm j}=L_{\zeta}({\bm k}))\wedge (x_{{\bm i}}=x_{{\bm k}})).    
	\end{equation}

	This property was initially established for aperiodic primitive substitutions by B. Moss\'e in \cite{mosse1992puissances}. This proof implies the existence of a natural sequence of refining (Kakutani-Rokhlin) partitions, which is a key tool when studying substitutive systems and more general $\mathcal{S}$-adic systems. Subsequently, in \cite{bezuglyi2009aperiodic}, it was extended to cover non-primitive substitutions. Later, F. Durand and J. Leroy proved the computability of the recognizability length for one-dimensional primitive substitutions \cite{durand2017constant}, which was then generalized by M.-P. Beal, D. Perrin, and A. Restivo in \cite{beal2023recognizability} for the most general class of morphisms, including ones with erasable letters. In the multidimensional setting, B. Solomyak showed in \cite{solomyakrecognizability} that aperiodic translationally finite self-affine tilings of $\R^{d}$ satisfy a recognizability property, referred to as the \emph{unique composition property}. Furthermore, in \cite{cabezas2023homomorphisms}, it was demonstrated that aperiodic symbolic factors of constant-shape substitutive subshifts also exhibit a recognizability property. In this section, we provide a computable upper bound for the constant of recognizability of aperiodic primitive constant-shape substitutions (\cref{UpperBoundRecognizability}). This upper bound can be expressed solely in terms of $|\A|,L_{\zeta}, \Vert F_{1}^{\zeta}\Vert$ and $d$. This result will be instrumental in the subsequent section, where we establish the decidability of the factorization problem between minimal substitutive subshifts. To achieve this, we will adapt some of the proofs presented in \cite{cabezas2023homomorphisms} in order to obtain computable bounds. We prove the following.	
	
	\begin{theorem}\label{UpperBoundRecognizability} Let $\zeta$ be an aperiodic primitive constant-shape substitution on an alphabet $\A$, with expansion matrix $L_{\zeta}$ and support $F_{1}^{\zeta}$ admitting a fixed point $x\in X_{\zeta}$.  Define
		
		\begin{itemize}
			\item $t=-\log(\Vert L_{\zeta}\Vert)/\log(\Vert L_{\zeta}^{-1}\Vert)$,
			\item $\bar{r}=\Vert L_{\zeta}^{-1}(F_{1}^{\zeta})\Vert/(1-\Vert L_{\zeta}^{-1}\Vert)$,
			\item $a=\left\lceil(2\Vert F_{1}^{\zeta}\Vert+d)(2\Vert F_{1}\Vert + \Vert L_{\zeta}\Vert^{|\A|^{2}+(|\A|+1)^{(6\bar{r})^{d}}})\right\rceil$,
			\item $\bar{R}=\left\lceil \Vert L_{\zeta}^{-1}\Vert^{|\A|-1}\cdot a\cdot 9^{t}\Vert L_{\zeta}\Vert^{|t(|\A|-1)}\bar{r}^{t})+4\bar{r}\right\rceil$
			\item $\bar{n}=\left\lceil |\A|^{(2\bar{R}+6\bar{r})^{d}}\right\rceil$.
		\end{itemize} Then $\zeta$ is recognizable on $x$ and the constant of recognizability is at most
		$$2\Vert L_{\zeta}\Vert^{|\A|}[2\Vert F_{1}^{\zeta}\Vert + \Vert L_{\zeta}\Vert^{{\bar n}+|\A|}(2\Vert F_{1}^{\zeta}\Vert+7\bar{r}+\Vert L_{\zeta}^{-1}\Vert^{|\A|-1}\cdot a\cdot 9^{t}\Vert L_{\zeta}\Vert^{t(|\A|-1)})\bar{r}^{t}].$$
	\end{theorem}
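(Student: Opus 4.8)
The plan is to establish recognizability by contradiction, converting the \emph{qualitative} argument of \cite{cabezas2023homomorphisms}—where recognizability holds because $\zeta:X_\zeta \to \zeta(X_\zeta)$ is a homeomorphism onto its image—into explicit quantitative estimates, but carrying out the conversion combinatorially rather than by unfolding a compactness argument. A crucial structural point is that the bound must depend only on $|\A|$, $L_\zeta$, $\|F_1^\zeta\|$ and $d$, and not on the specific combinatorics of $\zeta$; the strategy is therefore to bound the \emph{worst case} over all substitutions with these data, which is exactly why the intermediate quantities are counts of possible local configurations (for instance $(|\A|+1)^{(6\bar r)^d}$ counting partial patterns on a ball of radius $\bar r$, with the extra symbol standing for a gap) rather than anything read off from $\zeta$ itself. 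I would organize the argument in three phases: (i) a quantitative aperiodicity step producing a bounded-size witness that pins down the substitution lattice $L_\zeta(\Z^d)$; (ii) a local-to-global step showing that agreement of two $R$-balls forces alignment of the desubstitution cutting points; and (iii) a metric conversion turning the combinatorial counts into the explicit radius, using the repetitivity function of \cref{GrowthRepetititvtyFunction} and the complexity bound of \cref{Lemma:UpperBoundComplexityFunction}.

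The technical core is phase (i). I would first exploit the substitution self-similarity together with a pigeonhole over the $|\A|$ letters: following a chain $a_0 \mapsto a_1 \mapsto \cdots$ in which each $a_{k+1}$ occurs in $\zeta(a_k)$, some letter must repeat after at most $|\A|$ steps, and this is what produces the recurring factor $\|L_\zeta\|^{|\A|}$ bounding the spatial extent of such chains. Using this together with the fact that aperiodicity forbids any genuine period, I would produce a pattern of controlled support that can occur in only one phase relative to $L_\zeta(\Z^d)$: were the agreement of two $R$-balls to permit an off-lattice occurrence (${\bm j}\notin L_\zeta(\Z^d)$) or a symbol mismatch, then propagating that mismatch through the self-similar structure would manufacture a nontrivial period, contradicting aperiodicity. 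The quantity $a$, with its double exponential in the ball volume $(6\bar r)^d$, should arise precisely as the bound on the window within which such a phase-detecting witness is already forced to appear.

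Phase (iii) is where the anisotropy exponent $t=-\log\|L_\zeta\|/\log\|L_\zeta^{-1}\|$ does its work. Because $L_\zeta$ need not be a scalar matrix, pushing a metric ball through $L_\zeta$ or $L_\zeta^{-1}$ distorts radii by different factors in different directions; each such passage costs a factor controlled by $\|L_\zeta\|^{t}$ or $\bar r^{t}$, which is why $\bar R$ carries the correction $9^{t}\|L_\zeta\|^{t(|\A|-1)}\bar r^{t}$. Once $\bar R$ is fixed, the number of distinct patterns on a ball of radius $\sim\bar R$ is at most $\bar n=|\A|^{(2\bar R+6\bar r)^d}$ by the trivial counting bound (consistent with, and coarser than, \cref{Lemma:UpperBoundComplexityFunction}, the thickening $6\bar r$ absorbing $C_{L_\zeta,F_1^\zeta}$ via $\|C_{L_\zeta,F_1^\zeta}\|\le 2\bar r$). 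A pigeonhole over these $\bar n$ patterns then bounds the number of substitution iterations after which the unambiguous cutting structure must appear, and since each iteration scales distances by at most $\|L_\zeta\|$, this contributes the factor $\|L_\zeta\|^{\bar n}$ in the final expression. Assembling phases (i)--(iii) and tracking the constants should yield exactly the stated upper bound.

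The main obstacle I anticipate is the multidimensional anisotropy in passing from phase (i) to phase (ii): in dimension one the substitution length is a single number and "propagating a mismatch" is a clean two-sided pumping argument, whereas here one must control how a misalignment in $\Z^d$ spreads under $L_\zeta$ when the expansion rates differ by direction, and one must guarantee that the period produced is genuinely nontrivial rather than an artifact of the thickening. Keeping every inclusion effective throughout this propagation is what requires the finite fill-in set $K_{L,F_1}$ and the nested families $(L^n(C_{L,F_1})+F_n)_n$ from \cref{FiniteSubsetFillsZd,Prop:FiniteSetSatisfyingParticularProperties}, while the repetitivity bound of \cref{GrowthRepetititvtyFunction} is what upgrades the phase-detecting witness from occurring "eventually" to occurring within a computable distance, which is essential for the entire bound to be effective.
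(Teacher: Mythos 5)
Your skeleton---a repetitivity bound of the form $R_{X_\zeta}(r)\leq a\, r^t$ (\cref{GrowthRepetititvtyFunction}), a repulsion property converting two close occurrences of a large pattern into a forbidden period, and a pigeonhole over $\bar n$ local configurations across scales, each scale costing a factor $\Vert L_\zeta\Vert$---is essentially the paper's architecture, and you locate the roles of $a$, $t$ and $\bar n$ correctly. The genuine gap is that you never confront substitutions that are not injective on letters, and the one mechanism you propose for that case is wrong: if $\zeta(a)=\zeta(b)$ with $a\neq b$, a ``symbol mismatch'' at the preimage level is erased by a single application of $\zeta$, so propagating it through the self-similar structure manufactures nothing---in particular no nontrivial period---yet the definition of recognizability still demands $x_{\bm i}=x_{\bm k}$. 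The paper handles this with the Ehrenfeucht--Rozenberg simplification theorem (\cref{PropositionForNonInyectiveSubstitutions}): if $\zeta^{|\A|-1}(\texttt{u})\neq\zeta^{|\A|-1}(\texttt{v})$ then $\zeta^{n}(\texttt{u})\neq\zeta^{n}(\texttt{v})$ for all $n$. This theorem---not anisotropy, and not your chain of letters $a_0\mapsto a_1\mapsto\cdots$---is the source of all the $(|\A|-1)$- and $|\A|$-exponents in the bound: in the first step (\cref{ComputabilityRecognizability}) it pulls the two hypothetical occurrences of $\zeta^{m}(\texttt{w})$ back to level $|\A|-1$, where their distance is at most $9\Vert L_\zeta\Vert^{|\A|-1}\bar r$, which is exactly where the terms $\Vert L_{\zeta}^{-1}\Vert^{|\A|-1}$ and $\Vert L_\zeta\Vert^{t(|\A|-1)}$ inside $\bar R$ come from; and it constitutes the entire second step (\cref{RecognizabilitySecondStep}), which applies the first-step constant to $\zeta^{|\A|}$ and, via the power-scaling of \cref{RecognizabilityForPowers}, produces the leading factor $2\Vert L_\zeta\Vert^{|\A|}$ that you instead attribute to a letter-chain pigeonhole.

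Relatedly, even in the first step the contradiction is not obtained by turning a single off-lattice occurrence into a period. The paper assumes recognizability fails at every scale $|\A|\leq n\leq \bar n+|\A|$, pigeonholes to find two scales $n<m$ carrying identical triples $(\texttt{u},\texttt{v},\texttt{w})$, and then faces a dichotomy: if the complementary patterns satisfy $\zeta^{m-n}(\texttt{a}_{n})\neq \texttt{a}_{m}$, then $\zeta^{m}(\texttt{u})$ contains two distinct occurrences of $\zeta^{m}(\texttt{w})$ at controlled distance, which the repulsion property (\cref{RepulsionProperty})---the only place where aperiodicity enters---forbids; hence $\zeta^{m-n}(\texttt{a}_{n})=\texttt{a}_{m}$, and this equality is what forces ${\bm j}_{m}-{\bm f}_{m}\in L_\zeta^{m}(\Z^{d})$, contradicting ${\bm j}_{m}\notin L_\zeta(\Z^{d})$. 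Your plan would need this dichotomy, or a substitute for it, spelled out: ``propagating the misalignment'' on its own does not produce a period in $\Z^{d}$. A minor point: \cref{Lemma:UpperBoundComplexityFunction} is not actually invoked in the paper's proof---the trivial count $|\A|^{(2\bar R+6\bar r)^{d}}$ suffices, as you yourself suspected.
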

	
	In B. Moss\'e's original proof, a key argument for the proof of the recognizability property is the existence of an integer $p>0$ with the following property: for all $a,b\in \A$, if $\zeta^{n}(a)=\zeta^{n}(b)$ for some $n\geq 0$, then $\zeta^{p}(a)=\zeta^{p}(b)$. This result was proved in \cite{ehrenfeucht1978simplifcations}. Notably, this property holds true for $p=1$ when the substitution is injective on letters. The original proof concerns only one-dimensional morphisms. Nevertheless, it is possible to adapt the proof to the multidimensional context. The proof is left to the reader.
	
	\begin{theorem}\cite[Theorem 3]{ehrenfeucht1978simplifcations}\label{PropositionForNonInyectiveSubstitutions}. Let $\zeta$ be a constant-shape substitution. Then for any patterns $\texttt{u},\texttt{v}\in \A^{P}$, for some $P\Subset \Z^{d}$, we have that
		$$\zeta^{|\A|-1}(\texttt{u})\neq \zeta^{|\A|-1}(\texttt{v}) \implies \forall n,\ \zeta^{n}(\texttt{u})\neq \zeta^{n}(\texttt{v}).$$
	\end{theorem}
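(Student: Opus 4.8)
The plan is to reduce the statement about patterns to the analogous statement about single letters, and then to control how fast the induced equivalence relations on $\A$ can coarsen. The crucial structural input is that $\zeta$ has constant shape: for any pattern $\texttt{w}\in\A^{Q}$ and any $n\geq 0$, the image $\zeta^{n}(\texttt{w})$ is supported on $\bigsqcup_{{\bm q}\in Q}(L^{n}({\bm q})+F_{n})$ and satisfies $\zeta^{n}(\texttt{w})|_{L^{n}({\bm q})+F_{n}}=\zeta^{n}(\texttt{w}_{\bm q})$, the blocks being pairwise disjoint because $F_{n}$ is a fundamental domain of $L^{n}(\Z^{d})$. Consequently, for any two patterns $\texttt{u},\texttt{v}\in\A^{P}$ one has the blockwise characterization
$$\zeta^{n}(\texttt{u})=\zeta^{n}(\texttt{v}) \iff \bigl(\forall {\bm p}\in P\bigr)\ \zeta^{n}(\texttt{u}_{\bm p})=\zeta^{n}(\texttt{v}_{\bm p}).$$
This reduces the whole statement to its restriction to letters.

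For letters, I would introduce, for each $n\geq 0$, the relation $\approx_{n}$ on $\A$ defined by $a\approx_{n}b$ iff $\zeta^{n}(a)=\zeta^{n}(b)$. Applying $\zeta$ once shows $\approx_{n}\subseteq\approx_{n+1}$, so these equivalence relations coarsen with $n$, starting from $\approx_{0}$, which is equality and hence has $|\A|$ classes. The heart of the argument is an absorption property: if $\approx_{n}=\approx_{n+1}$, then $\approx_{m}=\approx_{n}$ for all $m\geq n$. To prove this, take $a\approx_{n+2}b$ and rewrite $\zeta^{n+2}(a)=\zeta^{n+2}(b)$ as $\zeta^{n+1}(\zeta(a))=\zeta^{n+1}(\zeta(b))$; the blockwise characterization applied to the patterns $\zeta(a),\zeta(b)\in\A^{F_{1}}$ at level $n+1$ gives $\zeta(a)_{\bm f}\approx_{n+1}\zeta(b)_{\bm f}$ for every ${\bm f}\in F_{1}$, hence $\zeta(a)_{\bm f}\approx_{n}\zeta(b)_{\bm f}$ by hypothesis, and reading the characterization backwards at level $n$ yields $\zeta^{n+1}(a)=\zeta^{n+1}(b)$, i.e. $a\approx_{n+1}b$. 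Thus $\approx_{n+2}=\approx_{n+1}$, and the claim follows by induction.

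With absorption in hand, an elementary counting argument controls the stabilization index. Since $\approx_{0}$ has $|\A|$ classes and each strict coarsening decreases the number of classes by at least one, while the number of classes stays at least one, the first index $N$ with $\approx_{N}=\approx_{N+1}$ satisfies $N\leq|\A|-1$; by absorption, $\approx_{m}=\approx_{N}=\approx_{|\A|-1}$ for all $m\geq|\A|-1$, and monotonicity gives $\approx_{n}\subseteq\approx_{|\A|-1}$ for every $n\geq 0$. I would then conclude by contraposition: if $\zeta^{n}(\texttt{u})=\zeta^{n}(\texttt{v})$ for some $n$, then $\texttt{u}_{\bm p}\approx_{n}\texttt{v}_{\bm p}$, hence $\texttt{u}_{\bm p}\approx_{|\A|-1}\texttt{v}_{\bm p}$, for every ${\bm p}\in P$, and the blockwise characterization at level $|\A|-1$ gives $\zeta^{|\A|-1}(\texttt{u})=\zeta^{|\A|-1}(\texttt{v})$, which is exactly the contrapositive of the statement.

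I expect the main obstacle to be the absorption property: it is the only place where one must simultaneously push the constant-shape block decomposition \emph{forward} (to split the level-$(n+2)$ equality into letter-level comparisons) and \emph{backward} (to recombine the level-$n$ equalities into a level-$(n+1)$ equality), and it is what makes the bound $|\A|-1$ independent of $P$ and of the geometry of $F_{1}$. Everything else is either the routine block decomposition afforded by the fundamental-domain property or the elementary estimate on the length of a strictly coarsening chain of partitions of an $|\A|$-element set.
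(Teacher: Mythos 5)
Your proof is correct, and it is worth noting that the paper itself contains no proof of this statement: it cites Theorem 3 of Ehrenfeucht and Rozenberg \cite{ehrenfeucht1978simplifcations} for one-dimensional morphisms and explicitly leaves the multidimensional adaptation to the reader. The cited argument is built for arbitrary morphisms of free monoids, where letter images have varying lengths, so equality of images of words does not split letterwise; it therefore passes through the machinery of simplifications of morphisms. Your route is genuinely more elementary and is the natural one in the constant-shape setting: since every letter image has the same support $F_1$ and $F_n$ is a fundamental domain of $L^n(\Z^d)$, the blocks $L^n({\bm p})+F_n$, ${\bm p}\in P$, are pairwise disjoint, so $\zeta^{n}(\texttt{u})=\zeta^{n}(\texttt{v})$ is equivalent to $\zeta^{n}(\texttt{u}_{\bm p})=\zeta^{n}(\texttt{v}_{\bm p})$ for all ${\bm p}$, reducing everything to the increasing chain of kernel relations $\approx_n$ on $\A$. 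Your absorption lemma (the forward/backward blockwise splitting applied to the patterns $\zeta(a),\zeta(b)\in\A^{F_1}$) is sound, and the count on strictly coarsening chains of partitions of $\A$ correctly yields stabilization by index $|\A|-1$, hence $\approx_n\subseteq\approx_{|\A|-1}$ for all $n$, which is exactly the contrapositive of the statement. What your approach buys is a short, self-contained proof whose only inputs are the fundamental-domain property and an elementary partition count, with the bound $|\A|-1$ appearing transparently; what it gives up is generality -- it would not survive letter-dependent supports or nonuniform lengths, which is precisely where the heavier argument of \cite{ehrenfeucht1978simplifcations} (and its extension to erasing morphisms in \cite{beal2023recognizability}) is needed. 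For the class of constant-shape substitutions treated in this paper, your argument is a perfectly adequate -- arguably preferable -- replacement for the ``left to the reader'' adaptation.
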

	
	We recall that $\zeta$ is primitive if and only if its \emph{incidence matrix} $M_{\zeta}$ defined for all $a,b\in \A$ as $(M_{\zeta})_{a,b}=|\{{\bm f}\in F_{1}^{\zeta}\colon \zeta(a)_{{\bm f}}=b\}|$ is primitive, i.e., there exists $k>0$ such that $M_{\zeta}^{k}$ only contains positive integer entries. The following is a well-known bound for this $k$.
	
	\begin{lemma}\cite{wielandt1950matrizen}\label{LemmaPrimitivity} 
		A non-negative $d\times d$ matrix $M$ is primitive if, and only if, there is an integer $k\leq d^{2}-2d+2$ such that $M^{k}$ only contains positive entries.
	\end{lemma}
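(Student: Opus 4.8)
The statement is H.\ Wielandt's classical bound on the exponent of a primitive matrix, so the plan is to reconstruct its standard graph-theoretic proof. The natural first move is to translate everything into the language of the directed graph $G(M)$ on $\{1,\dots,d\}$ having an arc $i\to j$ exactly when $M_{ij}>0$. Since $(M^{k})_{ij}>0$ if and only if there is a directed walk of length exactly $k$ from $i$ to $j$ in $G(M)$, the condition ``$M^{k}$ has all entries positive'' becomes ``every ordered pair of vertices is joined by a walk of length $k$''. With this dictionary the equivalence reduces to the well-known fact that $M$ is primitive if and only if $G(M)$ is strongly connected and \emph{aperiodic}, i.e.\ the $\gcd$ of all its cycle lengths equals $1$. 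One direction of the lemma is then immediate: if some $M^{k}$ is strictly positive, then $M$ is primitive by definition, so only the bound $k\leq d^{2}-2d+2=(d-1)^{2}+1$ requires work.

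For the quantitative direction I would argue as follows. First, let $s$ denote the girth of $G(M)$, i.e.\ the length of a shortest cycle $C$; strong connectivity forces $s\leq d$, and aperiodicity rules out $s=d$ (a graph all of whose cycles have length $d$ would have period $d>1$), so in fact $s\leq d-1$. Fix a vertex $v$ on $C$. The two structural inputs are: (i) by strong connectivity, every vertex can reach $v$, and be reached from $v$, by a path of length at most $d-1$; and (ii) because the period of $G(M)$ is $1$, the set of lengths of closed walks based at $v$ is a sub-semigroup of $\N$ whose generators have $\gcd 1$, hence by the Schur--Frobenius theorem it contains every sufficiently large integer, with an explicitly controllable Frobenius threshold.

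Combining these, for a prescribed target length $k$ and any pair $(i,j)$ I would route a walk from $i$ to $v$, then a closed walk at $v$ realising the needed residue modulo $s$ while the cycle $C$ absorbs the remaining slack, and finally a walk from $v$ to $j$; keeping track of the entry and exit path lengths $\leq d-1$ and of how many extra revolutions around $C$ are needed to hit each residue class yields the bound $\gamma(M)\leq d+s(d-2)$, where $\gamma(M)$ is the least exponent making $M$ positive. Substituting the girth estimate $s\leq d-1$ then gives $\gamma(M)\leq d+(d-1)(d-2)=(d-1)^{2}+1=d^{2}-2d+2$, as required; the Wielandt matrix (a Hamiltonian $d$-cycle with one added chord, producing coprime cycle lengths $d$ and $d-1$) shows this is sharp.

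The routine parts are the graph translation and the reachability bounds. \textbf{The main obstacle} is obtaining the \emph{sharp} constant rather than a weaker quadratic bound: a naive estimate that merely loops around $C$ until the length is correct easily yields something like $2d^{2}$, and squeezing it down to $(d-1)^{2}+1$ requires the precise bookkeeping in which the residue adjustment modulo $s$ is performed \emph{simultaneously} with the entry and exit paths, using the coprimality of $s$ with a second cycle length to guarantee that all residues are attainable within the allotted slack.
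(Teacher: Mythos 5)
The paper offers no proof of this lemma---it is quoted directly from Wielandt's 1950 note---so your attempt can only be measured against the standard literature argument. Your routine steps are fine: the digraph dictionary, the characterisation of primitivity as strong connectivity plus aperiodicity, the triviality of the converse direction given the paper's definition of primitive, and the girth bound $s\leq d-1$. The gap sits exactly where you flagged it, and your proposed fix does not close it, for two concrete reasons. First, the mechanism you invoke---``the coprimality of $s$ with a second cycle length''---is not available in general: aperiodicity only says the $\gcd$ of \emph{all} cycle lengths is $1$, and no two cycle lengths need be coprime (lengths $6,10,15$ have $\gcd$ $1$ while being pairwise non-coprime); moreover the relevant cycles need not pass through your base vertex $v$, so realising their lengths as closed walks at $v$ forces connecting detours that inflate the Frobenius-type threshold further. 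Second, even in the most favourable case where two coprime cycle lengths $d$ and $d-1$ pass through $v$ (Wielandt's extremal example), the Schur--Frobenius threshold for the closed-walk semigroup at $v$ is $d^{2}-3d+2$, and after allowing entry and exit paths of length up to $d-1$ each, the uniform exponent your bookkeeping yields is about $d^{2}-d$, strictly worse than $d^{2}-2d+2$. So the assertion that careful residue bookkeeping ``yields $\gamma(M)\leq d+s(d-2)$'' is not substantiated by the ingredients you assembled; this is the step that would fail.

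The sharp constant is obtained by the Dulmage--Mendelsohn argument, which avoids Frobenius numbers and residue analysis altogether, and you should substitute it for your middle step. Two observations suffice. (i) From any vertex $i$ one can reach \emph{some} vertex of the girth cycle $C$ in \emph{exactly} $d-s$ steps: a shortest path from $i$ to $C$ has length $p\leq d-s$ (its internal vertices are distinct and lie off $C$), and once on $C$ one pads by walking $d-s-p$ further steps along the cycle. (ii) For a vertex $u\in C$, let $R_{ms}(u)$ be the set of vertices reachable from $u$ by walks of length exactly $ms$; then $R_{ms}(u)\subseteq R_{(m+1)s}(u)$ (prepend one turn around $C$), and if this chain ever stabilises at a proper subset it remains there for all larger $m$, contradicting primitivity, since $R_{N}(u)$ is the full vertex set for all large $N$. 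Hence the chain is strictly increasing until it exhausts the $d$ vertices, so $R_{(d-1)s}(u)$ is everything. Concatenating (i) and (ii) produces, for \emph{every} pair $(i,j)$, a walk of length exactly $(d-s)+s(d-1)=d+s(d-2)\leq d+(d-1)(d-2)=d^{2}-2d+2$, which is the lemma. Note that aperiodicity enters only through primitivity in step (ii); no decomposition into cycle lengths, and in particular no coprimality of any two specific cycle lengths, is ever needed.
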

	
	Following the proof of the recognizability property of multidimensional substitutive subshifts in \cite{cabezas2023homomorphisms}, we first study the computability of the growth of the repetitivity function. We recall that the \emph{repetitivity function} of a minimal subshift is the map $R_{X}:\R_{+}\to\R_{+}$ defined for $r>0$ as the smallest radius such that every discrete ball $[B({\bm n},R_{X}(r))\cap \Z^{d}]$ contains an occurrence of every pattern whose support has a diameter less than $r$. 
	
	Like in several proofs of \cref{Section:DecidabilityFOlner}, we will consider the set $K$ of \cref{FiniteSubsetFillsZd}, the set $C_{L_{\zeta},F_{1}^{\zeta}}$ given by \cref{Prop:FiniteSetSatisfyingParticularProperties} with $A=\{{\bm 0}\}$ and $F=F_{1}^{\zeta}+F_{1}^{\zeta}$ and the set $B=\left[B\left({\bm 0},\bar{r}\right)\cap \Z^{d}\right]$, where $\bar{r} = \|L_{\zeta}^{-1}(F_1^{\zeta})\|/(1-\|L_{\zeta}^{-1}\|)$. Recall that $B \subseteq L_{\zeta}(B)+F_1^{\zeta}$ (see Equation~\eqref{eq:ballsapplyingthematrix}) and that $K$ satisfies $\|K\| \leq \bar{r}$ (see \cref{rem:GoodUniformlyBounded}), and so is included in $B$.

	\begin{lemma}\label{GrowthRepetititvtyFunction}
		Let $\zeta$ be an aperiodic primitive constant-shape substitution. Define $t=-\log(\Vert L_{\zeta}\Vert)/\log(\Vert L_{\zeta}^{-1}\Vert)$. Then, 
		\[
		R_{X_{\zeta}}(r)
		\leq 
		(2 \|F_1^{\zeta}\|+ \|L_{\zeta}\|^{|\A|^{2}+(|\A|+1)^{(6\bar{r})^d}} (2 \|F_1^{\zeta}\|+d)) r^t.
		\]
	\end{lemma}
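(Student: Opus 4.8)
The plan is to combine a renormalization (``desubstitution'') of large patterns with a counting argument bounding how long one must iterate $\zeta$ in order to see all small patterns, and then to rescale. Fix the window $W = C_{L_\zeta,F_1^\zeta} + B$ with $B = [B(\mathbf 0,\bar r)\cap\Z^d]$; note that $\|C_{L_\zeta,F_1^\zeta}\|\le 2\bar r$ (by~\eqref{eq:boundonC}), so $W\subseteq[B(\mathbf 0,3\bar r)\cap\Z^d]$ and there are at most $(|\A|+1)^{(6\bar r)^d}$ patterns supported on $W$. The first ingredient is already contained in the proof of \cref{Lemma:UpperBoundComplexityFunction}: for every $r>0$ one has $[B(\mathbf 0,r)\cap\Z^d]\subseteq L_\zeta^{n(r)}(B)+F^\zeta_{n(r)}$ with $n(r)=O(\log r)$, so that any pattern whose support has diameter less than $r$ is, up to translation, of the form $\zeta^{n(r)}(\texttt v)_{\mathbf f + [B(\mathbf 0,r)\cap\Z^d]}$ for some window pattern $\texttt v\in\mathcal L_W(X_\zeta)$. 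The quantitative point, obtained exactly as there, is that $\|L_\zeta\|^{n(r)}\le c\, r^{t}$ with $t=-\log\|L_\zeta\|/\log\|L_\zeta^{-1}\|$; this is the source of the factor $r^t$.

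Next I would bound a power $M$ such that \emph{every} window pattern of $\mathcal L_W(X_\zeta)$ occurs inside $\zeta^{M}(b)$ for \emph{every} letter $b$. By \cref{LemmaPrimitivity} there is $k\le |\A|^2-2|\A|+2$ with $M_\zeta^k>0$, so every letter occurs in $\zeta^{k}(b)$ for all $b$; in particular, if a pattern occurs in some $\zeta^{n}(a)$, then it occurs in $\zeta^{n+k}(b)$ for all $b$. It thus suffices to bound, for the fixed-point letter $a_0$ (with $\zeta(\bar x)=\bar x$, $\bar x_{\mathbf 0}=a_0$), the smallest level at which all window patterns of the language appear in $\zeta^{n}(a_0)$. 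Here I would use that the supports are nested, $F^\zeta_n\subseteq F^\zeta_{n+1}$, and that $\zeta^{n+1}(a_0)|_{F^\zeta_n}=\zeta^{n}(a_0)$; hence a \emph{full} occurrence of a $W$-pattern inside $\zeta^n(a_0)$ persists in $\zeta^{n+1}(a_0)$, so the sets $\{\,W\text{-patterns occurring in }\zeta^n(a_0)\,\}$ form an increasing chain. Using primitivity once more (every letter, hence every $\texttt v$, occurs in some $\zeta^{n_{\texttt v}}(a)$ and thus in $\zeta^{n_{\texttt v}+k}(a_0)$), this chain exhausts \emph{all} window patterns of $\mathcal L_W(X_\zeta)$; being a strictly increasing chain of subsets of a set of size at most $(|\A|+1)^{(6\bar r)^d}$, it stabilises after at most $(|\A|+1)^{(6\bar r)^d}$ steps. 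Combining the two uses of primitivity gives $M\le |\A|^2+(|\A|+1)^{(6\bar r)^d}$.

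With $M$ in hand I would conclude by a tiling argument in the fixed point. Any pattern $\texttt p$ of diameter less than $r$ occurs in $\zeta^{n(r)}(\texttt v)$ for a window pattern $\texttt v$, which in turn occurs in $\zeta^{M}(b)$ for every $b$; hence $\texttt p$ occurs in $\zeta^{n(r)+M}(b)$ for every letter $b$. Writing $\bar x=\zeta^{n(r)+M}(\bar x)$ exhibits $\bar x$ as tiled by the blocks $\zeta^{n(r)+M}(\bar x_{\mathbf j})$ supported on the translates $L_\zeta^{n(r)+M}(\mathbf j)+F^\zeta_{n(r)+M}$, and each such block, being the $\zeta^{n(r)+M}$-image of a single letter, contains an occurrence of $\texttt p$. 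Consequently every ball whose radius is of order $\|F^\zeta_{n(r)+M}\|$ meets a complete tile and therefore contains an occurrence of $\texttt p$, and by minimality this bounds $R_{X_\zeta}(r)$. It then remains to estimate $\|F^\zeta_{n(r)+M}\|$: from $L_\zeta^{-n}(F^\zeta_n)=\sum_{j=1}^{n}L_\zeta^{-j}(F^\zeta_1)$ one gets $\|L_\zeta^{-n}(F^\zeta_n)\|\le\bar r$, whence $\|F^\zeta_{n(r)+M}\|\le \|L_\zeta\|^{n(r)+M}\bar r\le \|L_\zeta\|^{M}\,c\,r^{t}\bar r$, and tracking the additive geometric constants yields the stated coefficient $2\|F_1^\zeta\|+\|L_\zeta\|^{M}(2\|F_1^\zeta\|+d)$.

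The main obstacle is the second step: making precise that the increasing chain of window patterns visible in $\zeta^n(a_0)$ really exhausts the whole language — this is exactly where both the persistence identity $\zeta^{n+1}(a_0)|_{F^\zeta_n}=\zeta^n(a_0)$ and primitivity are needed — and that its stabilisation length is controlled by the crude count $(|\A|+1)^{(6\bar r)^d}$ rather than by the a priori unbounded level $n_{\texttt v}$ at which an individual window pattern first appears. The remaining difficulty is purely bookkeeping: converting the norm inequalities $\|L_\zeta\|^{n(r)}\le c\,r^t$, $\|L_\zeta^{-n}(F^\zeta_n)\|\le\bar r$, together with the slack needed for a ball to contain a complete tile, into precisely the constants $2\|F_1^\zeta\|$ and $2\|F_1^\zeta\|+d$ of the statement.
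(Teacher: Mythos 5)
Your overall architecture is the same as the paper's: desubstitute an $r$-ball into a window pattern $\texttt{v}$ supported on $C_{L_\zeta,F_1^\zeta}+B$ at scale $n(r)$ with $\Vert L_\zeta\Vert^{n(r)}\lesssim r^t$, bound the level $M$ at which all window patterns appear in every $\zeta^M(b)$ via Wielandt's bound (\cref{LemmaPrimitivity}) plus a counting argument, then use relative denseness of $L_\zeta^{n}(\Z^d)$ to convert this into a bound on the repetitivity function. Steps one and three are essentially the paper's, up to bookkeeping. But your central counting step has a genuine gap. You consider the sets $\mathcal{P}_n$ of window patterns occurring in $\zeta^n(a_0)$, note correctly (via $F_n^\zeta\subseteq F_{n+1}^\zeta$ and the fixed point) that they form an increasing chain exhausting $\mathcal{L}_W(X_\zeta)$, and then assert that, ``being a strictly increasing chain,'' the chain stabilises within $(|\A|+1)^{(6\bar r)^d}$ steps. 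The chain is only \emph{weakly} increasing: nothing you have proved rules out long plateaus followed by later growth, so the index at which $\bigcup_n\mathcal{P}_n$ is attained is exactly the a priori unbounded quantity $\max_{\texttt{v}} n_{\texttt{v}}$ that you flag yourself in your last paragraph. Persistence plus primitivity do not close this: a plateau $\mathcal{P}_n=\mathcal{P}_{n+1}$ is only absorbing if you additionally prove that $\mathcal{P}_{n+1}$ is a monotone function of $\mathcal{P}_n$, i.e.\ a one-step desubstitution statement saying that every window pattern seen in $\zeta^{n+1}(a_0)$ arises inside $\zeta(\texttt{v})$ for a pattern $\texttt{v}$ with support contained in $C_{L_\zeta,F_1^\zeta}+B$ already seen in $\zeta^{n}(a_0)$ --- and here one must work with patterns whose supports are arbitrary subsets of the window, since near the boundary of $F_n^\zeta$ only a partial window is available. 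That inclusion is precisely what the set $C_{L_\zeta,F_1^\zeta}$ of \cref{Prop:FiniteSetSatisfyingParticularProperties} is engineered for, but you never invoke it at this point.

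The paper avoids the chain-of-sets formulation entirely and bounds the \emph{first occurrence level of each individual pattern}: taking $n$ minimal with $\texttt{v}\sqsubseteq\zeta^{n}(a)$ for some letter $a$, it desubstitutes step by step, producing patterns $\texttt{v}_1=\texttt{v},\texttt{v}_2,\texttt{v}_3,\dots$ with supports inside $C_{L_\zeta,F_1^\zeta}+B$ and $\texttt{v}_j\sqsubseteq\zeta(\texttt{v}_{j+1})$, where $\texttt{v}_j$ occurs at level $n-j+1$; minimality of $n$ forces these patterns to be pairwise distinct (if $\texttt{v}_j=\texttt{v}_{j'}$ with $j<j'$, then $\texttt{v}\sqsubseteq\zeta^{n-(j'-j)}(a)$, contradicting minimality), so $n\leq(|\A|+1)^{|C_{L_\zeta,F_1^\zeta}+B|}\leq(|\A|+1)^{(6\bar r)^d}$. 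This strict ``one new pattern per desubstitution step'' mechanism is what your argument is missing; either adopt it, or repair your version by proving the absorbing-plateau property through the one-step desubstitution inclusion above. The remaining discrepancies in your write-up (the coefficient obtained via $\Vert F_{n}^\zeta\Vert\leq\Vert L_\zeta\Vert^{n}\bar r$ instead of the paper's relative-denseness bound $\Vert L_\zeta\Vert^{n}(d+2\Vert F_1^\zeta\Vert)$, and the unspecified constant $c$ in $\Vert L_\zeta\Vert^{n(r)}\leq c\,r^t$) are genuinely just bookkeeping, but as stated they would not reproduce the exact constant $2\Vert F_1^\zeta\Vert+\Vert L_\zeta\Vert^{M}(2\Vert F_1^\zeta\Vert+d)$ of the lemma.
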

	
	\begin{proof} 
		Set $r>0$. We will show that every pattern $\texttt{u} \in \mathcal{L}_{[B({\bm 0},r) \cap \mathbb{Z}^d]}(X_\zeta)$ occurs in every image $\zeta^{m(r)}(a)$, $a \in \A$, for some $m(r)\in \N$ and then we give an upper bound on $m(r)$, from which we deduce a bound on the repetitivity function.
		
		Following the proof of \cref{Lemma:UpperBoundComplexityFunction}, we know that for every pattern $\texttt{u}\in \mathcal{L}_{[B({\bm 0},r)\cap \Z^{d}]}(X_{\zeta})$, there exists $\texttt{v}\in \mathcal{L}_{C_{L_{\zeta},F_{1}^{\zeta}}+B}(X_{\zeta})$ and ${\bm f}\in F_{n(r)}^{\zeta}$ such that $\texttt{u}=\zeta^{n(r)}(\texttt{v})_{{\bm f}+[B({\bm 0},r)\cap \Z^{d}]}$, where $n(r)=\lceil \log(r-\Vert L_{\zeta}^{-1}(F_{1}^{\zeta}))\Vert/\log(\Vert L_{\zeta}^{-1}\Vert)\rceil$.
		Thus, consider $\texttt{v}\in \mathcal{L}_{C_{L_{\zeta},F_1^{\zeta}}+B}(X_{\zeta})$ and  let $n\in \N$ denote the smallest positive integer such that $\texttt{v}\sqsubseteq \zeta^{n}(a)$ for some $a\in \A$ (such an $n$ exists by primitivity of the substitution).
		
		Let us first prove that $n\leq (|\A|+1)^{\left|C_{L_{\zeta},F_{1}^{\zeta}}+B\right|}$.         Indeed, as $\texttt{v}_1 := \texttt{v}$ occurs in $\zeta^{n}(a)$, there exists ${\bm f}_{n} \in F_n$ such that $\zeta^{n}(a)_{{\bm f}_{n}+C_{L,F_{1}}+B}=\texttt{v}_1$.
		Writing ${\bm f}_{n}=L_{\zeta}({\bm f}_{n-1})+{\bm f}_{1}$, there exists a pattern $\texttt{v}_2$ with minimal support $S_2$ such that $\zeta^{n-1}(a)_{{\bm f}_{n-1}+S_2} = \texttt{v}_2$ and $\texttt{v}_{1} = \zeta(\texttt{v}_{2})_{{\bm f}_{1}+C_{L_{\zeta},F_{1}^{\zeta}}+B}$.
		Note that we have 
		\[
		C_{L_{\zeta},F_1^{\zeta}}+B+{\bm f}_1 \subseteq C_{L_{\zeta},F_1^{\zeta}}+L_{\zeta}(B)+F_1^{\zeta}+F_1^{\zeta} \subseteq L_{\zeta}(B+C_{L_{\zeta},F_1^{\zeta}})+F_1^{\zeta},
		\]
		so the support $S_2$ of $\texttt{v}_{2}$ satisfies $S_2 \subseteq B+C_{L,F_1}$.
		Observe that since $\texttt{v}_1$ does not occur in $\zeta^{n-1}(a)$, we have that $\texttt{v}_2 \neq \texttt{v}_1$.
		Using the same argument, we can find a pattern $\texttt{v}_3$ with minimal support $S_3$ in $\zeta^{n-2}(a)$ such that $\texttt{v}_3$ occurs in $\zeta(v_2)$, $S_3 \subseteq B+C_{L_{\zeta},F_1^{\zeta}}$ and $\texttt{v}_3 \notin \{\texttt{v}_1,\texttt{v}_2\}$. Continuing this way, we inductively construct a sequence of pairwise distinct patterns $\texttt{v}_{1},\texttt{v}_{2},\ldots$ in $\bigcup_{S \subseteq C_{L_{\zeta},F_{1}^{\zeta}}+B}\mathcal{L}_{S}(X_{\zeta})$, such that for any $j\geq 1$, $\texttt{v}_{j}\sqsubseteq \zeta(\texttt{v}_{j+1})$, and $\texttt{v}_{j}$ occurs in $\zeta^{n-j+1}(a)$ but does not occur in $\zeta^{n-j}(a)$. Considering that there are at most $|\A|^{\left|S\right|}$ patterns in $\mathcal{L}_{S}(X_{\zeta})$, we conclude that $n\leq (|\A|+1)^{\left|C_{L_{\zeta},F_{1}^{\zeta}}+B\right|}$.
		
		Now, by \cref{LemmaPrimitivity}, for any pair of letters $a,b\in \A$, we have that $a\sqsubseteq \zeta^{|\A|^{2}}(b)$. Hence, for any letter $a\in \A$, any pattern $\texttt{v}\in \mathcal{L}_{C_{L_{\zeta},F_{1}^{\zeta}}+B}(X_{\zeta})$ occurs in $\zeta^{|\A|^{2}+(|\A|+1)^{\left|C_{L_{\zeta},F_{1}^{\zeta}}+B\right|}}(a)$.
		
		Since for any $n>0$, $L_{\zeta}^{n}(\Z^{d})$ is $d\Vert L_{\zeta}\Vert^{n}$-relatively dense, any ball of radius $d\Vert L_{\zeta}\Vert^{n}+2\Vert F_{1}^{\zeta}\Vert\cdot\Vert L_{\zeta}\Vert^{n}$ contains a set of the form $L_{\zeta}^{n}({\bm m})+F_{n}^{\zeta}$, for some ${\bm m}\in \Z^{d}$. This implies that any pattern of the form $\zeta^{n}(a)$, for some $a\in \A$, occurs in any pattern in $\mathcal{L}_{[B({\bm 0}, \Vert L_{\zeta}\Vert^{n}(d+2\Vert F_{1}^{\zeta}\Vert)\cap \Z^{d}]}(X_{\zeta})$. In particular, for $N=|\A|^{2}+(|\A|+1)^{\left|C_{L_{\zeta},F_{1}^{\zeta}}+B\right|}$, we conclude  that any ball of radius $\|L_{\zeta}\|^N (2 \|F_1^{\zeta}\|+d)$ contains an occurrence of any pattern $\texttt{v}\in \mathcal{L}_{C_{L_{\zeta},F_1^{\zeta}}+B}(X_{\zeta})$. Hence, by the first part of the proof, any ball of radius $\|L_{\zeta}\|^{n(r)} (2 \|F_1^{\zeta}\|+ \|L_{\zeta}\|^N (2 \|F_1^{\zeta}\|+d))$ contains an occurrence of any pattern $\texttt{u}\in \mathcal{L}_{[B({\bm 0},r)\cap \Z^{d}]}(X_{\zeta})$.
		
		To finish the proof, we deduce from Equation~\eqref{eq:boundonC} that $\|C_{L_{\zeta},F_1^{\zeta}}+B\| \leq 3 \bar{r}$. Using classical upper bounds for the cardinality of the discrete balls $[B({\bm 0},r)\cap \Z^{d}]$, we have that $|C_{L_{\zeta},F_1^{\zeta}}+B|\leq (6 \bar{r})^{d}$. With this new bound, we get that for any $r>0$,
		\begin{align*}
			R_{X_{\zeta}}(r) &
			\leq (2 \|F_1^{\zeta}\|+ \|L_{\zeta}\|^N (2 \|F_1^{\zeta}\|+d)) \Vert L\Vert^{\log(r)/\log(\Vert L^{-1}\Vert)}\\
			& \leq 
			(2 \|F_1^{\zeta}\|+ \|L_{\zeta}\|^{|\A|^{2}+(|\A|+1)^{(6\bar{r})^{d}}} (2 \|F_1^{\zeta}\|+d)) r^t.
		\end{align*}
	\end{proof}
	
	As pointed out in \cite{cabezas2023homomorphisms}, the growth of the repetitivity function has a direct consequence on the distance between two occurrences of a pattern in a point $x\in X_{\zeta}$, called \emph{repulsion property}. This is an analog to the \emph{k-power-free} property of one-dimensional primitive substitutions. We add the proof for completeness.
	
	\begin{proposition}[Repulsion property]\label{RepulsionProperty} Let $\zeta$ be an aperiodic primitive constant-shape substitution, $x\in X_{\zeta}$ and set $t=-\log(\Vert L_{\zeta}\Vert)/\log(\Vert L_{\zeta}^{-1}\Vert)$. Then, if a pattern $\texttt{p}\sqsubseteq x$ with $[B({\bm s},r)\cap \Z^{d}]\subseteq \supp(\texttt{p})$, for some ${\bm s}\in \Z^{d}$ and $r>0$, has two occurrences ${\bm j}_{1}, {\bm j}_{2}\in \Z^{d}$ in $x$ such that 
		$r\geq R_{X_{\zeta}}(\Vert {\bm j}_{1}-{\bm j}_{2}\Vert)$, then ${\bm j}_{1}$ is equal to ${\bm j}_{2}$.	
	\end{proposition}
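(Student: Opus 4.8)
The plan is to argue by contradiction, converting two distinct occurrences of $\texttt{p}$ into a forbidden ${\bm v}$-periodic window and then using the repetitivity function to plant a witness of aperiodicity inside that window. First I would set ${\bm v} = {\bm j}_1 - {\bm j}_2$ and suppose, for contradiction, that ${\bm v}\neq{\bm 0}$. Since $\texttt{p}$ occurs at both ${\bm j}_1$ and ${\bm j}_2$, for every ${\bm k}\in\supp(\texttt{p})$ one has $x_{{\bm j}_1+{\bm k}} = \texttt{p}_{\bm k} = x_{{\bm j}_2+{\bm k}}$. Restricting to the ball $[B({\bm s},r)\cap\Z^{d}]\subseteq\supp(\texttt{p})$ and writing ${\bm m} = {\bm j}_2+{\bm k}$, this reads $x_{{\bm m}+{\bm v}} = x_{\bm m}$ for every ${\bm m}$ in the translated ball $D := [B({\bm j}_2+{\bm s}, r)\cap\Z^{d}]$, since ${\bm m}+{\bm v} = {\bm j}_1+{\bm k}$. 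In other words, $x$ is ${\bm v}$-periodic on a discrete ball of radius $r$.

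Next, since $\zeta$ is aperiodic and ${\bm v}\neq{\bm 0}$, we have $S^{\bm v}x\neq x$, so there exists ${\bm n}_0\in\Z^{d}$ with $x_{{\bm n}_0}\neq x_{{\bm n}_0+{\bm v}}$. Consider the pattern $\texttt{e}$ obtained by restricting $x$ to the support $\{{\bm n}_0,\, {\bm n}_0+{\bm v}\}$; its support has diameter $\|{\bm v}\| = \|{\bm j}_1-{\bm j}_2\|$, and $\texttt{e}$ takes two distinct values at its two positions. Because $r\geq R_{X_{\zeta}}(\|{\bm j}_1-{\bm j}_2\|)$, the defining property of the repetitivity function guarantees that the ball $D$ contains an occurrence of $\texttt{e}$ with its whole support inside $D$: there is ${\bm q}$ with ${\bm q},\,{\bm q}+{\bm v}\in D$, $x_{\bm q}=x_{{\bm n}_0}$ and $x_{{\bm q}+{\bm v}}=x_{{\bm n}_0+{\bm v}}$, whence $x_{\bm q}\neq x_{{\bm q}+{\bm v}}$. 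But ${\bm q}\in D$ together with the ${\bm v}$-periodicity of $x$ on $D$ forces $x_{{\bm q}+{\bm v}}=x_{\bm q}$, a contradiction. Therefore ${\bm v}={\bm 0}$, i.e. ${\bm j}_1={\bm j}_2$.

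The delicate point, and the one I expect to be the main obstacle, is the bookkeeping in the repetitivity step: the witness $\texttt{e}$ has support of diameter exactly $\|{\bm j}_1-{\bm j}_2\|$, which is precisely the value at which $R_{X_{\zeta}}$ is evaluated, so one must be careful that the relocated occurrence of $\texttt{e}$ lands entirely inside $D$ while preserving the offset ${\bm v}$ between its two points (this is where the threshold $r\geq R_{X_{\zeta}}(\|{\bm j}_1-{\bm j}_2\|)$ is genuinely used). This step is exactly what transports the global aperiodicity of $X_{\zeta}$ into a local contradiction within the periodic window, whereas the derivation of the periodicity and the final clash are purely combinatorial.
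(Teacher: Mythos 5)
Your proof is correct, and it rearranges the paper's argument in contrapositive form rather than reproducing it. The paper argues directly: for \emph{every} ${\bm k}\in\Z^{d}$ it uses the repetitivity bound to plant the two-point pattern $x|_{\{{\bm k},\,{\bm k}+{\bm v}\}}$ (with ${\bm v}={\bm j}_{2}-{\bm j}_{1}$) inside the $r$-ball covered by $\texttt{p}$, then uses the coincidence of the two occurrences of $\texttt{p}$ there to conclude $x_{{\bm k}}=x_{{\bm k}+{\bm v}}$ for all ${\bm k}$ — i.e.\ ${\bm v}$ is a \emph{global} period of $x$ — and only then invokes aperiodicity. You instead first extract \emph{local} ${\bm v}$-periodicity on the window $D$ from the two occurrences, use aperiodicity to produce a single defect pair $\{{\bm n}_{0},{\bm n}_{0}+{\bm v}\}$, and use repetitivity once to import that one witness into $D$. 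The ingredients (two-point patterns of diameter exactly $\Vert{\bm v}\Vert$, the threshold $r\geq R_{X_{\zeta}}(\Vert{\bm v}\Vert)$, aperiodicity) are identical; your version is slightly more economical, needing one application of repetitivity instead of one per ${\bm k}$, while the paper's version yields the intermediate statement that ${\bm v}$ is a period of $x$, which is the conceptually cleaner bridge to aperiodicity. The ``delicate point'' you flag — that the witness pattern has diameter exactly $\Vert{\bm j}_{1}-{\bm j}_{2}\Vert$, the value at which $R_{X_{\zeta}}$ is evaluated, and that the relocated occurrence must land in $D$ — is present verbatim in the paper's proof (its patterns $\texttt{w}_{{\bm k}}$ also have diameter exactly $\Vert{\bm j}_{2}-{\bm j}_{1}\Vert$), so it is not a defect of your route; note also that your argument is robust here, since the window-periodicity relation $x_{{\bm m}+{\bm v}}=x_{{\bm m}}$ only requires the base point ${\bm q}$ of the imported occurrence to lie in $D$, not ${\bm q}+{\bm v}$.
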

	
	\begin{proof}
		For any ${\bm k}\in \Z^{d}$, we consider the pattern $\texttt{w}_{{\bm k}}=x|_{{\bm k}\cup ({\bm k}+{\bm j}_{2}-{\bm j}_{1})}$. Note that $\diam(\supp(\texttt{w}_{\bm k}))=\Vert {\bm j}_{2}-{\bm j}_{1}\Vert$. Since $r\geq R_{X_{\zeta}}(\Vert {\bm j}_{2}-{\bm j}_{1}\Vert)$, then the support of the pattern $\texttt{p}$ contains an occurrence in $x$ of any pattern $\texttt{w}_{{\bm k}}$. Since ${\bm j}_{1}$ is an occurrence of $\texttt{p}$ in $x$, we get that for any ${\bm k}\in \Z^{d}$, there exists ${\bm n}_{\bm k}\in \Z^{d}$ such that $x_{{\bm j}_{1}+{\bm n}_{{\bm k}}+{\bm k}}=x_{{\bm k}}$ and $x_{{\bm j}_{1}+{\bm n}_{{\bm k}}+({\bm j}_{2}-{\bm j}_{1}+{\bm k})}=x_{{\bm j}_{2}-{\bm j}_{1}+{\bm k}}$, which implies that $x_{{\bm j}_{2}+{\bm n}_{{\bm k}}+{\bm k}}=x_{{\bm j}_{2}-{\bm j}_{1}+{\bm k}}$. The fact that ${\bm j}_{2}$ is an occurrence of $\texttt{p}$ in $x$ let us conclude that for any ${\bm k}\in \Z^{d}$, $x_{{\bm j}_{2}-{\bm j}_{1}+{\bm k}}$ is equal to $x_{{\bm k}}$, i.e., ${\bm j}_{2}-{\bm j}_{1}$ is a period of $x$. Since $\zeta$ is aperiodic, we conclude that ${\bm j}_{1}={\bm j}_{2}$.
	\end{proof}
	
	\begin{figure}[H]
		\centering
		\begin{tikzpicture}[scale=0.3]
			\node(b1) at (8, 8.5) [scale=1] {$\texttt{p}$};
			
			\path[thin](0,0) edge (16,0);
			\path[thin](16.,0.) edge (16.,8.);
			\path[thin](16.,8.) edge (0.,8.);
			\path[thin](0.,8.) edge (0.,0.);
			
			\node(b8) at (10, 6.5) [scale=1,blue] {$\texttt{p}$};
			
			\path[thin,blue](1,-1) edge (17,-1);
			\path[thin,blue](17.,-1.) edge (17.,7);
			\path[thin,blue](17.,7.) edge (1.,7.);
			\path[thin,blue](1.,7.) edge (1.,-1.);

			\draw[->,thin] (0,0) -- (1,-1);
			
			\node(b15) at (-0.3,-0.3)[scale=1]{${\bm j}_{1}$};
			
			\node(b16) at (0.7,-1.3)[scale=1]{${\bm j}_{2}$};	
			
			\draw[red] (9,4) arc (0:360:3.5cm);
			
			\node(b17) at (12.5,4)[scale=1,red]{$B({\bm s},R_{X_{\zeta}}(r_{1}))$};
			
			\draw[brown] (1.6,-0.3) arc (0:360:1.9cm);
			
			\node(b18) at (-5,-0.4)[scale=1,brown]{$B({\bm j_{1}}, r_{1})$};		
		\end{tikzpicture}
		\caption{Illustration of a forbidden situation given by the repulsion property (\cref{RepulsionProperty}).}
		\label{illustrationrepulsionproperty}
	\end{figure}
	
	Now, we proceed to give a computable upper bound for the constant of recognizability of constant-shape substitutions. As mentioned in \cite{durand2017constant}, the proof of the recognizability property has two steps. Using the notation of Equation~\ref{eq:def recognizability}, the first step is to show that ${\bm j}$ belongs to $L_{\zeta}(\mathbb{Z}^d)$ and the second on is to show that $x_{{\bm i}} = x_{{\bm k}}$. 
	Here, we adapt the proofs in \cite{cabezas2023homomorphisms}.
	
	\begin{proposition}[First step of recognizability: positions of ``cutting bars'']\label{ComputabilityRecognizability}
		Let $\zeta$ be an aperiodic primitive constant-shape substitution from an alphabet $\A$ with expansion matrix $L_{\zeta}$ and support $F_{1}^{\zeta}$. Let $x\in X_{\zeta}$ be a fixed point of $\zeta$. Consider the constants
		\begin{itemize}
			\item $t=-\log(\Vert L_{\zeta}\Vert)/\log(\Vert L_{\zeta}^{-1}\Vert)$,
			\item $\bar{r}=\Vert L_{\zeta}^{-1}(F_{1}^{\zeta})\Vert/(1-\Vert L_{\zeta}^{-1}\Vert)$,
			\item $\bar{R}=\left\lceil \Vert L_{\zeta}^{-1}\Vert^{|\A|-1}\cdot R_{X_{\zeta}}(9\Vert L_{\zeta}\Vert^{|\A|-1}\bar{r})+4\bar{r}\right\rceil$.
			\item $\bar{n}=\lceil |\A|^{(2\bar{R}+6\bar{r})^{d}}\rceil$.
			
		\end{itemize}
		Then, $R=\Vert L_{\zeta}\Vert^{\bar{n}+|\A|}(\bar{R}+3\bar{r})+2\Vert L_{\zeta}\Vert^{\bar{n}+|\A|}\cdot \Vert F_{1}^{\zeta}\Vert$ is such that for all ${\bm i}, {\bm j}\in \Z^{d}$,
		$$x|_{L_{\zeta}({\bm i})+[B({\bm 0},R)\cap \Z^{d}]}=x|_{{\bm j}+[B({\bm 0},R)\cap \Z^{d}]} \implies  {\bm j}\in L_{\zeta}(\Z^{d}).$$
	\end{proposition}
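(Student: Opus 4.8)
The plan is to argue by contradiction, assuming $x|_{L_\zeta(\bm i)+[B(\bm 0,R)\cap\Z^d]}=x|_{\bm j+[B(\bm 0,R)\cap\Z^d]}$ while $\bm j\notin L_\zeta(\Z^d)$. Writing $\bm j=L_\zeta(\bm k)+\bm g$ with $\bm g\in F_1^\zeta\setminus\{\bm 0\}$, the point $L_\zeta(\bm i)$ sits at a tile boundary of the fixed point $x=\zeta(x)$, whereas $\bm j$ sits at offset $\bm g$ inside its tile. The equality of the two $R$-balls therefore exhibits the same finite pattern carrying two $\zeta$-tilings of the very same region of $x$: the genuine one, whose tiles start on $L_\zeta(\Z^d)$, and a copied one inherited from the neighborhood of $L_\zeta(\bm i)$, whose tiles start on $\bm g+L_\zeta(\Z^d)$. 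These two tilings differ by the nonzero vector $\bm g$, and the whole difficulty is to show that this is incompatible with aperiodicity.

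First I would set up the desubstitution tower. Using $x=\zeta(x)$ and the ball arithmetic of~\eqref{eq:ballsapplyingthematrix}, each time one passes from a tiled region of radius $\rho$ around an image point to its $\zeta$-preimage, the controlled radius drops from $\rho$ to roughly $(\rho-\|F_1^\zeta\|)/\|L_\zeta\|$, since $L_\zeta^{-1}([B(\bm 0,s)\cap\Z^d])\supseteq [B(\bm 0,s/\|L_\zeta\|)\cap\Z^d]$. The constant $R$ is engineered precisely so that, after $\bar n+|\A|$ such desubstitutions, the surviving ball still has radius at least $\bar R+3\bar r$: the geometric sum of the $\|F_1^\zeta\|$-losses accounts for the additive term $2\|L_\zeta\|^{\bar n+|\A|}\|F_1^\zeta\|$, and the factor $\|L_\zeta\|^{\bar n+|\A|}$ for the multiplicative loss. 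On the $L_\zeta(\bm i)$ side the center remains an image point and desubstitutes cleanly; on the $\bm j$ side the matched pattern is transported to a center carrying a nonzero residual offset. Because $\zeta$ need not be injective on letters, recovering letters from their images may require iterating, and \cref{PropositionForNonInyectiveSubstitutions} guarantees that $\zeta^{|\A|-1}$ already separates whatever $\zeta^n$ separates; this is the source of the extra $|\A|$ additions and of the $\|L_\zeta\|^{|\A|-1}$ and $\|L_\zeta^{-1}\|^{|\A|-1}$ factors in $\bar R$.

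Next I would run a pigeonhole argument. At each of the $\bar n+|\A|$ levels we record the pattern of radius $\bar R$ around the clean (image-point) center; by the definition of $\bar n$ as an upper bound for the number $|\A|^{(2\bar R+6\bar r)^d}$ of such patterns, two distinct levels carry the same $\bar R$-pattern. Re-substituting this coincidence back to the top scale produces two occurrences in $x$ of one and the same pattern, whose support contains a ball of radius $\bar R$ and whose two occurrences are separated by a nonzero vector of norm at most $9\|L_\zeta\|^{|\A|-1}\bar r$, namely the accumulated image of the offset $\bm g$ over the bounded number of intervening levels. The choice $\bar R\geq\|L_\zeta^{-1}\|^{|\A|-1}R_{X_\zeta}(9\|L_\zeta\|^{|\A|-1}\bar r)$, together with the repetitivity bound of \cref{GrowthRepetititvtyFunction}, makes $\bar R$ exceed $R_{X_\zeta}$ evaluated at that separation, so the Repulsion Property (\cref{RepulsionProperty}) forces the two occurrences to coincide. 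Tracking this equality back down the tower shows that the residual offset was $\bm 0$ at every level, hence $\bm g=\bm 0$, contradicting $\bm j\notin L_\zeta(\Z^d)$.

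I expect the main obstacle to be the bookkeeping that keeps the argument non-circular: since recognizability is exactly what is being proved, one may \emph{not} assume that desubstitution is unique, so the transport of the equality of balls through the tower must be carried out purely at the level of patterns and their $\zeta$-images, and the residual offsets must be tracked explicitly rather than inferred from a (not-yet-available) unique decomposition. The second delicate point is making the repulsion estimate quantitative for a non-scalar matrix $L_\zeta$: one must control simultaneously how $\|L_\zeta\|$ inflates the offset vector as one re-substitutes upward and how $\|L_\zeta^{-1}\|$ shrinks balls downward, which is where the somewhat awkward quantity $9\|L_\zeta\|^{|\A|-1}\bar r$ and the powers $|\A|-1$ enter and must be matched against the bound from \cref{GrowthRepetititvtyFunction}.
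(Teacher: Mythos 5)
Your proposal is correct and follows essentially the same route as the paper's proof: the constant $R$ is unpacked into matched patterns of shape $L_{\zeta}^{n}(D+[B({\bm 0},\bar{R})\cap \Z^{d}])+F_{n}^{\zeta}$ at every scale $n\leq \bar{n}+|\A|$, a pigeonhole over scales (justifying $\bar{n}=\lceil|\A|^{(2\bar{R}+6\bar{r})^{d}}\rceil$) yields two levels carrying identical pattern data, \cref{PropositionForNonInyectiveSubstitutions} handles non-injectivity by pulling the two occurrences down to scale $|\A|-1$, and the repulsion property (\cref{RepulsionProperty}) combined with the choice of $\bar{R}$ forces the residual offset to vanish, contradicting ${\bm j}\notin L_{\zeta}(\Z^{d})$. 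The one detail worth noting is that the large ball lies in $\supp(\zeta^{|\A|-1}(\texttt{w}))$ and has radius $(\bar{R}-4\bar{r})/\Vert L_{\zeta}^{-1}\Vert^{|\A|-1}$ rather than $\bar{R}$ itself --- which is precisely why the factor $\Vert L_{\zeta}^{-1}\Vert^{|\A|-1}$ appears in the definition of $\bar{R}$, exactly the scale-matching issue you flag in your closing paragraph.
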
 
	
	The proof of \cref{ComputabilityRecognizability} is an adaptation of the proof of Proposition 3.7 from \cite{cabezas2023homomorphisms}, using some ideas in \cite{durand2017constant}. This bound is far from being sharp, but does not depend on the combinatorics of the substitution. The idea of the proof is to construct sequences of patterns $\zeta^{n}(\texttt{w}_{n})$, $\zeta^{n}(\texttt{u}_{n})$ and $\zeta^{n}(\texttt{v}_{n})$ around two points ${\bm i}_{n}\in L_{\zeta}(\Z^{d})$ and ${\bm j}_{n}\notin L_{\zeta}(\Z^{d})$ such that $\zeta^{n}(\texttt{w}_{n})\sqsubseteq\zeta^{n}(\texttt{u}_{n})\sqsubseteq \zeta^{n}(\texttt{v}_{n})$, where $\supp(\texttt{w}_{n})$, $\supp(\texttt{u}_{n})$ and $\supp(\texttt{v}_{n})$ are fixed for every $n>0$ and big enough to ensure that if two occurrences of $\texttt{w}_{n}$ occur in $\texttt{u}_{n}$, they must be the same. The constants given in \cref{ComputabilityRecognizability} ensure that the arguments are true. 
	
	\begin{proof} Using \cref{Prop:FiniteSetSatisfyingParticularProperties} with $A = \{\bm{0}\}$ and $F = F_1^{\zeta} - F_1^{\zeta}$, there exists a finite set $D\subseteq \Z^{d}$ such that for every $n>0$, $F_{n}^{\zeta}-F_{n}^{\zeta}\subseteq L_{\zeta}^{n}(D)+F_{n}^{\zeta}$. Observe that, by \cref{item:BoundForTheNormforthefiniteset} of \cref{Prop:FiniteSetSatisfyingParticularProperties} we have that $\Vert D\Vert \leq 3\bar{r}$. We prove the statement by contradiction. Assume the contrary. Then, for every $|\A|\leq n\leq\bar{n}+|\A|$ there exist ${\bm i}_{n}\in L_{\zeta}(\Z^{d})$ and ${\bm j}_{n}\notin L_{\zeta}(\Z^{d})$ such that 
		$$x|_{{\bm i}_{n}+L_{\zeta}^{n}(D+[B({\bm 0},\bar{R})\cap \Z^{d}])+F_{n}^{\zeta}]}=x|_{{\bm j}_{n}+L_{\zeta}^{n}(D+[B({\bm 0},\bar{R})\cap \Z^{d}])+F_{n}^{\zeta}}.$$
		
		For any $|\A|\leq n\leq |\A|+\bar{n}$, we consider ${\bm a}_{n}\in \Z^{d}$ and ${\bm f}_{n}\in F_{n}^{\zeta}$ such that ${\bm i}_{n}=L_{\zeta}^{n}({\bm a}_{n})+{\bm f}_{n}$. Note that, by definition of the set $D\Subset \Z^{d}$, we have that $$L_{\zeta}^{n}({\bm a}_{n})+L_{\zeta}^{n}([B({\bm 0},\bar{R})\cap \Z^{d}])+F_{n}^{\zeta}\subseteq {\bm i}_{n}+L_{\zeta}^{n}(D+[B({\bm 0},\bar{R})\cap \Z^{d}])+F_{n}^{\zeta}.$$
		
		Let $\texttt{u}_{n}\in \mathcal{L}_{[B({\bm 0},\bar{R})\cap \Z^{d}]}(X_{\zeta})$ be such that
		$$x|_{L_{\zeta}^{n}({\bm a}_{n})+L_{\zeta}^{n}([B({\bm 0},\bar{R})\cap \Z^{d}])+F_{n}^{\zeta}}=\zeta^{n}(\texttt{u}_{n})=x|_{({\bm j}_{n}-{\bm f}_{n})+L_{\zeta}^{n}([B({\bm 0},\bar{R})\cap \Z^{d}])+F_{n}^{\zeta}}.$$

		\begin{figure}[H]
			\centering
			\begin{tikzpicture}[scale=0.3]
				\node(b1) at (5, 6.5) [scale=1] {$\zeta^{n}(\texttt{u}_{n}))$};
				
				\node(b2) at (5,5.5) [scale=1]{$\cdots$};
				
				\node(b3) at (5,4.5) [scale=1]{$\cdots$};
				\node(b4) at (5,3.5) [scale=1]{$\cdots$};
				\node(b5) at (5,2.5) [scale=1]{$\cdots$};
				\node(b6) at (5,1.5) [scale=1]{$\cdots$};
				
				\node(b7) at (5,0.5) [scale=1]{$\cdots$};
				
				\path[thin](0,0) edge (10,0);
				\path[thin](10.,0.) edge (10.,6.);
				\path[thin](10.,6.) edge (0.,6.);
				\path[thin](0.,6.) edge (0.,0.);
				\path[thin](2.,6.) edge (2.,0.);
				\path[thin](8.,6.) edge (8.,0.);
				\path[thin](0.,1.) edge (10.,1.);
				\path[thin](0.,2.) edge (10.,2.);
				\path[thin](0.,4.) edge (10.,4.);
				\path[thin](0.,5.) edge (10.,5.);
				
				\node(b8) at (25, 1.5) [scale=1,blue] {$\zeta^{n}(\texttt{u}_{n}))$};
				
				\node(b9) at (25,0.5) [scale=1]{$\cdots$};
				
				\node(b10) at (25,-0.5) [scale=1]{$\cdots$};
				\node(b11) at (25,-1.5) [scale=1]{$\cdots$};
				\node(b12) at (25,-2.5) [scale=1]{$\cdots$};
				\node(b13) at (25,-3.5) [scale=1]{$\cdots$};
				
				\node(b14) at (25,-4.5) [scale=1]{$\cdots$};
				
				\path[thin,blue](20,-5) edge (30,-5);
				\path[thin,blue](30.,-5.) edge (30.,1);
				\path[thin,blue](30.,1.) edge (20.,1.);
				\path[thin,blue](20.,1.) edge (20.,-5.);
				\path[thin,blue](22.,1.) edge (22.,-5.);
				\path[thin,blue](28.,1.) edge (28.,-5.);
				\path[thin,blue](20.,-4.) edge (30.,-4.);
				\path[thin,blue](20.,-3) edge (30.,-3.);
				\path[thin,blue](20.,-1) edge (30.,-1.);
				\path[thin,blue](20.,0) edge (30.,0.);
				
				\draw[->,thin] (0,0) -- (20,-5);
				
				\node(b15) at (6,3.4)[scale=1]{$L_{\zeta}^{n}({\bm a}_{n})$};
				\node(b15) at (5.6,2.8)[scale=0.5]{$\times$};
				
				\node(b16) at (26,-1.8)[scale=1]{${\bm j}_{n}-{\bm f}_{n}$};
				\node(b15) at (25.6,-2.2)[scale=0.5]{$\times$};
				
				\node(b17) at (11,-2.3)[scale=1,rotate=346]{${\bm j}_{n}-{\bm i}_{n}$};				
			\end{tikzpicture}
			\caption{Illustration of the pattern $\zeta^{n}(\texttt{u}_{n})$ around the coordinates $L_{\zeta}^{n}({\bm a}_{n})$ (black) and ${\bm j}_{n}-{\bm f}_{n}$ (blue).}
			\label{tauzetauandtauzetav}
		\end{figure}	
		
		Observe that ${\bm j}_{n}-{\bm f}_{n}$ is not necessarily in $L_{\zeta}^{n}(\Z^{d})$, so we set ${\bm b}_{n}\in \Z^{d}$ and ${\bm g}_{n}\in F_{n}^{\zeta}$ such that ${\bm j}_{n}-{\bm f}_{n}=L_{\zeta}^{n}({\bm b}_{n})+{\bm g}_{n}$. Now, for any $n>0$ and $E\subseteq \Z^{d}$ we define the following sets
		\begin{align*}
			G_{n,E} & =\{{\bm n}\in \Z^{d}\colon (L_{\zeta}^{n}({\bm n})+F_{n}^{\zeta})\cap ({\bm j}_{n}-{\bm f}_{n})+L_{\zeta}^{n}(E)+F_{n}^{\zeta}\neq \emptyset\}\\
			H_{n,E} & = \{{\bm n}\in \Z^{d}\colon (L_{\zeta}^{n}({\bm n})+F_{n}^{\zeta})\subseteq ({\bm j}_{n}-{\bm f}_{n})+L_{\zeta}^{n}(E)+F_{n}^{\zeta}\}.
		\end{align*}	
		
		By definition, we have for any $n>0$ and any $E\subseteq \Z^{d}$, that $H_{n,E}$ is included in $G_{n,E}$. Since $x=\zeta(x)$, there exist patterns $\texttt{v}_{n}\in \mathcal{L}_{G_{n,[B({\bm 0},\bar{R})\cap \Z^{d}]}-{\bm b}_{n}}(X_{\zeta})$, $\texttt{w}_{n}\in \mathcal{L}_{H_{n,[B({\bm 0},\bar{R})\cap \Z^{d}]}-{\bm b}_{n}}(X_{\zeta})$, with $L_{\zeta}^{n}({\bm b}_{n})$ being an occurrence of $\zeta^{n}(\texttt{v}_{n})$ and $\zeta^{n}(\texttt{w}_{n})$ in $x$, such that $\zeta^{n}(\texttt{w}_{n})$ occurs in $\zeta^{n}(\texttt{u}_{n})$ and $\zeta^{n}(\texttt{u}_{n})$ occurs in $\zeta^{n}(\texttt{v}_{n})$ as illustrated in \cref{zetavyzetau}:
		
		\begin{figure}[H]
			\centering
			\begin{tikzpicture}[scale=0.6]
				\node(b1) at (5, 6.5) [scale=1] {$\zeta^{n}(\texttt{v}_{n})$};
				
				\node(b2) at (9.8, 3) [scale=1] {\textcolor{blue}{$\zeta^{n}(\texttt{u}_{n})$}};
				
				\node(b3) at (5,5.5) [scale=1]{$\cdots$};
				
				\node(b4) at (5,4.5) [scale=1]{$\cdots$};
				\node(b5) at (5,3.5) [scale=1]{$\cdots$};
				\node(b6) at (5,2.5) [scale=1]{$\cdots$};
				\node(b7) at (5,1.5) [scale=1]{$\cdots$};
				
				\node(b8) at (5,0.5) [scale=1]{$\cdots$};
				
				\path[thin](0,0) edge (10,0);
				\path[thin](10.,0.) edge (10.,6.);
				\path[thin](10.,6.) edge (0.,6.);
				\path[thin](0.,6.) edge (0.,0.);
				\path[thin](2.,6.) edge (2.,0.);
				\path[thin](8.,6.) edge (8.,0.);
				\path[thin](0.,1.) edge (10.,1.);
				\path[thin](0.,2.) edge (10.,2.);
				\path[thin](0.,4.) edge (10.,4.);
				\path[thin](0.,5.) edge (10.,5.);
				\path[thin,blue](0.6,0.3) edge (8.6,0.3);
				\path[thin,blue](8.6,0.3) edge (8.6,5.3);
				\path[thin,blue](8.6,5.3) edge (0.6,5.3);
				\path[thin,blue](0.6,5.3) edge (0.6,0.3);
				\path[thin,blue](0.6,1.3) edge (8.6,1.3);
				\path[thin,blue](0.6,2.3) edge (8.6,2.3);
				\path[thin,blue](0.6,4.3) edge (8.6,4.3);
				\path[thin,blue](2.6,0.3) edge (2.6,5.3);
				\path[thin,blue](6.6,0.3) edge (6.6,5.3);
				
				\node(b10) at (5.5,3.2) [scale=1]{${\bm j}_{n}-{\bm f}_{n}$};
				\node(b11) at (5.25,2.85)[scale=1]{$\times$};
				
				\node(b9) at (0.9, 3) [scale=1] {\textcolor{red}{$\zeta^{n}(\texttt{w}_{n})$}};
				
				\path[line width=1.5pt,red](2,1) edge (2,5);
				\path[line width=1.5pt,red](2,1) edge (8,1);
				\path[line width=1.5pt,red](2,5)edge(8,5);
				\path[line width=1.5pt,red](8,1)edge(8,5);
				
			\end{tikzpicture}
			\caption{Illustration of the patterns $\zeta^{n}(\texttt{w}_{n})$, $\zeta^{n}(\texttt{u}_{n})$ and $\zeta^{n}(\texttt{v}_{n})$ around ${\bm j}_{n}-{\bm f}_{n}$.}
			\label{zetavyzetau}
		\end{figure}
		
		\setcounter{claim}{0}
		\begin{claim}\label{ClaimForRecognizability1}
			For any $n>0$, ${\bm b}_{n}$ belongs to $H_{n,[B({\bm 0},\bar{R})\cap \Z^{d}]}$, and $(G_{n,[B({\bm 0},\bar{R})\cap \Z^{d}]}-{\bm b}_{n})$ is a bounded set.
		\end{claim}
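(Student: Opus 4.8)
The plan is to establish the two assertions separately, both being Minkowski-sum bookkeeping: the first rests on the defining inclusion $F_{n}^{\zeta}-F_{n}^{\zeta}\subseteq L_{\zeta}^{n}(D)+F_{n}^{\zeta}$ of the set $D$ together with $\Vert D\Vert\leq 3\bar r$, while the second relies on the uniform bound $\Vert L_{\zeta}^{-n}(F_{n}^{\zeta})\Vert\leq\bar r$, valid for every $n$. I would first record this last estimate: writing $F_{n}^{\zeta}=\sum_{i=0}^{n-1}L_{\zeta}^{i}(F_{1}^{\zeta})$ gives $L_{\zeta}^{-n}(F_{n}^{\zeta})=\sum_{k=1}^{n}L_{\zeta}^{-k}(F_{1}^{\zeta})$, hence $\Vert L_{\zeta}^{-n}(F_{n}^{\zeta})\Vert\leq\sum_{k=1}^{n}\Vert L_{\zeta}^{-1}\Vert^{k-1}\Vert L_{\zeta}^{-1}(F_{1}^{\zeta})\Vert\leq\bar r$.

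For the first assertion, write $E=[B({\bm 0},\bar R)\cap\Z^{d}]$ and recall that ${\bm j}_{n}-{\bm f}_{n}=L_{\zeta}^{n}({\bm b}_{n})+{\bm g}_{n}$ with ${\bm g}_{n}\in F_{n}^{\zeta}$. After cancelling $L_{\zeta}^{n}({\bm b}_{n})$, the membership ${\bm b}_{n}\in H_{n,E}$ reduces to the inclusion $F_{n}^{\zeta}-{\bm g}_{n}\subseteq L_{\zeta}^{n}(E)+F_{n}^{\zeta}$. Since ${\bm g}_{n}\in F_{n}^{\zeta}$, one has $F_{n}^{\zeta}-{\bm g}_{n}\subseteq F_{n}^{\zeta}-F_{n}^{\zeta}\subseteq L_{\zeta}^{n}(D)+F_{n}^{\zeta}$, and as $\Vert D\Vert\leq 3\bar r\leq\bar R$ forces $D\subseteq E$, this is exactly the desired inclusion.

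For the second assertion, take ${\bm n}\in G_{n,E}$, so there are ${\bm y}_{1},{\bm y}_{2}\in F_{n}^{\zeta}$ and ${\bm e}\in E$ with $L_{\zeta}^{n}({\bm n})+{\bm y}_{1}=L_{\zeta}^{n}({\bm b}_{n})+{\bm g}_{n}+L_{\zeta}^{n}({\bm e})+{\bm y}_{2}$. Solving for the displacement and applying $L_{\zeta}^{-n}$ yields ${\bm n}-{\bm b}_{n}-{\bm e}=L_{\zeta}^{-n}({\bm g}_{n})+L_{\zeta}^{-n}({\bm y}_{2})-L_{\zeta}^{-n}({\bm y}_{1})$, whose three summands all lie in $L_{\zeta}^{-n}(F_{n}^{\zeta})$ and therefore have norm at most $\bar r$. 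Thus $\Vert {\bm n}-{\bm b}_{n}\Vert\leq\Vert{\bm e}\Vert+3\bar r\leq\bar R+3\bar r$, so $G_{n,E}-{\bm b}_{n}\subseteq[B({\bm 0},\bar R+3\bar r)\cap\Z^{d}]$ is bounded. This is precisely the estimate that bounds the cardinality of the support of $\texttt{v}_{n}$ by $(2\bar R+6\bar r)^{d}$, hence the number of possible patterns $\texttt{v}_{n}$ by $|\A|^{(2\bar R+6\bar r)^{d}}=\bar n$, which the later pigeonhole step will exploit.

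No single step is genuinely deep; the only delicate point is the alignment of the radii. One must know in advance that $\Vert D\Vert\leq 3\bar r$ and $\Vert L_{\zeta}^{-n}(F_{n}^{\zeta})\Vert\leq\bar r$ uniformly in $n$, and that $\bar R$ was chosen to be at least $4\bar r$, so that $D$ indeed fits inside the ball of radius $\bar R$. The recurring mechanism closing both parts is that every translation vector appearing (${\bm f}_{n},{\bm g}_{n},{\bm y}_{1},{\bm y}_{2}$) lives in $F_{n}^{\zeta}$ and hence becomes uniformly small, of norm at most $\bar r$, after applying $L_{\zeta}^{-n}$.
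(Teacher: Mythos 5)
Your proposal is correct and follows essentially the same route as the paper: the first assertion is reduced to $F_{n}^{\zeta}-{\bm g}_{n}\subseteq L_{\zeta}^{n}([B({\bm 0},\bar{R})\cap \Z^{d}])+F_{n}^{\zeta}$, verified via $F_{n}^{\zeta}-F_{n}^{\zeta}\subseteq L_{\zeta}^{n}(D)+F_{n}^{\zeta}$ with $\Vert D\Vert\leq 3\bar{r}\leq \bar{R}$, and the second via the same displacement identity and the bound $\Vert {\bm m}-{\bm b}_{n}\Vert \leq \bar{R}+3\bar{r}$. Your only additions (the explicit uniform estimate $\Vert L_{\zeta}^{-n}(F_{n}^{\zeta})\Vert\leq \bar{r}$ and the unpacking of $D\subseteq [B({\bm 0},\bar{R})\cap \Z^{d}]$) simply spell out steps the paper leaves implicit.
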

		
		\begin{proof}[Proof of \cref{ClaimForRecognizability1}]
			Note that ${\bm b}_{n}\in H_{n,[B({\bm 0},\bar{R})\cap \Z^{d}]}$ if and only if
			$$F_{n}^{\zeta}-{\bm g}_{n}\subseteq L_{\zeta}^{n}([B({\bm 0},\bar{R})\cap \Z^{d}])+F_{n}^{\zeta},$$ 
			
			\noindent which is true since $\bar{R}\geq \Vert D\Vert$. Now, set ${\bm m}\in (G_{n,[B({\bm 0},\bar{R})\cap \Z^{d}]}-{\bm b}_{n})$, i.e., there exists ${\bm h}_{n}\in F_{n}^{\zeta}$, ${\bm r}_{n}\in [B({\bm 0},\bar{R})\cap \Z^{d}]$ and ${\bm l}_{n}\in F_{n}^{\zeta}$ such that $$L_{\zeta}^{n}({\bm m})+{\bm h}_{n}=L_{\zeta}^{n}({\bm b}_{n})+{\bm g}_{n}+L_{\zeta}^{n}({\bm r}_{n})+{\bm l}_{n},$$
			
			\noindent i.e., ${\bm m}-{\bm b}_{n}={\bm r}_{n}+L_{\zeta}^{-n}({\bm g}_{n}+{\bm l}_{n}-{\bm h}_{n})$, which implies that $\Vert {\bm m}-{\bm b}_{n}\Vert \leq \bar{R}+\Vert L_{\zeta}^{-n}({\bm g}_{n}+{\bm l}_{n}-{\bm h}_{n})\Vert$. Since $\Vert L_{\zeta}^{-n}({\bm g}_{n}+{\bm l}_{n}-{\bm h}_{n})\Vert\leq 3\bar{r}$, we conclude that $\Vert {\bm m}-{\bm b}_{n}\Vert \leq \bar{R}+3\bar{r}$.
		\end{proof}
		
		Since $\Vert G_{n,[B({\bm 0},\bar{R})\cap \Z^{d}]}-{\bm b}_{n}\Vert \leq \bar{R}+3\bar{r}$, as in the proof of \cref{Lemma:UpperBoundComplexityFunction}, we have that $|\mathcal{L}_{G_{n,[B({\bm 0},\bar{R})\cap \Z^{d}]-{\bm b}_{n}}}(X_{\zeta})|\leq |\A|^{(2\bar{R}+6\bar{r})^{d}}$. 
		Since $\bar{n}\geq |\mathcal{L}_{G_{n,[B({\bm 0},\bar{R})\cap \Z^{d}]-{\bm b}_{n}}}(X_{\zeta})|$, there are two indices $|\A|<n<m\leq \bar{n}+|\A|$, some finite sets $G,H \Subset \Z^{d}$ such that
		\begin{align*}
			G=(G_{n,[B({\bm 0},r)\cap \Z^{d}]}-{\bm b}_{n})=(G_{m,[B({\bm 0},r)\cap \Z^{d}]}-{\bm b}_{m})\\
			H=(H_{n,[B({\bm 0},r)\cap \Z^{d}]}-{\bm b}_{n})=(H_{m,[B({\bm 0},r)\cap \Z^{d}]}-{\bm b}_{m})
		\end{align*}
		
		\noindent and some patterns $\texttt{u}\in \mathcal{L}_{[B({\bm 0},\bar{R})\cap \Z^{d}]}(X_{\zeta}),\texttt{v}\in \mathcal{L}_{G}(X_{\zeta})$ and $\texttt{w}\in \mathcal{L}_{H}(X_{\zeta})$ such that $\texttt{u}=\texttt{u}_{n}=\texttt{u}_{m}$, $\texttt{v}=\texttt{v}_{n}=\texttt{v}_{m}$ and $\texttt{w}=\texttt{w}_{n}=\texttt{w}_{m}$. Set $$\texttt{a}_{n}=x|_{({\bm j}_{n}-{\bm f}_{n})+L_{\zeta}^{n}([B({\bm 0},\bar{R})\cap \Z^{d}])+F_{n}^{\zeta}\setminus(L_{\zeta}^{n}({\bm b}_{n})+L_{\zeta}^{n}(H)+F_{n}^{\zeta})},$$
		
		\noindent i.e., $\texttt{a}_{n}$ is the pattern whose support is equal to $\supp(\zeta^{n}(\texttt{u}))\setminus \supp(\zeta^{n}(\texttt{w}))$,  as illustrated in \cref{zetavyzetauyzetaw}:
		
		\begin{figure}[H]
			\centering
			\begin{tikzpicture}[scale=0.6]
				\node(b1) at (5, 6.5) [scale=1] {$\zeta^{n}(\texttt{v})$};
				
				\node(b2) at (9.9, 3) [scale=1] {\textcolor{blue}{$\zeta^{n}(\texttt{u})$}};
				
				\node(b9) at (0.9, 3) [scale=1] {\textcolor{red}{$\zeta^{n}(\texttt{w})$}};
				
				\node(b3) at (5,5.5) [scale=1]{$\cdots$};
				
				\node(b4) at (5,4.5) [scale=1]{$\cdots$};
				\node(b5) at (5,3.5) [scale=1]{$\cdots$};
				\node(b6) at (5,2.5) [scale=1]{$\cdots$};
				\node(b7) at (5,1.5) [scale=1]{$\cdots$};

				\path[thin](0,0) edge (10,0);
				\path[thin](10.,0.) edge (10.,6.);
				\path[thin](10.,6.) edge (0.,6.);
				\path[thin](0.,6.) edge (0.,0.);
				\path[thin](2.,6.) edge (2.,0.);
				\path[thin](8.,6.) edge (8.,0.);
				\path[thin](0.,1.) edge (10.,1.);
				\path[thin](0.,2.) edge (10.,2.);
				\path[thin](0.,4.) edge (10.,4.);
				\path[thin](0.,5.) edge (10.,5.);
				\path[line width=1.5pt,red](2,1) edge (2,5);
				\path[line width=1.5pt,red](2,1) edge (8,1);
				\path[line width=1.5pt,red](2,5)edge(8,5);
				\path[line width=1.5pt,red](8,1)edge(8,5);
				
				\draw[fill=zzttqq,fill opacity=0.10000000149011612] (0.6,0.3) rectangle (2,5.3);
				\draw[fill=zzttqq,fill opacity=0.10000000149011612] (2,5) rectangle (8.6,5.3);
				\draw[fill=zzttqq,fill opacity=0.10000000149011612] (8,0.3) rectangle (8.6,5);
				\draw[fill=zzttqq,fill opacity=0.10000000149011612] (2,0.3) rectangle (8,1);
				
				\node(b8) at (9.4,0.6) [scale=1]{\textcolor{zzttqq}{$\texttt{a}_{n}$}};
				
				\path[thin,blue](0.6,0.3) edge (8.6,0.3);
				\path[thin,blue](8.6,0.3) edge (8.6,5.3);
				\path[thin,blue](8.6,5.3) edge (0.6,5.3);
				\path[thin,blue](0.6,5.3) edge (0.6,0.3);
				\path[thin,blue](0.6,1.3) edge (8.6,1.3);
				\path[thin,blue](0.6,2.3) edge (8.6,2.3);
				\path[thin,blue](0.6,4.3) edge (8.6,4.3);
				\path[thin,blue](2.6,0.3) edge (2.6,5.3);
				\path[thin,blue](6.6,0.3) edge (6.6,5.3);
				
				\node(b9) at (3.5,2.6)[scale=1]{$L_{\zeta}^{n}({\bm b}_{n})$};
				\node(b12) at (4.15,3)[scale=1]{$\times$};
				
				\node(b10) at (6.5,3) [scale=1]{${\bm j}_{n}-{\bm f}_{n}$};
				\node(b11) at (5.25,2.85)[scale=1]{$\times$};
			\end{tikzpicture}
			\caption{Illustration of the patterns $\zeta^{n}(\texttt{w})$ and $\texttt{a}_{n}$ around $({\bm j}_{n}-{\bm f}_{n})$.}
			\label{zetavyzetauyzetaw}
		\end{figure}

		Applying $\zeta^{m-n}$ to $\zeta^{n}(\texttt{u})$, we obtain the patterns $\zeta^{m}(\texttt{a}_{n})$ and $\zeta^{m-n}(\zeta^{n}(\texttt{w}))=\zeta^{m}(\texttt{w})$. Note that 
		$$\supp(\zeta^{m}(\texttt{u}))=\supp(\texttt{a}_{m})\cupdot\supp(\zeta^{m}(\texttt{w}))=\supp(\zeta^{m-n}(\texttt{a}_{n}))\cupdot\supp(\zeta^{m}(\texttt{w})).$$
		
		If $\zeta^{m-n}(\texttt{a}_{n})$ and $\texttt{a}_{m}$ are different, then the pattern $\zeta^m(\texttt{u})$ contains two occurrences of $\zeta^{m}(\texttt{w})$. We will use the repulsion property (\cref{RepulsionProperty}) to get the contradiction, i.e., these two occurrences are the same. To do this, we need the following result
		
		\begin{claim}\label{ClaimAnotherRecognizability}
			For any $n>0$ and any $E\subseteq \Z^{d}$, the set $G_{n,E}$ is included in $H_{n,E}+C_{L_{\zeta},F_{1}^{\zeta}}+C_{L_{\zeta}+F_{1}^{\zeta}}+D$.
		\end{claim}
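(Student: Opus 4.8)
The plan is to reduce both index sets to purely set-theoretic conditions on $\Z^{d}$ and then to pass from one to the other using the two absorption properties of $C_{L_{\zeta},F_{1}^{\zeta}}$ (write $C=C_{L_{\zeta},F_{1}^{\zeta}}$ for brevity) and of $D$. First I would normalize by writing ${\bm j}_{n}-{\bm f}_{n}=L_{\zeta}^{n}({\bm b}_{n})+{\bm g}_{n}$ with ${\bm b}_{n}\in\Z^{d}$ and ${\bm g}_{n}\in F_{n}^{\zeta}$ (the decomposition already introduced before the claim), so that the target region is $R=L_{\zeta}^{n}({\bm b}_{n}+E)+{\bm g}_{n}+F_{n}^{\zeta}$; then ${\bm n}\in G_{n,E}$ means the block $L_{\zeta}^{n}({\bm n})+F_{n}^{\zeta}$ meets $R$, and ${\bm n}\in H_{n,E}$ means it is contained in $R$. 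Throughout I would use that ${\bm 0}\in F_{n}^{\zeta}$, that $F_{n}^{\zeta}\cap L_{\zeta}^{n}(\Z^{d})=\{{\bm 0}\}$ (since $F_{n}^{\zeta}$ is a fundamental domain containing the origin), and the two inclusions $F_{n}^{\zeta}+F_{n}^{\zeta}\subseteq L_{\zeta}^{n}(C)+F_{n}^{\zeta}$ and $F_{n}^{\zeta}-F_{n}^{\zeta}\subseteq L_{\zeta}^{n}(D)+F_{n}^{\zeta}$ recalled just before the claim.

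The core computation is an upper bound on $G_{n,E}$. Given ${\bm n}\in G_{n,E}$, a point in the intersection yields ${\bm e}\in E$ and ${\bm f}_{1},{\bm f}_{2}\in F_{n}^{\zeta}$ with $L_{\zeta}^{n}({\bm n})+{\bm f}_{1}=L_{\zeta}^{n}({\bm b}_{n}+{\bm e})+{\bm g}_{n}+{\bm f}_{2}$, hence $L_{\zeta}^{n}({\bm n}-{\bm b}_{n}-{\bm e})={\bm g}_{n}+{\bm f}_{2}-{\bm f}_{1}\in F_{n}^{\zeta}+F_{n}^{\zeta}-F_{n}^{\zeta}$. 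Absorbing $F_{n}^{\zeta}+F_{n}^{\zeta}-F_{n}^{\zeta}\subseteq L_{\zeta}^{n}(C)+F_{n}^{\zeta}-F_{n}^{\zeta}\subseteq L_{\zeta}^{n}(C+D)+F_{n}^{\zeta}$ and projecting modulo $L_{\zeta}^{n}(\Z^{d})$ (using $F_{n}^{\zeta}\cap L_{\zeta}^{n}(\Z^{d})=\{{\bm 0}\}$) forces ${\bm n}-{\bm b}_{n}-{\bm e}\in C+D$, so that $G_{n,E}\subseteq {\bm b}_{n}+E+C+D$. Symmetrically, I would prove the interior bound ${\bm b}_{n}+E^{\circ D}\subseteq H_{n,E}$: if ${\bm m}+D\subseteq E$, then, since $F_{n}^{\zeta}-{\bm g}_{n}\subseteq F_{n}^{\zeta}-F_{n}^{\zeta}\subseteq L_{\zeta}^{n}(D)+F_{n}^{\zeta}$, the block $L_{\zeta}^{n}({\bm b}_{n}+{\bm m})+F_{n}^{\zeta}$ lands inside $R$, i.e. ${\bm b}_{n}+{\bm m}\in H_{n,E}$.

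Finally I would combine the two bounds. For ${\bm n}\in G_{n,E}$ the first step produces ${\bm e}\in E$ with ${\bm n}-{\bm b}_{n}-{\bm e}\in C+D$; choosing ${\bm m}\in E^{\circ D}$ with ${\bm e}-{\bm m}\in C$ gives ${\bm h}={\bm b}_{n}+{\bm m}\in H_{n,E}$ and ${\bm n}-{\bm h}=({\bm n}-{\bm b}_{n}-{\bm e})+({\bm e}-{\bm m})\in (C+D)+C$, which is exactly $G_{n,E}\subseteq H_{n,E}+C+C+D$. The main obstacle, and the step that genuinely uses that $E$ is the discrete ball $[B({\bm 0},\bar{R})\cap\Z^{d}]$ fixed in the ambient proof rather than an arbitrary finite set, is the existence of such an ${\bm m}$, i.e. the erosion estimate $E\subseteq E^{\circ D}+C$ asserting that every point of $E$ can be pushed a bounded amount into the $D$-interior of $E$ by a translate from $C$. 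For a large discrete ball this is plausible because $E^{\circ D}$ is again essentially a ball of radius about $\bar{R}-\Vert D\Vert$ and $\bar{R}$ is chosen large; I would check the constants here carefully, since this is the delicate point (for a general $E$, e.g. a singleton with ${\bm g}_{n}\neq{\bm 0}$ for which $H_{n,E}=\emptyset$ while $G_{n,E}\neq\emptyset$, the inclusion simply fails, so the ball hypothesis cannot be dropped).
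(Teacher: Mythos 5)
Your first two steps are correct, and they in fact clean up the paper's own argument. The coset bookkeeping giving $G_{n,E}\subseteq {\bm b}_{n}+E+C+D$ is sound (it is even more careful than the paper's, which picks ${\bm d}_{n}\in D$ with ${\bm l}_{n}-{\bm h}_{n}+{\bm g}_{n}=L_{\zeta}^{n}({\bm d}_{n})$, whereas ${\bm l}_{n}-{\bm h}_{n}+{\bm g}_{n}\in F_{n}^{\zeta}-F_{n}^{\zeta}+F_{n}^{\zeta}$ only forces ${\bm d}_{n}\in C+D$ — your version), and the interior bound ${\bm b}_{n}+E^{\circ D}\subseteq H_{n,E}$ is verified correctly. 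Your singleton remark is also right: for $E=\{{\bm e}\}$ with ${\bm g}_{n}\neq {\bm 0}$, the containment defining $H_{n,E}$ forces equality of two fundamental domains and hence ${\bm g}_{n}\in L_{\zeta}^{n}(\Z^{d})\cap F_{n}^{\zeta}=\{{\bm 0}\}$, so $H_{n,E}=\emptyset$ while $G_{n,E}\neq\emptyset$, and the claim as literally stated (``any $E\subseteq\Z^{d}$'') is false. It is worth knowing that the paper's own proof founders at exactly this point: it first shows $G_{n,E}\subseteq H_{n,E+C+C}+D$ and then invokes, as ``a straightforward computation'', the inclusion $H_{n,A+B}\subseteq H_{n,A}+B$ — which your singleton refutes ($H_{n,A}$ can be empty while $H_{n,A+B}$ is not). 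So your replacement of that false bridge by the correct erosion bound ${\bm b}_n+E^{\circ D}\subseteq H_{n,E}$ is the right instinct, and your diagnosis that the ball structure of $E$ must enter is accurate.

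However, your combining step contains a genuine gap, not merely a constant to be checked: the erosion estimate $E\subseteq E^{\circ D}+C$ is never proved, and it does not follow from the available bounds even for $E=[B({\bm 0},\bar{R})\cap\Z^{d}]$ with $\bar{R}$ arbitrarily large, because the deficit is scale-free and sits at the boundary. In the worst case $E^{\circ D}$ is no larger than $[B({\bm 0},\bar{R}-\Vert D\Vert)\cap\Z^{d}]$, so a point ${\bm e}$ with $\Vert {\bm e}\Vert$ near $\bar{R}$ requires some ${\bm c}\in C$ with $\Vert {\bm e}-{\bm c}\Vert\leq \bar{R}-\Vert D\Vert$; but $\Vert {\bm e}-{\bm c}\Vert\geq \Vert{\bm e}\Vert-\Vert C\Vert\geq \bar{R}-2\bar{r}$ while $\Vert D\Vert$ is only controlled by $3\bar{r}$, and, more fundamentally, $C$ is a specific finite set that has no reason to contain an inward-pointing vector of the needed length in every boundary direction — largeness of $\bar{R}$ does not help. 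Any repair must exploit structure rather than norms: for instance, running the recursion in the proof of \cref{Prop:FiniteSetSatisfyingParticularProperties} with $F=F_{1}+F_{1}$ and $F=F_{1}-F_{1}$ produces sets satisfying $D=-C$, so the full slack $C+C+D$ contains $C-C$, and one is reduced to an erosion statement for discrete Euclidean balls of the shape $E\subseteq E^{\circ D}+(C-C)$, which still needs a genuine geometric argument (an extremal-direction choice of the eroding vector controls the radial component but leaves a transverse excess). In short: your steps (1)–(2) are correct and your refutation of the general-$E$ statement exposes a real flaw in the paper's formulation, but the step you defer as ``delicate'' is precisely the missing core of the proof, so the proposal does not establish the claim even in the ball case where it is actually used.
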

		
		\begin{proof}[Proof of \cref{ClaimAnotherRecognizability}]
			First, we prove that for any $n>0$ and $E\Subset \Z^{d}$, we have that $G_{n,E}\subseteq H_{n,\left(E+C_{L_{\zeta},F_{1}^{\zeta}}+C_{L_{\zeta},F_{1}^{\zeta}}\right)}+D$. Indeed, set ${\bm m}\in G_{n,E}$. Then, there exists ${\bm h}_{n}\in F_{n}^{\zeta}$, ${\bm e}_{n}\in E$, ${\bm l}_{n}\in F_{n}^{\zeta}$ such that $L_{\zeta}^{n}({\bm m})+{\bm h}_{n}=L_{\zeta}^{n}({\bm b}_{n})+{\bm g}_{n}+L_{\zeta}^{n}({\bm e}_{n})+{\bm l}_{n}$. Set ${\bm d}_{n}\in D$ such that ${\bm l}_{n}-{\bm h}_{n}+{\bm g}_{n}=L_{\zeta}^{n}({\bm d}_{n})$. Hence ${\bm m}={\bm b}_{n}+{\bm e}_{n}+{\bm d}_{n}$. We prove that ${\bm m}-{\bm d}_{n}\in H_{n,E+C_{L_{\zeta},F_{1}^{\zeta}}+C_{L_{\zeta},F_{1}^{\zeta}}}$.
			
			Now, set ${\bm o}_{n}\in F_{n}^{\zeta}$. Then
			\begin{align*}
				L_{\zeta}^{n}({\bm m}-{\bm d}_{n})+{\bm o}_{n} & =L_{\zeta}^{n}({\bm m})+{\bm h}_{n}-L_{\zeta}^{n}({\bm d}_{n})-{\bm h}_{n}+{\bm o}_{n}\\
				& = L_{\zeta}^{n}({\bm b}_{n})+{\bm g}_{n}+L_{\zeta}^{n}({\bm e}_{n})+{\bm l}_{n}-L_{\zeta}^{n}({\bm d}_{n})-{\bm h}_{n}+{\bm o}_{n}\\
				& = L_{\zeta}^{n}({\bm b}_{n})+L_{\zeta}^{n}({\bm e}_{n})+{\bm o}_{n}\\
			\end{align*} 
			Let ${\bm q}_{n}\in F_{n}^{\zeta}$ and ${\bm c}_{n}\in C_{L_{\zeta},F_{1}^{\zeta}}$ be such that ${\bm g}_{n}+{\bm q}_{n}=L_{\zeta}^{n}({\bm c}_{n})$. We get that $L_{\zeta}^{n}({\bm m}-{\bm d}_{n})+{\bm o}_{n}=L_{\zeta}^{n}({\bm b}_{n})+{\bm g}_{n}+L_{\zeta}^{n}({\bm e}_{n}+{\bm c}_{n})+({\bm o}_{n}+{\bm q}_{n})$. Since $F_{n}^{\zeta}+{\bm q}_{n}\subseteq L_{\zeta}^{n}(C_{L_{\zeta},F_{1}^{\zeta}})+F_{n}^{\zeta}$, we conclude that ${\bm m}\in H_{n,E+C_{L_{\zeta},F_{1}^{\zeta}}+C_{L_{\zeta},F_{1}^{\zeta}}}+D$.
			
			To finish the proof, we note that a straightforward computation shows that for any $n>0$ and $A,B\Subset \Z^{d}$, we have that $H_{n,A+B}\subseteq H_{n,A}+B$. We then, conclude that $G_{n,E}\subseteq H_{n,E}+C_{L_{\zeta},F_{1}^{\zeta}}+C_{L_{\zeta},F_{1}^{\zeta}}+D$.
			
		\end{proof}

		By \cref{PropositionForNonInyectiveSubstitutions}, these patterns come from two patterns $\texttt{w}_{1},\texttt{w}_{2}\in \mathcal{L}_{H}(X_{\zeta})$ such that $\zeta^{|\A|-1}(\texttt{w}_{1})=\zeta^{|\A|-1}(\texttt{w}_{2})=\zeta^{|\A|-1}(\texttt{w})$, occurring in $\zeta^{|\A|-1}(\texttt{v})$. Thanks to \cref{ClaimAnotherRecognizability} and the fact that $\Vert C_{L_{\zeta},F_{1}^{\zeta}}+C_{L_{\zeta},F_{1}^{\zeta}}+D\Vert\leq 9\bar{r}$, the difference between these two occurrences is included in $L_{\zeta}^{|\A|-1}([B({\bm 0},9\bar{r})\cap \Z^{d}])$, so the distance is smaller than $9\Vert L_{\zeta}\Vert^{|\A|-1}\bar{r}$.
		
		\begin{claim}\label{ClaimFOrIterationsOfRadius}
			For any $r>0$ and any $n>0$, we have that $[B({\bm 0},r-3\bar{r})\cap \Z^{d}]\subseteq H_{n,[B({\bm 0},r)\cap \Z^{d}]}$ and $[L_{\zeta}^{n}(B({\bm 0},r))\cap \Z^{d}]\subseteq L_{\zeta}^{n}([B({\bm 0},r+\bar{r})\cap \Z^{d}])+F_{n}^{\zeta}$.
		\end{claim}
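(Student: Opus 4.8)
The plan is to establish both inclusions by direct manipulation with the fundamental domain $F_{n}^{\zeta}$, the only quantitative input being the uniform estimate $\Vert L_{\zeta}^{-n}(F_{n}^{\zeta})\Vert \leq \bar{r}$. I would first record this bound as a preliminary observation: iterating the recurrence $F_{n+1}^{\zeta}=L_{\zeta}(F_{n}^{\zeta})+F_{1}^{\zeta}$ gives $F_{n}^{\zeta}=\sum_{i=0}^{n-1}L_{\zeta}^{i}(F_{1}^{\zeta})$, so any ${\bm f}\in F_{n}^{\zeta}$ can be written as $\sum_{i=0}^{n-1}L_{\zeta}^{i}({\bm f}_{i})$ with ${\bm f}_{i}\in F_{1}^{\zeta}$, whence $L_{\zeta}^{-n}({\bm f})=\sum_{j=1}^{n}L_{\zeta}^{-j}({\bm f}_{n-j})$ and $\Vert L_{\zeta}^{-n}({\bm f})\Vert \leq \Vert L_{\zeta}^{-1}(F_{1}^{\zeta})\Vert \sum_{j\geq 0}\Vert L_{\zeta}^{-1}\Vert^{j}=\bar{r}$. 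This is precisely the source of the constants $\bar{r}$ and $3\bar{r}$ appearing in the two inclusions.

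For the second inclusion I would take ${\bm y}\in [L_{\zeta}^{n}(B({\bm 0},r))\cap \Z^{d}]$, write ${\bm y}=L_{\zeta}^{n}({\bm x})$ with $\Vert {\bm x}\Vert \leq r$, and use that $F_{n}^{\zeta}$ is a fundamental domain of $L_{\zeta}^{n}(\Z^{d})$ to decompose ${\bm y}=L_{\zeta}^{n}({\bm k})+{\bm f}$ with ${\bm k}\in \Z^{d}$ and ${\bm f}\in F_{n}^{\zeta}$. Then ${\bm k}={\bm x}-L_{\zeta}^{-n}({\bm f})$, so $\Vert {\bm k}\Vert \leq r+\bar{r}$ by the preliminary bound, which is exactly the asserted inclusion $[L_{\zeta}^{n}(B({\bm 0},r))\cap \Z^{d}]\subseteq L_{\zeta}^{n}([B({\bm 0},r+\bar{r})\cap \Z^{d}])+F_{n}^{\zeta}$.

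For the first inclusion I would unwind the definition of $H_{n,E}$. Recalling that ${\bm j}_{n}-{\bm f}_{n}=L_{\zeta}^{n}({\bm b}_{n})+{\bm g}_{n}$ with ${\bm g}_{n}\in F_{n}^{\zeta}$, membership of a point ${\bm b}_{n}+{\bm v}$ in $H_{n,[B({\bm 0},r)\cap \Z^{d}]}$ is equivalent, after subtracting $L_{\zeta}^{n}({\bm b}_{n})$, to $L_{\zeta}^{n}({\bm v})+F_{n}^{\zeta}\subseteq {\bm g}_{n}+L_{\zeta}^{n}([B({\bm 0},r)\cap \Z^{d}])+F_{n}^{\zeta}$. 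To check this for $\Vert {\bm v}\Vert \leq r-3\bar{r}$, I would fix ${\bm f}\in F_{n}^{\zeta}$ and decompose $L_{\zeta}^{n}({\bm v})+{\bm f}-{\bm g}_{n}=L_{\zeta}^{n}({\bm w})+{\bm f}'$ uniquely with ${\bm f}'\in F_{n}^{\zeta}$; then ${\bm w}={\bm v}+L_{\zeta}^{-n}({\bm f}-{\bm g}_{n}-{\bm f}')$, and applying the preliminary bound to each of the three fundamental-domain terms yields $\Vert {\bm w}\Vert \leq \Vert {\bm v}\Vert +3\bar{r}\leq r$, so ${\bm w}\in [B({\bm 0},r)\cap \Z^{d}]$ as needed. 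I should stress here that the inclusion is really about the translate $H_{n,[B({\bm 0},r)\cap \Z^{d}]}-{\bm b}_{n}$, which is the set carrying the support of $\texttt{w}_{n}$ in the surrounding argument, with ${\bm b}_{n}$ playing the role of base point.

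The two fundamental-domain decompositions are routine; the only delicate point is the bookkeeping in the first inclusion, where one must track both the shift by ${\bm g}_{n}$ and the base point ${\bm b}_{n}$, and verify that exactly three terms enter the estimate so the loss is $3\bar{r}$ and not a larger multiple of $\bar{r}$. I expect the main (purely organizational) obstacle to be keeping the coordinates before and after applying $L_{\zeta}^{-n}$ cleanly separated; there is no genuine analytic difficulty beyond the geometric-series bound established at the outset.
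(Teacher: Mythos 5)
Your proof is correct and follows essentially the same route as the paper: both inclusions rest on the fundamental-domain decomposition together with the geometric-series bound $\Vert L_{\zeta}^{-n}(F_{n}^{\zeta})\Vert\leq\bar{r}$, with the three terms ${\bm f},{\bm g}_{n},{\bm f}'$ producing the $3\bar{r}$ loss. The only cosmetic difference is that the paper packages the first estimate through an element ${\bm c}\in C_{L_{\zeta},F_{1}^{\zeta}}$ (using $F_n^{\zeta}+F_n^{\zeta}\subseteq L_{\zeta}^n(C_{L_{\zeta},F_{1}^{\zeta}})+F_n^{\zeta}$) where you bound $L_{\zeta}^{-n}({\bm f}-{\bm g}_{n}-{\bm f}')$ directly, and you are right --- indeed more careful than the paper's own wording --- that the first inclusion must be read relative to the translate $H_{n,[B({\bm 0},r)\cap \Z^{d}]}-{\bm b}_{n}$, which is how the claim is used for $\supp(\texttt{w}_{n})$.
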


		\begin{proof}[Proof of \cref{ClaimFOrIterationsOfRadius}]
			Set ${\bm n}\in [B({\bm 0},r-3\bar{r})\cap \Z^{d}]$ and ${\bm h}_{n}, {\bm l}_{n}\in F_{n}^{\zeta}$. Then, we write
			\begin{align*}
				L_{\zeta}^{n}({\bm n})+{\bm h}_{n} & =({\bm j}_{n}-{\bm f}_{n})+L_{\zeta}^{n}({\bm r}_{n}-{\bm b}_{n})+{\bm l}_{n}\\
				& = L_{\zeta}^{n}({\bm r}_{n})+{\bm g}_{n}+{\bm l}_{n}
			\end{align*}
			
			\noindent for some ${\bm r}_{n}\in \Z^{d}$. We note that there exists ${\bm c}\in C_{L_{\zeta},F_{1}^{\zeta}}$ such that $L_{\zeta}^{n}({\bm c})+{\bm h}_{n}={\bm g}_{n}+{\bm l}_{n}$ and we conclude that ${\bm m}={\bm r}_{n}+{\bm c}$. Since $\Vert {\bm m}\Vert \leq r-3\bar{r}$ and $\Vert {\bm c}\Vert \leq 3\bar{r}$, we conclude that ${\bm r}_{n}\leq r$. Now we prove the second inclusion.
			
			Set ${\bm n}\in L_{\zeta}^{n}(B({\bm 0},r))\cap \Z^{d}$. Then, there exists ${\bm m}_{1}\in \Z^{d}$ and ${\bm f}\in F_{n}^{\zeta}$ such that ${\bm m}=L_{\zeta}^{n}({\bm m}_{1})+{\bm f}$, which implies that $\Vert {\bm m}_{1}+L_{\zeta}^{-n}({\bm f})\Vert \leq r$. We then get that 
			$$\Vert {\bm m}_{1}\Vert\leq r+\Vert L_{\zeta}^{-n}({\bm f})\Vert \leq r+\bar{r}.$$
		\end{proof}
		
		By \cref{ClaimFOrIterationsOfRadius}, we have that
		$$[L_{\zeta}^{|\A|-1}(B({\bm 0},\bar{R}-4{\bar{r}}))\cap \Z^{d}] \subseteq L_{\zeta}^{|\A|-1}([B({\bm 0},\bar{R}-3{\bar{r}})\cap \Z^{d}])+F_{|\A|-1}^{\zeta}\subseteq \supp(\zeta^{|\A|-1}(\texttt{w})),$$
		
		\noindent so $\supp(\zeta^{|\A|-1}(\texttt{w}))$ contains a discrete ball of radius $(\bar{R}-4\bar{r})/\Vert L_{\zeta}^{-1}\Vert^{|\A|-1}$. By the repulsion property (\cref{RepulsionProperty}), this is a contradiction. Indeed, by the choice of $\bar{R}$, we note that
		\begin{align*}
			\dfrac{1}{\Vert L^{-1}\Vert^{|\A|-1}}(\bar{R}-4{\bar r})\geq \dfrac{1}{\Vert L^{-1}\Vert^{|\A|-1}}9^{t}(\Vert L_{\zeta}\Vert\cdot R_{X_{\zeta}}(9\Vert L_{\zeta}\Vert^{|\A|-1}\bar{r})).
		\end{align*}
		
		\noindent so  $\zeta^{m-n}(\texttt{a}_{n})=\texttt{a}_{m}$ as illustrated in \cref{zetavyzetauyzetawlm-njn}:
		
		\begin{figure}[H]
			\centering
			\begin{tikzpicture}[scale=0.6]
				\node(b1) at (5, 6.5) [scale=1] {$\zeta^{m}(\texttt{v})$};
				
				\node(b2) at (9.9, 3) [scale=1] {\textcolor{blue}{$\zeta^{m}(\texttt{u})$}};
				
				\node(b9) at (0.9, 3) [scale=1] {\textcolor{red}{$\zeta^{m}(\texttt{w})$}};
				
				\node(b3) at (5,5.5) [scale=1]{$\cdots$};
				
				\node(b4) at (5,4.5) [scale=1]{$\cdots$};
				\node(b5) at (5,3.5) [scale=1]{$\cdots$};
				\node(b6) at (5,2.5) [scale=1]{$\cdots$};
				\node(b7) at (5,1.5) [scale=1]{$\cdots$};
				
				\node(b8) at (11.5,0.6) [scale=1]{\textcolor{zzttqq}{$\zeta^{m-n}(\texttt{a}_{n})=\texttt{a}_{m}$}};
				
				\path[thin](0,0) edge (10,0);
				\path[thin](10.,0.) edge (10.,6.);
				\path[thin](10.,6.) edge (0.,6.);
				\path[thin](0.,6.) edge (0.,0.);
				\path[thin](2.,6.) edge (2.,0.);
				\path[thin](8.,6.) edge (8.,0.);
				\path[thin](0.,1.) edge (10.,1.);
				\path[thin](0.,2.) edge (10.,2.);
				\path[thin](0.,4.) edge (10.,4.);
				\path[thin](0.,5.) edge (10.,5.);
				\path[line width=1.5pt,red](2,1) edge (2,5);
				\path[line width=1.5pt,red](2,1) edge (8,1);
				\path[line width=1.5pt,red](2,5)edge(8,5);
				\path[line width=1.5pt,red](8,1)edge(8,5);
				
				\draw[fill=zzttqq,fill opacity=0.10000000149011612] (0.6,0.3) rectangle (2,5.3);
				\draw[fill=zzttqq,fill opacity=0.10000000149011612] (2,5) rectangle (8.6,5.3);
				\draw[fill=zzttqq,fill opacity=0.10000000149011612] (8,0.3) rectangle (8.6,5);
				\draw[fill=zzttqq,fill opacity=0.10000000149011612] (2,0.3) rectangle (8,1);
				
				\path[thin,blue](0.6,0.3) edge (8.6,0.3);
				\path[thin,blue](8.6,0.3) edge (8.6,5.3);
				\path[thin,blue](8.6,5.3) edge (0.6,5.3);
				\path[thin,blue](0.6,5.3) edge (0.6,0.3);
				\path[thin,blue](0.6,1.3) edge (8.6,1.3);
				\path[thin,blue](0.6,2.3) edge (8.6,2.3);
				\path[thin,blue](0.6,4.3) edge (8.6,4.3);
				\path[thin,blue](2.6,0.3) edge (2.6,5.3);
				\path[thin,blue](6.6,0.3) edge (6.6,5.3);
				
				\node(b10) at (6,3) [scale=0.8]{$L_{\zeta}^{m-n}({\bm j}_{n})$};
				
				\node(b12) at (1.85,0.8)[scale=1]{$\times$};
				\node(b11) at (5.25,2.85)[scale=1]{$\times$};
			\end{tikzpicture}
			\caption{Illustration of the patterns $\zeta^{m-n}(\texttt{a}_{n})$ in $L_{\zeta}^{m-n}({\bm j}_{n})$.}
			\label{zetavyzetauyzetawlm-njn}
		\end{figure}
		
		To finish the proof, we note that since $\zeta^{n}(\texttt{u})\sqsubseteq \zeta^{n}(\texttt{v})$, there exists ${\bm p}_{m}\in L_{\zeta}^{n}({\bm b}_{m})+L_{\zeta}^{n}(G)+F_{n}^{\zeta}$ such that $x|_{{\bm p}_{m}+L_{\zeta}^{n}([B({\bm 0},\bar{R})\cap \Z^{d}])+F_{n}^{\zeta}}=\zeta^{n}(\texttt{u})$, which implies that $x|_{L_{\zeta}^{m-n}({\bm p}_{m})+L_{\zeta}^{m}([B({\bm 0},\bar{R})\cap \Z^{d}])+F_{m}^{\zeta}}=\zeta^{m}(\texttt{u})$. Using the fact that $\zeta^{m-n}(\texttt{a}_{n})=\texttt{a}_{m}$, we get that $L_{\zeta}^{m-n}({\bm p}_{m})+L_{\zeta}^{m}([B({\bm 0},\bar{R})\cap \Z^{d}])+F_{m}^{\zeta}=({\bm j}_{m}-{\bm f}_{m})+L_{\zeta}^{m}([B({\bm 0},\bar{R})\cap \Z^{d}])+F_{m}^{\zeta}$, i.e., ${\bm j}_{m}-{\bm f}_{m}=L_{\zeta}^{m-n}({\bm p}_{m})\in L_{\zeta}^{m}(\Z^{d})$. Since ${\bm i}_{m}, {\bm f}_{m}\in L_{\zeta}(\Z^{d})$ we conclude that ${\bm j}_{m}\in L_{\zeta}(\Z^{d})$.
	\end{proof}
	
	In \cref{ComputabilityRecognizability} we compute a constant such that we can recognize patterns of the form $\zeta(a)$ for $a\in \A$. But it does not give information on the letter such that the pattern $\zeta(a)$ comes from. If the substitution is injective in letters, this result is enough to get a complete recognizability property. To finish the proof of \cref{UpperBoundRecognizability}, we need to deal with the case where the substitution is not injective. For this case, we prove the second step of the recognizability property.
	
	\begin{proposition}\label{RecognizabilitySecondStep}[Second step of recognizability: uniqueness of pre-images]
		Let $\zeta$ be an aperiodic primitive substitution and $x\in X_{\zeta}$ be a fixed point of $\zeta$. 
		Consider $R_{|\A|}>0$ the constant of \cref{ComputabilityRecognizability} associated with $\zeta^{|\A|}$, i.e.,
		$$x|_{L^{|\A|}_{\zeta}({\bm i})+[B({\bm 0},R_{|\A|})\cap \Z^{d}]}=x|_{{\bm j}+[B({\bm 0},R_{|\A|})\cap \Z^{d}]} \implies  {\bm j}\in L^{|\A|}_{\zeta}(\Z^{d}).$$
		Consider also $R=R_{|\A|}+2\Vert F_{1}^{\zeta}\Vert\cdot  \Vert L_{\zeta}\Vert^{|\A|}$. Then, for any ${\bm i},{\bm j}\in \Z^{d}$
		$$x|_{[B(L_{\zeta}({\bm i}),R)\cap \Z^{d}]}=x|_{[B(L_{\zeta}({\bm j}),R)\cap \Z^{d}]} \implies x_{{\bm i}}=x_{{\bm j}}.$$
	\end{proposition}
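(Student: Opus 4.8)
The plan is to reduce the statement, by a single application of the first step at the power $\zeta^{|\A|}$, to an equality of $\zeta^{|\A|-1}$-images that can be read off coordinatewise, using the Ehrenfeucht--Rozenberg property (\cref{PropositionForNonInyectiveSubstitutions}) to bridge the gap between the exponents $|\A|$ and $|\A|-1$. Throughout, $x$ is the given fixed point, so $x=\zeta^{n}(x)$ for every $n\ge 0$, and the hypothesis reads $x_{L_\zeta(\bm i)+\bm t}=x_{L_\zeta(\bm j)+\bm t}$ for all $\|\bm t\|\le R$.

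First I would locate, around $L_\zeta(\bm i)$, the level-$|\A|$ cutting bar that governs it. Writing $L_\zeta(\bm i)=L_\zeta^{|\A|}(\bm p)+\bm\tau$ with $\bm\tau\in F_{|\A|}^{\zeta}$ (unique, since $F_{|\A|}^{\zeta}$ is a fundamental domain of $L_\zeta^{|\A|}(\Z^{d})$), note that $\bm\tau=L_\zeta(\bm i)-L_\zeta^{|\A|}(\bm p)\in L_\zeta(\Z^{d})$. Using the factorization $F_{|\A|}^{\zeta}=L_\zeta(F_{|\A|-1}^{\zeta})+F_{1}^{\zeta}$ together with $F_{1}^{\zeta}\cap L_\zeta(\Z^{d})=\{\bm 0\}$, this forces $\bm\tau=L_\zeta(\bm\tau_1)$ for a unique $\bm\tau_1\in F_{|\A|-1}^{\zeta}$; dividing by $L_\zeta$ gives $\bm i=L_\zeta^{|\A|-1}(\bm p)+\bm\tau_1$, and hence $x_{\bm i}=\zeta^{|\A|-1}(x_{\bm p})_{\bm\tau_1}$. \emph{This descent from the level-$1$ bar to a level-$(|\A|-1)$ relation between $\bm i$ and its ancestor $\bm p$ is the key algebraic observation.}

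Next I would transport the hypothesis to the center $L_\zeta^{|\A|}(\bm p)$: for $\|\bm s\|\le R-\|\bm\tau\|$ one has $x_{L_\zeta^{|\A|}(\bm p)+\bm s}=x_{(L_\zeta(\bm j)-\bm\tau)+\bm s}$, i.e. the patterns of radius $R-\|\bm\tau\|$ centered at $L_\zeta^{|\A|}(\bm p)$ and at $L_\zeta(\bm j)-\bm\tau$ coincide. Since $\bm\tau\in F_{|\A|}^{\zeta}$ and $\|F_{|\A|}^{\zeta}\|$ is bounded by (roughly) $2\|F_{1}^{\zeta}\|\,\|L_\zeta\|^{|\A|}$, the choice $R=R_{|\A|}+2\|F_{1}^{\zeta}\|\,\|L_\zeta\|^{|\A|}$ guarantees $R-\|\bm\tau\|\ge R_{|\A|}$, so the first step applied to $\zeta^{|\A|}$ (\cref{ComputabilityRecognizability}, via $R_{|\A|}$, with left center at the genuine level-$|\A|$ bar $L_\zeta^{|\A|}(\bm p)$) yields $L_\zeta(\bm j)-\bm\tau\in L_\zeta^{|\A|}(\Z^{d})$, say $L_\zeta(\bm j)=L_\zeta^{|\A|}(\bm p')+\bm\tau$. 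Repeating the descent of the previous paragraph with the \emph{same} offset $\bm\tau$ gives $\bm j=L_\zeta^{|\A|-1}(\bm p')+\bm\tau_1$ and $x_{\bm j}=\zeta^{|\A|-1}(x_{\bm p'})_{\bm\tau_1}$. Moreover, as $R-\|\bm\tau\|$ also exceeds $\|F_{|\A|}^{\zeta}\|$, the matched ball covers the full block $L_\zeta^{|\A|}(\bm p)+F_{|\A|}^{\zeta}$, whence $\zeta^{|\A|}(x_{\bm p})=\zeta^{|\A|}(x_{\bm p'})$.

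Finally I would invoke \cref{PropositionForNonInyectiveSubstitutions}: from $\zeta^{|\A|}(x_{\bm p})=\zeta^{|\A|}(x_{\bm p'})$ its contrapositive gives $\zeta^{|\A|-1}(x_{\bm p})=\zeta^{|\A|-1}(x_{\bm p'})$, and reading the $\bm\tau_1$-coordinate yields $x_{\bm i}=\zeta^{|\A|-1}(x_{\bm p})_{\bm\tau_1}=\zeta^{|\A|-1}(x_{\bm p'})_{\bm\tau_1}=x_{\bm j}$, as desired. I expect the difficulty to be conceptual rather than computational: one must center the argument at the surrounding level-$|\A|$ bar (not at $L_\zeta(\bm i)$ itself, whose block only determines $x_{\bm i}$ up to the relation ``equal image under some power of $\zeta$''), and observe that dividing the cutting-bar identity by $L_\zeta$ is precisely what makes the exponent $|\A|-1$ produced by Ehrenfeucht--Rozenberg coincide with the exponent needed to express $x_{\bm i}$. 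The verification that the stated $R$ meets the two radius requirements, namely $R-\|\bm\tau\|\ge R_{|\A|}$ and $R-\|\bm\tau\|\ge\|F_{|\A|}^{\zeta}\|$, is then routine.
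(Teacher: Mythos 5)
Your proof is correct and is essentially the paper's own argument: both decompose $L_{\zeta}({\bm i})=L_{\zeta}^{|\A|}({\bm k})+{\bm f}$ with ${\bm f}\in F_{|\A|}^{\zeta}\cap L_{\zeta}(\Z^{d})$, apply the first step (\cref{ComputabilityRecognizability}) for $\zeta^{|\A|}$ to obtain $L_{\zeta}({\bm j})=L_{\zeta}^{|\A|}({\bm m})+{\bm f}$ with the \emph{same} offset, deduce $\zeta^{|\A|}(x_{{\bm k}})=\zeta^{|\A|}(x_{{\bm m}})$ from the matched ball, and descend to exponent $|\A|-1$ via \cref{PropositionForNonInyectiveSubstitutions} before reading off the common $F_{|\A|-1}^{\zeta}$-coordinate. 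Your explicit verification that the offset lies in $L_{\zeta}(F_{|\A|-1}^{\zeta})$ (via $F_{1}^{\zeta}\cap L_{\zeta}(\Z^{d})=\{{\bm 0}\}$) is only implicit in the paper's writing ${\bm f}=\sum_{i=1}^{|\A|-1}L_{\zeta}^{i}({\bm f}_{i})$, but this is a presentational difference, not a different route.
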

	
	\begin{proof}
		Let ${\bm k}\in \Z^{d}$ and ${\bm f}=\sum\limits_{i=1}^{|\A|-1}L_{\zeta}^{i}({\bm f}_{i})\in F_{|\A|}^{\zeta}$ be such that $L_{\zeta}^{|\A|}({\bm k})+{\bm f}=L_{\zeta}({\bm i})$. Hence, we have that $L_{\zeta}^{|\A|-1}({\bm k})+\sum\limits_{i=1}^{|\A|-1}L_{\zeta}^{i-1}({\bm f}_{i})={\bm i}$. By the definition of $R_{|\A|}>0$, we have the existence of ${\bm m}\in \Z^{d}$ such that $L_{\zeta}^{|\A|}({\bm m})+{\bm f}=L_{\zeta}({\bm j})$, which implies that ${\bm j}= L_{\zeta}^{|\A|-1}({\bm m})+\sum\limits_{i=1}^{|\A|-1}L_{\zeta}^{i-1}({\bm f}_{i})$. Note that, by the definition of $R>0$, we get that $x|_{L_{\zeta}^{|\A|}({\bm k})+F_{|\A|}^{\zeta}}=x|_{L_{\zeta}^{|\A|}({\bm m})+F_{|\A|}^{\zeta}}$. Hence $\zeta^{|\A|}(x_{{\bm k}})=\zeta^{|\A|}(x_{{\bm m}})$, and by \cref{PropositionForNonInyectiveSubstitutions} we also have that $\zeta^{|\A|-1}(x_{{\bm k}})=\zeta^{|\A|-1}(x_{{\bm m}})$. This implies that $x|_{L_{\zeta}^{|\A|-1}({\bm k})+F_{|\A|-1}^{\zeta}}=x|_{L_{\zeta}^{|\A|-1}({\bm m})+F_{|\A|-1}^{\zeta}}$, which let us conclude that $x_{{\bm i}}=x_{{\bm j}}$.
	\end{proof}
	
	\begin{remark}\label{RecognizabilityForPowers}
		We recall that if $R_{\zeta}$ is a constant of recognizability for the first step for $\zeta$, then $2\Vert L_{\zeta}\Vert R_{\zeta}$ is a constant of recognizability for the first step for $\zeta^{2}$. Indeed, note that if 
		$$x|_{L_{\zeta}^{2}({\bm i})+L_{\zeta}([B({\bm 0}),R_{\zeta}]\cap \Z^{d})+[B({\bm 0}),R_{\zeta}]\cap \Z^{d}}=x|_{{\bm m}+L_{\zeta}([B({\bm 0}),R_{\zeta}]\cap \Z^{d})+[B({\bm 0}),R_{\zeta}]\cap \Z^{d}},$$
		
		\noindent then ${\bm m}\in L_{\zeta}^{2}(\Z^{d})$. A straightforward induction shows that for any $n>0$, $2\Vert L_{\zeta}\Vert^{n}R_{\zeta}$ is a constant of recognizability for the first step for $\zeta^{n}$. 
	\end{remark}
	
	\begin{proof}[Proof of \cref{UpperBoundRecognizability}]
		Finally, to get an upper bound we just need to make the computation. By \cref{RecognizabilityForPowers}, we have that if $R_{\zeta}$ is given by \cref{ComputabilityRecognizability} for $\zeta$, then $2\Vert L_{\zeta}\Vert^{|\A|}(R_{\zeta}+\Vert F_{1}^{\zeta}\Vert)$, is a constant of recognizability, for the second step, for $\zeta$. By \cref{ComputabilityRecognizability}, we then get that $\zeta$ is recognizable on $x$ and the constant of recognizability is at most
		
		$$2\Vert L_{\zeta}\Vert^{|\A|}[2\Vert F_{1}^{\zeta}\Vert + \Vert L_{\zeta}\Vert^{{\bar n}+|\A|}(2\Vert F_{1}^{\zeta}\Vert+7\bar{r}+\Vert L_{\zeta}^{-1}\Vert^{|\A|-1}\cdot a\cdot 9^{t}\Vert L_{\zeta}\Vert^{t(|\A|-1)})\bar{r}^{t}].$$
	\end{proof}
	
	\section{Rigidity properties of topological factors between aperiodic minimal substitutive subshifts}\label{Section:RigidityFactorsSubstitutiveSubshifts}
	
	In this section, we study factor maps between multidimensional substitutive subshifts. The main theorem (\cref{TopologicalRigidityOfFactors}) shows that if $\zeta_{1},\zeta_{2}$ are two aperiodic substitutions with the same expansion map $L$ and there exists a factor map $\pi:(X_{\zeta_{1}},S,\Z^{d})\to (X_{\zeta_{2}},S,\Z^{d})$ between two substitutive subshifts, then, there exists another factor map $\phi:(X_{\zeta_{1}},S,\Z^{d})\to (X_{\zeta_{2}},S,\Z^{d})$ given by a local map of a computable bounded radius that only depends on the support and the constant of recognizability. 
	We recall that, by \cref{ConjugationSubstitutionsDifferentFundamentalDomains}, if two substitutions are defined with the same expansion matrix, we can assume, up to conjugacy, that they also share the same support. We then deduce the following consequences: every aperiodic minimal substitutive subshift is coalescent (\cref{Coalescence}) and the quotient group $\Aut(X_{\zeta},S,\Z^{d})/\left\langle S\right\rangle$ is finite, extending the results in \cite{cabezas2023homomorphisms} for the whole class of aperiodic minimal substitutive subshifts, given by constant-shape substitutions. Next, we prove the decidability of the factorization and the isomorphism problem between aperiodic minimal substitutive subshifts (\cref{Thm:DecidabilityFactorizationProblem} and \cref{cor:DecidabilityIsomorphismProblem}). We finish this section proving that aperiodic minimal substitutive subshifts have finitely many aperiodic symbolic factors, up to conjugacy (\cref{Lemma:FinitelyManyAperiodicSymbolicFactors}) and providing an algorithm to obtain a list of the possible injective substitutions factors of an aperiodic substitutive subshift.
	
	\subsection{Factor maps between substitutive subshifts} We prove a multidimensional analog of \cite[Theorem 8.1]{durand2022decidability}: There exists a computable upper bound $R$ such that any factor map between two aperiodic minimal substitutive subshifts is equal to a shift map composed with a sliding block code of radius less than $R$. This was first done in the measurable setting under some extra combinatorial assumptions for the substitutions (in particular for bijective substitutions) in the one-dimensional case in \cite{host1989homomorphismes} and in the multidimensional case in \cite{cabezas2023homomorphisms}. A similar result was also proved for factor maps between two minimal substitutive subshifts of constant-length and Pisot substitutions in \cite{salo2015blockmaps}. This last result was extended in \cite{durand2022decidability} for the whole class of aperiodic minimal substitutive subshifts. They also gave a computable bound for the radius of a factor map between two minimal substitutive subshifts. We prove a similar result of \cite{durand2022decidability} for the multidimensional constant-shape case, where the expansion matrices of the constant-shape substitutions are the same. We start with the following property about factor maps between substitutive subshifts.
	
	\begin{proposition}\label{ConstantModuloMinimality}
		Let $\zeta_{1},\zeta_{2}$ be two aperiodic primitive constant-shape substitutions with the same expansion matrix $L$ and the same support $F_{1}$, and $\phi:(X_{\zeta_{1}},S,\Z^{d})\to(X_{\zeta_{2}},S,\Z^{d})$ be a factor map. Then, for any $n>0$, there exists a unique ${\bm f}_n\in F_{n}$ such that if $x\in \zeta_{1}^n(X_{\zeta_{1}})$, then $\phi(x)\in S^{-{\bm f}_{n}} \zeta_{2}^{n}(X_{\zeta_{2}})$.
	\end{proposition}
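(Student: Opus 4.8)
The plan is to reduce the statement to the fact that a continuous, shift-invariant function on the minimal system $(X_{\zeta_1},S,\Z^{d})$ is constant. Since $\zeta_{2}$ is aperiodic and primitive, it is recognizable, so by the discussion following the definition of recognizability the collection $\{S^{{\bm j}}\zeta_2^{n}(X_{\zeta_2}) : {\bm j}\in F_{n}\}$ is a clopen partition of $X_{\zeta_2}$; the same holds if $F_{n}$ is replaced by any complete system of representatives of $\Z^{d}/L^{n}(\Z^{d})$, in particular by $-F_{n}$. I would therefore define the \emph{address map} ${\bm a}\colon X_{\zeta_2}\to F_{n}$ by letting ${\bm a}(w)$ be the unique element of $F_{n}$ with $w\in S^{-{\bm a}(w)}\zeta_2^{n}(X_{\zeta_2})$. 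This map is locally constant, hence continuous, and it is invariant under $S^{L^{n}({\bm m})}$ for every ${\bm m}\in\Z^{d}$: indeed, from the intertwining relation below, $S^{L^{n}({\bm m})}\zeta_2^{n}(X_{\zeta_2})=\zeta_2^{n}(S^{{\bm m}}X_{\zeta_2})=\zeta_2^{n}(X_{\zeta_2})$, and since translations commute, $S^{L^{n}({\bm m})}$ maps each piece $S^{-{\bm f}}\zeta_2^{n}(X_{\zeta_2})$ onto itself.

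Next, recognizability makes $\zeta_1^{n}\colon X_{\zeta_1}\to\zeta_1^{n}(X_{\zeta_1})$ a homeomorphism satisfying $\zeta_1^{n}\circ S^{{\bm m}}=S^{L^{n}({\bm m})}\circ\zeta_1^{n}$, which follows from the defining formula $\zeta(x)_{L({\bm j})+{\bm f}}=\zeta(x_{{\bm j}})_{{\bm f}}$ by induction on $n$. I then set $g={\bm a}\circ\phi\circ\zeta_1^{n}\colon X_{\zeta_1}\to F_{n}$, which is continuous as a composition of continuous maps. Using that $\phi$ commutes with the shift together with the two invariances above, for every ${\bm m}\in\Z^{d}$ and $x'\in X_{\zeta_1}$ I compute
\[
g(S^{{\bm m}}x')
={\bm a}\bigl(\phi(\zeta_1^{n}(S^{{\bm m}}x'))\bigr)
={\bm a}\bigl(\phi(S^{L^{n}({\bm m})}\zeta_1^{n}(x'))\bigr)
={\bm a}\bigl(S^{L^{n}({\bm m})}\phi(\zeta_1^{n}(x'))\bigr)
={\bm a}\bigl(\phi(\zeta_1^{n}(x'))\bigr)
=g(x').
\]
Thus $g$ is a continuous $S$-invariant function on the minimal system $X_{\zeta_1}$, hence constant, say $g\equiv{\bm f}_{n}\in F_{n}$. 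Unravelling the definitions, every $x\in\zeta_1^{n}(X_{\zeta_1})$ has the form $x=\zeta_1^{n}(x')$ and satisfies ${\bm a}(\phi(x))=g(x')={\bm f}_{n}$, that is $\phi(x)\in S^{-{\bm f}_{n}}\zeta_2^{n}(X_{\zeta_2})$, which is the desired conclusion.

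Uniqueness is then immediate: if both ${\bm f}_{n}$ and ${\bm f}_{n}'$ in $F_{n}$ satisfied the conclusion, then $\phi(x)$ for any $x\in\zeta_1^{n}(X_{\zeta_1})$ would lie in the two partition pieces $S^{-{\bm f}_{n}}\zeta_2^{n}(X_{\zeta_2})$ and $S^{-{\bm f}_{n}'}\zeta_2^{n}(X_{\zeta_2})$, forcing ${\bm f}_{n}={\bm f}_{n}'$ because $-F_{n}$ is a complete system of representatives of $\Z^{d}/L^{n}(\Z^{d})$. The step I expect to require the most care is the promotion of the merely $L^{n}(\Z^{d})$-directional invariance of the address map to genuine $S$-invariance of $g$ on $X_{\zeta_1}$: the crux is that pulling the address map back through $\zeta_1^{n}$ rescales the visible shift directions by $L^{n}$, so that the coarse $L^{n}(\Z^{d})$-invariance on $X_{\zeta_2}$ becomes full shift-invariance on $X_{\zeta_1}$, where minimality can finally be invoked.
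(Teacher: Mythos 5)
Your proof is correct and takes essentially the same route as the paper: both arguments rest on the clopen partition $\{S^{-{\bm f}}\zeta_2^{n}(X_{\zeta_2})\colon {\bm f}\in F_{n}\}$ furnished by recognizability, the intertwining relations $\zeta_1^{n}\circ S^{{\bm m}}=S^{L^{n}({\bm m})}\circ\zeta_1^{n}$ and $\phi\circ S=S\circ\phi$, and the minimality of $(X_{\zeta_1},S,\Z^{d})$. The only difference is presentational: where the paper fixes one $x\in\zeta_1^{n}(X_{\zeta_1})$ and reaches an arbitrary $y$ by a converging sequence $S^{L^{n}({\bm m}_p)}x$ together with continuity of $\phi$, you package that same density argument as the standard fact that your continuous shift-invariant address map $g$ on a minimal system must be constant.
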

	
	\begin{proof}
		Set $x\in \zeta_{1}^{n}(X_{\zeta_{1}})$. Then, there exists ${\bm f}_{n}(x)$ such that $\phi(x)\in S^{-{\bm f}_{n}} \zeta_{2}^{n}(X_{\zeta_{2}})$. We prove that ${\bm f}_{n}(x)$ is constant on $\zeta_{1}^{n}(X_{\zeta_{1}})$. Note that $S^{{\bm n}}x\in \zeta_{1}^{n}(X_{\zeta_{1}})$ if and only if ${\bm n}=L_{\zeta}^{n}({\bm m})$, for some ${\bm m}\in \Z^{d}$. 
		Hence, $\phi(S^{L_{\zeta}^{n}({\bm m})}(x))=S^{L_{\zeta}^{n}({\bm m})}\phi(x)\in S^{-{\bm f}_{n}}\zeta_{2}^{n}(X_{\zeta_{2}})$. Now, if $y\in \zeta_{1}^{n}(X_{\zeta_{1}})$, we use the minimality of $(X_{\zeta},S,\Z^{d})$ to obtain a sequence $(S^{L_{\zeta}^{n}({\bm m}_{p})}(x))_{p>0}$ converging to $y$. By continuity of $\phi$, we conclude that $\phi(y)\in S^{-{\bm f}_{n}(x)}\phi(x)$. 
	\end{proof}
	
	The following theorem states the rigidity properties that factor maps between substitutive subshifts with the same expansion map satisfy. As in the previous sections, we define ${\bar r}=\Vert L^{-1}(F_{1})\Vert/(1-\Vert L^{-1}\Vert)$.
	Observe that in the proofs of the next results, we sometimes need to replace a substitution $\zeta$ by a power $\zeta^n$ of itself. 
	However, in the computations, this modification does not impact the bound $\bar{r}$, i.e., we do not have to replace $\bar{r}$ by $\Vert L^{-n}(F_{n})\Vert/(1-\Vert L^{-n}\Vert)$.
	
	\begin{theorem}\label{TopologicalRigidityOfFactors}
		Let $\zeta_{1},\zeta_{2}$ be two aperiodic primitive constant-shape substitutions with the same expansion matrix $L$ and support $F_{1}$. Suppose there exists a factor map $\phi:(X_{\zeta_{1}},S,\Z^{d})\to(X_{\zeta_{2}},S,\Z^{d})$ with radius $r$. Then, there exists ${\bm j}\in \Z^{d}$ and a factor map $\psi:(X_{\zeta_{1}},S,\Z^{d})\to (X_{\zeta_{2}},S,\Z^{d})$ such that $S^{{\bm j}}\phi=\psi$, satisfying the following two properties:
		
		\begin{enumerate}
			\item The factor map $\psi$ is a sliding block code of radius at most $2\bar{r}+R_{\zeta_{2}}+1$, where $R_{\zeta_{2}}$ is a constant of recognizability for $\zeta_{2}$.
			\item\label{CommutationPropertySlidingBlickCodeSubstitution} There exists an integer $n>0$ and ${\bm f}\in F_{n}$ such that $S^{{\bm f}}\psi \zeta_{1}^{n}=\zeta_{2}^{n}\psi$.
		\end{enumerate}
	\end{theorem}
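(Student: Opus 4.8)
The plan is to realize the desired $\psi$ as a sufficiently deep \emph{desubstitution} of $\phi$, and then to use the recognizability of $\zeta_2$ to prove that this desubstitution is forced to be nothing but a shift of $\phi$ itself. First I would set up the renormalization. By recognizability of $\zeta_2$ (with constant $R_{\zeta_2}$), every $y\in X_{\zeta_2}$ decomposes uniquely as $y=S^{\bm g}\zeta_2^{n}(z)$ with $\bm g\in F_{n}$, $z\in X_{\zeta_2}$, so one has a well-defined desubstitution $\zeta_2^{-n}$. Combining this with \cref{ConstantModuloMinimality}, which supplies the unique ${\bm f}_n\in F_n$ such that $\phi(\zeta_1^{n}(X_{\zeta_1}))\subseteq S^{-{\bm f}_n}\zeta_2^{n}(X_{\zeta_2})$, I define
\[
\phi_n:=\zeta_2^{-n}\circ S^{{\bm f}_n}\circ\phi\circ\zeta_1^{n}.
\]
Since $X_{\zeta_i}=X_{\zeta_i^{n}}$ and both subshifts are minimal, each $\phi_n$ is again a factor map $X_{\zeta_1}\to X_{\zeta_2}$, and by construction it satisfies the renormalization identity $\zeta_2^{n}\phi_n=S^{{\bm f}_n}\phi\zeta_1^{n}$.

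Next I would estimate the radius of $\phi_n$. To read off $\phi_n(x')_{\bm m}$ one must know $\phi(\zeta_1^{n}(x'))$ on a ball around $L^{n}(\bm m)$ whose radius is controlled by recognizability, hence know $\zeta_1^{n}(x')$ there up to the radius $r$ of $\phi$, hence know $x'$ on a ball of radius at most $\|L^{-n}\|\,r$ plus geometric terms governed by $\|F_n\|$ (which, after multiplication by $\|L^{-n}\|$, are controlled by $\bar r$) together with a recognizability term of size $R_{\zeta_2}$. As $\|L^{-1}\|<1$, the contribution of the initial radius $r$ decays while the geometric part stabilizes near $2\bar r$, so for all large $n$ one obtains $r(\phi_n)\le 2\bar r+R_{\zeta_2}+1$.

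The crucial step is to produce a self-similarity. Because there are only finitely many factor maps $X_{\zeta_1}\to X_{\zeta_2}$ of radius at most $2\bar r+R_{\zeta_2}+1$ (sliding block codes of bounded support over finite alphabets), the pigeonhole principle gives $\phi_n=\phi_m$ for some $n<m$. Feeding this coincidence back into the two renormalization identities and cancelling $\zeta_1^{n}$ on the right — which is legitimate since the translates $\{S^{\bm h}\zeta_1^{n}(X_{\zeta_1}):\bm h\in F_n\}$ partition $X_{\zeta_1}$ and every map in sight commutes with $S$ — yields a self-similarity for $\phi$ itself, namely $\phi\zeta_1^{k}=S^{\bm c}\zeta_2^{k}\phi$ with $k=m-n$ and $\bm c=L^{k}{\bm f}_n-{\bm f}_m$. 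Iterating this gives $\phi\zeta_1^{jk}=S^{\bm c_j}\zeta_2^{jk}\phi$ with $\bm c_j=\sum_{i=0}^{j-1}L^{ik}\bm c$. Comparing with $\zeta_2^{jk}\phi_{jk}=S^{{\bm f}_{jk}}\phi\zeta_1^{jk}$ produces $\zeta_2^{jk}\phi_{jk}=S^{{\bm f}_{jk}+\bm c_j}\zeta_2^{jk}\phi$; the clopen partition $\{S^{\bm g}\zeta_2^{jk}(X_{\zeta_2}):\bm g\in F_{jk}\}$ then forces ${\bm f}_{jk}+\bm c_j\in L^{jk}\Z^{d}$, and injectivity of $\zeta_2^{jk}$ gives $\phi_{jk}=S^{\bm j}\phi$ with $\bm j=L^{-jk}({\bm f}_{jk}+\bm c_j)$. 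Choosing $j$ large, $\psi:=\phi_{jk}=S^{\bm j}\phi$ has radius at most $2\bar r+R_{\zeta_2}+1$, which is property (1). For property (2), all the $\phi_{jk}$ are shifts of $\phi$ of bounded radius and hence lie in the same finite set, so $\phi_{jk}=\phi_{(j+s)k}$ for some $s\ge 1$; the renormalization identity at level $N=sk$ then reads $S^{\bm f}\psi\zeta_1^{N}=\zeta_2^{N}\psi$ with the element $\bm f\in F_N$ furnished by \cref{ConstantModuloMinimality}.

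I expect the main obstacle to be twofold and essentially bookkeeping in nature: controlling the radius estimate so that it stabilizes \emph{exactly} at $2\bar r+R_{\zeta_2}+1$ (tracking precisely how the recognizability radius and the geometry of $F_n$ encoded by $\bar r$ recombine after applying $L^{-n}$), and carrying out the partition-and-cancellation manipulations that upgrade the soft coincidence $\phi_n=\phi_m$ into the \emph{integral} shift identity $\psi=S^{\bm j}\phi$. The conceptual heart is that recognizability, applied to the self-similarity relation, is what forces the a priori real displacement to be an honest lattice translation.
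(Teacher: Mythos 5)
Your proposal is correct and takes essentially the same route as the paper: the same renormalized maps $\phi_{n}=\zeta_{2}^{-n}\circ S^{{\bm f}_{n}}\circ\phi\circ\zeta_{1}^{n}$ built from \cref{ConstantModuloMinimality} and recognizability, the same radius bound $2\bar{r}+R_{\zeta_{2}}+1$ for large $n$, and the same pigeonhole among finitely many block codes, with your detour through the explicit self-similarity $\phi\zeta_{1}^{k}=S^{{\bm c}}\zeta_{2}^{k}\phi$ and the lattice-displacement argument being a harmless variant of the paper's composition law $(\phi_{n})_{k}=\phi_{n+k}$ followed by its final minimality argument. The only compressed point is your last step for property (2), where ``the renormalization identity at level $N=sk$'' implicitly uses exactly that composition law $(\phi_{jk})_{sk}=\phi_{(j+s)k}$ — a short consequence of the uniqueness in \cref{ConstantModuloMinimality}, which the paper proves for one step and iterates.
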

	
	Our proof follows a similar strategy than in \cite{durand2022decidability} and \cite{cabezas2023homomorphisms}. We then use \cref{TopologicalRigidityOfFactors} to deduce some topological and combinatorial properties of aperiodic primitive constant-shape substitutions.
	
	\begin{proof}
		For any $n>0$, we denote ${\bm f}_{n}(\phi)$ the element of $F_n$ given by \cref{ConstantModuloMinimality}. We recall that the recognizability property implies that the substitution maps $\zeta_{1}^{n},\zeta_{2}^{n}$ are homeomorphisms from $X_{\zeta_{1}}$ to $\zeta_{1}^n(X_{\zeta_{1}})$ and from $X_{\zeta_{2}}$ to $\zeta_{2}^n(X_{\zeta_{2}})$, respectively. Hence, for any $x\in X_{\zeta_{1}}$, there exists a unique point $\phi_{n}(x)\in X_{\zeta_{2}}$ such that
		\begin{equation}\label{eq:definition of phi_n}
			S^{{\bm f}_{n}(\phi)}\phi \zeta_{1}^{n}(x)=\zeta_{2}^{n}(\phi_{n}(x)).        
		\end{equation}
		
		Let us show that $\phi_n$ is also a factor map and that, for large enough $n$, it has a ``small'' radius.
		The map $\phi_{n}$ is obviously continuous, so let us prove that it commutes with the shift. Take ${\bm m}\in \Z^{d}$. We note that
		\begin{align*}
			\zeta_{2}^{n}(\phi_{n}(S^{{\bm m}}x)) & = S^{{\bm f}_{n}(\phi)}\phi\zeta_{1}^{n}(S^{{\bm m}}x)\\
			& = S^{{\bm f}_{n}(\phi)}\phi S^{L^{n}({\bm m})}\zeta_{1}^{n}(x)\\
			& = S^{L^{n}({\bm m})+{\bm f}_{n}(\phi)}\phi \zeta_{1}^{n}(x)\\
			& =	S^{L^{n}({\bm m})}\zeta_{2}^{n}(\phi_{n}(x))\\
			& = \zeta_{2}^{n}(S^{{\bm m}}\phi_{n}(x)).
		\end{align*}
		The map $\zeta_2^n:X_{\zeta_2} \to \zeta_2^n(X_{\zeta_2})$ being injective, proves that $\phi_{n}(S^{{\bm m}}x) = S^{{\bm m}}\phi_{n}(x)$, so the map $\phi_{n}$ is a factor map between $(X_{\zeta_{1}},S,\Z^{d})$ and $(X_{\zeta_{2}},S,\Z^{d})$. 
		
		Let us now show that Equation~\eqref{eq:definition of phi_n} allows to bound the radius of $\phi_n$ for large enough $n$. Roughly, given $x|_{[B({\bm 0},R)\cap \Z^{d}]}$, we first apply $\zeta_1^n$ to compute $\zeta_1^n(x)$ on some big support. Then we lose some information using $\phi$ and the shift $S^{{\bm f}_n}$. Our aim is thus to find $R$ such that, for large enough $n$, the remaining information is large enough to be ``desubstituted'' by $\zeta_2^n$.

		Let $P_{n}\Subset \Z^{d}$ be such that if $x,y\in X_{\zeta_1}$, then 
		\begin{equation}\label{eq:def_of_factor}
			x|_{P_{n}}=y|_{P_{n}} \implies \phi_{n}(x)_{{\bm 0}}=\phi_{n}(y)_{{\bm 0}}.
		\end{equation}
		By definition, we have that $\zeta_{1}^{n}(x)|_{L^{n}(P_{n})+F_{n}}$ is equal to $\zeta_{1}^{n}(y)|_{L^{n}(P_{n})+F_{n}}$. We then obtain that
		\begin{equation}\label{Eq:EqualityOfPatternsRigidty}
			(S^{{\bm f}_{n}(\phi)}\phi\zeta_{1}^{n}(x))|_{(L^{n}(P_{n})+F_{n})^{\circ r}-{\bm f}_{n}(\phi)}=(S^{{\bm f}_{n}(\phi)}\phi\zeta_{1}^{n}(y))|_{(L^{n}(P_{n})+F_{n})^{\circ r}-{\bm f}_{n}(\phi)},
		\end{equation}
		which implies that 
		\[
		\zeta_{2}^{n}(\phi_{n}(x))|_{(L^{n}(P_{n})+F_{n})^{\circ r}-{\bm f}_{n}(\phi)}=\zeta_{2}^{n}(\phi_{n}(y))|_{(L^{n}(P_{n})+F_{n})^{\circ r}-{\bm f}_{n}(\phi)}.
		\]
		Let $R_{\zeta_{2}}$ be a constant of recognizability for $\zeta_{2}$. By \cref{RecognizabilitySecondStep} and \cref{RecognizabilityForPowers} we get that $\phi_n(x)$ and $\phi_n(y)$ coincide on the support
		\[
		[L^{-n}(((L^{n}(P_{n})+F_{n})^{\circ r}-{\bm f}_{n}(\phi))^{\circ \mathcal{R}_{n}})]\cap \Z^{d},
		\]
		where $\mathcal{R}_{n}=\sum\limits_{i=0}^{n-1}L^{i}([B({\bm 0},R_{\zeta_{2}})\cap \Z^{d}])$. By Equation~\eqref{eq:def_of_factor}, for $\phi_{n}$ to be a sliding block code induced by a $P_{n}$-block map, we need that
		\[
		{\bm 0}\in [L^{-n}(((L^{n}(P_{n})+F_{n})^{\circ r}-{\bm f}_{n}(\phi))^{\circ \mathcal{R}_{n}})]\cap \Z^{d}.
		\]
		We have that
		\begin{align}\label{ConditionForAToBeBlockMap}
			{\bm 0}\in [L^{-n}(((L^{n}(P_{n})+F_{n})^{\circ r}-{\bm f}_{n}(\phi))^{\circ \mathcal{R}_{n}})]\cap \Z^{d} & \Leftrightarrow {\bm 0}\in ((L^{n}(P_{n})+F_{n})^{\circ r }-{\bm f}_{n}(\phi))^{\circ \mathcal{R}_{n}}\notag\\
			& \Leftrightarrow \mathcal{R}_{n} \subseteq (L^{n}(P_{n})+F_{n})^{\circ r}-{\bm f}_{n}(\phi)\notag\\
			& \Leftrightarrow {\bm f}_{n}(\phi)+\mathcal{R}_{n} \subseteq (L^{n}(P_{n})+F_{n})^{\circ r}.
		\end{align}
		
		Consider $R = 2\bar{r}+R_{\zeta_{2}}+1$ and let us prove that $P_n = [B({\bm 0},R)\cap \Z^{d}]$ satisfies Equation~\eqref{ConditionForAToBeBlockMap} for large enough $n$.
		Indeed, for all ${\bm r}_n \in \mathcal{R}_n$ and all ${\bm r} \in [B({\bm 0},r)\cap \Z^{d}]$, we can write ${\bm f}_n(\phi)+{\bm r}_n+{\bm r} = L^n({\bm a})+{\bm g}_n$ for some ${\bm a} \in \mathbb{Z}^d$ and ${\bm g}_n \in F_n$. We then get
		\begin{align}
			\nonumber \|{\bm a}\| 
			&\leq
			\|L^{-n}({\bm f}_n(\phi)-{\bm g}_n+{\bm r}_n+{\bm r})\| \\
			&\leq
			\|L^{-n}({\bm f}_n(\phi))\|+\|L^{-n}({\bm g}_n)\|+\|L^{-n}({\bm r}_n)\|+\|L^{-n}({\bm r})\| \label{eq:reductionofradius}.     
		\end{align}

		\noindent and the fact that for $n$ large enough, $\Vert L^{-1}\Vert^{n+1}r\leq 1$, we conclude that for any $n$ large enough,  $\Vert P_{n}\Vert\leq 2\bar{r}+R_{\zeta_{2}}+1$.
		
		As in \cite{cabezas2023homomorphisms,host1989homomorphismes} we note that for any $n\geq 1$, ${\bm f}_{n+1}(\phi)={\bm f}_{n}(\phi)\ (\text{mod}\ L^{n}(\Z^{d}))$. Hence, there exists ${\bm g}\in F_{1}$ such that ${\bm f}_{n+1}(\phi)={\bm f}_{n}(\phi)+L^{n}({\bm g})\ (\text{mod}\ L^{n+1}(\Z^{d}))$. We note that ${\bm g}={\bm f}_{1}(\phi_{n})$. Indeed
		\begin{align}
			\nonumber    S^{{\bm f}_{n+1}(\phi)}\phi \zeta_{1}^{n+1} & = \zeta_{2}^{n+1}(\phi_{n+1}) \\
			\nonumber    S^{{\bm f}_{n+1}(\phi)-{\bm f}_{n}}S^{{\bm f}_{n}(\phi)}\phi \zeta_{1}^{n}\zeta & = \zeta_{2}^{n+1}(\phi_{n+1})\\
			\nonumber    S^{L^{n}({\bm g})}\zeta_{2}^{n}\phi_{n}\zeta_{1} & = \zeta_{2}^{n+1}(\phi_{n+1})\\
			S^{{\bm g}}\phi_{n}\zeta_{1} & =\zeta_{2}(\phi_{n+1}). \label{eq:relation between phi_n's}
		\end{align}
		
		By definition of ${\bm g}$, ${\bm f}_{1}(\phi_{n})$, $\phi_{n+1}$ and $(\phi_{n})_{1}$, we conclude that ${\bm g}={\bm f}_{1}(\phi_{n})$ and $(\phi_{n})_{1}=\phi_{n+1}$. By recurrence, we conclude that for any $n,k\geq 0$, $(\phi_{n})_{k}=\phi_{n+k}$.
		
		To finish the proof, observe that for fixed alphabets $\A$ and $\B$, there exists a finite number of sliding block codes of radius $2\bar{r}+R_{\zeta_{2}}+1$. Thus, there exist two different integers $m,k\geq 0$ such that $\phi_{m}=\phi_{m+k}$.
		
		Let $n\geq m$ be a multiple of $k$. Note that $(\phi_{n})_{k}=\phi_{n+k}=(\phi_{m+k})_{n-m}=(\phi_{m})_{n-m}=\phi_{n}$. This implies that for all $r\in \N$, $\phi_{n}$ is equal to $(\phi_{n})_{rk}$. Since $\phi_{n}$ is equal to $\phi_{2n}$ we denote $\psi=\phi_{n}$ and ${\bm f}={\bm f}_{n}(\psi)$. By definition of ${\bm f}$, we have that $S^{{\bm f}}\psi\zeta_{1}^{n}=\zeta_{2}^{n}\psi$. 
		
		Set ${\bm j}={\bm f}_{n}(\phi)-{\bm f}$, then
		$$S^{{\bm j}}\phi\zeta_{1}^{n}=S^{{\bm f}_{n}(\phi)-{\bm f}}\phi\zeta_{1}^{n}=S^{-{\bm f}}\zeta_{2}^{n}\psi=\psi\zeta_{1}^{n},$$
		this implies that $S^{{\bm j}}\phi$ and $\psi$ coincides on $\zeta_{1}^{n}(X_{\zeta_{1}})$, hence on the whole set $X_{\zeta_{1}}$ by minimality.
	\end{proof}
	
	\subsection{Consequences of \cref{TopologicalRigidityOfFactors}}\label{SectionApplicationsOfMainTheorem}
	
	As a consequence of \cref{TopologicalRigidityOfFactors}, we extend the results on the coalescence and the automorphism group of substitutive subshifts proved in \cite{cabezas2023homomorphisms}. Specifically, we get rid of the reducibility condition for constant-shape substitutions. We recall that a system is \emph{coalescent} if any factor map between $X$ and itself is invertible. This was first proved in \cite{durand2000linearly}
	for one-dimensional linearly recurrent subshifts (in particular aperiodic primitive substitutive subshifts). Multidimensional linearly recurrent substitutive subshifts (such as the self-similar ones) are also coalescent as
	a consequence of a result in \cite{cortez2010linearly}. For an aperiodic primitive constant-shape substitution, we denote $R_{\zeta}$ to be a constant of recognizability for $\zeta$.
	
	Since the set of sliding block codes $2\bar{r}+R_{\zeta}+1$ is finite, we will assume here (up to considering a power of $\zeta$) that if a factor map $\psi$ satisfies Property \ref{CommutationPropertySlidingBlickCodeSubstitution} in \cref{TopologicalRigidityOfFactors}, then it does so for $n=1$, i.e., there exists ${\bm p}\in F_{1}^{\zeta}$ such that $S^{{\bm p}}\psi \zeta=\zeta \psi$.
	
	\subsubsection{Coalescence of substitutive subshifts}\label{CoalescenceAndFactors} The coalescence of aperiodic substitutive subshifts was proved in \cite{cabezas2023homomorphisms} for reduced aperiodic primitive constant-shape substitutions. Here, we proved it for the whole class of aperiodic primitive constant-shape substitutions. As in \cite{cabezas2023homomorphisms}, we use the notion of $\zeta$-\emph{invariant orbits}. An orbit $\mathcal{O}(x,\Z^{d})$ is called $\zeta$-\emph{invariant} if there exists ${\bm j}\in \Z^{d}$ such that $\zeta(x)=S^{{\bm j}}x$, i.e., the orbit is invariant under the action of $\zeta$ in $X_{\zeta}$. Since for every ${\bm n}\in \Z^{d}$ we have that $\zeta\circ S^{{\bm n}}=S^{L_{\zeta}{\bm n}}\circ \zeta$, the definition is independent of the choice of the point in the $\Z^{d}$-orbit of $x$. As an example, the orbit of a fixed point of the substitution map is an example of an invariant orbit. We recall that in \cite{cabezas2023homomorphisms} it was proved that there exist finitely many invariant orbits.
	
	\begin{proposition}\cite[Proposition 3.9]{cabezas2023homomorphisms}\label{FinitelyManyInvariantOrbits}
		Let $\zeta$ be an aperiodic primitive constant-shape substitution. Then, there exist finitely many $\zeta$-invariant orbits in the substitutive subshift $X_{\zeta}$. The bound is explicit and depends only on $d$, $|\A|$, $\Vert L_{\zeta}^{-1}\Vert$, $\Vert F_{1}^{\zeta}\Vert$ and $\det(L_{\zeta}-\id)$.
	\end{proposition}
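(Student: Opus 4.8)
The plan is to attach to every $\zeta$-invariant orbit a well-defined algebraic invariant living in a finite group, and then to bound the number of orbits carrying a fixed invariant by a fixed-point count. First I would make the assignment. Given a $\zeta$-invariant orbit, choose $x$ with $\zeta(x)=S^{{\bm j}}x$ for some ${\bm j}\in\Z^{d}$. Using the commutation $\zeta\circ S^{{\bm m}}=S^{L_{\zeta}({\bm m})}\circ\zeta$, replacing $x$ by $S^{{\bm m}}x$ turns ${\bm j}$ into ${\bm j}+(L_{\zeta}-\id)({\bm m})$, so the class $\bar{{\bm j}}\in\Z^{d}/(L_{\zeta}-\id)(\Z^{d})$ depends only on the orbit, not on the chosen point. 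Since $L_{\zeta}$ is expanding, $1$ is not an eigenvalue, hence $L_{\zeta}-\id$ is invertible over $\Q$ and the quotient group is finite, of cardinality $|\det(L_{\zeta}-\id)|$. This already produces the dependence on $\det(L_{\zeta}-\id)$ appearing in the statement.

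Next I would reduce the problem to a fixed-point count. Fix one representative ${\bm j}$ in each of the $|\det(L_{\zeta}-\id)|$ classes. Every orbit with class $\bar{{\bm j}}$ contains \emph{exactly one} point $y$ satisfying $\zeta(y)=S^{{\bm j}}y$ for that precise ${\bm j}$: existence holds because some ${\bm m}$ realizes the class, and uniqueness because $(L_{\zeta}-\id)({\bm m})={\bm 0}$ forces ${\bm m}={\bm 0}$ by invertibility. Moreover, two such points attached to distinct orbits never lie in a common orbit. Therefore the number of invariant orbits equals $\sum_{\bar{{\bm j}}} N({\bm j})$, where $N({\bm j})=\#\{y\in X_{\zeta}:\zeta(y)=S^{{\bm j}}y\}$, and it remains only to bound $N({\bm j})$ for each (boundedly chosen) representative ${\bm j}$.

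The core of the argument is bounding $N({\bm j})$. The point is that a solution of $y=S^{-{\bm j}}\zeta(y)$ is determined by finitely many of its letters, exactly as a fixed point of $\zeta$ is determined by its seed. Writing ${\bm n}+{\bm j}=L_{\zeta}({\bm k})+{\bm f}$ with ${\bm f}\in F_{1}$, one reads $y_{{\bm n}}=\zeta(y_{{\bm k}})_{{\bm f}}$, so the letter at ${\bm n}$ is governed by the letter at ${\bm k}=L_{\zeta}^{-1}({\bm n}+{\bm j}-{\bm f})$, which is strictly closer to the center ${\bm c}=(L_{\zeta}-\id)^{-1}({\bm j})$, the fixed point of the contraction ${\bm n}\mapsto L_{\zeta}^{-1}({\bm n}+{\bm j})$. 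Iterating this contraction confines the determining coordinates to a bounded neighborhood of ${\bm c}$ (the analog of the set $K_{\zeta}$), whose diameter is controlled by $\bar{r}=\Vert L_{\zeta}^{-1}(F_{1})\Vert/(1-\Vert L_{\zeta}^{-1}\Vert)$. Choosing each representative ${\bm j}$ with norm bounded in terms of a fundamental domain of $(L_{\zeta}-\id)(\Z^{d})$, this neighborhood has cardinality at most $(c\,\bar{r})^{d}$ for an explicit constant $c$, giving $N({\bm j})\le|\A|^{(c\bar{r})^{d}}$. Combining with the previous step, the number of invariant orbits is at most $|\det(L_{\zeta}-\id)|\cdot|\A|^{(c\bar{r})^{d}}$, a quantity depending only on $d$, $|\A|$, $\Vert L_{\zeta}^{-1}\Vert$, $\Vert F_{1}^{\zeta}\Vert$ and $\det(L_{\zeta}-\id)$.

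The main obstacle is the rigorous version of the third paragraph: one must check that the backward contraction really confines the determining coordinates to a neighborhood of ${\bm c}$ of a size that is uniform in the point $y$, invoking recognizability both to legitimize the decomposition $y=S^{-{\bm j}}\zeta(y)$ and to handle substitutions that are not injective on letters; and one must verify that the representatives ${\bm j}$ can be selected with a norm bound keeping the seed region — hence the exponent $(c\bar{r})^{d}$ — expressible solely through the stated quantities. The remaining steps are routine, relying on the finiteness of $\Z^{d}/(L_{\zeta}-\id)(\Z^{d})$ and the standard counting of local configurations.
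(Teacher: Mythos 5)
Your proposal is correct and follows essentially the same route as the proof of \cite[Proposition 3.9]{cabezas2023homomorphisms}, which this paper cites rather than reproves: assign to each invariant orbit the class of ${\bm j}$ in the finite group $\Z^{d}/(L_{\zeta}-\id)(\Z^{d})$ of order $|\det(L_{\zeta}-\id)|$, normalize ${\bm j}$ within each orbit, and count solutions of $\zeta(y)=S^{{\bm j}}y$ via seed-determination under the contraction ${\bm n}\mapsto L_{\zeta}^{-1}({\bm n}-{\bm j}-{\bm f})$. The caveats you flag are indeed routine: recognizability is never needed (the relation $\zeta(y)=S^{{\bm j}}y$ is hypothesized, not derived, so desubstitution along it is automatic), and what must be controlled is not $\Vert {\bm j}\Vert$ but $\Vert L_{\zeta}^{-1}({\bm j})\Vert$ (equivalently the center $(L_{\zeta}-\id)^{-1}({\bm j})$), which for representatives ${\bm j}\in (L_{\zeta}-\id)([0,1)^{d})\cap \Z^{d}$ is at most $\sqrt{d}\,(1+\Vert L_{\zeta}^{-1}\Vert)$, keeping the final bound expressible in the stated quantities.
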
 
	
	We now prove that substitutive subshifts are coalescent.
	
	\begin{proposition}\label{Coalescence}
		Let $\zeta$ be an aperiodic primitive constant-shape substitution. Then, the substitutive subshift $(X_{\zeta},S,\Z^{d})$ is coalescent.
	\end{proposition}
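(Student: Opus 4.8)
The plan is to exploit the rigidity theorem \cref{TopologicalRigidityOfFactors} to show that, modulo the shift action, there are only finitely many endomorphisms of $(X_\zeta,S,\Z^d)$, and then to run a pigeonhole argument on the powers of a given endomorphism.

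First I would take an arbitrary endomorphism (factor map to itself) $\phi\colon (X_\zeta,S,\Z^d)\to(X_\zeta,S,\Z^d)$. By the Curtis-Hedlund-Lyndon theorem it is a sliding block code of some finite radius, so \cref{TopologicalRigidityOfFactors} applies with $\zeta_1=\zeta_2=\zeta$: there is ${\bm j}_\phi\in\Z^d$ such that $S^{{\bm j}_\phi}\phi$ is a sliding block code of radius at most $2\bar r+R_\zeta+1$. Since $\A$ is finite and this radius bound is fixed, the collection $\Psi$ of sliding block codes $X_\zeta\to X_\zeta$ of radius at most $2\bar r+R_\zeta+1$ is finite. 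Hence every endomorphism of $X_\zeta$ has the form $S^{\bm n}\psi$ with $\psi\in\Psi$ and ${\bm n}\in\Z^d$; in particular the endomorphism monoid of $X_\zeta$ modulo the shift is finite.

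Next I would apply this to every power of a fixed endomorphism $\phi$. Each $\phi^k$ is again an endomorphism (a composition of surjective, continuous, shift-commuting maps), so I can write $\phi^k=S^{{\bm n}_k}\psi_k$ with $\psi_k\in\Psi$. By finiteness of $\Psi$ there are indices $1\le i<j$ with $\psi_i=\psi_j$, which yields $\phi^j=S^{\bm m}\phi^i$ with ${\bm m}={\bm n}_j-{\bm n}_i$. The decisive step is then to use the surjectivity of $\phi^i$: given $y\in X_\zeta$ choose $x$ with $\phi^i(x)=y$ and compute $\phi^{j-i}(y)=\phi^j(x)=S^{\bm m}\phi^i(x)=S^{\bm m}y$, so that $\phi^{j-i}=S^{\bm m}$ on all of $X_\zeta$. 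Since $S^{\bm m}$ is a homeomorphism, $\phi^{j-i}$ is injective, whence $\phi$ is injective; being a continuous bijection of a compact metric space commuting with $S$, it is an automorphism, and coalescence follows.

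I expect the only delicate point to be the reduction giving the uniform radius bound, but that is precisely the content of \cref{TopologicalRigidityOfFactors}, which I am entitled to invoke; the remaining pigeonhole-and-surjectivity argument is elementary. If one prefers to stay closer to the invariant-orbit framework of \cite{cabezas2023homomorphisms}, the same finiteness can instead be channelled through the finitely many $\zeta$-invariant orbits provided by \cref{FinitelyManyInvariantOrbits}, but the argument sketched above avoids that extra machinery entirely.
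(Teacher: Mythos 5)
Your proof is correct, and it follows a genuinely different route from the paper's. Both arguments take \cref{TopologicalRigidityOfFactors} as the rigidity input, but you use only its item (1) (the uniform radius bound $2\bar r + R_\zeta + 1$), whereas the paper leans on item (2): it picks the representative $\psi$ satisfying $S^{\bm p}\psi\zeta = \zeta\psi$, observes that such a $\psi$ permutes the $\zeta$-invariant orbits, invokes the finiteness of those orbits (\cref{FinitelyManyInvariantOrbits}) to find $x$ with $S^{\bm m}\psi^n(x)=x$, and then uses minimality to promote this pointwise identity to $\psi^n = S^{-\bm m}$ on all of $X_\zeta$. Your pigeonhole is instead run over the finite set $\Psi$ of bounded-radius sliding block codes, applied to the powers $\phi^k = S^{{\bm n}_k}\psi_k$, and your closing step $\phi^{j-i}=S^{\bm m}$ needs only the surjectivity of $\phi^i$, not minimality and not the invariant-orbit machinery of \cite{cabezas2023homomorphisms} at all; this is essentially the classical coalescence argument for systems whose endomorphism monoid is finite modulo the shift (as for linearly recurrent subshifts). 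What your version buys is economy: a shorter, more self-contained proof with fewer hypotheses invoked (in particular, no need for the convention, adopted in the paper just before this proposition, that the commutation property of item (2) holds with $n=1$ after passing to a power of $\zeta$). What the paper's version buys is structural information reused elsewhere in Section 5: endomorphisms act on the finite set of $\zeta$-invariant orbits, a fact of independent interest in the study of the automorphism group. All the small checks in your argument go through: $\Psi$ is finite because the alphabet and radius bound are fixed; $\phi^j = S^{{\bm n}_j-{\bm n}_i}\phi^i$ follows from $\psi_i=\psi_j$; and injectivity of $\phi^{j-i}=S^{\bm m}$ forces injectivity of $\phi$ since $j-i\geq 1$.
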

	
	\begin{proof}
		Set $\phi\in \End(X_{\zeta},S,\Z^{d})$. \cref{TopologicalRigidityOfFactors} ensures that there exists ${\bm j}\in \Z^{d}$ such that $S^{{\bm j}}\phi$ is equal to a sliding block code $\psi$ of a fixed radius satisfying $S^{{\bm p}}\psi\zeta=\zeta \psi$, for some ${\bm p}\in F_{1}^{\zeta}$. Let $\bar{x}\in X_{\zeta}$ be in a $\zeta$-invariant orbit, i.e., there exists ${\bm j}\in \Z^{d}$ such that $\zeta(\bar{x})=S^{{\bm j}}\bar{x}$. Note that
		$$\zeta \psi(\bar{x}) = S^{{\bm p}}\psi\zeta(\bar{x})=S^{{\bm p}+{\bm j}}\psi(\bar{x}),$$
		
		\noindent so, if the orbit of $x$ is in a $\zeta$-invariant orbit, then $\psi(x)$ is also in a $\zeta$-invariant orbit. By Proposition \ref{FinitelyManyInvariantOrbits}, there exist finitely many $\zeta$-invariant orbits, hence for $n$ large enough, we can find $x\in X_{\zeta}$ with $x$ and $\psi^{n}(x)$ being in the same orbit, i.e., there exists ${\bm m}\in \Z^{d}$ such that $S^{{\bm m}}\psi^{n}(x)=x$. The minimality of $(X_{\zeta},S,\Z^{d})$ allows us to conclude that $\psi^{n}=S^{-{\bm m}}$. Hence $\psi$ is invertible, which implies that $\phi$ is invertible.
	\end{proof}
	
	\subsubsection{The automorphism group of substitutive subshifts.} The rigidity properties of the topological factors between substitutive subshifts also allow us to conclude that the group $\Aut(X_{\zeta},S,\Z^{d})/\left\langle S\right\rangle$ is finite, since any element in $\Aut(X_{\zeta},S,\Z^{d})$ can be represented as an automorphism with radius $2\bar{r}+R_{\zeta}+1$.
	
	\begin{proposition}\label{AutomoprhismVirtuallyZd}
		Let $(X_{\zeta},S,\Z^{d})$ be a substitutive subshift from an aperiodic primitive reduced  constant-shape substitution $\zeta$. Then, the quotient group $\Aut(X_{\zeta},S,\Z^{d})/\left\langle S\right\rangle$ is finite. A bound for $|\Aut(X_{\zeta},S,\Z^{d})/\left\langle S \right\rangle|$ is given by an explicit formula depending only on $d$, $|\A|$, $\Vert L_{\zeta}^{-1}\Vert$, $\Vert F_{1}^{\zeta}\Vert$.
	\end{proposition}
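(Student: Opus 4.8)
The plan is to combine \cref{TopologicalRigidityOfFactors} with a finiteness count. That theorem tells us that, modulo a shift, every automorphism of $(X_{\zeta},S,\Z^{d})$ is realised by a sliding block code whose radius is bounded by a fixed computable constant. Since there are only finitely many sliding block codes of bounded radius over a fixed alphabet, the quotient $\Aut(X_{\zeta},S,\Z^{d})/\langle S\rangle$ must be finite, and an explicit bound for its cardinality can be read off from the number of such codes.

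First I would apply \cref{TopologicalRigidityOfFactors} in the special case $\zeta_{1}=\zeta_{2}=\zeta$. Given any $\phi\in \Aut(X_{\zeta},S,\Z^{d})$, the theorem produces ${\bm j}\in \Z^{d}$ and a factor map $\psi=S^{{\bm j}}\phi$ that is a sliding block code of radius at most $R:=2\bar{r}+R_{\zeta}+1$. As $\psi$ is the composition of $\phi$ with a power of the shift, it is again an automorphism, and it represents the same class as $\phi$ in $\Aut(X_{\zeta},S,\Z^{d})/\langle S\rangle$. Let $\mathcal{S}_{R}$ denote the (finite) set of sliding block codes from $X_{\zeta}$ to itself of radius at most $R$. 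Choosing for each coset such a representative $\psi$ defines a map $\Aut(X_{\zeta},S,\Z^{d})/\langle S\rangle \to \mathcal{S}_{R}$. This map is injective: if two cosets are sent to the same code $\psi\in\mathcal{S}_{R}$, then each of them equals $\psi\langle S\rangle$, hence they coincide. Consequently $|\Aut(X_{\zeta},S,\Z^{d})/\langle S\rangle|\leq |\mathcal{S}_{R}|$.

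It then remains to bound $|\mathcal{S}_{R}|$ explicitly. Every element of $\mathcal{S}_{R}$ is induced by a block map $\A^{[B({\bm 0},R)\cap \Z^{d}]}\to \A$, so $|\mathcal{S}_{R}|\leq |\A|^{|\A|^{|[B({\bm 0},R)\cap \Z^{d}]|}}$, and using the elementary estimate $|[B({\bm 0},R)\cap \Z^{d}]|\leq (2R+1)^{d}$ this becomes a fully explicit expression in $R$ and $|\A|$. To conclude I would substitute the bound $\bar{r}\leq \Vert L_{\zeta}^{-1}\Vert\,\Vert F_{1}^{\zeta}\Vert/(1-\Vert L_{\zeta}^{-1}\Vert)$ together with the computable upper bound for the constant of recognizability $R_{\zeta}$ furnished by \cref{UpperBoundRecognizability}; since both are controlled by $d$, $|\A|$, $\Vert L_{\zeta}^{-1}\Vert$ and $\Vert F_{1}^{\zeta}\Vert$, so is the resulting bound for $R$ and hence for $|\Aut(X_{\zeta},S,\Z^{d})/\langle S\rangle|$. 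The only genuinely substantial input is \cref{TopologicalRigidityOfFactors} itself; once the uniform radius bound is in hand, the argument reduces to an elementary finiteness-and-counting step, and the main care needed is in tracking how $R$, and therefore the final bound, depends on the data of $\zeta$.
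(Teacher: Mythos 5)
Your proposal is correct and follows essentially the same route as the paper, which proves this proposition in one line by invoking \cref{TopologicalRigidityOfFactors} (with $\zeta_1=\zeta_2=\zeta$) to represent every automorphism, modulo a shift, by a sliding block code of radius at most $2\bar{r}+R_{\zeta}+1$, and then counting the finitely many such codes. Your additional details — the injectivity of the coset-to-representative assignment, the explicit bound $|\A|^{|\A|^{(2R+1)^{d}}}$, and the substitution of \cref{UpperBoundRecognizability} for $R_{\zeta}$ — simply flesh out what the paper leaves implicit.
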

	
	We recall that this was proved in the one-dimensional case as a consequence of the works \cite{coven2016computing,cyr2015automorphism,donoso2016lowcomplexity}, where they proved that the automorphism group of a minimal subshift with non-superlinear complexity is virtually generated by the shift action.
	
	\subsubsection{Decidability of the factorization problem between aperiodic substitutive subshifts}
	
	In this section, we prove that the factorization problem is decidable for aperiodic substitutive subshifts, given by substitutions sharing the same expansion matrix (\cref{Thm:DecidabilityFactorizationProblem}). This extends the one proved by I. Fagnot \cite{fagnot1997facteurs} and F. Durand, J. Leroy \cite{durand2022decidability} in the one-dimensional constant-length case as follows. 
	
	\begin{theorem}\label{Thm:DecidabilityFactorizationProblem}
		Let $\zeta_{1},\zeta_{2}$ be two aperiodic primitive constant-shape substitutions with the same expansion matrix $L$. It is decidable to know whether there exists a factor map between $(X_{\zeta_{1}},S,\Z^{d})$ and $(X_{\zeta_{2}},S,\Z^{d})$.
	\end{theorem}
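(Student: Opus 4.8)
The plan is to assemble the structural results already proved—\cref{ConjugationSubstitutionsDifferentFundamentalDomains}, \cref{Thm:ComputabilityOfTheRecognizability} (i.e.\ \cref{UpperBoundRecognizability}) and \cref{TopologicalRigidityOfFactors}—into an algorithm that reduces the existence of a factor map to a finite search among explicitly listed block maps, each tested by a terminating computation. First I would normalise the data: since $\zeta_{1},\zeta_{2}$ share the expansion matrix $L$ but may have different supports, I apply \cref{ConjugationSubstitutionsDifferentFundamentalDomains} to replace each $\zeta_i$ by a computable conjugate substitution with a common support $F_{1}$. As conjugacies are isomorphisms and compose with factor maps, a factor map $X_{\zeta_{1}}\to X_{\zeta_{2}}$ exists if and only if one exists between the normalised systems, so from now on both substitutions have support $F_{1}$. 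Using \cref{Thm:ComputabilityOfTheRecognizability} I compute an explicit recognizability constant $R_{\zeta_{2}}$ and set $R=2\bar{r}+R_{\zeta_{2}}+1$. By \cref{TopologicalRigidityOfFactors}, if any factor map exists then one of them, after composition with a shift, is a sliding block code of radius at most $R$ satisfying $S^{{\bm f}}\psi\zeta_{1}^{n}=\zeta_{2}^{n}\psi$ for some $n>0$ and ${\bm f}\in F_{n}$. As explained in \cref{SectionApplicationsOfMainTheorem}, there are only finitely many block maps of radius at most $R$, so the exponents $n$ arising this way are bounded; replacing $\zeta_{1},\zeta_{2}$ by a fixed computable common power (which leaves $X_{\zeta_i}$ and $\bar{r}$ unchanged) I may assume $n=1$, i.e.\ the relevant $\psi$ satisfies $S^{{\bm p}}\psi\zeta_{1}=\zeta_{2}\psi$ for some ${\bm p}\in F_{1}$.

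The decisive point is that minimality, together with the commutation relation, converts ``being a factor map'' into a bounded check. Let $\Phi$ be a block map with support $[B({\bm 0},R)\cap\Z^{d}]$ inducing the sliding block code $\phi:X_{\zeta_{1}}\to\B^{\Z^{d}}$, and suppose there is ${\bm p}\in F_{1}$ with $S^{{\bm p}}\phi\zeta_{1}=\zeta_{2}\phi$. Fixing a fixed point $\bar{x}=\zeta_{1}(\bar{x})$, the relation gives $\zeta_{2}(\phi(\bar{x}))=S^{{\bm p}}\phi(\bar{x})$, whence $\zeta_{2}^{k}(\phi(\bar{x}))=S^{{\bm q}_k}\phi(\bar{x})$ with ${\bm q}_k=\big(\sum_{i<k}L^{i}\big){\bm p}$. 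Thus for any window $W$, the pattern $\phi(\bar{x})|_{W}$ is a subpattern of $\zeta_{2}^{k}\big(\phi(\bar{x})|_{E_k}\big)$, where $E_k\subseteq L^{-k}(W+{\bm q}_k-F_{k})$ has diameter bounded, uniformly in $k$, by a computable constant $D$ (since $\|L^{-1}\|^{k}\|F_{k}\|$ stays bounded and $\|L^{-1}\|^{k}\diam(W)\to0$). Consequently, if every pattern in $\mathcal{L}_{[B({\bm 0},D)\cap\Z^{d}]}(\phi(X_{\zeta_{1}}))$ lies in $\mathcal{L}(X_{\zeta_{2}})$, then $\zeta_{2}^{k}(\phi(\bar{x})|_{E_k})\in\mathcal{L}(X_{\zeta_{2}})$ and so is $\phi(\bar{x})|_{W}$; hence $\phi(\bar{x})\in X_{\zeta_{2}}$. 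By minimality $X_{\zeta_{1}}=\overline{\mathcal{O}(\bar{x},\Z^{d})}$, so $\phi(X_{\zeta_{1}})=\overline{\mathcal{O}(\phi(\bar{x}),\Z^{d})}$ is a nonempty subshift contained in $X_{\zeta_{2}}$, and minimality of $X_{\zeta_{2}}$ forces $\phi(X_{\zeta_{1}})=X_{\zeta_{2}}$, so $\phi$ is a factor map. Conversely \cref{TopologicalRigidityOfFactors} guarantees that whenever a factor map exists some block map of radius at most $R$ satisfies the relation and genuinely maps into $X_{\zeta_{2}}$, so both conditions hold. Therefore a factor map exists if and only if there are a block map $\Phi$ of radius at most $R$ and ${\bm p}\in F_{1}$ with $S^{{\bm p}}\phi\zeta_{1}=\zeta_{2}\phi$ and $\mathcal{L}_{[B({\bm 0},D)\cap\Z^{d}]}(\phi(X_{\zeta_{1}}))\subseteq\mathcal{L}(X_{\zeta_{2}})$.

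It remains to see that this characterisation is effective. There are finitely many block maps of radius at most $R$ and finitely many ${\bm p}\in F_{1}$, so I would enumerate all pairs $(\Phi,{\bm p})$. For each pair, both $S^{{\bm p}}\phi\zeta_{1}$ and $\zeta_{2}\phi$ are sliding block codes on $X_{\zeta_{1}}$ of computable radius, and their equality is equivalent to agreement at the origin on every pattern of $X_{\zeta_{1}}$ supported on one fixed finite window; likewise the legality condition only involves finitely many patterns of $\mathcal{L}_{[B({\bm 0},D)\cap\Z^{d}]}(\phi(X_{\zeta_{1}}))$, which are images under $\Phi$ of patterns of $X_{\zeta_{1}}$ on a bounded window. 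Both tests reduce to comparing finite slices of the languages $\mathcal{L}(X_{\zeta_{1}})$ and $\mathcal{L}(X_{\zeta_{2}})$, and these languages are computable: every pattern occurring in $X_{\zeta_i}$ occurs in some iterate $\zeta_{i}^{k}(a)$, with the exponent $k$ controlled by the repetitivity bound of \cref{GrowthRepetititvtyFunction}, and the subshifts are well defined by \cref{thm:decidabilityfolnerporperty}. I expect the main obstacle to be precisely this bookkeeping—verifying that the reduction to $n=1$ through a bounded power is genuinely effective, and that the window $D$ certifying containment is an \emph{a priori} computable constant rather than one depending on the unknown map—both of which rest on the computability of the language and on the explicit recognizability constant of \cref{Thm:ComputabilityOfTheRecognizability}. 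Combining the enumeration with these finite tests yields a decision procedure, proving the theorem.
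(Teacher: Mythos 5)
Your proposal is correct and takes essentially the same route as the paper: normalize the supports via \cref{ConjugationSubstitutionsDifferentFundamentalDomains}, use the computable recognizability constant (\cref{UpperBoundRecognizability}) together with \cref{TopologicalRigidityOfFactors} to reduce existence of a factor map to finitely many candidate block maps of computable radius satisfying a commutation relation $S^{\bm f}\psi\zeta_1^n=\zeta_2^n\psi$ with computably bounded exponent, and decide each candidate by finitely many pattern comparisons against the computable languages using the repetitivity bound of \cref{GrowthRepetititvtyFunction}, exactly as in \cref{corollary:decidegivenblockmap}. The only (harmless) deviations are that you force $n=1$ by passing to a fixed computable common power where the paper instead enumerates exponents $n\leq |\B|^{|\A|^{R}}$, and that you add a language-containment test on a computable window $D$, which the paper shows is redundant: its \cref{prop:caract factor} deduces $\phi(X_{\zeta_1})\subseteq X_{\zeta_2}$ from the commutation relation alone, using the fixed point, minimality and the F{\o}lner property.
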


	First, we note that, by \cref{ConjugationSubstitutionsDifferentFundamentalDomains}, we may assume that the substitutions share the same support. Let $\zeta_{1}, \zeta_{2}$ be two aperiodic primitive constant-shape substitutions with the same expansion matrix $L$ and support $F_{1}$. 
	
	\begin{proposition}\label{prop:caract factor}
		Let $\zeta_{1}:\A \to \A^{F_1},\zeta_{2}:\B \to \B^{F_1}$ be two aperiodic primitive constant-shape substitutions with the same expansion matrix $L$ and the same support $F_{1}$. 
		Let $\Phi:\A^{[B({\bm 0},r)\cap \Z^{d}]} \to \B$ be a block map of radius $r$ and let $\phi$ be the associated factor map from $\A^{\mathbb{Z}^d}$ to $\phi(\A^{\mathbb{Z}^d})$. 
		If there exists ${\bm f} \in \mathbb{Z}^d$ such that $S^{\bm f} \phi \zeta_1(x) = \zeta_2 \phi(x)$ for all $x \in X_{\zeta_1}$, then $\phi$ is a factor map from $X_{\zeta_1}$ to $X_{\zeta_2}$.
	\end{proposition}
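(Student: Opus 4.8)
The plan is to show that the image $Y:=\phi(X_{\zeta_1})$ is exactly $X_{\zeta_2}$. The restriction of $\phi$ to $X_{\zeta_1}$ is automatically continuous and commutes with the shift, being a sliding block code; hence the only content is surjectivity onto $X_{\zeta_2}$. As a continuous, shift-commuting image of a subshift, $Y$ is itself a subshift, and since $X_{\zeta_1}$ is minimal (because $\zeta_1$ is primitive), so is $Y$. Because $X_{\zeta_2}$ is minimal as well, it then suffices to prove the single inclusion $Y\subseteq X_{\zeta_2}$: a nonempty subshift contained in the minimal system $X_{\zeta_2}$ must coincide with it, which yields surjectivity and finishes the proof. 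Equivalently, I would show $\mathcal{L}(Y)\subseteq \mathcal{L}_{\zeta_2}$.

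To produce a convenient point of $Y$, I would first, using \cref{rem:powers if needed} and iterating the hypothesis, replace $\zeta_1,\zeta_2$ by a common power so that $\zeta_1$ admits a fixed point $\bar{x}\in X_{\zeta_1}$; the relation $S^{\bm f}\phi\zeta_1=\zeta_2\phi$ on $X_{\zeta_1}$ iterates to $S^{\bm f_n}\phi\zeta_1^{n}=\zeta_2^{n}\phi$ with $\bm f_n=\sum_{i=0}^{n-1}L^{i}(\bm f)$, so the hypothesis is preserved under taking powers. Setting $y=\phi(\bar{x})$ and using $\zeta_1(\bar{x})=\bar{x}$ gives $\zeta_2(y)=S^{\bm f}y$, and therefore $\zeta_2^{n}(y)=S^{\bm f_n}y$ for every $n$, where $\zeta_2$ denotes the substitution map extended to $\B^{\Z^{d}}$. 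By minimality, $Y=\overline{\mathcal{O}(y)}$ is the orbit closure of $y$, so $\mathcal{L}(Y)$ is precisely the set of patterns occurring in $y$. The key identity I would extract combines $\zeta_2^{n}(y)_{\bm m}=\zeta_2^{n}(y_{\bm 0})_{\bm m}$ for $\bm m\in F_{n}$ with $\zeta_2^{n}(y)=S^{\bm f_n}y$: on the translated fundamental domain $\bm f_n+F_{n}$ the point $y$ coincides with the genuine image $\zeta_2^{n}(y_{\bm 0})$, which belongs to $\mathcal{L}_{\zeta_2}$.

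The heart of the argument, and the step I expect to be the main obstacle, is that $y$ a priori lives only in $\B^{\Z^{d}}$: its patterns straddling the boundaries of the $\zeta_2^{n}$-blocks (the ``seams'') are not obviously admissible, and a naive induction on pattern size merely reduces admissibility of one bounded pattern of $y$ to that of another, never terminating. I would bypass this entirely using minimality together with the F\o lner property. Fix any pattern $\mathtt{p}$ occurring in $y$; by minimality of $Y$ it occurs \emph{syndetically}, say every ball of radius $M$ contains an occurrence. Since $(F_{n})$ is F\o lner, $F_{n}$ (hence its translate $\bm f_n+F_{n}$) contains balls of radius tending to infinity, so for $n$ large enough $\bm f_n+F_{n}$ contains a ball of radius $>M+\diam(\mathtt{p})$. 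Inside the concentric ball of radius $M$ there is an occurrence of $\mathtt{p}$ whose entire support remains within $\bm f_n+F_{n}$, where $y$ agrees with $\zeta_2^{n}(y_{\bm 0})$; thus this occurrence of $\mathtt{p}$ is a subpattern of $\zeta_2^{n}(y_{\bm 0})\in\mathcal{L}_{\zeta_2}$, whence $\mathtt{p}\in\mathcal{L}_{\zeta_2}$. This establishes $\mathcal{L}(Y)\subseteq\mathcal{L}_{\zeta_2}$, i.e.\ $Y\subseteq X_{\zeta_2}$, and by minimality of $X_{\zeta_2}$ we conclude $Y=X_{\zeta_2}$, so $\phi$ is a factor map from $X_{\zeta_1}$ onto $X_{\zeta_2}$.
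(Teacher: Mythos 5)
Your proposal is correct and follows essentially the same route as the paper's proof: iterate the intertwining relation to $S^{{\bm f}_n}\phi\zeta_1^{n}=\zeta_2^{n}\phi$, evaluate at a fixed point $\bar{x}$ of $\zeta_1$ to get $\phi(\bar{x})|_{{\bm f}_n+F_n}=\zeta_2^{n}(\phi(\bar{x})_{\bm 0})$, and conclude by minimality of the image together with the F\o lner property. The only difference is that you carefully spell out (via syndetic occurrences and large balls inside $F_n$) the step the paper compresses into ``by minimality, and using the F\o lner property, it suffices to show\dots'', which is a faithful expansion rather than a different argument.
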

	
	\begin{proof}
		A straightforward induction shows that for all $n >0$, there exists ${\bm f_n} \in \mathbb{Z}^d$ such that $S^{{\bm f}_n} \phi \zeta_1^n(x) = \zeta_2^n \phi(x)$ for all $x \in X_{\zeta_1}$.
		Let $x \in X_{\zeta_1}$ be a fixed point of $\zeta_1$.
		By minimality, and using the F\o lner property, it suffices to show that any pattern of the form $\phi(x)_{{\bm f}_n+F_n}$ belongs to $\mathcal{L}(X_{\zeta_2})$.
		Since $x$ is a fixed point of $\zeta_1$, we have that
		$\phi(x)_{F_n+{\bm f}_n} = (\zeta_2^n(\phi(x)))_{F_n} = \zeta_2^n(\phi(x)_{\bm 0})$, which concludes the proof.
	\end{proof}

	\begin{corollary}\label{corollary:decidegivenblockmap}
		Let $\zeta_{1}:\A \to \A^{F_1},\zeta_{2}:\B \to \B^{F_1}$ be two aperiodic primitive constant-shape substitutions with the same expansion matrix $L$ and the same support $F_{1}$. Let $r>0$ and $\Phi:\A^{[B({\bm 0},r)\cap \Z^{d}]} \to \B$ be a block map of radius $r$ and let $\phi$ be the associated factor map from $\A^{\mathbb{Z}^d}$ to $\phi(\A^{\mathbb{Z}^d})$. 
		For any ${\bm f} \in \mathbb{Z}^d$, it is decidable whether $S^{\bm f} \phi \zeta_1(x) = \zeta_2 \phi(x)$ for all $x \in X_{\zeta_1}$. 
	\end{corollary}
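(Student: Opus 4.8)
The plan is to reduce the universally quantified identity $S^{\bm f}\phi\zeta_{1}(x)=\zeta_{2}\phi(x)$, which a priori ranges over the uncountable set $X_{\zeta_{1}}$ and over all coordinates ${\bm n}\in\Z^{d}$, to a finite list of equalities between patterns, each of which is checkable by enumerating a computable finite portion of the language of $X_{\zeta_{1}}$.

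First I would record the equivariance of the two sides. Write $G=S^{\bm f}\phi\zeta_{1}$ and $H=\zeta_{2}\phi$, viewed as continuous maps $X_{\zeta_{1}}\to\B^{\Z^{d}}$. Using $\phi S^{\bm k}=S^{\bm k}\phi$ and $\zeta_{i}S^{\bm k}=S^{L{\bm k}}\zeta_{i}$, one checks that $G(S^{\bm k}x)=S^{L{\bm k}}G(x)$ and $H(S^{\bm k}x)=S^{L{\bm k}}H(x)$ for every ${\bm k}\in\Z^{d}$. From this $L$-equivariance I obtain the key reduction: $G=H$ on $X_{\zeta_{1}}$ if and only if $G(x)_{\bm g}=H(x)_{\bm g}$ for every $x\in X_{\zeta_{1}}$ and every ${\bm g}$ in the fundamental domain $F_{1}$ (a set of representatives of $\Z^{d}/L(\Z^{d})$). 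Indeed, any ${\bm n}\in\Z^{d}$ decomposes uniquely as ${\bm n}=L({\bm k})+{\bm g}$ with ${\bm g}\in F_{1}$, and since $X_{\zeta_{1}}$ is shift-invariant, $G(x)_{\bm n}=G(S^{\bm k}x)_{\bm g}$ and $H(x)_{\bm n}=H(S^{\bm k}x)_{\bm g}$, so agreement at the $|\det(L)|$ coordinates of $F_{1}$ over all of $X_{\zeta_{1}}$ forces agreement everywhere.

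Next I would localize each of the $|\det(L)|$ conditions. Fix ${\bm g}\in F_{1}$. Unfolding the definitions, $H(x)_{\bm g}=\zeta_{2}(\Phi(x|_{[B({\bm 0},r)\cap\Z^{d}]}))_{\bm g}$ depends only on $x|_{[B({\bm 0},r)\cap\Z^{d}]}$, while $G(x)_{\bm g}=\Phi(\zeta_{1}(x)|_{{\bm g}+{\bm f}+[B({\bm 0},r)\cap\Z^{d}]})$ depends only on $x$ restricted to the finite computable set $W_{\bm g}=\{{\bm j}\in\Z^{d}\colon (L({\bm j})+F_{1})\cap({\bm g}+{\bm f}+[B({\bm 0},r)\cap\Z^{d}])\neq\emptyset\}$. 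Setting $D_{\bm g}=[B({\bm 0},r)\cap\Z^{d}]\cup W_{\bm g}$, both values are explicit functions of the pattern $x|_{D_{\bm g}}\in\mathcal{L}_{D_{\bm g}}(X_{\zeta_{1}})$, computable from the data $\Phi,\zeta_{1},\zeta_{2},{\bm f}$. Hence ``$G(x)_{\bm g}=H(x)_{\bm g}$ for all $x\in X_{\zeta_{1}}$'' is equivalent to checking, for each admissible pattern $\texttt{p}\in\mathcal{L}_{D_{\bm g}}(X_{\zeta_{1}})$, the equality of the two values computed from $\texttt{p}$. It then remains to argue that the finite set $\mathcal{L}_{D_{\bm g}}(X_{\zeta_{1}})$ is computable: since $\zeta_{1}$ is primitive we have $\mathcal{L}(X_{\zeta_{1}})=\mathcal{L}_{\zeta_{1}}$, and by the computable upper bound on the repetitivity function $R_{X_{\zeta_{1}}}$ from \cref{GrowthRepetititvtyFunction} together with the (computable) growth of the balls contained in the supports $F_{m}$, one finds a computable $m$ for which each $\zeta_{1}^{m}(a)$, $a\in\A$, already contains an occurrence of every pattern of diameter at most $\diam(D_{\bm g})$; extracting all sub-patterns on translates of $D_{\bm g}$ from the finitely many $\zeta_{1}^{m}(a)$ yields $\mathcal{L}_{D_{\bm g}}(X_{\zeta_{1}})$ exactly. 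The decision procedure is then: for each ${\bm g}\in F_{1}$, compute $D_{\bm g}$ and $\mathcal{L}_{D_{\bm g}}(X_{\zeta_{1}})$, verify the associated finite list of pattern equalities, and answer affirmatively precisely when all checks succeed.

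The hard part is the final computability point: producing the \emph{exact} finite language $\mathcal{L}_{D_{\bm g}}(X_{\zeta_{1}})$ rather than a one-sided approximation, so that no negative instance is missed. This is exactly what forces reliance on the computability of the repetitivity function (\cref{GrowthRepetititvtyFunction}); granting that, the equivariance reduction to the $|\det(L)|$ residues and the windowing of the two sides are routine finite computations.
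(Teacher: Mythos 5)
Your proposal is correct and takes essentially the same route as the paper: both reduce the identity, via the $L$-equivariance of the two sides, to finitely many local pattern equalities indexed by ${\bm g}\in F_{1}$ (the paper in \eqref{ConditionForDecidability1}--\eqref{ConditionForDecidability3}, writing ${\bm g}+{\bm f}=L({\bm c}_{\bm g})+{\bm h}_{\bm g}$ and windowing the left side with the set $E$ from \cref{Prop:FiniteSetSatisfyingParticularProperties}, you via the directly defined window $W_{\bm g}$), and both make the verification exhaustive by invoking the computable repetitivity bound of \cref{GrowthRepetititvtyFunction}. The only cosmetic difference is that the paper checks the equalities inside a single admissible pattern of radius $R_{X_{\zeta_{1}}}(\mathfrak{R})$, which by repetitivity contains occurrences of every relevant configuration, whereas you enumerate the exact finite language $\mathcal{L}_{D_{\bm g}}(X_{\zeta_{1}})$ from computably many iterates $\zeta_{1}^{m}(a)$ --- an equivalent realization of the same idea.
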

	\begin{proof}
		Note that 
		\begin{align}\label{ConditionForDecidability1}
			S^{{\bm f}}\phi\zeta_{1}(x)=\zeta_{2}(\phi(x)) 
			& \Leftrightarrow (\forall {\bm n}\in \Z^{d})(\forall {\bm g}\in F_{1}), \phi\zeta_{1}(x)_{L({\bm n})+{\bm g}+{\bm f}} =\zeta_{2}(\phi(x))_{L({\bm n})+{\bm g}}\notag\\
			& \Leftrightarrow (\forall {\bm n}\in \Z^{d})(\forall {\bm g}\in F_{1}), \phi\zeta_{1}(x)_{L({\bm n})+{\bm g}+{\bm f}} =\zeta_{2}(\phi(x)_{{\bm n}})_{{\bm g}}\notag\\
			& \Leftrightarrow (\forall {\bm n}\in \Z^{d})(\forall {\bm g}\in F_{1}), \phi\zeta_{1}(x)_{L({\bm n})+{\bm g}+{\bm f}} =\zeta_{2}(\Phi(x|_{{\bm n}+[B({\bm 0},r)\cap \Z^{d}]}))_{{\bm g}}.
		\end{align}
		
		Let ${\bm c}_{{\bm g}}\in C$ and ${\bm h}_{{\bm g}}\in F_{1}$ be such that ${\bm g}+{\bm f}=L({\bm c}_{{\bm g}})+{\bm h}_{{\bm g}}$, then \eqref{ConditionForDecidability1} is equivalent to
		\begin{align}\label{ConditionForDecidability2}(\forall {\bm n}\in \Z^{d})(\forall {\bm g}\in F_{1}), \Phi\left(\zeta_{1}(x)|_{L({\bm n})+L({\bm c}_{\bm g})+{\bm h}_{{\bm g}}+[B({\bm 0},r)\cap \Z^{d}]}\right) =\zeta_{2}(\Phi(x|_{{\bm n}+[B({\bm 0},r)\cap \Z^{d}]}))_{{\bm g}}.
		\end{align}
		
		Using \cref{Prop:FiniteSetSatisfyingParticularProperties} with $A = [B({\bm 0},r)\cap \Z^{d}]$ and $F = F_1$, we find a set $E \Subset \Z^{d}$ such that $\Vert E\Vert\leq \Vert L^{-1}\Vert r/(1-\Vert L^{-1}\Vert)+3\bar{r}$ and
		$$F_1 + [B({\bm 0},r)\cap \Z^{d}]\subseteq L(E)+F_{1},$$
		
		\noindent so we can rewrite \eqref{ConditionForDecidability2} as
		\begin{align}\label{ConditionForDecidability3}(\forall {\bm n}\in \Z^{d})(\forall {\bm g}\in F_{1})& , \Phi\left(\zeta_{1}(x)|_{L({\bm n})+L({\bm c}_{\bm g})+L(E)+F_{1}}\right) =\zeta_{2}(\Phi(x|_{{\bm n}+[B({\bm 0},r)\cap \Z^{d}]}))_{{\bm g}}\notag\\
			(\forall {\bm n}\in \Z^{d})(\forall {\bm g}\in F_{1})& , \Phi\left(\zeta_{1}(x|_{{\bm n}+{\bm c}_{\bm g}+E})\right) =\zeta_{2}(\Phi(x|_{{\bm n}+[B({\bm 0},r)\cap \Z^{d}]}))_{{\bm g}}.
		\end{align}
		
		We note that $\diam(\supp(x|_{{\bm n}+{\bm c}_{g}+E}))\leq 2 \Vert E\Vert\leq 2\Vert L^{-1}\Vert r/(1-\Vert L^{-1}\Vert)+6\bar{r}$ and $\diam(\supp(x|_{{\bm n}+[B({\bm 0},R)\cap \Z^{d}]}))\leq 2r$. Consider $\mathfrak{R}=\max\{2r,2\Vert L^{-1}\Vert r/(1-\Vert L^{-1}\Vert)+6\bar{r}\}$. To test if a sliding block code satisfy \eqref{ConditionForDecidability3}, we consider any pattern $\texttt{w}\in \mathcal{L}(X_{\zeta_1})$ with support
		$[B({\bm 0},R_{X_{\zeta_1}}(\mathfrak{R}))\cap \Z^{d}]$, where $R_{X_{\zeta_{1}}}(\cdot)$ denote the repetitivity function of the substitutive subshift $X_{\zeta_{1}}$. By \cref{GrowthRepetititvtyFunction}, we note that this discrete ball contains an occurrence of any pattern of the form $x|_{{\bm n}+{\bm c}_{\bm g}+E}$ and $x|_{{\bm n}+[B({\bm 0},r)\cap \Z^{d}]}$ for any ${\bm n}\in \Z^{d}$ and ${\bm g}\in F_{1}$.
	\end{proof}
	
	\begin{proof}[Proof of \cref{Thm:DecidabilityFactorizationProblem}.]
		Let $R_{\zeta_{2}}$ be a constant of recognizability for $\zeta_{2}$.
		Using \cref{TopologicalRigidityOfFactors} and \cref{prop:caract factor}, there is a factor map from $X_{\zeta_1}$ to $X_{\zeta_2}$ if and only there exist $n >0$, ${\bm f} \in F_n$ and a factor map $\phi$ with radius $R=2\bar{r}+R_{\zeta_{2}}+1$ that satisfies $S^{{\bm f}}\phi \zeta_{1}^{n}(x)=\zeta_{2}^{n}\phi(x)$ for all $x \in X_{\zeta_1}$.
		
		We first show that if such a factor map exists, we can find another factor map $\psi$ with the same radius such that $S^{{\bm f}}\phi \zeta_{1}^{n}(x)=\zeta_{2}^{n}\phi(x)$ for all $x \in X_{\zeta_1}$ and for some $n \leq |\mathcal{B}|^{|\A|^{R}}$.  
		Indeed, Equation~\eqref{eq:definition of phi_n} defines other factor maps $\phi_n$ that, by \eqref{eq:reductionofradius}, for $n\geq \lfloor\log_{2}(R)\rfloor$, also have radius $R$.
		We can thus find two indices $m,n\in \llbracket \lfloor\log_{2}(\bar{r})\rfloor, \lfloor\log_{2}(\bar{r})\rfloor+|\mathcal{B}|^{|\A|^{\bar{r}}}\rrbracket$ such that $\phi_{n}=\phi_{m} = \psi$ and $m<n$. By Equation~\eqref{eq:relation between phi_n's}, the factor maps $\phi_n$ satisfy $S^{{\bm g}}\phi_{n}\zeta_{1} =\zeta_{2}(\phi_{n+1})$ for some ${\bm g} \in F_1$. Hence, there exists ${\bm f} \in F_{n-m}$ such that $S^{{\bm f}}\psi \zeta_{1}^{n-m}(x)=\zeta_{2}^{n-m}\psi(x)$ for all $x \in X_{\zeta_1}$.
		
		Finally, for every $n \leq |\mathcal{B}|^{|\A|^{R}}$, every ${\bm f} \in F_n$ and every block map $\Phi:\A^{[B(\mathbf{0},R)\cap \Z^{d}]} \to \B$, \cref{corollary:decidegivenblockmap} allows us to decide whether that block map defines a factor map from $X_{\zeta_1}$ to $X_{\zeta_2}$. Since there is only a finite number of possibilities, this completes the proof.
	\end{proof}

	Using the fact that minimal substitutive subshifts from aperiodic primitive constant-shape substitutions are coalescent (\cref{Coalescence}), we can decide if two aperiodic primitive constant-shape substitutions with the same expansion matrix are conjugate.
	
	\begin{corollary}\label{cor:DecidabilityIsomorphismProblem}
		Let $\zeta_{1},\zeta_{2}$ be two aperiodic primitive constant-shape substitutions with the same expansion matrix $L$. It is decidable to know whether $(X_{\zeta_{1}},S,\Z^{d})$ and $(X_{\zeta_{2}},S,\Z^{d})$ are topologically conjugate.
	\end{corollary}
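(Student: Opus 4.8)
The plan is to reduce the isomorphism problem to two instances of the factorization problem, with the coalescence established in \cref{Coalescence} bridging the gap between ``factorizable in both directions'' and ``conjugate''. First I would record the purely dynamical observation: two minimal systems $X_{\zeta_{1}}$ and $X_{\zeta_{2}}$ are topologically conjugate if and only if there exist factor maps in both directions, say $\phi:(X_{\zeta_{1}},S,\Z^{d})\to(X_{\zeta_{2}},S,\Z^{d})$ and $\psi:(X_{\zeta_{2}},S,\Z^{d})\to(X_{\zeta_{1}},S,\Z^{d})$. The forward implication is immediate, since a conjugacy is in particular a factor map whose inverse is again a factor map.

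For the converse, I would compose the two factor maps to obtain an endomorphism $\psi\circ\phi\in\End(X_{\zeta_{1}},S,\Z^{d})$. By \cref{Coalescence}, the subshift $(X_{\zeta_{1}},S,\Z^{d})$ is coalescent, so $\psi\circ\phi$ is invertible, hence bijective. This forces $\phi$ to be injective; being already a surjective factor map, $\phi$ is therefore a conjugacy. This is the only step where something genuinely happens: for general minimal systems, the existence of factor maps in both directions does \emph{not} imply conjugacy, and it is precisely coalescence that rules out this pathology.

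With this characterization in hand, decidability follows directly from \cref{Thm:DecidabilityFactorizationProblem}. Since $\zeta_{1}$ and $\zeta_{2}$ share the same expansion matrix $L$, that theorem allows me to decide whether a factor map $X_{\zeta_{1}}\to X_{\zeta_{2}}$ exists; applying it again with the roles of $\zeta_{1}$ and $\zeta_{2}$ exchanged, I can also decide whether a factor map $X_{\zeta_{2}}\to X_{\zeta_{1}}$ exists. I would run both decision procedures and declare the two subshifts conjugate precisely when both return a positive answer. I expect the combination step to be entirely routine; the substance of the argument lies in the converse implication, where coalescence (\cref{Coalescence}) does the real work, and no computation beyond what \cref{Thm:DecidabilityFactorizationProblem} and \cref{Coalescence} already supply is required.
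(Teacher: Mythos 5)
Your proposal is correct and is essentially the paper's own (only sketched) argument: the paper derives \cref{cor:DecidabilityIsomorphismProblem} precisely by combining \cref{Thm:DecidabilityFactorizationProblem} applied in both directions with the coalescence of \cref{Coalescence}, which as you note turns ``factor maps both ways'' into a genuine conjugacy by making the endomorphism $\psi\circ\phi$ invertible and hence $\phi$ injective. Your write-up simply makes explicit the details the paper leaves implicit.
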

	
	\subsubsection{Aperiodic symbolic factors of substitutive subshifts}\label{Subsection:Listing}
	
	The radius of the sliding block codes given by \cref{TopologicalRigidityOfFactors} can be improved when the substitutions involved are injective on letters. Indeed, under the assumption of injectivity, we can deduce from \eqref{Eq:EqualityOfPatternsRigidty} that, by injectivity of the substitution $\zeta_{2}$, $\phi_{n}(x)$ and $\phi_{n}(y)$ coincide on the set $\{{\bm m}\in \Z^{d}\colon L^{n}({\bm m})+F_{n}\subseteq (L^{n}(P_{n})+F_{n})^{\circ r}-{\bm f}_{n}(\phi)\}$. Moreover, if $\phi_{n}$ is given by an $P_{n}$-local map we need that
	$$
	F_{n}\subseteq (L^{n}(P_{n})+F_{n})^{\circ r}-{\bm f}_{n}(\phi),
	$$
	which is true if $F_{n}+F_{n}+[B({\bm 0},r)\cap \Z^{d}]\subseteq L^{n}(P_{n})+F_{n}$. Proceeding as in the proof of \cref{TopologicalRigidityOfFactors} we get that $\phi_{n}$ has radius at most $3\bar{r}+1$. As mentioned in \cref{rem:GoodUniformlyBounded}, this bound is independent of the choices of the powers $\zeta_{1}$ and $\zeta_{2}$. 
	We thus have the following result.
	\begin{corollary}
		\label{corollary:radiusdependindonlyonL}
		Let $\zeta_{1},\zeta_{2}$ be two aperiodic primitive constant-shape substitutions with the same expansion matrix $L$ and support $F_{1}$, such that $\zeta_2$ is injective.
		If $\phi: X_{\zeta_1} \to X_{\zeta_2}$ is a factor map, then there is a factor map $\psi:X_{\zeta_1} \to X_{\zeta_2}$ with radius at most $3\bar{r}+1$ such that $\phi = S^{\bm j} \psi$ for some ${\bm j} \in \mathbb{Z}^d$.
	\end{corollary}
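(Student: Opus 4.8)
The plan is to re-run the proof of \cref{TopologicalRigidityOfFactors} verbatim up to Equation~\eqref{Eq:EqualityOfPatternsRigidty}, and then replace the desubstitution step—which in the general case relied on the recognizability constant $R_{\zeta_2}$—by the hypothesis that $\zeta_2$ is injective on letters. Concretely, for a factor map $\phi$ of radius $r$ I would introduce the renormalized factor maps $\phi_n$ through Equation~\eqref{eq:definition of phi_n}, choose a support $P_n$ witnessing \eqref{eq:def_of_factor}, and recall that agreement of $x,y$ on $P_n$ forces $\zeta_1^n(x),\zeta_1^n(y)$ to agree on $L^n(P_n)+F_n$, hence $\zeta_2^n(\phi_n(x))$ and $\zeta_2^n(\phi_n(y))$ to agree on $(L^n(P_n)+F_n)^{\circ r}-{\bm f}_n(\phi)$, exactly as in \eqref{Eq:EqualityOfPatternsRigidty}.

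The one genuinely new observation is the following. For any $z\in X_{\zeta_2}$ and ${\bm m}\in\Z^{d}$ one has $\zeta_2^{n}(z)|_{L^{n}({\bm m})+F_{n}}=\zeta_2^{n}(z_{\bm m})$, and since $\zeta_2$ is injective on letters so is $\zeta_2^{n}$. Hence, whenever a full block $L^{n}({\bm m})+F_{n}$ lies inside the agreement region $(L^{n}(P_{n})+F_{n})^{\circ r}-{\bm f}_n(\phi)$, comparing the two restrictions (which equal $\zeta_2^{n}(\phi_n(x)_{\bm m})$ and $\zeta_2^{n}(\phi_n(y)_{\bm m})$) and applying injectivity of $\zeta_2^{n}$ yields $\phi_n(x)_{\bm m}=\phi_n(y)_{\bm m}$ directly, with no loss coming from recognizability. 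Thus $\phi_n$ is induced by a $P_n$-block map as soon as ${\bm 0}$ belongs to $\{{\bm m}\in\Z^{d}\colon L^{n}({\bm m})+F_{n}\subseteq(L^{n}(P_{n})+F_{n})^{\circ r}-{\bm f}_n(\phi)\}$, i.e. as soon as $F_n\subseteq(L^{n}(P_{n})+F_{n})^{\circ r}-{\bm f}_n(\phi)$; since ${\bm f}_n(\phi)\in F_n$, a sufficient condition is
\[
F_n+F_n+[B({\bm 0},r)\cap\Z^{d}]\subseteq L^{n}(P_{n})+F_{n}.
\]

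To pin down the radius I would take $P_n=[B({\bm 0},3\bar{r}+1)\cap\Z^{d}]$ and verify the displayed inclusion for large $n$. Writing an arbitrary ${\bm a}+{\bm b}+{\bm c}$ with ${\bm a},{\bm b}\in F_n$, ${\bm c}\in[B({\bm 0},r)\cap\Z^{d}]$ as $L^{n}({\bm m})+{\bm f}$ with ${\bm f}\in F_n$ gives ${\bm m}=L^{-n}({\bm a}+{\bm b}+{\bm c}-{\bm f})$, hence
\[
\|{\bm m}\|\le 3\,\|L^{-n}(F_n)\|+\|L^{-1}\|^{n}r.
\]
The decisive estimate is $\|L^{-n}(F_n)\|\le\bar{r}$, which follows from $L^{-n}(F_n)=\sum_{i=1}^{n}L^{-i}(F_1)$ together with $\sum_{i\ge1}\|L^{-1}\|^{\,i-1}\|L^{-1}(F_1)\|=\bar{r}$; by \cref{rem:GoodUniformlyBounded} this is moreover uniform in the power, so the bound $3\bar{r}+1$ does not degrade when $\zeta_1,\zeta_2$ are replaced by powers of themselves. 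As $\|L^{-1}\|<1$, for $n$ large we have $\|L^{-1}\|^{n}r\le1$ and so ${\bm m}\in P_n$, giving $r(\phi_n)\le 3\bar{r}+1$.

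Finally, exactly as in the proof of \cref{TopologicalRigidityOfFactors}, the maps $\phi_n$ (for $n$ large) are block codes of radius at most $3\bar{r}+1$, of which there are only finitely many; using $(\phi_n)_k=\phi_{n+k}$ from Equation~\eqref{eq:relation between phi_n's} I would extract $m<n$ with $\phi_m=\phi_n=:\psi$, set ${\bm j}={\bm f}_n(\phi)-{\bm f}_n(\psi)$, and conclude $\phi=S^{-{\bm j}}\psi$ by minimality. I expect the only delicate point to be the uniform estimate $\|L^{-n}(F_n)\|\le\bar{r}$ combined with the block-alignment argument of the second paragraph: this is precisely what lets injectivity of $\zeta_2$ stand in for recognizability and removes the additive term $R_{\zeta_2}$ appearing in \cref{TopologicalRigidityOfFactors}.
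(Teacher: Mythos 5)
Your proposal is correct and follows essentially the same route as the paper: the paper's own justification likewise re-runs the proof of \cref{TopologicalRigidityOfFactors} up to \eqref{Eq:EqualityOfPatternsRigidty}, uses injectivity of $\zeta_{2}$ (on letters, hence on patterns and powers) to desubstitute on whole blocks $L^{n}({\bm m})+F_{n}$, reduces to the same sufficient inclusion $F_{n}+F_{n}+[B({\bm 0},r)\cap\Z^{d}]\subseteq L^{n}(P_{n})+F_{n}$, and obtains the radius $3\bar{r}+1$ via $\Vert L^{-n}(F_{n})\Vert\leq \bar{r}$, with uniformity in powers coming from \cref{rem:GoodUniformlyBounded}. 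Your concluding pigeonhole-and-minimality step also matches the paper, provided you carry out the index bookkeeping of \cref{TopologicalRigidityOfFactors} exactly (choosing $n$ a suitable multiple of $n-m$ so that $\psi_{n}=\psi$), as you indicate.
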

	
	Now, as in \cite{durand2022decidability}, we prove that we can always assume that the aperiodic substitutions are injective on letters. Indeed, let $\zeta:\A\to\A^{F}$ be an aperiodic primitive constant-shape substitution and consider $\mathcal{B}\subseteq \A$ such that for any $a\in \A$ there exists a unique $b\in \mathcal{B}$ satisfying $\zeta(a)=\zeta(b)$. We define the map $\Phi:\A\to\B$ such that $\Phi(a)=b$ if and only if $\zeta(a)=\zeta(b)$. Then, there exists a unique constant-shape substitution $\tilde{\zeta}:\mathcal{B}\to\mathcal{B}^{F}$ defined by $\tilde{\zeta}\circ \phi=\phi\circ \zeta$. It is clear that $\tilde{\zeta}$ is primitive and $\Phi$ induces a topological factor from $(X_{\zeta},S,\Z^{d})$ to $(X_{\tilde{\zeta}},S,\Z^{d})$ denoted by $\phi$. The substitution $\tilde{\zeta}$ defined this way is called the \emph{injectivization} of $\zeta$.
	
	\begin{proposition}\label{prop:ConjugatetoInjectivesubstitution}\cite{blanchard2004scrambled}
		Let $\zeta$ be an aperiodic primitive constant-shape substitution and $\tilde{\zeta}$ be its injectivization. Then $(X_{\zeta},S,\Z^{d})$ and $(X_{\tilde{\zeta}},S,\Z^{d})$ are topologically conjugate.
	\end{proposition}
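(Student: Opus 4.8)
The plan is to show that the given factor map $\phi$ (induced coordinatewise by $\Phi$) is injective; since $\phi$ is already a continuous, surjective, shift-commuting map between the compact metrizable subshifts $X_{\zeta}$ and $X_{\tilde{\zeta}}$, injectivity will upgrade it to a homeomorphism, and hence to a topological conjugacy. So the whole argument reduces to establishing the implication $\phi(x)=\phi(y)\Rightarrow x=y$.

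First I would exploit the very definition of the injectivization. By construction $\Phi(a)$ is the unique element of $\mathcal{B}$ with $\zeta(\Phi(a))=\zeta(a)$, so that $\zeta\circ\Phi=\zeta$ on letters. Now suppose $\phi(x)=\phi(y)$ for some $x,y\in X_{\zeta}$. Reading this equality coordinatewise gives $\Phi(x_{\bm n})=\Phi(y_{\bm n})$ for every ${\bm n}\in\Z^{d}$, and applying $\zeta$ to each such equality yields $\zeta(x_{\bm n})=\zeta(\Phi(x_{\bm n}))=\zeta(\Phi(y_{\bm n}))=\zeta(y_{\bm n})$. Writing each coordinate of $\Z^{d}$ uniquely as $L({\bm j})+{\bm f}$ with ${\bm f}\in F_{1}$ and using $\zeta(x)_{L({\bm j})+{\bm f}}=\zeta(x_{\bm j})_{\bm f}$, this patternwise equality propagates to the equality of the full points $\zeta(x)=\zeta(y)$ in $X_{\zeta}$.

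The key step is then to invoke recognizability. Because $\zeta$ is aperiodic and primitive, it is recognizable, and recognizability guarantees that every $z\in X_{\zeta}$ has a unique decomposition $z=S^{\bm j}\zeta(z')$ with ${\bm j}\in F_{1}$ and $z'\in X_{\zeta}$; in particular the substitution map $\zeta\colon X_{\zeta}\to X_{\zeta}$ is injective, \emph{even though} $\zeta$ need not be injective on letters (this is exactly the phenomenon emphasized in the excerpt right after the definition of recognizability, and made effective by \cref{RecognizabilitySecondStep}). Applying this to $\zeta(x)=\zeta(y)$ forces $x=y$, so $\phi$ is injective.

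The argument has essentially no hard obstacle beyond using recognizability correctly: the only point requiring care is precisely that the failure of injectivity of $\zeta$ on letters is irrelevant here, since recognizability supplies injectivity of $\zeta$ as a map of the whole subshift $X_{\zeta}$. To finish, I would conclude by the standard compactness argument — a continuous bijection between compact metrizable spaces is a homeomorphism — and observe that $\phi$ commutes with the shift $S$, so that $\phi$ is a topological conjugacy between $(X_{\zeta},S,\Z^{d})$ and $(X_{\tilde{\zeta}},S,\Z^{d})$, as claimed.
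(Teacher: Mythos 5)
Your proposal is correct and follows essentially the same route as the paper: both prove injectivity of $\phi$ by observing that $\zeta\circ\Phi=\zeta$ on letters, so $\phi(x)=\phi(y)$ forces $\zeta(x)=\zeta(y)$, and then both invoke the recognizability-based fact that $\zeta\colon X_{\zeta}\to\zeta(X_{\zeta})$ is injective (indeed a homeomorphism, via \cref{RecognizabilitySecondStep}) to conclude $x=y$. Your extra details — the coordinatewise propagation through $\zeta(x)_{L({\bm j})+{\bm f}}=\zeta(x_{\bm j})_{\bm f}$ and the compactness argument upgrading a continuous bijection to a conjugacy — are exactly what the paper's terser proof leaves implicit.
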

	
	\begin{proof}
		Since $\phi$ is a factor map, we prove that it is one-to-one. Let $x,y\in X_{\zeta}$ be such that $\phi(x)=\phi(y)$. Note that $\zeta(x)=\zeta(\phi(x))=\zeta(\phi(y))=\zeta(y)$. The fact that $\zeta:X_{\zeta}\to \zeta(X_{\zeta})$ is a homeomorphism let us conclude that $x=y$.
	\end{proof}
	
	By construction, $\tilde{\zeta}$ may not be injective on letters, so we proceed in the same way to obtain an injectivization of $\tilde{\zeta}$ (and of $\zeta$). Since in each step, the cardinality of the alphabet is decreasing, we will obtain, in finite steps, an injective substitution $\overline{\zeta}$ such that $(X_{\zeta},S,\Z^{d})$ and $(X_{\overline{\zeta}},S,\Z^{d})$ are topologically conjugate.
	
	Now, let $\zeta$ be an aperiodic primitive constant-shape substitution. From \cite[Theorem 3.26]{cabezas2023homomorphisms}, we know that any aperiodic symbolic factor of $(X_{\zeta},S,\Z^{d})$ is conjugate to an aperiodic primitive constant-shape substitution with the same expansion matrix and support of some power of $\zeta$. This substitution may not be injective, but using \cref{prop:ConjugatetoInjectivesubstitution}, we can find an injective substitution which is conjugate to the symbolic factor. Then, thanks to \cref{corollary:radiusdependindonlyonL}, we only need to test finitely many sliding block codes, which are the ones of radius $3\bar{r}+1$. In particular, we prove the following result, extending what is known~\cite{durand1999substitutional} for linearly recurrent shifts (in particular, aperiodic primitive substitutions):
	
	\begin{lemma}\label{Lemma:FinitelyManyAperiodicSymbolicFactors}
		Let $\zeta$ be an aperiodic primitive constant-shape substitution. The substitutive subshift $(X_{\zeta},S,\Z^{d})$ has finitely many aperiodic symbolic factors, up to conjugacy.
	\end{lemma}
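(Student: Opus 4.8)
The plan is to combine the structure theorem for symbolic factors with the uniform radius bound available for injective targets, and then to reduce the finiteness to a counting argument over one fixed finite set of patterns.

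First I would fix an aperiodic symbolic factor $Y$ together with a factor map $\pi:(X_{\zeta},S,\Z^{d})\to (Y,S,\Z^{d})$. By \cite[Theorem 3.26]{cabezas2023homomorphisms}, $Y$ is conjugate to a substitutive subshift $X_{\eta}$, where $\eta$ is an aperiodic primitive constant-shape substitution whose expansion matrix is $L^{n}$ and whose support is $F_{n}^{\zeta}$, for some power $n>0$. Applying \cref{prop:ConjugatetoInjectivesubstitution} (iterating the injectivization finitely many times if necessary), I may further assume that $\eta$ is injective on letters; this step only shrinks the alphabet and leaves the expansion matrix $L^{n}$ and the support $F_{n}^{\zeta}$ unchanged. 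The key observation is that $X_{\zeta}=X_{\zeta^{n}}$ and that $\zeta^{n}$ is itself an aperiodic primitive constant-shape substitution with expansion matrix $L^{n}$ and support $F_{n}^{\zeta}$. Hence $\zeta^{n}$ and $\eta$ share the same expansion matrix and support, with $\eta$ injective.

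Next I would transport $\pi$ through the conjugacy $Y\cong X_{\eta}$ to obtain a factor map $X_{\zeta^{n}}\to X_{\eta}$, and apply \cref{corollary:radiusdependindonlyonL} with $\zeta_{1}=\zeta^{n}$ and $\zeta_{2}=\eta$. This yields a factor map $\psi:X_{\zeta}\to X_{\eta}$ of radius at most $3\bar{r}+1$ and a vector ${\bm j}\in\Z^{d}$ with $\pi=S^{\bm j}\psi$ (up to the fixed conjugacy), so that $\psi(X_{\zeta})=X_{\eta}$ and therefore $Y$ is conjugate to $\psi(X_{\zeta})$. The point I want to stress, which is where the argument could go wrong, is that the bound $3\bar{r}+1$ must be \emph{independent} of the a priori unbounded power $n$ produced by \cite[Theorem 3.26]{cabezas2023homomorphisms}: this is precisely guaranteed by \cref{rem:GoodUniformlyBounded} and the remark preceding \cref{TopologicalRigidityOfFactors}, where it is noted that replacing $(L,F_{1})$ by $(L^{n},F_{n})$ does not force us to replace $\bar{r}=\Vert L^{-1}(F_{1})\Vert/(1-\Vert L^{-1}\Vert)$ by a larger quantity. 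Thus every aperiodic symbolic factor of $X_{\zeta}$ is, up to conjugacy, the image of $X_{\zeta}$ under a sliding block code on the alphabet $\A$ of radius at most $3\bar{r}+1$.

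Finally I would conclude by a counting argument on the fixed finite set $D=\mathcal{L}_{[B({\bm 0},3\bar{r}+1)\cap\Z^{d}]}(X_{\zeta})$. Such a block map is a function $\Psi$ defined on $D$, and I claim that the conjugacy class of the image $\psi(X_{\zeta})$ depends only on the partition of $D$ into the fibers of $\Psi$: indeed, if two block maps $\Psi,\Psi'$ induce the same fiber partition, there is a bijection $\beta$ between their ranges with $\Psi'=\beta\circ\Psi$, and the induced coordinatewise relabeling is a conjugacy carrying $\psi(X_{\zeta})$ onto $\psi'(X_{\zeta})$. Since $D$ is finite, there are finitely many partitions of $D$, hence finitely many possible images up to conjugacy, and therefore finitely many aperiodic symbolic factors of $(X_{\zeta},S,\Z^{d})$ up to conjugacy. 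The only genuinely delicate point is the uniformity of the radius bound discussed above; once that is in hand, the finiteness is immediate from the finiteness of $D$, and the same block maps furnish the announced explicit list of candidate injective substitution factors.
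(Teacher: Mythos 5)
Your proposal is correct and follows essentially the same route as the paper: reduce via \cite[Theorem 3.26]{cabezas2023homomorphisms} and \cref{prop:ConjugatetoInjectivesubstitution} to an injective substitution sharing the expansion matrix and support of $\zeta^{n}$, invoke \cref{corollary:radiusdependindonlyonL} with the uniformity in $n$ guaranteed by \cref{rem:GoodUniformlyBounded}, and conclude by finiteness of sliding block codes of radius $3\bar{r}+1$. Your fiber-partition counting merely makes explicit (and correctly handles the unbounded target alphabet in) the step the paper leaves implicit when it says one only needs to test finitely many such block codes.
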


	\section{Future works and discussions}
	
	\subsection{Listing the factors and decidability of the aperiodicity of multidimensional substitutive subshifts.} 
	
	Since substitutions are defined by finite objects, it is natural to ask about the decidability of some properties about them. 
	\cref{Lemma:FinitelyManyAperiodicSymbolicFactors} states that any aperiodic substitutive subshift $X_\zeta$ has finitely many aperiodic symbolic factors.
	Furthermore, from \cite[Theorem 3.26]{cabezas2023homomorphisms} and~\cref{prop:ConjugatetoInjectivesubstitution}, any such factor is conjugate to a substitutive subshift defined by an injective constant-shape substitution with the same support as $\zeta$.
	Thus we would like to give a list $\zeta_1,\dots,\zeta_k$ of injective constant-shape substitutions that define all aperiodic symbolic factors of $X_\zeta$.
	
	\cref{corollary:radiusdependindonlyonL} allows to give a bound on the size of the alphabet of any injective constant-shape substitution that would define an aperiodic symbolic factor of $X_\zeta$.
	Hence, we can produce a finite list of candidates.
	Furthermore, if we know that two substitutions from that list are aperiodic, then~\cref{TopologicalRigidityOfFactors} allows us to decide whether they are conjugate. 
	Therefore, positively answering to the following question would allow us to list all possible aperiodic symbolic factors of $X_\zeta$.
	
	\begin{question}
		Is it decidable whether a primitive constant-shape substitution is aperiodic?
	\end{question}
	
	\subsection{Toward a Cobham's theorem for constant-shape substitutions} In \cite{fagnot1997facteurs} it was proved that if $\zeta_{1}, \zeta_{2}$ are two one-dimensional aperiodic primitive constant-length substitutions, and $(X_{\zeta_{2}},S,\Z)$ is a symbolic factor of $(X_{\zeta_{1}},S,\Z)$, then their lengths have a common
	power (greater than 1), generalizing a well-known result proved by A. Cobham \cite{cobham1969recognizable}. 
	In the multidimensional framework, Theorem~\ref{ConjugationSubstitutionsDifferentFundamentalDomains} and~\cite{durand2008cobhamsemenov} imply that this still remains true when the expansion matrix is equal to a multiple of the identity, but there is no generalized version for all constant-shape substitutions, which raises the following question.
	
	\begin{question}
		Is there a version of Cobham's theorem for constant-shape substitutions?
	\end{question}
	
	\subsection{Connections with first-order logic theory} 
	In the one-dimensional case, the decidability of the factorization problem between two constant-length substitutions was proved in \cite{durand2022decidability} using automata theory and its connexion with first order logic. 
	We describe the proof in the following. 
	Let $k\geq 2$. 
	An infinite sequence is called \emph{k-automatic} if there is a finite state automaton with output (we refer to~\cite{allouche2003automatic} for definitions) that, reading the base-$k$ representation of a natural number $n$, outputs the letter $x(n)$.

	Now, consider the first-order logical structure $\left\langle \N,+,V_{k},=\right\rangle$, where $V_{k}$ corresponds to $k$-\emph{valuation function} $V_{k}:\N\to\N$ by $V_{k}(0)=1$ and
	$$V_{k}(n)=\max\{p\colon q^{r}\ \text{divides}\ n\}.$$
	
	A subset $E\subseteq \N$ is called $k$-\emph{definable} if there exists a first-order formula $\phi$ in $\left\langle \N,+,V_{k},=\right\rangle$ such that $E=\{n\in \N\colon \phi(n)\}$.  
	An infinite sequence $x\in \A^{\N}$ is called $k$-\emph{definable} if for all $a\in \A$ the set $\{n\colon x_{n}=a\}$ is $k$-definable.
	
	\begin{theorem}\cite{buchi1960weak,cobham1972uniform}
		\label{thm:automaticiffdefinable}
		Let $\A$ be a finite alphabet and $k\geq 2$. An infinite sequence $x\in \A^{\N}$ is $k$-automatic if and only if it is $k$-definable and if and only if it is the image under a letter-to-letter substitution of a fixed point of a substitution of constant length $k$.  
	\end{theorem}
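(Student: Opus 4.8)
The plan is to route everything through the intermediate notion of a \emph{$k$-recognizable set}: a set $E \subseteq \N$ whose base-$k$ representations (say, read most-significant-digit first) form a regular language over the digit alphabet $\{0,\dots,k-1\}$. A sequence $x$ is $k$-automatic precisely when, for each $a \in \A$, the fiber $E_a = \{n : x_n = a\}$ is $k$-recognizable, since a DFAO is nothing but a DFA whose accepting structure is refined into a coloring of its states by output letters. With this reformulation the theorem splits into two classical equivalences, which I would prove separately and then reassemble: $k$-recognizable $\iff$ image of a fixed point of a length-$k$ substitution (Cobham), and $k$-recognizable $\iff$ $k$-definable (B\"uchi).

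For the substitutive equivalence I would exploit the dictionary between deterministic finite automata with output and constant-length substitutions. Given a DFAO with state set $Q$, transition function $\delta: Q \times \{0,\dots,k-1\} \to Q$, initial state $q_0$ (arranged, as one always may, so that $\delta(q_0,0) = q_0$, i.e.\ leading zeros are absorbed) and output map $\tau: Q \to \A$, I would define the length-$k$ substitution $\sigma: Q \to Q^k$ by $\sigma(q) = \delta(q,0)\,\delta(q,1)\cdots\delta(q,k-1)$. Since $\sigma(q_0)$ begins with $q_0$, the substitution $\sigma$ has a fixed point $y$, and a straightforward induction on the length of the base-$k$ expansion shows that $y_n$ equals the state reached from $q_0$ after reading $\mathrm{rep}_k(n)$; applying the coding $\tau$ gives $x = \tau(y)$, exhibiting $x$ as the image of a fixed point of a constant-length-$k$ substitution. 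Conversely, from any length-$k$ substitution with a fixed point and a coding I would read off a DFAO by the same formulas, using the self-similarity $y_{kn+j} = \sigma(y_n)_j$ to verify the transitions. This gives automatic $\iff$ substitutive with essentially no obstruction beyond the leading-zero normalization.

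The logical equivalence is the deeper half. For the direction \emph{definable $\Rightarrow$ recognizable} I would argue by induction on the structure of a formula in $\langle \N, +, V_k, = \rangle$: the atomic relations $x = y$, $x + y = z$ and $V_k(x) = y$ are each $k$-recognizable by explicit finite automata reading the digit-tuples of their arguments synchronously, after which closure of regular languages under Boolean operations handles the connectives and closure under projection (nondeterminism followed by determinization) handles the existential quantifiers; hence every definable fiber $E_a$ is recognizable. For the reverse implication \emph{recognizable $\Rightarrow$ definable}, which I expect to be the main obstacle, I would first use $V_k$ together with $+$ to define within the logic the digit-extraction predicate ``the coefficient of $k^i$ in the base-$k$ expansion of $n$ equals $d$'', which is exactly the expressive power that the valuation function is designed to supply. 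With digit access available, I would encode an accepting run of a given automaton on $\mathrm{rep}_k(n)$ as an auxiliary integer whose own base-$k$ digits record the sequence of visited states, and write a first-order formula asserting the existence of such a run that respects the initial state, the transition table, and acceptance. Assembling the pieces, a sequence is $k$-definable iff each $E_a$ is recognizable iff $x$ is $k$-automatic, closing the loop; the genuinely delicate point is this arithmetic encoding of runs, where one must ensure that the quantified ``run'' integer has the correct length relative to $n$, a constraint that is again expressible using $V_k$.
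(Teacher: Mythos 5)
The paper contains no proof of this statement to compare against: it is a classical theorem quoted verbatim with citations to B\"uchi and to Cobham. Measured against the standard proofs in those sources, your reconstruction is faithful and correctly organized: routing everything through $k$-recognizable sets is the classical decomposition; the DFAO-to-substitution dictionary $\sigma(q)=\delta(q,0)\delta(q,1)\cdots\delta(q,k-1)$, with the leading-zero normalization $\delta(q_0,0)=q_0$ guaranteeing a fixed point, is exactly Cobham's argument (it is the proof of Theorem 6.3.2 in Allouche--Shallit), and your two halves of the logical equivalence --- structural induction with synchronous automata for the atomic relations $x=y$, $x+y=z$, $V_k(x)=y$, plus Boolean and projection closure in one direction, and arithmetization of accepting runs via $V_k$-definable digit extraction in the other --- is the standard B\"uchi--Bruy\`ere proof as presented by Bruy\`ere--Hansel--Michaux--Villemaire.

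One step of your sketch fails as literally written and needs the standard patch: you encode an accepting run as a \emph{single} auxiliary integer whose base-$k$ digits record the sequence of visited states, which is impossible once the automaton has more than $k$ states. The usual repair is to existentially quantify one auxiliary integer per state $q$, whose base-$k$ digits are $0$ or $1$ and mark the positions at which the run sits in $q$, together with first-order conditions saying these position-sets partition the digit positions of $\mathrm{rep}_k(n)$, start at $q_0$, respect the transition table, and end in a state with the required output (alternatively, one can work in base $k^m$ or with blocks of $m$ digits per step). This is a local, routine fix --- the length constraint you already flag is handled, as you say, by the power-of-$k$ predicate definable from $V_k$ --- so with it your argument is complete and coincides with the proofs in the cited literature.
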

	
	\begin{theorem}
		The theory $\left\langle \N,+,V_{k},=\right\rangle$ is decidable, i.e, for each closed formula expressed in this first order logical structure, there is an algorithm deciding whether it is true or not.
	\end{theorem}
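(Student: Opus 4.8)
The plan is to prove the stronger statement that every relation \emph{definable} in $\langle \N,+,V_{k},=\rangle$ is recognized by a finite automaton reading base-$k$ expansions, and then to invoke the decidability of the emptiness problem for finite automata. This is the B\"uchi--Bruy\`ere approach, and it refines, at the level of relations, the equivalence between definability and automaticity already recorded in \cref{thm:automaticiffdefinable}. The heart of the argument is that the class of \emph{$k$-recognizable} relations is closed under all the logical operations, so that the automaton associated with a formula can be built by structural induction.

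First I would fix an encoding. A tuple $(n_{1},\ldots,n_{r})\in \N^{r}$ is represented by reading the base-$k$ digits of the $n_{i}$ synchronously, padding the shorter expansions with leading zeros so that all $r$ words have the same length; the input alphabet is thus $\{0,\ldots,k-1\}^{r}$. A relation $E\subseteq \N^{r}$ is \emph{$k$-recognizable} if the set of encodings of its tuples is a regular language over this alphabet, a notion that one sets up so as to be insensitive to the addition or deletion of leading zeros. Establishing these normalization conventions carefully at the outset is what makes the whole induction go through.

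The induction then proceeds on the structure of the formula. For atomic formulas one exhibits explicit automata: the graph of addition $\{(x,y,z)\colon x+y=z\}$ is recognized by an automaton carrying a single carry bit as it reads digits, equality is immediate, and the graph of $V_{k}$, namely $\{(x,y)\colon y=V_{k}(x)\}$ where $y$ is the largest power of $k$ dividing $x$, is recognized by an automaton checking that $y$ is a power of $k$ (a single digit equal to $1$ in one position, zeros elsewhere) sitting exactly at the lowest nonzero digit position of $x$. For the Boolean connectives one uses that regular languages are closed under union, intersection and complement, via product automata and determinization; conjunction, disjunction and negation therefore preserve $k$-recognizability, once the paddings of relations of equal arity are normalized to a common alphabet. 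For existential quantification, eliminating a variable corresponds to projecting away one coordinate of the alphabet, which turns a deterministic automaton into a nondeterministic one recognizing the projected relation, after which determinization restores a finite automaton. A closed formula has no free variables, so it defines a subset of $\N^{0}$, i.e.\ a truth value; the induction produces a finite automaton whose language is nonempty precisely when the sentence holds, and since emptiness of a finite automaton is decidable, this yields the desired algorithm.

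The main obstacle I anticipate is not the existence of the atomic automata, but rather the bookkeeping around leading zeros and padding throughout the induction, and most acutely in the projection step: naive projection can create encodings carrying spurious leading zeros that do not correspond to genuine elements of the projected relation, and these must be absorbed without altering the underlying set. Ensuring that the class of $k$-recognizable relations is \emph{genuinely} closed under all the operations therefore hinges on getting these normalization conventions right uniformly, which is the technically delicate part of the proof.
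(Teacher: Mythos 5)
Your proposal is correct and is precisely the classical B\"uchi--Bruy\`ere argument that the paper itself invokes for this theorem (the paper states it without proof, as a known result going back to \cite{buchi1960weak}): structural induction showing every definable relation is $k$-recognizable, with explicit automata for the graphs of $+$, $=$ and $V_{k}$, closure of recognizable relations under Boolean operations and projection modulo leading-zero normalization, and decidability of emptiness for the automaton attached to a sentence. Your identification of the padding bookkeeping in the projection step as the delicate point is also accurate; there is nothing to add beyond the minor edge case $V_{k}(0)=1$, which your atomic automaton absorbs with one extra branch.
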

	
	\cref{thm:automaticiffdefinable} extends to multidimensionnal constant-shape substitutions for which the expansion matrix is proportional to the identity.
	Hence, in that case, the decidability of the factorization problem can be deduced by the Büchi-Bruy\`ere theorem. 
	In the general case, it is not clear that these tools can be applied. 
	For an integer expansion matrix $L\in \mathcal{M}(d,\Z)$, we don't have the analogous notions of $L$-automaticity and of $L$-definability.
	This raises the following questions:
	
	\begin{question}
		\begin{enumerate}
			\item Can we extend the notion of $k$-automaticity to the general case of integer expansion matrices ? More precisely, can we define $L$-automatic sequences so that theses sequences are exactly the image under a letter-to-letter substitution of a constant-shape substitution with expansion matrix $L$ ? 
			
			\item Can we define a logical structure depending on $L$ and a notion of $L$-definability to obtain an analog of the Büchi-Bruy\`ere theorem for constant-shape substitutions?
			
			\item Assuming that the logical structure exists, is this theory decidable ?
		\end{enumerate}
	\end{question}
	
	\subsection{Topological Cantor factors of substitutive subshifts} In the one-dimensional case, the topological Cantor factors of aperiodic primitive substitutions are either expansive or equicontinuous \cite{durand1999substitutional}. This classification result is no
	longer true in the multidimensional framework (\cite[Example 4.3]{cabezas2023homomorphisms} is an example of an aperiodic primitive constant-shape substitution with a Cantor factor which is neither expansive neither equicontinuous). Moreover, we only study the aperiodic topological factors of substitutive subshifts, leaving open the study of the topological factors with non-trivial periods. The following are open questions:
	
	\begin{question}
		\begin{enumerate}
			\item Are the expansive factors of aperiodic substitutive subshifts also substitutive?
			\item Is there a classification theorem for topological Cantor factors of constant-shape substitutions?
			\item Do aperiodic primitive constant-shape substitutions have a finite number of aperiodic topological Cantor factors, up to conjugacy?
		\end{enumerate}
	\end{question}

	\bibliographystyle{plain}
	\bibliography{bibliography}
	
\end{document}